\renewcommand{\paragraph}{%
\@startsection{paragraph}{4}%
{\z@}{1.5ex \@plus 1.5ex \@minus .2ex}{-0.7em}%
{\normalfont\normalsize\bfseries}%
}
\setlist[itemize]{leftmargin=5mm}
\theoremstyle{plain}
\newtheorem{theorem}{Theorem}[section]
\newtheorem{lemma}[theorem]{Lemma}
\newtheorem{proposition}[theorem]{Proposition}
\theoremstyle{definition}
\newtheorem{definition}[theorem]{Definition}
\newtheorem{remark}[theorem]{Remark}
\newtheorem*{acknowledgements}{Acknowledgements}
\numberwithin{equation}{section}
\newcommand\RedeclareMathOperator{%
\@ifstar{\def\rmo@s{m}\rmo@redeclare}{\def\rmo@s{o}\rmo@redeclare}%
}
\newcommand\rmo@redeclare[2]{%
\begingroup \escapechar\m@ne\xdef\@gtempa{{\string#1}}\endgroup
\expandafter\@ifundefined\@gtempa
{\@latex@error{\noexpand#1undefined}\@ehc}%
\relax
\expandafter\rmo@declmathop\rmo@s{#1}{#2}}
\newcommand\rmo@declmathop[3]{%
\DeclareRobustCommand{#2}{\qopname\newmcodes@#1{#3}}%
}
\def\R{\mathbb{R}}
\def\C{\mathbb{C}}
\def\S{\mathbb{S}}
\def\P{\mathbb{P}}
\def\Z{\mathbb{Z}}
\def\E{\mathbb{E}}
\def\N{\mathbb{N}}
\def\G{\mathbb{G}}
\def\HH{\mathcal{H}}
\def\CC{\mathcal{C}}
\def\FF{\mathcal{F}}
\def\VV{\mathcal{V}}
\def\EE{\mathcal{E}}
\def\MM{\mathcal{M}}
\def\GG{\mathcal{G}}
\def\HH{\mathcal{H}}
\def\TT{\mathcal{T}}
\def\SS{\mathcal{S}}
\def\MM{\mathcal{M}}
\def\PPP{\mathcal{P}}
\def\dotSS{\dot{\mathcal{S}}}
\def\frkf{\mathfrak{f}}
\def\frkh{\mathfrak{h}}
\def\frkw{\mathfrak{w}}
\def\e{\mathbf{e}}
\def\x{\mathbf{x}}
\def\y{\mathbf{y}}
\def\v{\mathbf{v}}
\def\X{\mathbf{X}}
\def\PP{\mathbf{P}}
\def\he{\hat{\mathbf{e}}}
\def\hx{\hat{\mathbf{x}}}
\def\hy{\hat{\mathbf{y}}}
\def\dotX{\dot{\mathbf{X}}}
\def\sfQ{\mathrm{Q}}
\def\sfR{\mathrm{R}}
\def\sfI{\mathrm{I}}
\def\sfV{\mathrm{V}}
\def\sfH{\mathrm{H}}
\def\sfS{\mathrm{S}}
\def\SLE{\mathrm{SLE}}
\def\eps{\upvarepsilon}
\def\tau{\uptau}
\def\eta{\upeta}
\def\sigma{\upsigma}
\def\delta{\updelta}
\def\rho{\uprho}
\def\xi{\upxi}
\def\d{\mathrm{d}}
\def\theta{\upvartheta}
\def\zeta{\upzeta}
\renewcommand{\tilde}[1]{\widetilde{#1}}
\renewcommand{\hat}[1]{\widehat{#1}}
\renewcommand{\bar}[1]{\overline{#1}}
\renewcommand{\mod}[1]{\;\mathrm{mod}(#1)}
\renewcommand{\l}[1]{\bigl{#1}}
\renewcommand{\r}[1]{\bigr{#1}}
\DeclareMathOperator\wind{wind}
\DeclareMathOperator\diam{diam}
\DeclareMathOperator\dH{\d^{\mathrm{H}}}
\DeclareMathOperator\dTV{\d^{\mathrm{TV}}}
\DeclareMathOperator\dCMP{\d^{\mathrm{CMP}}}
\DeclareMathOperator\dCMPloc{\d_{\mathrm{loc}}^{\mathrm{CMP}}}
\RedeclareMathOperator\Im{Im}
\RedeclareMathOperator\Re{Re}
\DeclareMathOperator\length{length}
\DeclareMathOperator\Out{Outrad}
\DeclareMathOperator\Area{Area}
\DeclareMathOperator\midpoint{mid}
\RedeclareMathOperator\S{S}
\DeclareMathOperator\Var{\mathbb{V}ar}
\DeclareMathOperator\Cov{\mathbb{C}ov}
\RedeclareMathOperator\deg{deg}
\definecolor{colorLink}{RGB}{0,100,162}
\definecolor{colorCite}{RGB}{8,124,100}
\title{Scaling limits of planar maps under the Smith embedding}
\author{
	\begin{tabular}{c} Federico Bertacco\\ [-5pt] \small Imperial College London \end{tabular}
	\begin{tabular}{c} \\[-5pt]\small  \end{tabular}
	\begin{tabular}{c} \\[-5pt]\small  \end{tabular}
	\begin{tabular}{c} Ewain Gwynne\\ [-5pt] \small University of Chicago \end{tabular}
	\begin{tabular}{c} \\[-5pt]\small  \end{tabular}
	\begin{tabular}{c} \\[-5pt]\small  \end{tabular}
	\begin{tabular}{c} Scott Sheffield\\ [-5pt] \small MIT \end{tabular}
}
\date{ }
\begin{document}	

\maketitle

\begin{abstract}
\noindent
The Smith embedding of a finite planar map with two marked vertices, possibly with conductances on the edges, is a way of representing the map as a tiling of a finite cylinder by rectangles. In this embedding, each edge of the planar map corresponds to a rectangle, and each vertex corresponds to a horizontal segment. Given a sequence of finite planar maps embedded in an infinite cylinder, such that the random walk on both the map and its planar dual converges to Brownian motion modulo time change, we prove that the a priori embedding is close to an affine transformation of the Smith embedding at large scales. By applying this result, we prove that the Smith embeddings of mated-CRT maps with the sphere topology converge to $\gamma$-Liouville quantum gravity ($\gamma$-LQG).
\end{abstract}

\tableofcontents

\section{Introduction}

\subsection{Motivation}
Over the past few decades, there has been a large amount of interest in the study of random planar maps, i.e., graphs embedded in the plane viewed modulo orientation-preserving homeomorphisms. Since the foundational work of Polyakov in the context of bosonic string theory \cite{Pol81}, it has been believed that various types of random planar maps converge, in various topologies, to limiting random surfaces called \emph{Liouville quantum gravity} (LQG) surfaces. The rigorous mathematical study of LQG has been explored, e.g., in works by Duplantier and Sheffield \cite{DS11} and Rhodes and Vargas \cite{RV_KPZ}.
Roughly speaking, LQG surfaces can be thought of as random two-dimensional Riemannian manifolds parameterized by a fixed underlying Riemann surface, indexed by a parameter $\gamma\in (0,2]$. These surfaces are too rough to be Riemannian manifolds in the literal sense, but one can still define, e.g., the associated volume form (area measure) and distance function (metric) via regularization procedures \cite{Kahane, DS11, RV_KPZ, Berestycki_Elementary, DDDF_first, Weak_Metric, GM_Conf, GM_Uniq}. Many properties of the $\gamma$-LQG area measure are well-known \cite{RVReview, BerMulti, BerPow}.

\medskip
\noindent
One way of formulating the convergence of random planar maps toward LQG surfaces is to consider so-called \emph{discrete conformal embeddings} of the random planar maps. Here, a discrete conformal embedding refers to a particular way of drawing the map in the plane, which is in some sense a discrete analog of the Riemann mapping. Suppose we have a random planar map with $n$ vertices, along with a discrete conformal embedding of the map that maps each vertex to a point in $\C$. This embedding creates a natural measure on the plane, with each vertex given a mass of $1/n$. In many settings, it is natural to conjecture that as $n$ tends to infinity, the measure should converge weakly to the $\gamma$-LQG area measure, with the parameter $\gamma$ depending on the particular planar map model under consideration. Additionally, the random walk on the embedded map is expected to converge in law to two-dimensional Brownian motion modulo time parameterization (more precisely, the parameterized walk should converge to the so-called \emph{Liouville Brownian motion} \cite{Ber_LBM, GRV_LBM, GB_LBM}).
Several precise scaling limit conjectures for random planar maps toward LQG surfaces were formulated, e.g., in \cite{DS11, LQG_Sphere, She_Zipper, DMS21}. However, this very general convergence ansatz has only been rigorously proven in a few specific settings (see below). 

\medskip
\noindent
One of the challenges in formulating a general scaling limit result for the embedding of random planar maps is the existence of numerous discrete conformal embeddings that could be regarded as natural in some sense. We collect here some of the most commonly employed discrete conformal embeddings.
\begin{itemize}
\item The \emph{circle packing} (see \cite{Nac20} for a review), which represents the map as the tangency graph of a collection of non-overlapping circles\footnote{We refer to \cite{RS87} for a proof of the fact that the circle packing for lattice approximations of planar domains gives an approximation of the Riemann mapping.}.
\item The \emph{Smith embedding} (a.k.a.\ \emph{rectangle packing}), which will be the focus of the present paper, was introduced by Brooks, Smith, Stone, and Tutte in \cite{BSST40}. It is another popular method of embedding planar graphs, and it is defined by means of a rectangle tiling of either a cylinder or a rectangle, and in which vertices of the planar map correspond to horizontal segments in the Smith embedding, and edges of the planar map correspond to rectangles in the Smith embedding\footnote{We refer to \cite{GP_square} for a proof of the fact that the Smith embedding for fine-mesh lattice graphs gives an approximation of the Riemann mapping.}. 
Several papers have studied properties of the Smith embedding of planar maps \cite{BS96, Geo16, Hutch_Smith, Carm_Smith}.
\item Other examples of discrete conformal embeddings include the \emph{Tutte embedding} \cite{GMS_Tutte}, the \emph{Cardy--Smirnov embedding} \cite{HS_Cardy}, and the \emph{Riemann uniformization embedding}, obtained by viewing the planar map as a piecewise flat two-dimensional Riemannian manifold where the faces are identified with unit side length polygons. 
\end{itemize}

\noindent
A few cases of the conjecture that LQG describes the scaling limit of random planar maps under discrete conformal embeddings have been proven. For example, in \cite{GMS_Tutte}, Gwynne, Miller, and Sheffield established the convergence to $\gamma$-LQG under the Tutte embedding for a one-parameter family of random planar maps defined using pairs of correlated Brownian motions, known as the \emph{mated-CRT maps} (see below for a definition of this family of random planar maps). Moreover, in \cite{GMS_Voronoi}, the same authors proved that the Tutte embedding of the Poisson Voronoi tessellation of the Brownian disk converges to $\sqrt{8/3}$-LQG. In \cite{HS_Cardy}, Holden and Sun proved that the scaling limit of uniformly sampled triangulations under the Cardy--Smirnov embedding converges to $\sqrt{8/3}$-LQG. Finally, let us also mention that in \cite{Nach_Circle}, the authors studied the circle-packing of the mated-CRT map and showed that there are no macroscopic circles in the circle packing of this random planar map. Roughly speaking, the main goal of this paper is to provide a general convergence result for the Smith embedding of planar maps, which works whenever random walk on both the map and its dual approximate Brownian motion.

\subsection{Main result}
The main result of this paper concerns the scaling limit of general (random) planar maps under the Smith embedding. More precisely, we consider a sequence of finite planar maps, each of which comes with an embedding into an infinite cylinder, referred to as the \emph{a priori embedding}. Under the assumption that both the sequence of maps and their duals satisfy an invariance principle, we show that the a priori embedding is close to an affine transformation of the Smith embedding at large scales. We then apply this result to prove the convergence of the Smith embeddings of mated-CRT maps to $\gamma$-LQG.

\medskip
\noindent
One advantage of the version of the Smith embedding considered in this paper is that its definition is particularly natural for random planar maps without boundary. This is in contrast to other embeddings under which random planar maps have been shown to converge to LQG (such as the Tutte embedding \cite{GMS_Tutte} and the Cardy--Smirnov embedding \cite{HS_Cardy}) which are most naturally defined for planar maps with boundary.

\medskip
\noindent
Throughout the paper, we always write \emph{weighted planar map} for a planar map with edge conductances.  Moreover, throughout the article, we allow all of our planar maps to have multiple edges and self-loops. In order to state our main theorem, we need to introduce some notation. Given a planar graph $\GG$, we denote the sets of vertices, edges, and faces of $\GG$ by $\VV\GG$, $\EE\GG$, and $\FF\GG$, respectively. We consider a doubly marked finite weighted planar map $(\GG, c, v_0, v_1)$, where $v_0$, $v_1 \in \VV\GG$ are the two marked vertices, and $c=\{c_e\}_{e\in\EE\GG}$ is a collection of positive unoriented weights called conductances. We assume that we are given a \emph{proper embedding} of the map in the infinite cylinder $\CC_{2\pi} := \R/2\pi\Z \times \R$ \label{pag_inf_cyl} in the sense of the following definition.
\begin{definition}
\label{def_proper_embedd}
An embedding of the quadruple $(\GG, c, v_0, v_1)$ in the infinite cylinder $\CC_{2\pi}$ is said to be \emph{proper} if:
\begin{enumerate}[(a)]
	\item the edges in $\EE\GG$ are continuous and do not cross;
	\item the graph $\GG$ is connected;
	\item the two marked vertices $v_0$ and $v_1$ are mapped to $-\infty$ and $+\infty$, respectively.
\end{enumerate}
\end{definition}

\noindent
We observe that, if $(\GG, c, v_0, v_1)$ is properly embedded in $\CC_{2\pi}$, then the set of unmarked vertices $\VV\GG$ is contained in $\CC_{2\pi}$; each edge in $\EE\GG$ is a curve in $\CC_{2\pi}$ that does not cross any other edge, except possibly at its endpoints; and each face in $\FF\GG$ is a connected component in $\CC_{2\pi}$ of the complement of the embedded graph $\GG$. Since the two marked vertices are mapped to $\pm \infty$, this implies that there is an infinite face at each end of the cylinder $\CC_{2\pi}$. In what follows, we use the convention to identify each vertex in $\VV\GG$ with its a priori embedding, i.e., if $x \in \VV\GG$ then we view $x$ as a point in $\CC_{2 \pi}$. Furthermore, for a set $K\subset \CC_{2\pi}$, we write
\begin{equation*}
 \VV\GG(K) := \l\{ x\in \VV\GG :  x\in  K \r\} .
\end{equation*}

\medskip
\noindent
We denote by $\smash{(\hat{\GG}, \hat{c})}$ the dual weighted planar graph associated to $\smash{(\GG, c)}$, where the conductance $\smash{\hat{c}_{\hat{e}}}$ of a dual edge $\smash{\hat{e}\ \in \EE\hat{\GG}}$ is equal to the resistance of the corresponding primal edge $e \in \EE\GG$, i.e., we set $\smash{\hat{c}_{\hat{e}} := 1/c_e}$. We assume that $\smash{(\hat{\GG}, \hat{c})}$ is \emph{properly embedded} in the infinite cylinder $\CC_{2\pi}$ in the sense of the following definition.
\begin{definition}
\label{def_proper_embedd_dual}
An embedding of the dual weighted planar graph $(\hat{\GG}, \hat{c})$ associated to $(\GG, c)$ in the infinite cylinder $\CC_{2\pi}$ is said to be \emph{proper} if:
\begin{enumerate}[(a)]
	\item every vertex of $\hat{\GG}$ is contained in a face of $\GG$;
	\item every edge $e$ of $\GG$ is crossed exactly at one point by a single edge $\hat{e}$ of $\hat{\GG}$ which joins the two faces incident to $e$.
\end{enumerate}
\end{definition}

\noindent
If an edge $e \in \EE\GG$ is oriented, the orientation of the corresponding dual edge $\hat{e} \in \EE\hat{\GG}$ can be obtained by rotating $e$ counter-clockwise. As for the primal graph, given a set $K \subset \CC_{2\pi}$, we write
\begin{equation*}
 \VV\hat{\GG}(K) := \l\{\hat{x} \in \VV\hat{\GG} :  \hat{x} \in  K \r\} .	
\end{equation*}

\subsubsection{Smith embedding}
We are now ready to provide a somewhat informal description of the Smith embedding of a given doubly marked finite weighted planar map $(\GG, c, v_0, v_1)$. The precise definition will be given in Section~\ref{sec_background_setup}. As mentioned earlier, the Smith embedding of a planar map was first introduced by Brooks, Smith, Stone, and Tutte in \cite{BSST40}, and later generalized to infinite planar graphs by Benjamini and Schramm in \cite{BS96}.

\medskip
\noindent
The Smith embedding of the quadruple $(\GG, c, v_0, v_1)$ is constructed by means of a tiling by rectangles of a finite cylinder $\CC_{\eta} := \R/\eta\Z \times [0, 1]$\label{pag_finite_cyl}, where $\eta$ is a positive number which depends on $(\GG,c)$. Each vertex $x \in \VV\GG$ is represented by a horizontal line segment $\sfH_x$, each edge $e \in \EE\GG$ by a rectangle $\sfR_e$, and each dual vertex $\hat{x} \in \VV\hat{\GG}$ by a vertical line segment $\sfV_{\hat{x}}$. In particular, since each edge $e \in \EE\GG$ corresponds to a rectangle in the tiling, we need to specify four coordinates for each edge. This is done by means of the voltage function $\frkh: \VV\GG \to [0, 1]$ and its discrete harmonic conjugate $\frkw: \VV\hat{\GG} \to \R/\eta\Z$. 

\medskip
\noindent
The function $\frkh $ is the unique function on $\VV\GG$ which is discrete harmonic on $\VV\GG\setminus\{v_0,v_1\}$ (with respect to the conductances $c$) with boundary conditions given by $\frkh(v_0) = 0$ and $\frkh(v_1) = 1$, i.e.,
\begin{equation*}
\frkh(x) = \P_x\l(\text{$X$ hits $v_1$ before $v_0$}\r), \quad \forall x \in \VV\GG,
\end{equation*}
where $\P_x$ denotes the law of the (weighted) random walk $X^x$ on $(\GG,c)$ started from $x$. We refer to Subsection~\ref{sub_rand_elect} for more details. 

\medskip
\noindent
The function $\frkw$ is the function on $\VV\hat{\GG}$ that satisfies the discrete Cauchy--Riemann equation associated to $\frkh$, i.e., it is the function on the set of dual vertices $\VV\hat{\GG}$ whose difference at the endpoints of each edge of $\hat{\GG}$ is equal to the difference of $\frkh$ at the endpoints of the corresponding primal edge times its conductance.  As we will see in Subsection~\ref{sub_dis_har_conj}, the function $\frkw$ is only defined modulo $\eta\Z$ and modulo an additive constant that can be fixed by imposing that $\frkw$ is equal to zero on a chosen dual vertex. In particular, the choice of the additive constant of $\frkw$ fixes the rotation of the cylinder in which the tiling takes place.

\medskip
\noindent
Now, we can specify the various objects involved in the definition of the Smith embedding. 
\begin{itemize}
	\item For each edge $e \in \EE\GG$, the rectangle $\sfR_e$ corresponds to the rectangle on $\CC_{\eta}$ such that the height coordinates of the top and bottom sides are given by the values of $\frkh$ at the endpoints of $e$, and the width coordinates of the left and right sides of $\sfR_e$ are given by the values of $\frkw$ at the endpoints of the corresponding dual edge $\hat{e}$. 
	\item For each vertex $x \in \VV\GG$, the horizontal segment $\sfH_x$ corresponds to the maximal horizontal segment which lies on the boundaries of all the rectangles corresponding to the edges incident to $x$.
	\item For each dual vertex $\hat{x} \in \VV\hat{\GG}$, the vertical segment $\sfV_{\hat{x}}$ corresponds to the maximal vertical segment which is tangent with all rectangles corresponding to primal edges surrounding $\hat{x}$.
\end{itemize}

\noindent
We call the map $\SS: \EE\GG \cup \VV\GG \cup \VV\hat{\GG} \to \CC_{\eta}$ such that $\SS(e) := \sfR_e$, $\SS(x) := \sfH_x$, and $\SS(\hat{x}) := \sfV_{\hat{x}}$ the \emph{tiling map} associated to the quadruple $(\GG, c, v_0, v_1)$. We refer to Figure~\ref{fig_smith_tiling} for a diagrammatic illustration of the tiling map associated to a given quadruple $(\GG, c, v_0, v_1)$. We define the Smith embedding $\dotSS$ associated to the quadruple $(\GG, c, v_0, v_1)$ as the function from $\VV\GG$ to $\R/\eta\Z \times [0, 1]$ given by
\begin{equation}
\label{eq_Smith_intro}
\dotSS(x) := \midpoint(\sfH_x), \quad \forall x \in \VV\GG,
\end{equation}
where $\midpoint(\sfH_x)$ corresponds to the middle point of the horizontal line segment $\sfH_x$\footnote{This definition of the Smith embedding is somewhat arbitrary. Indeed, for each $x \in \VV\GG$, one can define $\dotSS(x)$ to be any arbitrary point inside the horizontal segment $\sfH_x$.}. We refer to Subsection~\ref{sub_def_tiling_Smith} for precise definitions.

\begin{figure}[h]
	\centering
	\includegraphics[scale=1]{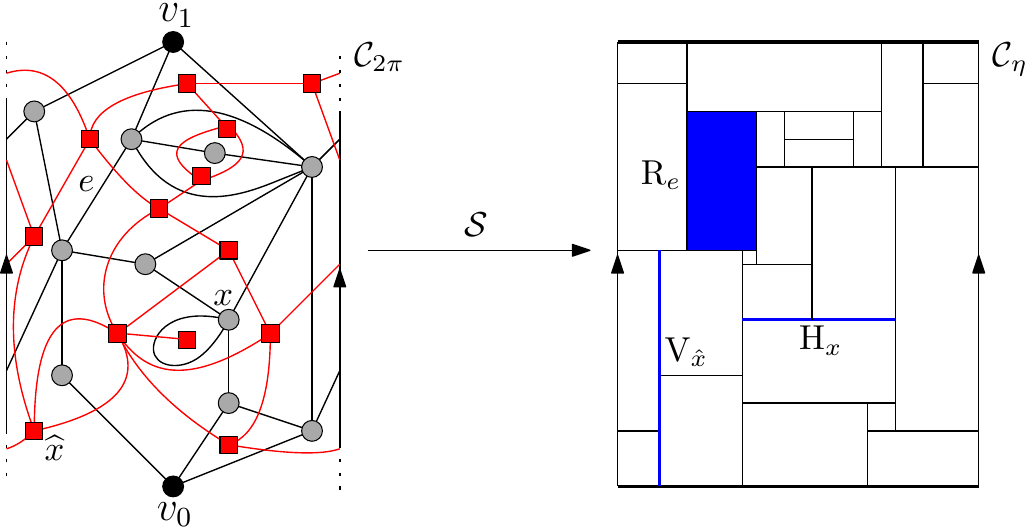}
	\caption[short form]{\small \textbf{Left:} A doubly marked finite weighted planar graph $(\GG, c, v_0, v_1)$, drawn in gray, properly embedded in the infinite cylinder $\CC_{2\pi}$ with the two marked vertices $v_0$ and $v_1$ drawn in black. Drawn in red is the corresponding weighted dual planar graph $(\hat{\GG}, \hat{c})$, which is also properly embedded in $\CC_{2\pi}$. \textbf{Right:} The Smith diagram associated to $\smash{(\GG, c, v_0, v_1)}$ constructed via the tiling map $\SS$. The blue horizontal segment $\sfH_x$ corresponds to the vertex $\smash{x \in \VV\GG}$. The blue vertical segment $\smash{\sfV_{\hat{x}}}$ corresponds to the dual vertex $\smash{\hat{x} \in \VV\hat{\GG}}$. The blue rectangle $\smash{\sfR_e}$ corresponds to the edge $\smash{e \in \EE\GG}$. In both the left and right figure, the two vertical lines with an arrow are identified with each other.}
	\label{fig_smith_tiling}
\end{figure}

\subsubsection{Assumptions and statement of the main result}
To state our main result, we need to consider a sequence of doubly marked finite weighted planar maps 
\begin{equation*}
\l\{(\GG^n, c^n, v_0^n, v_1^n)\r\}_{n \in \N},
\end{equation*}
and the sequence of associated weighted dual planar graphs $\{(\hat{\GG}^n, \hat{c}^n)\}_{n \in \N}$. We make the following assumptions.
\begin{enumerate}[start=1,label={(H\arabic*})]
\item \label{it_embedding} (\textbf{Cylindrical embedding}) For each $n \in \N$, the quadruple $(\GG^n, c^n, v_0^n, v_1^n)$ is properly embedded in the infinite cylinder $\CC_{2\pi}$ in the sense of Definition~\ref{def_proper_embedd}. Furthermore, the associated weighted dual planar graph $(\hat{\GG}^n, \hat{c}^n)$ is also properly embedded in $\CC_{2\pi}$ in the sense of Definition~\ref{def_proper_embedd_dual}.
\item \label{it_invariance} (\textbf{Invariance principle on the primal graphs}) 
For each $n \in \N$, view the embedded random walk on $(\GG^n, c^n)$, stopped when it hits either $v_0^n$ or $v_1^n$, as a continuous curve in $\CC_{2\pi}$ obtained by piecewise linear interpolation at constant speed. For each compact subset $K \subset \CC_{2\pi}$ and for any $z \in K$, the law of the random walk on $(\GG^n, c^n)$ started from the vertex $x_z^n \in \VV\GG^n$ nearest to $z$ weakly converges as $n \to \infty$ to the law of the Brownian motion on $\CC_{2\pi}$ started from $z$ with respect to the local topology on curves viewed modulo time parameterization specified in Subsection \ref{subsub_metric_CMP}, uniformly over all $z \in K$. 
\item \label{it_invariance_dual} (\textbf{Invariance principle on the dual graphs}) For each $n \in \N$, view the embedded random walk on $(\hat{\GG}^n, \hat{c}^n)$ as a continuous curve in $\CC_{2\pi}$ obtained by piecewise linear interpolation at constant speed. For each compact subset $K \subset \CC_{2\pi}$ and for any $z \in K$, the law of the random walk on $(\hat{\GG}^n, \hat{c}^n)$ started from the vertex $\hat{x}_z^n \in \VV\hat{\GG}^n$ nearest to $z$ weakly converges as $n \to \infty$ to the law of the Brownian motion on $\CC_{2\pi}$ started from $z$ with respect to the local topology on curves viewed modulo time parameterization specified in Subsection~\ref{subsub_metric_CMP}, uniformly over all $z \in K$. 
\end{enumerate}

\begin{remark}
It is natural to ask whether assumptions \ref{it_invariance} and \ref{it_invariance_dual} are actually equivalent, i.e., if one implies the other. We don't have any strong reason to either believe or dismiss the equivalence of these two assumptions. However, we emphasize that the local structure of the dual graph can differ significantly from the primal one. For example, the Tutte embedding of a graph and its dual can look very different, particularly in cases where one allows vertices of very high degree (which corresponds to having a very large face in the dual graph). Therefore, without any restrictions on the sequences of planar maps, it might be possible to construct examples where the walk and the dual walk behave very differently.
\end{remark}

\noindent
In what follows, given a point $x \in \CC_{2\pi}$, we write $\Re(x) \in [0, 2\pi)$ for its horizontal coordinate and $\Im(x) \in \R$ for its height coordinate. Similarly, if $x \in \CC_{\eta}$, then $\Re(x) \in [0, \eta)$ denotes its horizontal coordinate and $\Im(x) \in [0, 1]$ denotes its height coordinate. We are now ready to state our main theorem.

\begin{theorem}[Main theorem]
\label{th_main_1}
Consider a sequence $\smash{\{(\GG^n, c^n, v_0^n, v_1^n)\}_{n \in \N}}$ of doubly marked finite weighted planar maps, and let $\smash{\{(\hat{\GG}^n, \hat{c}^n)\}_{n \in \N}}$ be the corresponding sequence of dual weighted planar graphs. Assume that assumptions \ref{it_embedding}, \ref{it_invariance}, \ref{it_invariance_dual} are satisfied. For each $n \in \N$, let $\dotSS_n : \VV\GG^n \to \CC_{\eta_n}$ denote the Smith embedding associated with the quadruple $(\GG^n, c^n, v_0^n, v_1^n)$ as specified in \eqref{eq_Smith_intro}. There exist sequences $\smash{\{c^{\frkh}_n\}_{n \in \N}}$, $\smash{\{b^{\frkh}_n\}_{n \in \N}}$, $\smash{\{b^{\frkw}_n\}_{n \in \N} \subset \R}$ such that, if we define the affine transformation $\smash{T_n : \CC_{\eta_n} \to \CC_{2\pi}}$ by
\begin{equation*}
\Re(T_n x) := \left(\frac{2\pi}{\eta_n} \Re(x) + b_n^{\frkw}\right) \mod{2\pi} \quad \text{ and } \quad \Im(T_n x) := c_n^{\frkh} \Im(x) + b^{\frkh}_n,  \qquad \forall x \in \CC_{\eta_n}, 
\end{equation*}
then, for all compact sets $K \subset \CC_{2\pi}$, it holds that 
\begin{equation*}
\lim_{n \to \infty} \sup_{x \in \VV\GG^{n}(K)} \d_{2\pi}\l(T_n \dotSS_n(x), x\r) = 0,
\end{equation*}
where $\d_{2\pi}$ denotes the Euclidean distance on the cylinder $\CC_{2\pi}$. 
\end{theorem}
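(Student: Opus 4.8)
The plan is to reduce the claim to two quantitative convergence results — one for the voltage function $\frkh$, one for its discrete harmonic conjugate $\frkw$ — and then feed them into the definition of the Smith embedding. By construction the horizontal segment $\sfH_x$ representing a vertex $x$ lies at height exactly $\frkh(x)$, so $\Im(\dotSS_n(x)) = \frkh(x)$; and the midpoint of the horizontal extent of $\sfH_x$ differs from $\frkw(\hat x)$, for any dual vertex $\hat x$ of a face incident to $x$, by at most the oscillation of $\frkw$ over the small loop of faces around $x$, which — using that the a priori embeddings of $\GG^n$ become fine (a consequence of assumptions~\ref{it_invariance}--\ref{it_invariance_dual}) together with the convergence of $\frkw$ obtained below — is $o(\eta_n)$ uniformly over $x$ in any compact set. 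Since the affine map $T_n$ carries precisely the dilation $\tfrac{2\pi}{\eta_n}$ needed to send $\R/\eta_n\Z$ onto $\R/2\pi\Z$, it therefore suffices to construct sequences $c_n^{\frkh}, b_n^{\frkh}, b_n^{\frkw}$ such that, for every compact $K \subset \CC_{2\pi}$, $\sup_{x \in \VV\GG^n(K)} |c_n^{\frkh}\frkh(x) + b_n^{\frkh} - \Im(x)| \to 0$ and $\sup_{\hat x \in \VV\hat\GG^n(K)} \d_{2\pi}(\tfrac{2\pi}{\eta_n}\frkw(\hat x) + b_n^{\frkw}, \Re(\hat x)) \to 0$, the latter being read in $\R/2\pi\Z$.

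For the first convergence I would argue as follows. Since $\frkh$ is discrete harmonic off $\{v_0^n, v_1^n\}$ with values in $[0,1]$, assumption~\ref{it_invariance} together with a Harnack-type equicontinuity estimate forces every subsequential local-uniform limit of $z \mapsto \frkh(x_z^n)$ to be a bounded harmonic function on all of $\CC_{2\pi}$, hence a constant; consequently the oscillation of $\frkh$ over any fixed compact set tends to $0$. Now fix a truncated cylinder $\R/2\pi\Z \times [-M, M]$: on it $\frkh$ is discrete harmonic with boundary data on the two bounding circles which, by the previous step, is nearly constant on each, say approximately $\rho_n^{\pm}(M)$. Applying the mean value property and assumption~\ref{it_invariance} up to the exit time of this truncated cylinder, $\frkh$ is then, up to an error tending to $0$, the continuum harmonic interpolation of this data, i.e. an affine function of the height with slope $\approx (\rho_n^+(M) - \rho_n^-(M))/2M$. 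The quantitative heart of the argument is that, because $\frkh$ is globally bounded while $\GG^n$ fills out a cylinder of vertical extent $M_n \to \infty$, the non-affine part of $\frkh$ on the truncated cylinder is bounded at the far ends $\pm M_n$ and hence exponentially small in $M_n - M$ in the bulk, whereas the total height variation $\rho_n^+(M) - \rho_n^-(M)$ is only polynomially small in $M_n$; thus the non-affine part is negligible relative to the slope. Taking $s_n > 0$ and $t_n$ to be the coefficients of this affine-in-height approximation near height $0$ — with $s_n > 0$ because the walk from height $-M$ must cross height $M$ before it can reach $v_1^n$ at $+\infty$ — and setting $c_n^{\frkh} := 1/s_n$, $b_n^{\frkh} := -t_n/s_n$, we obtain $|c_n^{\frkh}\frkh(x) + b_n^{\frkh} - \Im(x)| = s_n^{-1}|\frkh(x) - s_n\Im(x) - t_n| \to 0$ uniformly on compacts.

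For the second convergence one runs the same scheme on the dual graph via assumption~\ref{it_invariance_dual}, with one essential extra ingredient: $\frkw$ is discrete harmonic on $\hat\GG^n$ but is genuinely $\R/\eta_n\Z$-valued, picking up $\eta_n$ along a loop winding once around the cylinder, and this topological constraint prevents the rescaled function $\tfrac{2\pi}{\eta_n}\frkw$ from collapsing. Passing to the universal cover, $\tfrac{2\pi}{\eta_n}\frkw$ becomes discrete harmonic with monodromy $2\pi$, so its subsequential limits are harmonic maps $\CC_{2\pi} \to \R/2\pi\Z$ of degree one; using that the Smith-diagram structure keeps the lift of $\frkw$ bounded over contractible sets, together with the same exponential decay of the oscillatory modes along the long cylinder, the limit is forced to be $z \mapsto \Re(z) + \text{const}$, while any affine-in-height component of $\frkw$ contributes negligibly after rescaling by $\tfrac{2\pi}{\eta_n}$ (again by the exponential decay of the angular oscillation of $\frkh$, via the discrete Cauchy--Riemann relation between $\frkw$ and the single-valued function $\frkh$). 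Choosing $b_n^{\frkw}$ to match a reference point gives the second convergence, and combining it with the first and with the reduction of the first paragraph proves the theorem.

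The step I expect to be the main obstacle is exactly this beyond-leading-order control of the degenerating discrete harmonic functions $\frkh$ and $\frkw$: both tend to constants on compact sets, so one must show that the first-order correction is affine in the relevant coordinate with an error that is negligible \emph{relative to the (vanishing) slope}. This is what forces one to exploit the exponential decay of the non-constant Fourier modes of bounded harmonic functions on long cylinders, and it is delicate because one is comparing an exponentially small remainder against a merely polynomially small slope, all within the error budget supplied by the invariance principles~\ref{it_invariance}--\ref{it_invariance_dual}. A secondary technical point, needed to pass from ``$\Re(\dotSS_n(x))$ is the midpoint of $\sfH_x$'' to ``$\Re(\dotSS_n(x)) \approx \frkw(\hat x)$'', is a uniform local bound on the oscillation of $\frkw$ around a single vertex.
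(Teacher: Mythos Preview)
Your reduction to separate convergences for $\frkh$ and $\frkw$ is exactly right, and you correctly identify the crux: after rescaling by $c_n^{\frkh}$ (resp.\ $2\pi/\eta_n$) the errors must be small \emph{relative to the vanishing slope}. However, your proposed mechanism for controlling that relative error --- ``exponential decay of the non-constant Fourier modes of bounded harmonic functions on long cylinders'' --- is a continuum statement that does not transfer to the discrete setting using only the invariance principle. The hypotheses~\ref{it_invariance}--\ref{it_invariance_dual} give you $o(1)$ control on exit distributions from fixed compact sets, not exponential-in-$M_n$ decay for the discrete Poisson kernel; and you cannot bootstrap, because the graphs $\GG^n$ have no uniform ellipticity or bounded-degree assumption, so standard discrete Harnack/decay estimates are unavailable. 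The same objection applies, more severely, to your treatment of $\frkw$: the claim that the affine-in-height component of $\frkw$ is negligible after rescaling relies on the same unproven decay, and your argument does not use any structure specific to $\frkw$ beyond its harmonicity and monodromy.

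The paper bypasses the decay issue entirely with a coupling trick. For $\frkh$ (Proposition~\ref{pr_main_height}): write $\frkh(\x)=\bar a\,p(\x)+\underline a\,(1-p(\x))$ by optional stopping, where $p(\x)$ is the probability the walk hits the level set $(\frkh^{\dag}_n)^{-1}(\bar a)$ before $(\frkh^{\dag}_n)^{-1}(\underline a)$, with $\bar a,\underline a$ chosen so that these level sets touch heights $\pm R'$ with $R'=R/\delta$. Decompose $p(\x)=q(\x)+f(\x)$, where $q(\x)$ is the probability of hitting height $+R'$ before $-R'$; by the invariance principle and gambler's ruin, $q$ is affine in $\Im(\x)$ up to error $\delta/(2R')$. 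For the correction $f$, one shows only that it is \emph{nearly constant} over $\VV\GG^{\dag,n}(R)$: by a Wilson's-algorithm coupling (Lemma~\ref{lm_tot_variation}), $|f(\x)-f(\y)|\le \sup|f|\cdot \dTV(\text{exit laws from }\x,\y)$, and one has $\sup|f|\lesssim 1/R'$ (the walk from $-R'$ is unlikely to reach $\bar a$ without first winding below $-R'$) and $\dTV\lesssim R/R'$ (the walk from the bulk is likely to disconnect before exiting), so $2R'|f(\x)-f(\y)|\lesssim R/R'=\delta$. No exponential decay is needed, only the polynomial gain from $R'=R/\delta$.

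For $\frkw$ (Proposition~\ref{pr_main_witdh}) the paper's argument is genuinely different from yours and uses the Smith diagram itself. The key inputs are: (i) if the walk is started from a point sampled according to the length measure $\mu_a$ on a level set of $\frkh$, then conditional on the height trajectory the Smith-embedded walk has \emph{exactly zero} expected horizontal winding (Lemma~\ref{lm_winding_embed_walk}, proved via the exact hitting-distribution Lemma~\ref{lm_hitting_hor}); and (ii) the a-priori-embedded walk has $O(1)$ expected horizontal displacement by the invariance principle. Comparing (i) and (ii) pins the values of $\frkw^{\dag}$ on the level sets $\hat\PPP^{\dag}_{\bar a},\hat\PPP^{\dag}_{\underline a}$ to within $O(\eta_n)$ of a common constant, and then the same coupling/total-variation scheme as for $\frkh$ upgrades this to the desired uniform estimate on compacts. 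Your monodromy/degree-one-harmonic-map picture is the right heuristic for why the answer must be $\Re(\cdot)+\text{const}$, but it does not by itself supply the quantitative control; the zero-expected-winding property of the Smith-embedded walk is the missing ingredient.
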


\begin{figure}[!h]
	\centering
	\includegraphics[scale=0.75]{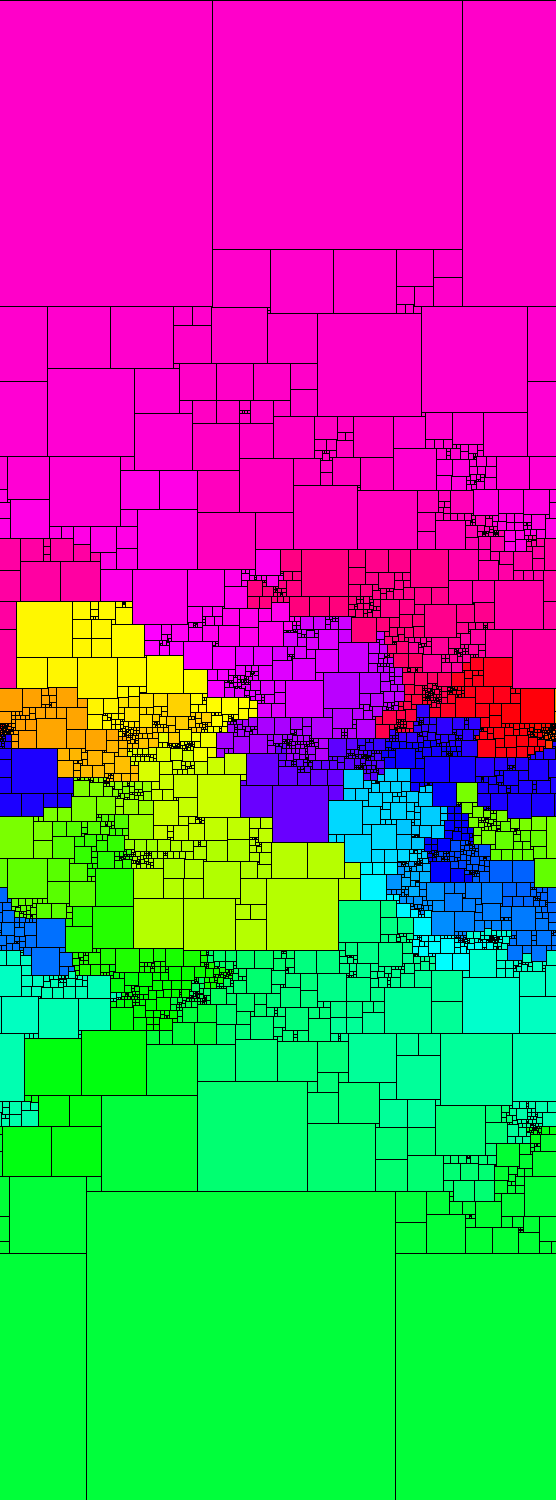} 
	\hspace*{3cm}
	\includegraphics[scale=0.75]{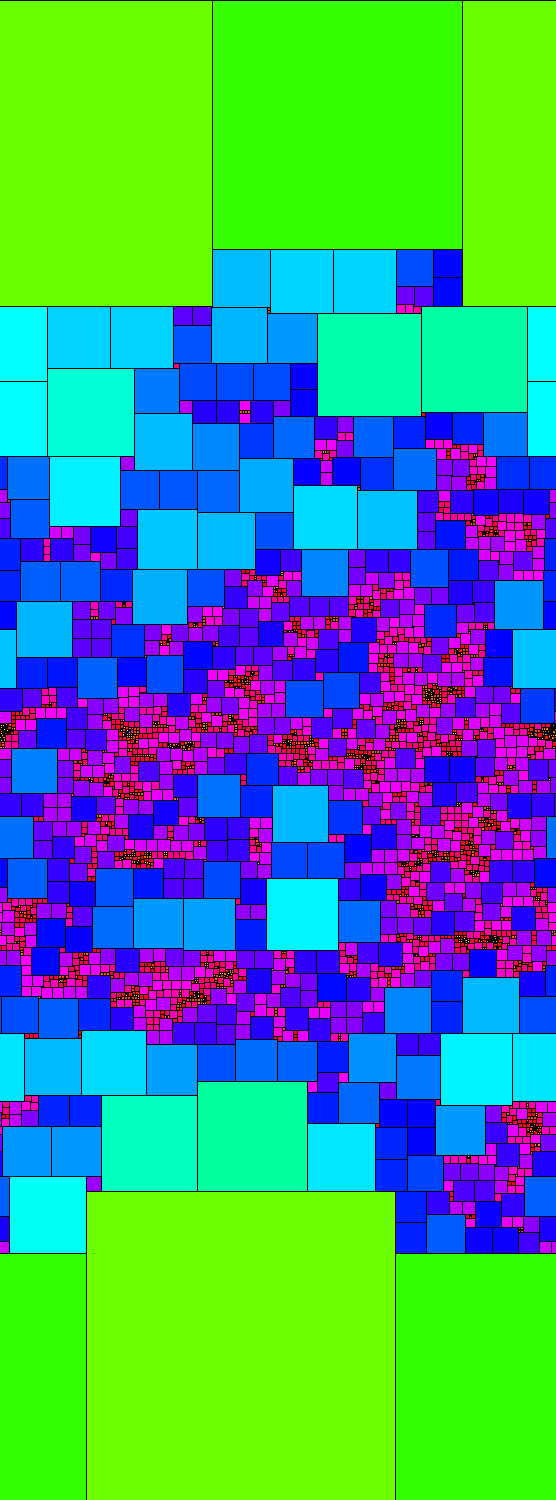}
	\caption[short form]{\small Consider a planar map $\MM$ with unit edge conductances and $n$ edges, and a distinguished spanning tree $\TT$. Then $(\MM,\TT)$ determines a quadrangulation $\GG$ with $n$ quadrilaterals obtained by replacing each edge of $\MM$ by a quadrilateral. Moreover, the path that ``snakes'' between $\TT$ and its dual crosses the edges of $\GG$ in order. Shown here is the Smith diagram of an instance of $\GG$ corresponding to a uniformly random $(\MM,\TT)$ pair with two random points chosen as roots. \textbf{Left:} The squares in the figure are colored according to their order with respect to the space-filling path that snakes between $\TT$ and its dual, i.e., based on their position in the cyclic ordering. \textbf{Right:} The squares in the figure are colored according to their Euclidean size. It is interesting to compare the figure on the right with the square subdivisions associated to the $\gamma$-LQG measure appearing, e.g., in \cite[Figures~1,~2,~3]{DS11}. The interested reader can find the Mathematica code used to generate the above simulations at the following link: \href{https://github.com/federico-bertacco/smith-embedding.git}{https://github.com/federico-bertacco/smith-embedding.git}. Furthermore, at the same link, there is also a PDF file available that provides further explanation on how these simulations were produced.}
	\label{fig_smith_sim}
\end{figure}

\medskip
\noindent
Theorem \ref{th_main_1} tells us that in order to say that the Smith embedding of $(\GG^n, c^n, v_0^n, v_1^n)$ is close to a given a priori embedding (up to translation and scaling), we only need to know a certain invariance principle for random walk under the a priori embedding. This result is in some ways not surprising, since it is natural to expect that if a simple random walk (and its dual) approximate Brownian motion, then discrete harmonic functions (and their conjugate duals) should approximate continuum harmonic functions. However, showing that this is actually true in the limit, and that the convergence is to the right continuum harmonic functions, will require some new coupling tricks, which we hope may prove useful in other settings as well. More precisely, the particular statement we obtain is far from obvious a priori, for two main reasons.
\begin{itemize}
\item Our hypotheses only concern the \emph{macroscopic} behavior of the random walk on $(\GG^n,c^n)$ in the \emph{bulk} of the cylinder. We do not need any hypotheses about how the random walk behaves when it gets close to the marked vertices $v_0^n$ and $v_1^n$. This may seem surprising at first glance since one could worry, e.g., that the structure of $(\GG^n,c^n)$ in small neighborhoods of $v_0^n$ and $v_1^n$ makes it much easier for random walk to hit $v_0^n$ than for it to hit $v_1^n$, and so the height coordinate function $\frkh^n$ is close to zero on all of $\VV\GG^n$. What allows us to get around this is the scaling and translation sequences $\smash{\{c^{\frkh}_n\}_{n \in \N}}$ and $\smash{\{b^{\frkh}_n\}_{n \in \N}}$. We refer to Subsection~\ref{sub_height} for more details. 
\item The width coordinate $\frkw^n$ is discrete harmonic but does not admit a simple direct description in terms of the random walk on $(\hat{\GG}^n,\hat c^n)$. For this reason, a fair amount of work is required to get from the invariance principle for this random walk to a convergence statement for $\frkw^n$. We refer to Subsection~\ref{sub_width} for more details.
\end{itemize}

\noindent
We remark that the Smith embedding can be very far from the identity near the ends of the cylinder: what is interesting, and perhaps surprising, is the generality in which we can show that the ``bad behavior'' gets ``smoothed out'' in the middle of the cylinder. This is apparent in the simulations presented in Figure~\ref{fig_smith_sim}.

\medskip
\noindent
As we will discuss in the next section, one application of Theorem~\ref{th_main_1} is the convergence of the mated-CRT map with the sphere topology to LQG under the Smith embedding. More generally, Theorem~\ref{th_main_1} reduces the problem of proving the convergence to LQG under the Smith embeddings for other types of random planar maps to the problem of finding some a priori embeddings of the map and its dual under which the counting measure on vertices converges to the LQG measure and the random walk on the map converges to Brownian motion modulo time parameterization.
 
\subsection{Application to the mated-CRT map}
\label{sec_mated-CRT}
Mated-CRT maps are a one-parameter family of random planar maps constructed and studied, e.g., in \cite{GHS_mated, GMS_Harmonic, DMS21, GMS_Tutte}. The mated-CRT maps are parameterized by a real parameter $\gamma \in (0, 2)$ and are in the universality class of $\gamma$-LQG. 
In this paper, we will be interested in mated-CRT maps with the sphere topology. For each $n \in \N$ and $\gamma \in (0, 2)$, the $n$-\emph{mated-CRT map with the sphere topology} is the random planar triangulation $\GG^n$ with vertex set given by 
\begin{equation*}
	\VV\GG^n := \frac{1}{n} \Z \cap (0, 1],
\end{equation*} 
and an edge set defined by means of a condition involving a pair of linear Brownian motions. More precisely, consider a two dimensional Brownian motion $(L, R)$ with covariance matrix given by
\begin{equation}
\label{eq_var_cov_mated}
	\Var(L_t) = \Var(R_t) = |t|, \qquad \Cov(L_t, R_t) = -\cos\left(\frac{\pi \gamma^2}{4}\right)|t|,
\end{equation}
and conditioned to stay in the first quadrant for one unit of time and end up at $(0, 0)$, i.e., $(L, R)$ is an excursion.

\medskip
\noindent
Then, two vertices $x_1$, $x_2 \in \VV\GG^n$ are connected by an edge if and only if either
\begin{equation}
\label{eq_construction_mated_CRT}
\begin{alignedat}{1}
\max\left\{\inf_{t \in [x_1 - 1/n, x_1]} L_t, \inf_{t \in [x_2 - 1/n, x_2]} L_t \right\} &\leq \inf_{t \in [x_1, x_2 - 1/n]} L_t, \quad \text{ or }   \\
\max\left\{\inf_{t \in [x_1 - 1/n, x_1]} R_t, \inf_{t \in [x_2 - 1/n, x_2]} R_t \right\} &\leq \inf_{t \in [x_1, x_2 - 1/n]} R_t.
\end{alignedat}
\end{equation}

\noindent
The vertices in $\VV\GG^n$ are connected by two edges if $|x_1 - x_2| \neq 1/n$ and both the conditions in \eqref{eq_construction_mated_CRT} hold. We observe that the condition for $L$ in \eqref{eq_construction_mated_CRT} is equivalent to the existence of a horizontal line segment below the graph of $L$ whose end points are of the form $(t_1,L_{t_1})$ and $(t_2,L_{t_2})$ for $t_1 \in [x_1 - 1/n, x_1]$ and $t_2 \in [x_2 - 1/n,x_2]$, and similarly for $R$. This allows us to give an equivalent, more geometric, version of the definition of $\GG^n$. In particular, this procedure assigns a natural planar map structure to the mated-CRT map $\GG^n$, under which it is a triangulation. We refer to Figure~\ref{fig_mated} for a diagrammatic explanation of this procedure.

\begin{figure}[h]
\centering
\includegraphics[scale=0.92]{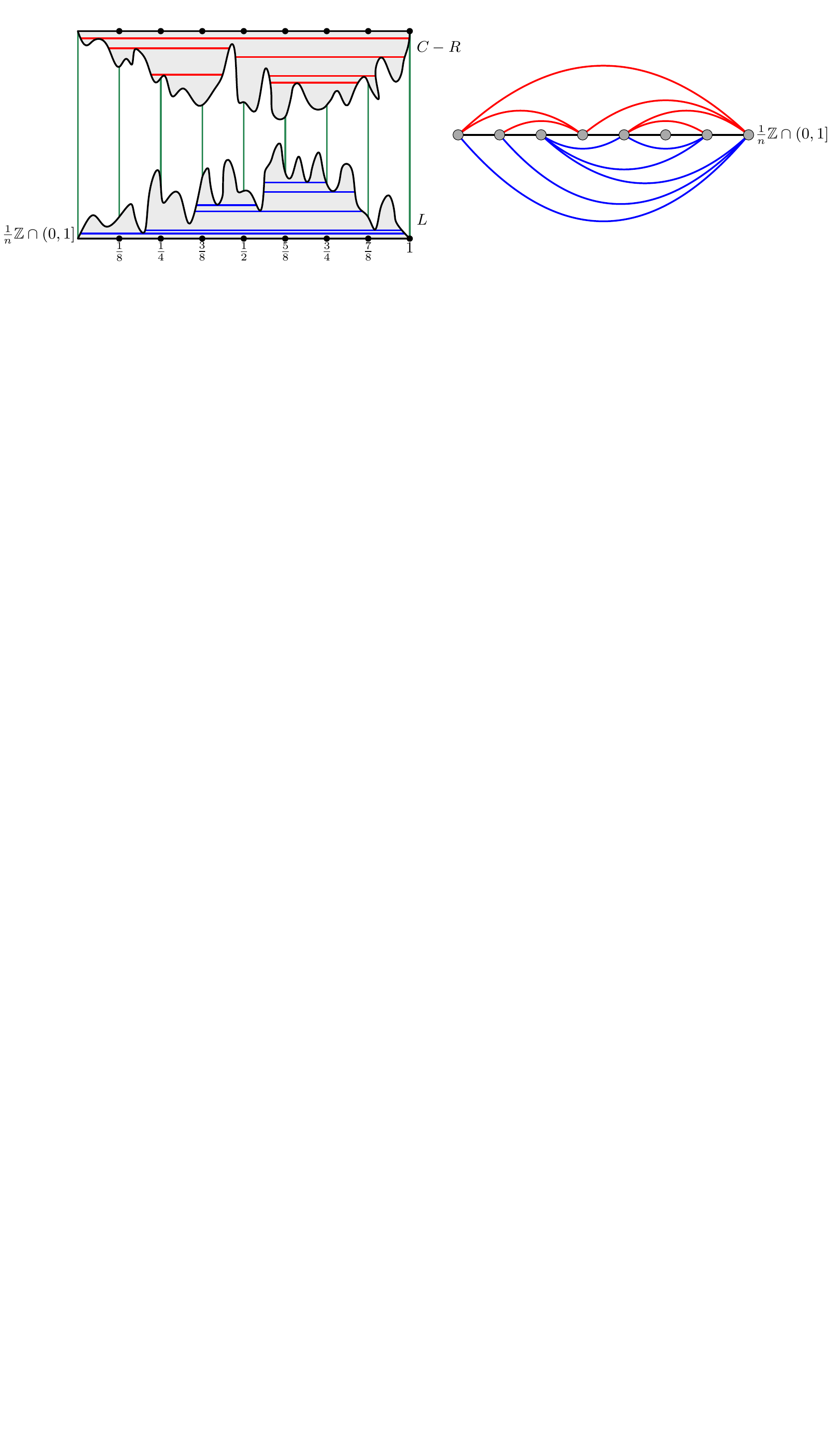}
\caption[short form]{\small \textbf{Left:} A diagram showing the construction of the mated-CRT map with sphere topology and $n = 8$ vertices. To geometrically construct the mated-CRT map $\GG^n$, we draw the graphs of $L$ and $C-R$ with a chosen large constant $C > 0$ to ensure that the graphs do not intersect. The region between the graphs is then divided into vertical strips. Each strip corresponds to the vertex $x \in \VV\GG^n$ which is the horizontal coordinate of its rightmost point. Two vertices $x_1$, $x_2 \in \VV\GG^n$ are connected by an edge if and only if their respective vertical strips are connected by a horizontal line segment that is either below the graph of $L$ or above the graph of $C-R$. For each pair of vertices for which the condition holds for $L$ (resp.\ $C-R$), we have drawn the lowest (resp.\ highest) segment which joins the corresponding vertical strips. We note that consecutive vertices are always connected by an edge. \textbf{Right:} The graph $\GG^n$ can be represented in the plane by connecting two vertices $x_1$, $x_2 \in \VV\GG^n$ with an arc below (resp.\ above) the real line if their vertical strips are connected by a horizontal segment below (resp.\ above) the graph of $L$ (resp.\ $C-R$). Additionally, each pair of consecutive vertices in $\VV\GG^n$ is connected by an edge. This representation gives $\GG^n$ a planar map structure under which it is a triangulation. A similar illustration was shown in \cite{GMS_Tutte}.}
\label{fig_mated}
\end{figure}

\medskip
\noindent
In \cite{GMS_Tutte}, Gwynne, Miller, and Sheffield proved that the Tutte embeddings of mated-CRT maps with the disk topology converge to $\gamma$-LQG. Thanks to our main theorem, we can prove an analogous result for the Smith embeddings of mated-CRT maps with the sphere topology. More precisely, for each $n \in \N$, pick two marked vertices $v_0^n$, $v_1^n \in \VV\GG^n$\footnote{We refer to Section~\ref{sec_mated} for an explanation of how these two marked vertices are selected.}. Then, we can conformally map the sphere into the infinite cylinder $\CC_{2\pi}$ so that the marked points are mapped to $\pm \infty$, and $(\GG^n, v_0^n, v_1^n)$\footnote{Here, each edge in $\EE\GG^n$ has unit conductance and so we do not specify the sequence of weights $c$ as in the general case.} is properly embedded in $\CC_{2\pi}$.

\begin{theorem}[Convergence of mated-CRT map]
\label{th_main_2}
Fix $\gamma \in (0, 2)$ and let $\{(\GG^n, v_0^n, v_1^n)\}_{n \in \N}$ be the sequence of doubly marked $n$-mated CRT map with the sphere topology embedded in $\CC_{2\pi}$ as specified above. There exists a sequence of random affine transformations $\{T_n\}_{n \in \N}$ from $\CC_{\eta_n}$ to $\CC_{2\pi}$ of the form specified in the statement of Theorem~\ref{th_main_1} such that, if we let $\mu_n$ be the push-forward with respect to the mapping $z \mapsto T_n z$ of the counting measure on the set $\dotSS_n(\VV\GG^n)$ scaled by $1/n$, then we have the following convergences in probability as $n \to \infty$.
\begin{enumerate}[(a)]
	\item \label{mated_a} On each compact subset of $\CC_{2\pi}$, the measure $\mu_n$ weakly converges to the $\gamma$-LQG measure associated to a doubly marked unit area quantum sphere parameterized by $\CC_{2 \pi}$ in such a way that its marked points are at $\pm\infty$, as defined in \cite[Definition~4.21]{DMS21}.
	\item \label{mated_b} On each compact subset of $\CC_{2\pi}$, the image under the mapping $z \mapsto T_n z$ of the space-filling path on the Smith embedded mated-CRT map on $\CC_{\eta_n}$ obtained from the left-right ordering of the vertices converges uniformly with respect to the two-point compactification topology on the cylinder to space-filling SLE$_\kappa$ on $\CC_{2\pi}$, with $\kappa = 16/\gamma^2$, parameterized by $\gamma$-LQG mass. 
	\item \label{mated_c} For $z \in \CC_{2\pi}$, let $x_z^n \in \VV\GG^n$ be the vertex nearest to $z$. The conditional law given $\GG^n$ of the image under the mapping $z \mapsto T_n z$ of the simple random walk on the Smith-embedded mated-CRT map started from $\dotSS^n(x_z^n)$ and stopped when it hits one of the horizontal segments associated to the marked vertices weakly converges to the law of Brownian motion on $\CC_{2 \pi}$ started from $z$, modulo time parameterization and uniformly over all $z$ in a compact subset of $\CC_{2\pi}$.
\end{enumerate}
\end{theorem}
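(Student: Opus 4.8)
\emph{Strategy.} The plan is to realize Theorem~\ref{th_main_2} as an application of Theorem~\ref{th_main_1}. We equip $\GG^n$ and $\hat{\GG}^n$ with an a priori embedding coming from the mating-of-trees correspondence of \cite{DMS21}, verify assumptions \ref{it_embedding}--\ref{it_invariance_dual} a.s., and then use the affine maps $T_n$ furnished by Theorem~\ref{th_main_1} to transport the (already understood) scaling limits under the a priori embedding over to the Smith embedding. Concretely, by \cite{DMS21} we may couple the Brownian excursion $(L,R)$ of \eqref{eq_var_cov_mated} with a doubly marked unit-area $\gamma$-quantum sphere decorated by an independent space-filling $\SLE_\kappa$, $\kappa = 16/\gamma^2$, parameterized by $\gamma$-LQG mass, so that the cells $\eta([\tfrac{i-1}{n},\tfrac{i}{n}])$, $i = 1,\dots,n$, have adjacency graph isomorphic to $\GG^n$, with the cells containing the chosen marked vertices playing the roles of $v_0^n$ and $v_1^n$. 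Conformally mapping the quantum sphere onto $\CC_{2\pi}$ with those two cells sent to $\pm\infty$, sending each vertex $i/n$ to $\eta(i/n)$, and placing each dual vertex inside the corresponding triangle of $\GG^n$ with each dual edge routed across the shared primal edge, produces the a priori embedding. Under it, the statements analogous to parts~\ref{mated_a}, \ref{mated_b} and~\ref{mated_c} (but with the a priori embedding in place of $T_n\circ\dotSS_n$) are known: the analogue of part~\ref{mated_a} because $\eta([\tfrac{i-1}{n},\tfrac{i}{n}])$ has $\gamma$-LQG mass $1/n$ and Euclidean diameter tending to $0$ uniformly in $i$; the analogue of part~\ref{mated_b} essentially by construction, since the left-right order of the cells is the order in which $\eta$ visits them; and the analogue of part~\ref{mated_c}, which is the quenched invariance principle for simple random walk on mated-CRT maps of \cite{GMS_Harmonic, GMS_Tutte}.

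\emph{Verifying \ref{it_embedding} and \ref{it_invariance}.} Assumption \ref{it_embedding} asks only that the above embeddings of $\GG^n$ and $\hat{\GG}^n$ be proper in the senses of Definitions~\ref{def_proper_embedd} and~\ref{def_proper_embedd_dual}, which is immediate from the a.s.\ continuity and space-fillingness of $\eta$ together with the fact that the dual vertices and edges were placed inside faces and across primal edges by construction. Assumption \ref{it_invariance} is precisely the uniform quenched random-walk invariance principle for $(\GG^n, c^n)$ under the a priori embedding, i.e.\ exactly the analogue of part~\ref{mated_c} discussed above, and hence holds a.s.\ by \cite{GMS_Harmonic, GMS_Tutte}.

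\emph{Verifying \ref{it_invariance_dual}: the main obstacle.} The genuinely new ingredient is the invariance principle for random walk on the dual mated-CRT map $\hat{\GG}^n$ under the induced embedding. One cannot simply invoke the primal statement, because $\hat{\GG}^n$ is $3$-regular whereas $\GG^n$ has unbounded vertex degrees. The idea is instead to re-derive the conclusion of the primal invariance principle for the dual map, using that the dual embedding is \emph{comparable} to the primal one: each dual vertex lies in a triangle of $\GG^n$ contained in the union of three adjacent cells, so dual vertices are interspersed among the primal cells at the same scale, the counting measure on $\VV\hat{\GG}^n$ rescaled by $\tfrac{1}{2n}$ again converges to the $\gamma$-LQG measure (as $\GG^n$ has $\approx 2n$ faces), and every dual edge joins dual vertices sitting in cells at bounded $\GG^n$-graph-distance. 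Inserting this comparability into the effective-resistance, cell-size, and ergodicity estimates for mated-CRT maps from \cite{GMS_Harmonic, GHS_mated, GMS_Tutte} — that is, verifying for the embedded dual graph the same package of hypotheses underlying the primal invariance principle — should yield the convergence of the dual walk to Brownian motion modulo time parameterization. I expect this transfer from the primal to the dual map to be the technically most demanding step of the argument.

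\emph{Conclusion.} On the a.s.\ event that \ref{it_embedding}--\ref{it_invariance_dual} hold, Theorem~\ref{th_main_1} provides random affine maps $T_n : \CC_{\eta_n} \to \CC_{2\pi}$ with $\sup_{x \in \VV\GG^n(K)} \d_{2\pi}(T_n \dotSS_n(x), x) \to 0$ for every compact $K$; in particular the rescaling forces the two extreme horizontal segments (those of $v_0^n$ and $v_1^n$, at heights $0$ and $1$ in $\CC_{\eta_n}$) toward $-\infty$ and $+\infty$. Part~\ref{mated_a} then follows because, on any fixed compact, $\mu_n = \tfrac{1}{n}\sum_{x\in\VV\GG^n}\delta_{T_n\dotSS_n(x)}$ differs from the a priori embedded counting measure $\tfrac{1}{n}\sum_{x}\delta_{x}$ (which converges to the quantum sphere's $\gamma$-LQG measure) by a quantity controlled by the modulus of continuity of a test function times the uniform displacement bound, so the two have the same weak limit on compacts. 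Part~\ref{mated_c} follows because simple random walk on the Smith-embedded map is just simple random walk on the graph $\GG^n$ drawn via $T_n\circ\dotSS_n$, which on every compact is uniformly close to the a priori embedding, so the invariance principle \ref{it_invariance} transfers; and the stopping rule is consistent, since hitting the marked horizontal segments is the same graph event as hitting $v_0^n$ or $v_1^n$, and those segments have escaped to $\pm\infty$. Part~\ref{mated_b} transfers the convergence of the left-right space-filling path to space-filling $\SLE_\kappa$ parameterized by $\gamma$-LQG mass in the same fashion; the only additional care is at the two ends of the cylinder, where one uses that the extreme segments are pushed to $\pm\infty$ by $T_n$ to upgrade the bulk (local) convergence to convergence with respect to the two-point compactification topology.
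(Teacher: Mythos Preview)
Your proposal follows essentially the same route as the paper: realize the mated-CRT map via the SLE/LQG a priori embedding on the cylinder, verify \ref{it_embedding}--\ref{it_invariance_dual}, apply Theorem~\ref{th_main_1} to obtain the affine maps $T_n$, and then transport the known convergences under the a priori embedding. Your conclusion paragraph matches the paper's short deduction of parts~\ref{mated_a}--\ref{mated_c} from Proposition~\ref{pr_close_mated} together with Proposition~\ref{pr_invariance_mated}.

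The one place where your sketch is vaguer than the paper is the dual invariance principle \ref{it_invariance_dual}. You propose to ``insert comparability into the effective-resistance, cell-size, and ergodicity estimates'' from the primal works. The paper instead invokes the black-box invariance principle for embedded lattices satisfying translation invariance modulo scaling, ergodicity, and a finite-expectation condition (\cite[Theorem~1.11]{GMS22}), and the work reduces to checking that finite-expectation hypothesis for the \emph{dual} cell configuration. Two points you do not make explicit: (i) the verification is first carried out on the $0$-quantum cone rather than the sphere, since that is the setting in which translation invariance modulo scaling holds, and then transferred to the sphere by absolute continuity as in \cite[Section~3.3]{GMS_Tutte}; (ii) the concrete estimate needed is a bound on $\E\bigl[\sum_{x\in\partial H_0}\Out(x)^2\deg(x)/\Area(H_0)\bigr]$, which the paper reduces (using that dual vertices have degree~$3$) to moments of $\diam(H)^2\deg(H)$ for primal cells, already controlled by \cite[Theorem~4.1]{GMS_Tutte} and an ergodic-averaging argument from \cite{GMS22}. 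Your comparability heuristic is the right intuition, but the paper's route via \cite{GMS22} packages it cleanly and avoids redoing the resistance estimates from scratch.
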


\noindent
To conclude, let us point out that item~\ref{mated_a} of Theorem~\ref{th_main_2} solves \cite[Question~1]{Berry_square} for the case of mated-CRT maps.

\subsection{Outline}
\label{sec_outline}
Most of the paper is dedicated to proving Theorem~\ref{th_main_1}, and it is organized as follows. In the first part of Section~\ref{sec_background_setup}, we provide some background material on weighted planar graphs and the theory of electrical networks. We then move on to the precise construction of the tiling map and the definition of the Smith embedding in Subsection~\ref{sub_def_tiling_Smith}. As mentioned earlier, this is achieved by introducing two harmonic maps: one on the planar map itself and one on the associated dual planar map.

\medskip
\noindent
In Section~\ref{sec_prop_Smith}, we establish and prove several properties of the Smith embedding. The most significant result in this section is Lemma~\ref{lm_winding_embed_walk}, which heuristically asserts that the conditional expected horizontal winding of a Smith-embedded random walk, given the vertical coordinate, is zero. This property is crucial for proving our main theorem. To prove this intermediate result, we rely on Lemma~\ref{lm_hitting_hor}, which essentially states that the conditional probability, given the vertical coordinate, that the Smith-embedded random walk hits a certain horizontal line segment is proportional to the segment's width. We note that a similar result, though without conditioning on the vertical component, was previously obtained by Georgakopoulos \cite[Lemma~6.2]{Geo16} in the context of infinite weighted planar graphs.

\medskip
\noindent
Section~\ref{sec_assumption_main} forms the core of this article and contains the proof of Theorem~\ref{th_main_1}, which is divided into two main parts. In Subsection~\ref{sub_height}, we examine the height coordinate function, and in Subsection~\ref{sub_width}, we focus on the width coordinate function.
Specifically, the main result of Subsection~\ref{sub_height} is Proposition~\ref{pr_main_height}, which roughly states that the height coordinate of the a priori embedding is asymptotically close to an affine transformation of the height coordinate of the Smith embedding. Similarly, the main result of Subsection~\ref{sub_width} is Proposition~\ref{pr_main_witdh}, which states the analogous fact for the width coordinate. We refer to Subsections~\ref{sub_height}~and~\ref{sub_width} for the proof outlines of the height and width coordinate results, respectively. Finally, in Subsection~\ref{sub_proof_main}, we show how to combine the results for the height coordinate and width coordinate to prove Theorem~\ref{th_main_1}. 

\medskip
\noindent
In Section~\ref{sec_mated}, we provide a brief introduction to the relationship between mated-CRT maps and LQG. We then prove in Subsection~\ref{sub_mated_assumptions} that this family of random planar maps satisfies the assumptions of our main result. Specifically,  in Subsection~\ref{sub_conv_LQG}, we apply our result to show that the scaling limit of mated-CRT maps is $\gamma$-LQG, thereby proving Theorem~\ref{th_main_2}.

\begin{acknowledgements}
F.B.\ is grateful to the Royal Society for financial support through Prof.\ M.~Hairer's research professorship grant RP\textbackslash R1\textbackslash 191065. 
E.G.\ was partially supported by a Clay research fellowship.
Part of this work was carried out during the \textit{Probability and Mathematical Physics} ICM satellite conference at Helsinki in Summer 2022. We thank the organizers of this conference for their hospitality.
\end{acknowledgements}

\section{Background and setup}
\label{sec_background_setup}

\subsection{Basic definitions}

\subsubsection{Basic notation}
We write $\N$ for the set of positive integers and $\N_0$ for the set of non-negative integers. Given $n \in \N$ and $j \in \N_0$, we let $[n] := \{1, \ldots, n\}$ and $[n]_j := \{j, \ldots, n\}$, Furthermore, for $n \in \N$ and $j \in \N_0$, we write $[x_n]$ to denote the collection of objects $\{x_1, \ldots, x_n\}$ and $[x_n]_j$ to denote the collection of objects $\{x_j, \ldots, x_n\}$. If $a$ and $b$ are two real numbers, we write $a \lesssim b$ if there is a constant $C > 0$, independent of the values of $a$ or $b$ and certain other parameters of interest, such that $a \leq C b$, and we highlight the dependence of the implicit constants when necessary. If $a$ and $b$ are two real numbers depending on a variable $x$, we write $a = o_x(b)$ if $a/b$ tends to $0$ as $x \to \infty$. We use the convention to identify $\R^2$ with $\C$. In particular, given a point $x \in \R^2$, we write $\Re(x)$ (resp. $\Im(x)$) for its horizontal (resp. vertical) coordinate.

\subsubsection{Metric on curves modulo time parameterization}
\label{subsub_metric_CMP}
For $T_1$, $T_2 > 0$, let $P_1 : [0,T_1] \to \R^2$ and $P_2 : [0,T_2] \to \R^2$ be two continuous curves defined on possibly different time intervals. We define 
 \begin{equation}
 \label{eq_metric_CMP}
 \dCMP\l(P_1,P_2\r) := \inf_{\phi } \sup_{t\in [0,T_1]} \l|P_1(t) - P_2(\phi(t))\r|,
 \end{equation}
where the infimum is taken over all increasing homeomorphisms $\phi : [0,T_1]  \to [0,T_2]$. It is known that $\dCMP$ induces a complete metric on the set of curves viewed modulo time parameterization (see\ \cite[Lemma~2.1]{AB99}). 

\medskip
\noindent
For curves defined for infinite time, it is convenient to have a local variant of the metric $\dCMP$. Assume that $P_1 : [0,\infty) \to \R^2$ and $P_2 : [0,\infty) \to \R^2$ are two such curves. Then, for $r > 0$, let $T_{1,r}$ (resp.\ $T_{2,r}$) be the first exit time of $P_1$ (resp.\ $P_2$) from the ball $B(0, r)$ centred at $0$ with radius $r$, or $0$ if the curve starts outside $B(0, r)$. 
We define 
\begin{equation}
\label{eq_metric_CMP_loc}
\dCMPloc\l(P_1,P_2\r) := \int_1^\infty e^{-r} \l(1 \wedge \dCMP\left(P_1|_{[0,T_{1,r}]} , P_2|_{[0,T_{2,r}]} \r) \right)\d r.
\end{equation}
Moreover, we observe that given a sequence $\{P_n\}_{n \in \N}$ of continuous curves defined for infinite time, then $\lim_{n \to \infty} \dCMPloc(P_n , P) = 0$, if and only if, for Lebesgue almost every $r > 0$, $P_n$ stopped at its first exit time from $B(0, r)$ converges to $P$ stopped at its first exit time from $B(0, r)$ with respect to the metric~\eqref{eq_metric_CMP}.

\begin{remark}
In the remaining part of the article, we also need to consider curves taking values in the infinite cylinder $\CC_{2 \pi}$. We equip the spaces specified above, but with $\CC_{2\pi}$ in place of $\R^2$, with the same metrics. It will be clear from the context whether the metric under consideration refers to curves in $\R^2$ or in $\CC_{2\pi}$.
\end{remark}

\subsubsection{Graph notation}
Given a finite planar graph $\GG$, besides the notation related to $\GG$ specified in the introduction, we need to introduce some further nomenclature. In particular, in what follows, we use $e \in \EE\GG$ to denote both oriented and unoriented edges. An oriented edge $e \in \EE\GG$ is oriented from its tail $e^{-}$ to its head $e^{+}$. Furthermore, given a vertex $x \in \VV\GG$, we write $\VV\GG(x)$ for the set of vertices $y$ adjacent to $x$, i.e., such that there exists an edge connecting $x$ to $y$. For a vertex $x \in \VV\GG$, we denote by $\EE\GG(x)$ the set of edges in $\EE\GG$ incident to $x$. For a fixed orientation of the edges in $\EE\GG(x)$, we let $\EE\GG^{\downarrow}(x)$ (resp.\ $\EE\GG^{\uparrow}(x)$) be the set of edges in $\EE\GG(x)$ with heads (resp.\ tails) equal to $x$. Similar notation will also be used for the dual planar graph $\hat{\GG}$.

\paragraph{Metric graph.}
We will need to consider the metric space associated to a planar graph $\GG$ which can be canonically built as follows. For each edge $e \in \EE\GG$, we choose an arbitrary orientation of $e$ and we let $I_e$ be an isometric copy of the real unit interval $[0, 1]$. We define the metric space $\G$ associated with $\GG$ to be the quotient of $\cup_{e \in \EE\GG} I_e$ where we identify the endpoints of $I_e$ with the vertices $e^{-}$ and $e^{+}$, and we equip it with the natural path metric $\d^{\G}$. More precisely, for two points $x$, $y$ lying on an edge of $\GG$, we define $\d^{\G}(x, y)$ to be the Euclidean distance between $x$ and $y$. For points $x$, $y$ lying on different edges, we use the metric given by the length of the shortest path between the two points, where distances are measured along the edges using the Euclidean distance. We can also define the dual metric graph $\smash{\hat{\G}}$, and the associated metric $\smash{\d^{\hat{\G}}}$, in a similar way. 

\subsection{Universal cover}
\label{sub_universal_cover}
The concept of universal cover of a graph will play an important role in our analysis. If $\GG$ is a graph embedded in the infinite cylinder $\CC_{2\pi}$, then there is a canonical way to define its lift $\GG^{\dag}$ to the universal covering space of $\CC_{2\pi}$. More precisely, consider the universal cover $(\R^2, \sigma_{2\pi})$ of $\CC_{2\pi}$, where the covering map $\sigma_{2\pi} : \R^2 \to \CC_{2\pi}$ is defined by
\begin{equation}
\label{eq_covering_map}
\sigma_{2 \pi}(t, x) := \l(e^{i t}, x\r) , \quad \forall (t, x) \in \R^2.
\end{equation}
Then, the lifted graph $\GG^{\dag}$ can be constructed by taking every lift of every vertex and every edge of $\GG$ in $\CC_{2\pi}$ to the covering space $\R^2$. We denote by $\VV\GG^{\dag}$ and $\EE\GG^{\dag}$ the set of vertices and edges of the lifted graph $\GG^{\dag}$, respectively. Moreover, we can also construct the lift of the dual graph $\hat{\GG}$ to the universal covering space $\R^2$ in a similar way, and we denote it by $\hat{\GG}^{\dag}$. We adopt the following notational convention: if $x \in \VV\GG$ is a vertex, then we denote by $\x \in \VV\GG^{\dag}$ a lift of $x$; if $e \in \EE\GG$ is an edge, then we denote by $\e \in \EE\GG^{\dag}$ a lift of $e$; if $\hat{x} \in \VV\hat{\GG}$ is a dual vertex, then we denote by $\hx \in \VV\hat{\GG}^{\dag}$ a lift of $\hat{x}$.

\begin{figure}[h]
\centering
\includegraphics[scale=0.85]{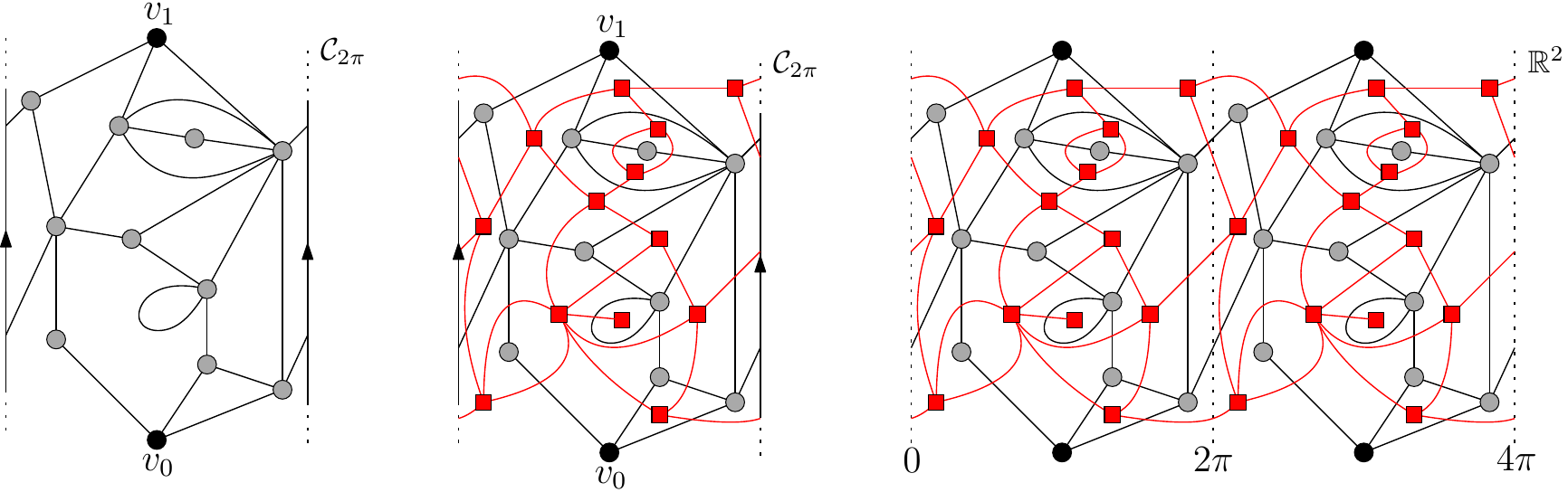}
\caption[short form]{\small \textbf{Left:} A doubly marked finite weighted planar graph $(\GG, c, v_0, v_1)$, drawn in gray, properly embedded in the infinite cylinder $\CC_{2\pi}$ with the two marked vertices $v_0$ and $v_1$ drawn in black. \textbf{Middle:} The same doubly marked finite weighted planar graph as in the left figure together with its dual planar weighted graph $\smash{(\hat{\GG}, \hat{c})}$, drawn in red, properly embedded in $\CC_{2\pi}$. \textbf{Right:} A portion of the lifted weighted graph $\smash{(\GG^{\dag}, c^{\dag})}$, drawn in grey, and the associated dual lifted graph $\smash{(\hat{\GG}^{\dag}, \hat{c}^{\dag})}$, drawn in red. In both the left and middle figure, the two vertical lines with an arrow are identified with each other.}
\end{figure}

\noindent
Moreover, if $(\GG, c)$ is a finite weighted planar graph embedded in $\CC_{2\pi}$, we can naturally assign to each lifted edge $\e$ the conductance $c^{\dag}_{\e} := c_{e}$, and we denote by $(\GG^{\dag}, c^{\dag})$ the lifted weighted graph. By definition, the lifted graph $\GG^{\dag}$ is periodic in the sense that if $\x_1$, $\x_2 \in \VV\GG^{\dag}$ are two points in $\R^2$ such that $\Im(\x_1) = \Im(\x_2)$ and $|\Re(\x_1) - \Re(\x_2)| \in \N_0$, then $\sigma_{2\pi}(\x_1) = \sigma_{2\pi}(\x_2)$. Finally, for a set $K \subset \R^2$, we write
\begin{equation*}
\VV\GG^{\dag}(K) := \l\{\x \in \VV\GG^{\dag} \, : \, \x \in K\r\}, \qquad \VV\hat{\GG}^{\dag}(K) := \l\{\hx \in \VV\GG^{\dag} \, : \, \hx \in K\r\}.
\end{equation*}

\medskip
\noindent
Before proceeding, we recall the following simple result. An oriented path in $\GG$ is a collection of oriented edges $e_1 \cdots e_n$ in $\EE\GG$ such that $\smash{e_{j}^{+} = e_{j+1}^{-}}$, for all $\smash{j \in [n-1]}$. Furthermore, if also $\smash{e_n^{+} = e_1^{-}}$, then $e_1 \cdots e_n$ is called an oriented loop.
\begin{lemma} 
\label{lm_path_lifting}
Let $e_1 \cdots e_n$ be an oriented path in $\GG$. Let $\e_1$ be a lift of $e_1$ to the lifted graph $\GG^{\dag}$, then there exists a unique path $\e_1 \cdots \e_n$ in $\GG^{\dag}$ such that $\e_{j}$ is a lift of $e_j$, for all $j \in [n]_2$.
\end{lemma}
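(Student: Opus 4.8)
\textbf{Proof proposal for Lemma~\ref{lm_path_lifting}.}

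The plan is to prove the statement by induction on the length $n$ of the oriented path, using the basic path-lifting property of covering maps together with the explicit combinatorial structure of $\GG^{\dag}$. First I would observe that each edge $e_j \in \EE\GG$ has exactly one lift $\e_j \in \EE\GG^{\dag}$ whose tail $\e_j^-$ is any prescribed lift of $e_j^- \in \VV\GG$; this follows because $\sigma_{2\pi}$ restricted to a small neighborhood of $\e_j$ is a homeomorphism onto a neighborhood of $e_j$, so a lift of the curve $e_j$ (with prescribed starting point) exists and is unique. Equivalently, this is just the unique path lifting property of the covering map $\sigma_{2\pi} : \R^2 \to \CC_{2\pi}$ applied to the edge $e_j$, viewed as a continuous curve in $\CC_{2\pi}$, after choosing the initial point.

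Next I would set up the induction. For the base case $n = 1$ there is nothing to prove, since $\e_1$ is given. For the inductive step, suppose we have constructed the unique lifted subpath $\e_1 \cdots \e_{k}$ with $\e_j$ a lift of $e_j$ for all $j \in [k]_2$ and with consecutive heads and tails matching, i.e., $\e_j^+ = \e_{j+1}^-$. Since $e_{k}^+ = e_{k+1}^-$ in $\GG$, the vertex $\e_{k}^+ \in \VV\GG^{\dag}$ is a lift of $e_{k+1}^-$. By the observation above applied to the edge $e_{k+1}$ with prescribed tail lift $\e_{k}^+$, there is a unique lift $\e_{k+1} \in \EE\GG^{\dag}$ of $e_{k+1}$ with $\e_{k+1}^- = \e_{k}^+$. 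Appending $\e_{k+1}$ extends the lifted path by one edge, and this extension is forced, which establishes both existence and uniqueness at step $k+1$.

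Finally I would note that the concatenation $\e_1 \cdots \e_n$ produced this way is indeed an oriented path in $\GG^{\dag}$ (consecutive heads and tails agree by construction) and that each $\e_j$ projects under $\sigma_{2\pi}$ to $e_j$, as required. I do not expect any serious obstacle here: the only point requiring a little care is that the lift of an edge with prescribed endpoint is genuinely unique, which is immediate from the fact that $\sigma_{2\pi}$ is a covering map and an edge is a (simple) continuous curve, so this is really just the standard unique path lifting lemma from covering space theory specialized to the very concrete map \eqref{eq_covering_map}. One could alternatively avoid covering-space language entirely and argue directly from the periodicity of $\GG^{\dag}$ under the deck transformation $(t,x) \mapsto (t + 2\pi, x)$, but the inductive argument above is the cleanest route.
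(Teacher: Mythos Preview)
Your proof is correct and is exactly the standard argument one would give. The paper itself does not prove this lemma: it introduces it with ``we recall the following simple result'' and states it without proof, treating it as well known. Your inductive argument via unique path lifting for the covering map $\sigma_{2\pi}$ is precisely the natural justification the authors had in mind.
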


\noindent
The main advantage of working in the universal cover of the cylinder is that we can keep track of the winding of paths.  
\begin{definition}
Let $0 \leq t_1 < t_2$, consider a path $P: [t_1, t_2] \to \CC_{2 \pi}$, and let $\PP:[t_1, t_2]\to\R^2$ be a lift of $P$ to the universal cover. We define the winding of $P$ by letting
\begin{equation*}
\wind_{2 \pi}(P) := \frac{\Re(\PP(t_2)) -  \Re(\PP(t_1))}{2 \pi} .
\end{equation*}
We say that $P$ winds around the cylinder if $|\wind_{2 \pi}(P)| \geq 1$. We say that $P$ does a noncontractible loop around the cylinder if there exist times $t_1 \leq s_1 < s_2 \leq t_2$ such that $P|_{[s_1, s_2]}$ winds around the cylinder and $P(s_1) = P (s_2)$.
\end{definition}

\subsection{Random walks and electrical networks}
\label{sub_rand_elect}
In this subsection, we briefly recall the main concepts in the theory of electrical networks and we refer to \cite{LP16, Nac20} for a complete introduction. Let $(\GG, c, v_0, v_1)$ be a doubly marked finite weighted planar graph properly embedded in the infinite cylinder $\CC_{2\pi}$ in the sense of Definition~\ref{def_proper_embedd} . The conductance of a vertex $x \in \VV\GG$ is denoted by $\pi(x)$ and it is defined to be the sum of the conductances of all the edges incident to $x$, i.e.,
\begin{equation*}
\pi(x) := \sum_{e \in \EE\GG(x)} c_{e} , \quad \forall x \in \VV\GG .
\end{equation*}

\paragraph{Random walk.}
The random walk on $(\GG, c)$ is the discrete time Markov chain $X = \{X_n\}_{n \in \N_0}$ with state space $\VV\GG$ such that, for all $n \in \N_0$,
\begin{equation*}
\P\l(X_{n+1} = y \mid X_n = x\r) = \begin{cases}
c_{xy}/\pi(x) , & y \in \VV\GG(x),  \\
0 , & \text{otherwise}.
\end{cases}
\end{equation*}
Given a vertex $x \in \VV\GG$, we write $\P_x$ and $\E_x$ for the law and expectation of $X$ started from $x$. Moreover, we may write $X^x$ in order to emphasize that the random walk $X$ is started from the vertex $x \in \VV\GG$. With a slight abuse of notation, we will also denote with $X = \{X_t\}_{t\geq0}$ the continuous time version of the random walk, where the continuous path is generated by piecewise linear interpolation at constant speed. If the conductance on every edge of the graph is equal to one, we call the random walk in this case simple random walk.

\medskip
\noindent
We emphasize that, given a random walk $X$ on $(\GG, c)$, we can canonically lift it to the lifted weighted planar graph $(\GG^{\dag}, c^{\dag})$, and we denote the resulting walk by $\X$. If $X^{x}$ is started from a point $x \in \VV\GG$, then we need to specify the lift $\x \in \sigma_{2\pi}^{-1}(x)$ of $x$ from which the lifted walk $\X^{\x}$ is started from. Similar notation will be also adopted for the random walk on the dual graph.

\paragraph{Estimate on the total variation distance.}
We now state and prove an elementary lemma for general weighted planar graphs which allows to compare the total variation distance of the exit positions from a set for two random walks started from two distinct points.
\begin{lemma}
\label{lm_tot_variation}
Let $(\GG, c)$ be a finite weighted planar graph and let $W \subset \VV\GG$. For $x \in \VV\GG$, let $X^x$ be the random walk on $(\GG, c)$ started from $x$ and let $\tau_x$ be the first time that $X^x$ hits $W$. Then, for $x$, $y \in \VV\GG \setminus W$, it holds that
\begin{equation*} 	
\dTV\l(X^x_{\tau_{x}}, X^y_{\tau_{y}}\r) \leq \P\l(X^x|_{[0, \tau_x]} \text{ does not disconnect } y \text{ from } W\r),
\end{equation*}
where $\dTV$ denotes the total variation distance.
\end{lemma}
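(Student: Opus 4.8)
The plan is to construct an explicit coupling of the two walks $X^x$ and $X^y$ so that, on the event that $X^x|_{[0,\tau_x]}$ disconnects $y$ from $W$, the two stopped walks exit $W$ at the same vertex; the total variation bound then follows from the coupling inequality. First I would run the walk $X^x$ from $x$ and examine its trace $X^x|_{[0,\tau_x]}$ together with the portion of the (primal) metric graph it fills in. Since the graph is planar and embedded in the cylinder, the stopped trace is a curve which, together with $W$, partitions the remaining vertices; if this trace disconnects $y$ from $W$ in the planar sense, then $X^y$, started from $y$, must cross the trace of $X^x$ before it can reach $W$. Let $\rho$ be the first time $X^y$ hits a vertex visited by $X^x|_{[0,\tau_x]}$, say at the vertex $z = X^x_{\sigma}$ for some $\sigma \le \tau_x$. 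I would then declare that after time $\rho$, the walk $X^y$ follows exactly the same increments as $X^x$ did after time $\sigma$ — i.e., $X^y_{\rho + k} := X^x_{\sigma + k}$ for all $k \ge 0$. By the strong Markov property, this is a legitimate coupling: conditionally on reaching $z$, the future of $X^x$ from $z$ and the future of $X^y$ from $z$ have the same law, so we may simply use the same realization.

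Next I would verify that under this coupling, on the event $E := \{X^x|_{[0,\tau_x]} \text{ disconnects } y \text{ from } W\}$, the two walks exit $W$ at the same point. On $E$, the walk $X^y$ cannot reach $W$ without first hitting the trace of $X^x|_{[0,\tau_x]}$ (this is the planarity/topology input: a path from $y$ to $W$ must cross a curve separating $y$ from $W$, and at the level of the graph such a crossing must occur at a vertex of the trace). Hence $\rho < \tau_y$ on $E$, and after coalescing at $z = X^x_\sigma$ the walk $X^y$ follows the tail of $X^x$; since $z$ is visited at time $\sigma \le \tau_x$ and $X^x$ has not yet hit $W$ by time $\tau_x$ (indeed $\sigma < \tau_x$ because $z$ is on the trace, not in $W$ — or if $z \in W$ already then trivially both are done), the first hitting time of $W$ for the coalesced $X^y$ is $\rho + (\tau_x - \sigma)$, and $X^y_{\tau_y} = X^x_{\tau_x}$. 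Therefore $\{X^x_{\tau_x} \ne X^y_{\tau_y}\} \subseteq E^c$, and the coupling inequality gives
\begin{equation*}
\dTV\l(X^x_{\tau_x}, X^y_{\tau_y}\r) \le \P\l(X^x_{\tau_x} \ne X^y_{\tau_y}\r) \le \P\l(E^c\r) = \P\l(X^x|_{[0,\tau_x]} \text{ does not disconnect } y \text{ from } W\r).
\end{equation*}

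The main obstacle I anticipate is making precise the topological claim that, on the event $E$, the walk $X^y$ must hit the \emph{vertex} trace of $X^x$ (as opposed to merely crossing an edge of it) before hitting $W$. One has to be a little careful about what "disconnects" means at the level of the metric graph $\G$ versus the abstract graph $\GG$, and about self-loops, multiple edges, and the fact that the trace of a walk is a union of edges rather than a simple curve. I would handle this by working in the metric graph $\G \subset \CC_{2\pi}$: the image of $X^x|_{[0,\tau_x]}$ is a compact connected subset of $\G$, and "disconnects $y$ from $W$" should be interpreted as: $y$ and $W$ lie in different connected components of $\CC_{2\pi} \setminus (\text{trace} \cup \{\text{point near } v_0 \text{ or } v_1 \text{ if relevant}\})$, or more simply in $\G$ itself one asks that every path in $\GG$ from $y$ to $W$ passes through a vertex of the trace. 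With the latter (purely graph-theoretic) reading, the crossing claim is immediate since $X^y$ traverses such a path, and no genuine planarity is even needed — though planarity is what makes the event $E$ have substantial probability in applications. I would state the lemma with this graph-theoretic notion of disconnection to keep the proof clean, remarking that in the planar embedded setting it coincides with the topological one.
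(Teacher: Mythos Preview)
Your coupling does not give $\tilde X^y$ the law of a random walk from $y$. The flaw is in the appeal to strong Markov: to define $\rho$ you must first reveal the full trace $X^x|_{[0,\tau_x]}$, and this trace already contains the segment $X^x|_{[\sigma,\tau_x]}$ that you then want to reuse. Conditionally on $X^y|_{[0,\rho]}=(y_0,\dots,y_{\rho-1},z)$ you have in particular conditioned on the event that the trace of $X^x|_{[0,\tau_x]}$ avoids $\{y_0,\dots,y_{\rho-1}\}$, and this constrains $X^x$ \emph{after} time $\sigma$ as well: the segment $X^x|_{[\sigma,\tau_x]}$ must also avoid $\{y_0,\dots,y_{\rho-1}\}$. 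So what you splice onto $X^y$ is not a fresh walk from $z$ but a conditioned one, and the marginal law of $\tilde X^y$ is wrong.

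A small counterexample: take vertices $\{x,y,a,b,w_1,w_2\}$, unit-conductance edges $xy,\ xa,\ xb,\ aw_1,\ bw_2$, and $W=\{w_1,w_2\}$. Since $y$ has $x$ as its only neighbour, $X^y_1=x$ always, and $x=X^x_0$ is always in the trace. One computes $\P\l(y\in\text{trace}(X^x|_{[0,\tau_x]})\r)=1/2$. On the event $\{y\in\text{trace}\}$ your recipe gives $\rho=0$, $\sigma=\text{first visit of }X^x\text{ to }y$, and then $\tilde X^y_2=X^x_{\sigma+2}$ is uniform on $\{a,b,y\}$ by strong Markov. On $\{y\notin\text{trace}\}$ it gives $\rho=1$, $\sigma=0$, and $\tilde X^y_2=X^x_1$, which under this conditioning is never equal to $y$. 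Hence $\P(\tilde X^y_2=y)=\tfrac12\cdot\tfrac13+\tfrac12\cdot 0=\tfrac16$, whereas a genuine random walk from $y$ has $\P(X^y_2=y)=\tfrac13$.

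The paper's proof avoids this via Wilson's algorithm for the weighted spanning tree of $(\GG,c)$ with $W$ wired to a single point. One runs $X^y$ to $W$, takes its loop-erasure $L^y$, and then runs an independent $X^x$ until it hits $L^y\cup W$; the exchangeability of Wilson's algorithm ensures that the endpoint in $W$ of the resulting branch $L^x$ has exactly the law of $X^x_{\tau_x}$, while the endpoint of $L^y$ has the law of $X^y_{\tau_y}$. On the disconnection event, the path $L^y$ from $y$ to $W$ must meet the trace of $X^x|_{[0,\tau_x]}$, so $X^x$ hits $L^y$ before $W$ and the two branches reach $W$ at the same vertex. The exchangeability built into Wilson's algorithm is precisely the ingredient your direct splicing lacks.
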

\begin{proof}
The proof is a variant of \cite[Lemma~3.12]{GMS22} with the difference that one should consider a weighted spanning tree instead of a uniform spanning tree of the finite weighted planar graph $(\GG, c)$. For the reader's convenience, we gather here a proof. The lemma is a consequence of Wilson's algorithm. Consider the weighted spanning tree $\TT$ of the finite weighted planar graph $(\GG, c)$, where all vertices of $W$ are wired to a single point. We recall that the weighted spanning tree $\TT$ is chosen randomly from among all the spanning trees with probability proportional to the product of the conductances along the edges of the tree. For $x\in \VV\GG$, let $L^x$ be the unique path in $\TT$ from $x$ to $W$.  For a path $P$ in $\GG$, write $\operatorname{LE}(P) $ for its chronological loop erasure. By Wilson's algorithm (see \cite[Theorem~4.1]{LP16}), we can generate the union $L^x\cup L^y$ by using the following procedure. 
\begin{enumerate}[(a)]
\item Run $X^y$ until time $\tau_y$ and generate the loop erasure $ \operatorname{LE}(X^y|_{[0,\tau^y]})$.
\item Conditional on $X^y|_{[0,\tau_y]}$, run $X^x$ until the first time $\tilde{\tau}_x$ that it hits either $\operatorname{LE}(X^y|_{[0,\tau_y]})$ or $W$. 
\item Set $L^x \cup L^y = \operatorname{LE}(X_y|_{[0,\tau_y]}) \cup \operatorname{LE}(X^x|_{[0,\tilde{\tau}_x]})$. 
\end{enumerate}
Note that $L_y =  \operatorname{LE}(X^y|_{[0,\tau_y]})$ in the above procedure. Interchanging the roles of $x$ and $y$ in the above procedure shows that $L^x$ and $\operatorname{LE}(X^x|_{[0,\tau_x]})$ have the same distribution. When constructing $L^x\cup L^y$ as described above, the points at which $L^x$ and $L^y$ hit $W$ coincide if $X^x$ hits $\operatorname{LE}(X^y|_{[0,\tau_y]})$ before reaching $W$. In particular, this occurs when $X^x|_{[0, \tau_x]}$ disconnects $y$ from $W$. Thus, there is a coupling between $\operatorname{LE}(X^x|_{[0,\tau_x]})$ and $\operatorname{LE}(X^y|_{[0,\tau_y]})$, where the probability that these two loop erasures hit $W$ at the same point is at least $\P_x(X|_{[0, \tau_x]} \text{ disconnects } y \text{ from } W)$. Now, by observing that $X^x_{\tau_x}$ corresponds to the point at which $\operatorname{LE}(X^x|_{[0,\tau_x]})$ first hit $W$, and similarly for $y$ in place of $x$, we obtain the desired result.
\end{proof}

\paragraph{Electrical network.} 
There is an extremely useful correspondence between random walks and Kirchhoff's theory of electric networks. Let $(\GG, c, v_0, v_1)$ be as above and suppose that every edge $e \in \EE\GG$ is made of conducting wires with conductance equals to $c_e$. Connect a battery between $v_1$ and $v_0$ so that the voltage at $v_1$ is equal to one and the voltage at $v_0$ is equal to zero. Then certain currents will flow along the edges of the graph establishing the voltage at each vertex $x \in \VV\GG \setminus \{v_0, v_1\}$. An immediate consequence of physical laws is that the voltage function is harmonic on $\VV\GG$ except at $v_0$ and $v_1$. More formally, we have the following definition.
\begin{definition}
\label{def_voltage_fct}
The voltage function associated to the quadruple $(\GG, c, v_0, v_1)$ is the unique function $\frkh: \VV\GG \to [0,1]$ such that $\frkh(v_0) = 0$, $\frkh(v_1) = 1$, and 
\begin{equation*}
\frkh(x) = \frac{1}{\pi(x)} \sum_{y \in \VV\GG(x)} c_{xy} \frkh(y) , \quad \forall x \in \VV\GG \setminus\{v_0, v_1\}.
\end{equation*}
In view of the role that $\frkh$ will play in the construction of the Smith embedding, we will also call $\frkh$ the \emph{height coordinate function}.
\end{definition}

\noindent
Given an edge $e \in \EE\GG$, we say that $e$ is \emph{harmonically oriented} if $\frkh(e^+) \geq \frkh(e^{-})$. In what follows, unless otherwise specified, we always consider the harmonic orientation of the edges in $\EE\GG$. It is a remarkable fact that the voltage function $\frkh$ admits a representation in terms of a random walk $X$ on $(\GG, c)$. More precisely, if for all $v \in \VV\GG$, we define $\tau_v$ to be the first hitting time of $v$ for $X$, then one can easily check that  
\begin{equation*}
\frkh(x) = \P_x\l(\tau_{v_1} < \tau_{v_0}\r), \quad \forall x \in \VV\GG ,
\end{equation*}
Moreover, since the voltage function $\frkh$ is harmonic on $\VV\GG \setminus \{v_0, v_1\}$, if $X^x$ is a random walk on $(\GG, c)$ started from $x \in \VV\GG$ and killed upon reaching the set $\{v_0, v_1\}$, then the process $\frkh(X^x)$ is a martingale with respect to the filtration generated by $X^x$. 

\begin{remark}
We note that we can canonically lift the voltage function $\frkh$ to the lifted weighted graph $(\GG^{\dag}, c^{\dag})$ by setting $\frkh^{\dag} : \VV\GG^{\dag} \to [0, 1]$ as follows
\begin{equation}
\label{eq_harmo_lift}
\frkh^{\dag}(\x) : = \frkh(\sigma_{2 \pi}(\x)) , \quad \forall \x \in \VV\GG^{\dag}.
\end{equation} 
\end{remark}

\begin{remark}
\label{rm_extened_harm}
Note that we can naturally extend the definition of the voltage function to a function on the metric graph $\G$ associated to $\GG$, i.e., we can define the function $\frkh : \G \to [0, 1]$. This extension can be done by linearly interpolating the values at the endpoints of every edge $e \in \EE\GG$. More precisely, if $x \in \G$ is a point lying on the harmonically oriented edge $e \in \EE\GG$, then we set 
\begin{equation*}
\frkh(x):=(\frkh(e^{+})-\frkh(e^{-}))\d^{\G}(e^{-}, x) + \frkh(e^{-}) .
\end{equation*}
\end{remark}

\paragraph{The flow induced by the voltage function.}
We finish this subsection by introducing the flow across oriented edges induced by the voltage function $\frkh$. We denote this flow by $\nabla \frkh: \EE\GG \to \R$ and we define it as follows
\begin{equation*}
\nabla \frkh(e) := c_e \l(\frkh(e^{+}) - \frkh(e^{-})\r), \quad \forall e \in \EE\GG . 
\end{equation*}
The flow $\nabla \frkh$ satisfies the following well-known properties (see \cite[Section~2.2]{Nac20}).
\begin{enumerate}[(a)]
	\item (\emph{Antisymmetry}) For every oriented edge $e\in\EE\GG$, it holds that 
	\begin{equation*}
		\nabla \frkh(-e) = - \nabla\frkh(e),
	\end{equation*}
	where $-e$ stands for the edge $e$ endowed with opposite orientation.
	\item (\emph{Kirchhoff's node law}) For all $x \in \VV\GG \setminus \{v_0, v_1\}$, it holds that
	\begin{equation}
	\label{eq_kirch_node}	
	\sum_{e \in \EE\GG(x)} \nabla \frkh(e) = 0,
	\end{equation}
	where here the orientation of each $e \in \EE\GG(x)$ is fixed by letting $e^{-} = x$.
	\item (\emph{Kirchhoff's cycle law}) For every directed cycle $e_1 \cdots e_n$, it holds that
 	\begin{equation}
	\label{eq_kirch_cycle}
	\sum_{i = 1}^{n} \frac{1}{c_{e_i}} \nabla \frkh(e_i) = 0 .
	\end{equation}
\end{enumerate}

\noindent
We denote the strength of the flow $\nabla \frkh$ induced by $\frkh$ by setting
\begin{equation}
\label{eq_flow_strength}
\eta := \sum_{e \in \EE\GG^{\uparrow}(v_0)} \nabla \frkh (e),
\end{equation}
where $\smash{\EE\GG^{\uparrow}(v_0)}$ denotes the set of harmonically oriented edges in $\EE\GG$ with tails equal to $v_0$. Furthermore, thanks to the harmonicity of $\frkh$, a simple computation yields that $\smash{\eta = \sum_{e \in \EE\GG^{\downarrow}(v_1)} \nabla \frkh(e)}$, where $\smash{\EE\GG^{\downarrow}(v_1)}$ denotes the set of harmonically oriented edges in $\EE\GG$ with heads equal to $v_1$.

\subsection{Discrete harmonic conjugate}
\label{sub_dis_har_conj}
Let $(\GG, c, v_0, v_1)$ be a doubly marked finite weighted planar graph properly embedded in the infinite cylinder $\CC_{2\pi}$ according to Definition~\ref{def_proper_embedd}, and let $\smash{(\hat{\GG}, \hat{c})}$ be the associated weighted dual planar graph properly embedded in $\CC_{2\pi}$ according to Definition~\ref{def_proper_embedd_dual}. Moreover, let $\smash{\frkh : \VV\GG \to [0, 1]}$ be the voltage function associated with $\smash{(\GG, c, v_0, v_1)}$ as defined in Definition~\ref{def_voltage_fct}. We want to define the discrete harmonic conjugate function of $\frkh$, i.e., the function $\frkw$ defined on the set of dual vertices $\smash{\VV\hat{\GG}}$ that satisfies the discrete Cauchy--Riemann equation. More formally, for every directed edge $e \in \EE\GG$ and its corresponding oriented dual edge $\smash{\hat{e} \in \EE\hat{\GG}}$, the function $\frkw$ should satisfy the following identity
\begin{equation}
\label{eq_Cauchy_Riemann}
\nabla \frkw(\hat{e}) := \hat{c}_{\hat{e}} \l(\frkw(\hat{e}^{+}) - \frkw(\hat{e}^{-})\r) = \frkh(e^{+}) - \frkh(e^{-}),
\end{equation}
where we recall that $\hat{c}_{\hat{e}} = 1/c_e$. 

\medskip
\noindent
To precisely define the function $\frkw$ specified above, it will be more convenient to work with the lifted weighted graph $\smash{(\GG^{\dag}, c^{\dag})}$ and its dual $\smash{(\hat{\GG}^{\dag}, \hat{c}^{\dag})}$. More precisely, we consider the lifted voltage function $\smash{\frkh^{\dag} : \VV\GG^{\dag} \to [0, 1]}$. We fix an arbitrary vertex $\smash{\hx_0 \in \VV\hat{\GG}^{\dag}}$ on the lifted dual graph, and for every $\smash{\hx \in \VV\hat{\GG}^{\dag}}$ we consider a directed path of lifted dual edges $\smash{\he_1 \cdots \he_n}$ connecting $\smash{\hx_0}$ to $\smash{\hx}$.
\begin{remark}
We emphasise that the lifted dual graph $\smash{\hat{\GG}^{\dag}}$ is always connected and so we can find a path connecting $\smash{\hx_0}$ to any $\smash{\hx \in \VV\hat{\GG}^{\dag}}$.
\end{remark}

\noindent
We define the function $\frkw^{\dag}:\VV\hat{\GG}^{\dag} \to \R$ by setting
\begin{equation}
\label{eq_conj_harmo_lift}
\mathfrak{w}^{\dag}(\hx) : =  \sum_{j=1}^{n} \nabla \frkh^{\dag}(\e_j), \quad \forall \hx \in \VV\hat{\GG}^{\dag}.
\end{equation}
where $\e_j \in \EE\GG^{\dag}$ is the oriented primal edge associated to $\he_j$. We call the function $\frkw^{\dag}$ defined in this way the \emph{lifted discrete harmonic conjugate} function associated to $\frkh^{\dag}$ with base vertex $\hx_0$. The following lemma guarantees that $\frkw^{\dag}$ is actually well-defined. 
\begin{lemma}
\label{lm_harm_conj}
For all $\hx \in \VV\hat{\GG}^{\dag}$, the value $\frkw^{\dag}(\hx)$ defined in \eqref{eq_conj_harmo_lift} does not depend on the choice of the directed path from $\hx_0$ to $\hx$. Moreover, for any $\hx_1$, $\hx_2 \in \VV\hat{\GG}^{\dag}$ such that $\sigma_{2\pi}(\hx_1) = \sigma_{2\pi}(\hx_2)$, the following relation holds
\begin{equation*}
\frac{\frkw^{\dag}(\hx_1) - \frkw^{\dag}(\hx_2)}{\eta} = \frac{\Re(\hx_1) - \Re(\hx_2)}{2 \pi},
\end{equation*}
where we recall that $\eta$ denotes the strength of the flow induced by $\frkh$ as defined in \eqref{eq_flow_strength}.

\end{lemma}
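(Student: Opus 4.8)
The plan is to prove the two claims separately. First, well-definedness: I would show that the sum $\sum_{j=1}^n \nabla\frkh^{\dag}(\e_j)$ in \eqref{eq_conj_harmo_lift} is independent of the chosen directed path from $\hx_0$ to $\hx$. Since any two directed paths between the same endpoints in a graph differ by (a concatenation of) directed cycles — more precisely, the difference of their edge-indicator vectors lies in the cycle space — it suffices to check that the sum vanishes around every directed cycle in $\hat{\GG}^{\dag}$. A directed cycle $\he_1\cdots\he_n$ in the planar dual $\hat{\GG}^{\dag}$ corresponds, via the duality rotation, to the set of primal edges $\e_1,\dots,\e_n$ crossing it; because $\hat{\GG}^{\dag}$ is a planar graph, a cycle in $\hat{\GG}^{\dag}$ is the boundary of a union of faces of $\hat{\GG}^{\dag}$, i.e.\ of a set $S$ of primal vertices. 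The quantity $\sum_j \nabla\frkh^{\dag}(\e_j)$ is then exactly the net flow of $\nabla\frkh^{\dag}$ out of $S$, which by Kirchhoff's node law \eqref{eq_kirch_node} equals the sum of the ``external'' contributions, namely $\pm$(the flow into $v_0^{\dag}$ or $v_1^{\dag}$ lifts) if $v_0$ or $v_1$ lies in $S$, and is zero otherwise. To kill the remaining case I would note that a contractible cycle in $\hat{\GG}^{\dag}$ bounds a finite region; if that region contains no lift of $v_0$ or $v_1$ the sum is $0$ by node law, and if it contains such a lift one uses that the total flow out of any finite set containing exactly one lift of $v_0$ (resp.\ $v_1$) and no lift of the other is $\pm\eta$ — but this can only happen for a noncontractible cycle in the original cylinder, which is excluded when we compare two genuine finite paths in $\hat{\GG}^{\dag}$ with common endpoints. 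Equivalently, and more cleanly: two directed paths with the same endpoints $\hx_0,\hx$ differ by a contractible directed loop in $\R^2$ (since $\R^2$ is simply connected), and a contractible loop in the planar graph $\hat{\GG}^{\dag}$ bounds a finite union of bounded faces whose primal-vertex set $S$ contains at most — in fact, I claim, neither — of the lifts $v_0^{\dag}, v_1^{\dag}$ along the two paths under consideration, because the lifts of $v_0$ and $v_1$ do not exist as honest vertices in $\R^2$ (they sit at $\pm\infty$). Hence the net flow out of $S$ is $0$ by \eqref{eq_kirch_node}, proving path-independence.

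Second, the winding identity. Fix $\hx_1,\hx_2\in\VV\hat{\GG}^{\dag}$ with $\sigma_{2\pi}(\hx_1)=\sigma_{2\pi}(\hx_2)$; then $\Re(\hx_1)-\Re(\hx_2)=2\pi k$ for some $k\in\Z$, and I must show $\frkw^{\dag}(\hx_1)-\frkw^{\dag}(\hx_2)=k\eta$. By path-independence I may compute $\frkw^{\dag}(\hx_1)-\frkw^{\dag}(\hx_2)$ along any directed path $\he_1\cdots\he_m$ in $\hat{\GG}^{\dag}$ from $\hx_2$ to $\hx_1$; projecting this path to $\CC_{2\pi}$ via $\sigma_{2\pi}$ gives a closed directed loop $\hat\gamma$ in $\hat{\GG}$ that winds $k$ times around the cylinder. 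So the claim reduces to: for any directed loop $\hat\gamma = \hat e_1\cdots\hat e_m$ in $\hat{\GG}$ that winds $k$ times around $\CC_{2\pi}$, the sum $\sum_j \nabla\frkh(e_j)$ of the primal flows across its edges equals $k\eta$. To see this, take $k=1$ first: a directed noncontractible loop in $\hat{\GG}$ separates $v_0$ from $v_1$ on the cylinder, so the set $S$ of primal vertices on the ``$v_0$ side'' contains $v_0$ but not $v_1$; the net flow of $\nabla\frkh$ out of $S$ is, by \eqref{eq_kirch_node} applied at every vertex of $S\setminus\{v_0\}$, equal to the net flow out of $v_0$, which is $\eta$ by the definition \eqref{eq_flow_strength}. (One must be careful with orientation conventions: the dual orientation is obtained by rotating the primal edge counter-clockwise, so crossing $\hat\gamma$ in its forward direction corresponds consistently to leaving $S$; this fixes the sign to $+\eta$ rather than $-\eta$, assuming $\hx_1$ is ``further right'' than $\hx_2$, i.e.\ $k>0$.) For general $k$, the loop winds $k$ times and one decomposes/iterates: the same separating-set argument, or simply additivity of the flow sum under concatenation together with the $k=1$ case, gives $k\eta$. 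Dividing by $\eta$ yields $(\frkw^{\dag}(\hx_1)-\frkw^{\dag}(\hx_2))/\eta = k = (\Re(\hx_1)-\Re(\hx_2))/2\pi$, as desired.

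The main obstacle I anticipate is bookkeeping the planar-duality and orientation conventions precisely enough that the cycle-space/boundary argument is rigorous: one needs that a directed cycle in the embedded planar graph $\hat{\GG}^{\dag}$ really is the oriented boundary of a union of faces, that the primal edges crossing it are exactly the edges with one endpoint in the corresponding primal-vertex set $S$, and that the induced orientations match so that the signed sum $\sum_j\nabla\frkh^{\dag}(\e_j)$ is literally $\sum_{x\in S}\sum_{e\in\EE\GG^{\dag}(x),\, e^-=x}\nabla\frkh^{\dag}(e)$ (net outflow from $S$), with no stray signs. Once that identification is set up cleanly, both the well-definedness (take $S$ with $S\cap\{v_0^{\dag},v_1^{\dag}\}=\emptyset$) and the winding formula (take $S$ containing exactly one of the lifted marked vertices, or rather the corresponding separating set on the cylinder) follow from Kirchhoff's node law \eqref{eq_kirch_node} and the definition \eqref{eq_flow_strength} of $\eta$. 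A secondary subtlety is handling the fact that $v_0,v_1$ sit at $\pm\infty$: on the cylinder the relevant ``set $S$'' is infinite (an entire end), but only finitely many vertices of $S$ are incident to edges crossing $\hat\gamma$, and the node law still telescopes correctly because $\frkh$ is harmonic at every vertex except $v_0$ and $v_1$; I would phrase this via the lifted graph to keep everything a finite sum over the region bounded by the (compact) loop together with the portion of the cylinder end it cuts off.
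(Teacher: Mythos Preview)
Your proposal is correct and takes essentially the same approach as the paper: for well-definedness you reduce to loops, identify the sum over a simple loop with the net $\nabla\frkh^{\dag}$-flow out of the bounded region it encloses (which contains no lift of $v_0,v_1$ since those sit at $\pm\infty$), and apply Kirchhoff's node law \eqref{eq_kirch_node}; for the winding identity you handle $k=1$ by noting the separating cycle yields net flow $\eta$ from \eqref{eq_flow_strength}, then iterate. The paper's proof does exactly this, working throughout on the lift (thereby keeping all regions and sums finite and sidestepping the ``infinite $S$'' subtlety you flagged), and citing \cite[Lemma~3.2]{BS96} for the loop decomposition into simple closed loops plus back-and-forth pairs.
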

\begin{proof}
For the first part of the lemma, the proof is similar to that of \cite[Lemma~3.2]{BS96}. In particular, it is sufficient to prove that for any oriented loop $\he_1 \cdots \he_n$ in $\hat{\GG}^{\dag}$, it holds that
\begin{equation}
\label{eq_well_defined_disc_conj}
\sum_{j = 1}^{n} \l(\frkw^{\dag}(\he^{+}_j)  - \frkw^{\dag}(\he^{-}_j)\r)= 0 .
\end{equation} 
The key observation in \cite[Lemma~3.2]{BS96} is that every oriented loop $\he_1 \cdots \he_n$ in $\hat{\GG}^{\dag}$ can be written as the disjoint union of simple closed loops and of paths of length two consisting of a single dual edge traversed in both directions. Here, by a simple closed path we mean that $\he_j^{+} \neq \he_k^{+}$ for distinct $j$, $k \in [n]$ when $n > 2$, while when $n = 2$, we mean that $\he_1 \neq - \he_2$. Therefore, since \eqref{eq_well_defined_disc_conj} obviously holds if the path consists of a single dual edge traversed in both directions, we can assume without loss of generality that $\he_1 \cdots \he_n$ is a simple counter-clockwise oriented closed loop. Let $K \subset \R^2$ be the bounded connected component of $\R^2 \setminus \he_1 \cdots \he_n$. Then, thanks to \eqref{eq_Cauchy_Riemann}, it holds that
\begin{equation}
\label{eq_well_defined_disc_conj_step}
\sum_{j = 1}^{n} \l(\frkw^{\dag}(\he^{+}_j) - \frkw^{\dag}(\he^{-}_j)\r) =  \sum_{j = 1}^{n} \nabla \frkh^{\dag}(\e_j) = \sum_{\x \in \VV\GG^{\dag}(K)} \sum_{\e \in \EE\GG^{\dag}(\x)} \nabla \frkh^{\dag}(\e),
\end{equation}
where here the orientation of each edge $\e \in \EE\GG^{\dag}(\x)$ is fixed by letting $\e^{-} = \x$. The second equality in \eqref{eq_well_defined_disc_conj_step} follows from the following argument. Fix $\x \in \VV\GG^{\dag}(K)$ and consider $\y \in \VV\GG^{\dag}(\x)$. If $\y \not \in K$, then $\x\y = \e_j$ for some $j \in [n]$, while if $\y \in K$ then $\nabla \frkh^{\dag}(\x\y)$ cancels out with $\nabla \frkh^{\dag}(\y\x)$ thanks to the antisymmetry of $\nabla \frkh^{\dag}$. The term on the right-hand side of \eqref{eq_well_defined_disc_conj_step} is equal to zero thanks to the boundedness of $K$ and Kirchhoff's node law \eqref{eq_kirch_node}. 

\medskip
\noindent
Concerning the second part of the lemma, we can proceed as follows. Let $\hx_1$, $\hx_2 \in \VV\hat{\GG}^{\dag}$ be such that $\sigma_{2\pi}(\hx_1) = \sigma_{2\pi}(\hx_2)$, and let $k \in \Z$ be such that $k = (\Re(\hx_2) - \Re(\hx_1))/(2\pi)$.
Then we need to prove that $\frkw^{\dag}(\hx_2) - \frkw^{\dag}(\hx_1) = \eta k$. In particular, thanks to the first part of the lemma, it is sufficient to consider an arbitrary directed path $\he_1 \cdots \he_n$ in $\hat{\GG}^{\dag}$ connecting $\hx_1$ to $\hx_2$ and show that $\sum_{j = 1}^n (\frkw^{\dag}(\he^{+}_j) - \frkw^{\dag}(\he^{-}_j)) = \eta k$. We assume first that $k = 1$. We can choose the lifted dual edges $[\he_n]$ in such way that $\he_1 \cdots \he_n$ is a simple path oriented from left to right. Now, using an argument similar to the one used above, it is not difficult to see that $\sum_{j = 1}^n \nabla \frkh^{\dag}(\e_j) = \eta$, and so the conclusion follows in this case. Finally, the general case can be obtained easily: we can just ``glue'' together, by eventually changing the orientation if $k$ is negative, $k$ copies of the path used in the case $k=1$.  
\end{proof}

\noindent
From the definition~\eqref{eq_conj_harmo_lift} of the function $\frkw^{\dag}$, it follows that for every oriented edge $\e \in \EE\GG^{\dag}$ and for the associated dual edge $\he \in \EE\hat{\GG}^{\dag}$, it holds that 
\begin{equation*}
\nabla \frkw^{\dag}(\he) := \hat{c}^{\dag}_{\he}(\frkw^{\dag}(\he^{+}) - \frkw^{\dag}(\he^{-})) = \frkh^{\dag}(\e^{+}) - \frkh^{\dag}(\e^{-}),
\end{equation*}
i.e., $\frkw^{\dag}$ satisfies the discrete Cauchy--Riemann equation. Moreover, an immediate application of Kirchhoff's cycle law \eqref{eq_kirch_cycle} implies that the function $\frkw^{\dag}$ is harmonic on $\VV\hat{\GG}^{\dag}$. Thanks to Lemma~\ref{lm_harm_conj}, we can define the discrete harmonic conjugate function of $\frkh$ as follows.
\begin{definition}
\label{def_width_fct}
The discrete harmonic conjugate function of $\frkh$ with base vertex $\hat{x}_0 \in \VV\hat{\GG}$ is the unique function $\frkw : \VV\hat{\GG} \to \R/ \eta\Z$ such that $\frkw(\hat{x}_0) = 0$ and 
\begin{equation*}
\frkw(\hat{x}) = \frkw^{\dag}(\hx) \mod{\eta},\quad  \forall \hat{x} \in \VV\hat{\GG} ,
\end{equation*}
where $\frkw^{\dag}: \VV\GG^{\dag} \to \R$ is the function defined in \eqref{eq_conj_harmo_lift} with base vertex an arbitrary lift of $\hat{x}_0$. In view of the role that $\frkw$ will play in the construction of the Smith embedding, we will also call $\frkw$ the \emph{width coordinate function}.
\end{definition}

\begin{remark}
\label{rm_extened_discrte_harm}
As for the case of the voltage function $\frkh$, we can naturally extend the definition of $\frkw$ to a function from the dual metric graph $\hat{\G}$, i.e., $\frkw : \hat{\G} \to \R/\eta\Z$. To be precise, if $\hat{x} \in \hat{\G}$ is a point on the edge $\hat{e} \in \EE\hat{\GG}$ and $\frkw(\hat{e}^{+}) \geq \frkw(\hat{e}^{-})$, then we set
\begin{equation*}
\frkw(\hat{x}):=(\frkw(\hat{e}^{+})-\frkw(\hat{e}^{-}))\d^{\hat{\G}}(\hat{e}^{-}, \hat{x}) + \frkw(\hat{e}^{-}) .
\end{equation*}
\end{remark}

\subsection{Construction of the Smith embedding}
\label{sub_def_tiling_Smith}
Let $(\GG, c, v_0, v_1)$ be a doubly marked finite weighted planar graph properly embedded in the infinite cylinder $\CC_{2\pi}$ according to Definition~\ref{def_proper_embedd}, and let $\smash{(\hat{\GG}, \hat{c})}$ be the associated weighted dual planar graph properly embedded in $\CC_{2\pi}$ according to Definition~\ref{def_proper_embedd_dual}. The aim of this subsection is to precisely define the \emph{Smith embedding} of $(\GG, c, v_0, v_1)$. As we have already explained in the introduction, the Smith embedding is built in terms of a tiling of a finite cylinder with rectangles in which every edge $e \in \EE\GG$ corresponds to a rectangle in the tiling, every vertex $x \in \VV\GG$ corresponds to the maximal horizontal segment tangent with all rectangles corresponding to the edges incident to $x$, and every dual vertex $\hat{x} \in \VV\hat{\GG}$ corresponds to the maxiaml vertical segment tangent with all rectangles corresponding to primal edges surrounding $\hat{x}$. The existence of such tiling was first proven in \cite{BSST40} and then successively extended in \cite{BS96}.

\paragraph{The main objects.} 
To precisely define the Smith embedding, it will be more convenient to work with the lifted weighted graph $\smash{(\GG^{\dag}, c^{\dag})}$ and its dual $\smash{(\hat{\GG}^{\dag}, \hat{c}^{\dag})}$. More precisely, we need to consider the lifted voltage function $\frkh^{\dag}:\VV\GG^{\dag} \to [0, 1]$ and its lifted discrete harmonic conjugate function $\frkw^{\dag}:\VV\hat{\GG}^{\dag} \to \R$.
For every edge $\smash{\e \in \EE\GG^{\dag}}$, consider its harmonic orientation and let $\smash{\he \in \EE\hat{\GG}^{\dag}}$ be the corresponding oriented dual edge. We define the intervals 
\begin{equation*}
\sfI_{\e} := \l[\frkw^{\dag}(\he^{-}), \frkw^{\dag}(\he^{+})\r], \qquad \hat{\sfI}_{\he} := \l[\frkh^{\dag}(\e^{-}), \frkh^{\dag}(\e^{+})\r].
\end{equation*}
Then, we define the rectangle $\sfR_{\e}$  associated to the edge $\e\in \EE\GG^{\dag}$ by letting
\begin{equation}
\label{eq_rec}
\sfR_{\e} := \sfI_{\e} \times \hat{\sfI}_{\he} \subset \R \times [0,1], \quad \forall \e \in \EE\GG^{\dag}.
\end{equation}
Recalling the definition \eqref{eq_conj_harmo_lift} of the lifted discrete harmonic conjugate function $\frkw^{\dag}$, it holds that 
\begin{equation*}
\frkw^{\dag}(\he^{+}) - \frkw^{\dag}(\he^{-}) = c_{\e} (\frkh^{\dag}(\e^{+}) - \frkh^{\dag}(\e^{-})).
\end{equation*} 
Therefore, the aspect ratio of the rectangle $\sfR_{\e}$ is equal to the conductance $c_{\e}$ of the edge $\e\in \EE\GG^{\dag}$. In particular, this implies that if an edge $\e\in \EE\GG^{\dag}$ has unit conductance, then $\sfR_{\e}$ is a square. For a vertex $\x \in \VV\GG^{\dag}$, we define the closed horizontal line segment $\sfH_{\x}$ by setting
\begin{equation}
\label{eq_hor}
\sfH_{\x} := \bigcup_{\e\in \EE\GG^{\dag, \downarrow}(\x)} \sfI_{\e} \times \{\frkh^{\dag}(\x)\} \subset \R \times [0,1], \quad \forall \x \in \VV\GG^{\dag},
\end{equation}
where $\EE\GG^{\dag, \downarrow}(\x)$ denotes the set of harmonically oriented lifted edges with heads equal to $\x$. Finally, for a dual vertex $\hx \in \VV\hat{\GG}^{\dag}$, we define the closed vertical line segment $\sfV_{\hx}$ by letting
\begin{equation}
\label{eq_ver}
\sfV_{\hx} := \bigcup_{\he \in \EE\hat{\GG}^{\dag, \downarrow}(\hx)} \{\frkw^{\dag}(\hx)\} \times \hat{\sfI}_{\he} \subset \R \times [0,1], \quad \forall \hx \in \VV\hat{\GG}^{\dag},
\end{equation}
where $\EE\hat{\GG}^{\dag, \downarrow}(\hx)$ denotes the set of harmonically oriented lifted dual edges with heads equal to $\hx$. Thanks to the harmonicity of the lifted height coordinate function, we observe that in the definition of $\sfH_{\x}$, one can replace $\EE\GG^{\dag, \downarrow}(\x)$ with $\EE\GG^{\dag, \uparrow}(x)$, and similarly for $\sfV_{\hat{\x}}$.

\paragraph{Construction of the tiling.}
We recall that $\eta$ denotes the strength of the flow induced by $\frkh$ as defined in \eqref{eq_flow_strength}. We consider the cylinder 
\begin{equation*}
\CC_{\eta} := \R/\eta\Z \times [0, 1],
\end{equation*}
where $\R/\eta \Z$ denotes the circle of length $\eta$. We let $(\R \times [0, 1], \sigma_{\eta})$ be the universal cover of $\CC_{\eta}$, where the covering map $\sigma_{\eta} : \R \times [0, 1] \to \CC_{\eta}$ is defined by
\begin{equation}
\label{eq_covering_eta}
\sigma_{\eta}(t, x) := \l(e^{i 2 \pi t/\eta}, x\r), \quad \forall (t, x) \in \R \times [0, 1] .
\end{equation}

\medskip
\noindent
We are now ready to define the tiling map. For each $e \in \EE\GG$, $x \in \VV\GG$, and $\hat{x} \in \VV\hat{\GG}$, we define the following objects
\begin{equation}
\label{eq_def_main_objects_Smith}
\sfR_e := \sigma_{\eta}(\sfR_{\e}), \qquad \sfH_x := \sigma_{\eta}(\sfH_{\x}), \qquad \sfV_{\hat{x}} := \sigma_{\eta}(\sfV_{\hx}) ,
\end{equation}
where $\e \in \EE\GG^{\dag}$, $\x \in \VV\GG^{\dag}$, and $\hx \in \VV\hat{\GG}^{\dag}$ are lifts of $e$, $x$, and $\hat{x}$, respectively. An immediate consequence of Lemma~\ref{lm_harm_conj} is that $\sfR_e$, $ \sfH_x$, and $\sfV_{\hat{x}}$ are well-defined, i.e., they do not depend on the particular choice of the lifts $\e$, $\x$ and $\hx$. 

\medskip
\noindent
The following properties are well-known (see~\cite[Theorem~3.1]{BS96}).
\begin{enumerate}[(a)]
\item  The collection of rectangles $\{\sfR_e\}_{e \in \EE\GG}$ constitutes a tiling of $\R/\eta\Z \times [0, 1]$, i.e., for each pair of distinct edges $e_1$, $e_2 \in \EE\GG$, the interiors of the rectangles $\sfR_{e_1}$ and $\sfR_{e_2}$ are disjoint and $\cup_{e \in \EE\GG} \sfR_e =\R/\eta\Z \times [0, 1]$.
\item For each two distinct edges $e_1$, $e_2 \in \EE\GG$, the interiors of the vertical sides of the rectangles $\sfR_{e_1}$ and $\sfR_{e_2}$ have a non-trivial intersection only if $e_1$ and $e_2$ both lie in the boundary of some common face of $\GG$.
\item Two rectangles intersect along their horizontal (resp.\ vertical) boundaries if and only if the corresponding primal (resp.\ dual) edges share an endpoint.
\end{enumerate}

\noindent
We note that if $e \in \EE\GG$ is such that no current flows through it, i.e., $\frkh(e^{-}) = \frkh(e^{+})$, then the corresponding rectangle $\sfR_e$ is degenerate and consists only of a single point. We also remark that the existence of the aforementioned tiling was proven by Benjamini and Schramm in \cite{BS96}. Originally, their proof was stated specifically for the case of edges with unit conductance, however, it can be readily extended to our setting.

\begin{definition}[Tiling map]
The tiling map associated to the quadruple $(\GG, c, v_0, v_1)$ is the map 
\begin{equation*}
\SS : \EE\GG  \cup \VV\GG \cup \VV\hat{\GG} \to \R/\eta\Z \times [0,1]
\end{equation*}
such that 
\begin{equation*}
\SS(e): = \sfR_{e},  \quad \forall e\in \EE\GG; \qquad \SS(x) := \sfH_{x}, \quad \forall x \in \VV\GG; \qquad \SS(\hat{x}): = \sfV_{\hat{x}}, \quad \forall \hat{x} \in \VV\hat{\GG},
\end{equation*}
where $\sfR_e$, $ \sfH_x$, and $\sfV_{\hat{x}}$ are as defined in \eqref{eq_def_main_objects_Smith}. The image of the tiling map $\SS$ is called the Smith diagram associated to $(\GG, c, v_0, v_1)$.
\end{definition}

\noindent
We refer to Figure~\ref{fig_smith_tiling} for an illustration of the Smith diagram associated to a given quadruple $(\GG, c, v_0, v_1)$ with unit conductances.

\begin{remark}
\label{rem_general_metric}
Since the height coordinate function $\frkh$ can be extended to the metric graph $\G$, we can view each rectangle $\smash{\sfR_e}$ of the tiling as being foliated into horizontal segments, one for each inner point of the corresponding edge $\smash{e \in \EE\GG}$. Similarly, since the width coordinate function $\frkw$ can be extended to the dual metric graph $\smash{\hat{\G}}$,  we can also view each rectangle $\sfR_e$ of the tiling as being foliated into vertical segments, one for each inner point of the corresponding dual edge $\smash{\hat{e} \in \EE\hat{\GG}}$.
\end{remark}

\noindent
It will be also convenient to introduce the lifted tiling map associated to $(\GG, c, v_0, v_1)$ which is the map
\begin{equation*}
\SS^{\dag} : \EE\GG^{\dag} \cup \VV\GG^{\dag} \cup \VV\hat{\GG}^{\dag} \to \R \times [0,1]
\end{equation*}
such that $\SS^{\dag}(\x) := \sfH_{\x}$ for each $\x \in \VV\GG^{\dag}$, $\SS^{\dag}(\e): = \sfR_{\e}$ for each $\e\in \EE\GG^{\dag}$, and $\SS^{\dag}(\hx): = \sfV_{\hx}$ for each $\hx \in \VV\hat{\GG}^{\dag}$. We emphasize that, since the collection $\{R_e\}_{e \in \EE\GG}$ forms a tiling of the cylinder $\R/\eta\Z \times [0, 1]$, the collection of rectangles $\{R_{\e}\}_{\e\in \EE\GG^{\dag}}$ forms a periodic tiling of $\R \times [0,1]$ of period $\eta$. We are now ready to precisely define the Smith embedding.
\begin{definition}[Smith embedding]
\label{def_dotted_Smith}
The Smith embedding associated to the quadruple $(\GG, c, v_0, v_1)$ is the function $\dotSS : \VV\GG \to \R/\eta\Z \times [0,1]$ such that
\begin{equation*}
	\dotSS(x) =\midpoint(\sfH_x), \quad \forall x \in \VV\GG,
\end{equation*}
where $\midpoint(\sfH_x)$ denotes the middle point of the horizontal line segment $\SS(x)$. Moreover, we define the lifted Smith embedding $\dotSS^{\dag} :\VV\GG^{\dag} \to \R \times [0, 1]$ as the map that assigns to each $\x \in \VV\GG^{\dag}$ the middle point of the horizontal line segment $\SS^{\dag}(\x)$.
\end{definition}

\noindent
We emphasize once again that the choice to define the Smith embedding by picking the middle point of each horizontal line segment is rather arbitrary. Indeed, the main result of this paper holds also if one chose an arbitrary point inside each horizontal segment. Finally, for technical reasons, we also need to introduce the following map.

\begin{definition}
\label{def_Smith_rand}
We define the map $\dotSS^{\dag, \rm{rand}}$ that assigns to each vertex $\x \in \VV\GG^{\dag}$ the random variable $\dotSS^{\dag, \rm{rand}}(\x)$ which is uniformly distributed on the horizontal line segment $\SS^{\dag}(\x)$.
\end{definition}

\section{Some properties of the Smith embedding}
\label{sec_prop_Smith}
In this section, we collect some results that follow directly from the construction of the Smith embedding. We fix throughout this section a doubly marked finite weighted planar graph $(\GG, c, v_0, v_1)$ properly embedded in the infinite cylinder $\CC_{2\pi}$ according to Definition~\ref{def_proper_embedd}, and we also consider the associated weighted dual planar graph $\smash{(\hat{\GG}, \hat{c})}$ properly embedded in $\CC_{2\pi}$ according to Definition~\ref{def_proper_embedd_dual}. In what follows, we consider the metric graph $\G$ associated to $\GG$, and we let $\frkh : \G \to [0, 1]$ be the extended height coordinate function as specified in Remark~\ref{rm_extened_harm}. Furthermore, we also consider the dual metric graph $\hat{\G}$, and we let $\frkw : \hat{\G} \to \R/\eta\Z$ be the extended width coordinate function as specified in Remark~\ref{rm_extened_discrte_harm}.

\subsection{Adding new vertices}
\label{subsec_aux_height}
In this subsection, we examine how declaring a finite number of interior points on certain edges of the graph as vertices affects both the height coordinate function and the random walk on the graph. Notably, as proved in Lemma~\ref{lm_harm_coincides} below, this process of adding new vertices does not alter the height coordinate function on the set of original vertices. This technique proves to be useful, and we will employ it in the proof of our main result (see, for instance, the setups for the proofs of Propositions~\ref{pr_main_height} and~\ref{pr_main_witdh} in Subsections~\ref{sub_height} and~\ref{sub_width}, respectively).

\medskip
\noindent
In order to make this precise, we start with the following definition. 

\begin{definition}
\label{def_new_graph}
Let $W \subset \G$ be a finite subset of the metric graph. We define the weighted planar graph $(\GG', c')$ associated to $(\GG, c)$ and $W$ as follows:
\begin{enumerate}[(a)]
\item The set of vertices $\VV\GG'$ is given by $\VV\GG \cup W$;
\item If the interior of an edge $e \in \EE\GG$ contains $n \in \N$ points of $W$, then $e$ is split into $n+1$ new edges $\smash{[e'_{n+1}]}$ according to the points in the interior of $e$. The edge $e$ remains unchanged otherwise.
\item If the interior of an edge $e \in \EE\GG$ is split into $\smash{[e'_{n+1}]}$ new edges, for some $n \in \N$, then we set
\begin{equation*}
c'_{e'_i} := \frac{c_e}{\d^{\G}\l(e_i^{\prime, -}, e_i^{\prime, +}\r)}, \quad \forall i \in [n+1].
\end{equation*}
The conductance of $e$ remain unchanged otherwise.
\end{enumerate}
The weighted dual graph $(\hat{\GG}', \hat{c}')$ can be naturally constructed from $(\GG', c')$. 
\end{definition}

\begin{remark}
At the level of the Smith diagram, adding new vertices to the interior of some edges of the graph according to the procedure described above corresponds to horizontally dissecting the rectangles associated to such edges. More precisely, let us assume for simplicity that only one point is added to the interior of an edge $e$, and let $e_1'$ and $e_2'$ be the new edges in which $e$ is split into. Suppose that $\smash{e_1^{\prime, -} = e^{-}}$ and $\smash{e_2^{\prime, +} = e^{+}}$. Let $\SS'$ be the tiling map associated to new weighted graph. Then it is immediate to check that $\SS(e) = \SS'(e_1') \cup \SS'(e_2')$. In particular the rectangles $\SS'(e_1')$ and $\SS'(e_2')$ have the same width of the rectangle $\SS(e)$, and the height of $\SS'(e_1')$ is proportional to $\smash{\d^{\G}(e_1^{\prime, -}, e_1^{\prime, +})}$, while that of $\SS'(e_2')$ is proportional to $\smash{\d^{\G}(e_2^{\prime, -}, e_2^{\prime, +})}$. We refer to Figure~\ref{fig_smith_tiling_add_new} for a diagrammatic representation of this procedure.
\end{remark}

\begin{figure}[h]
	\centering
	\includegraphics[scale=1]{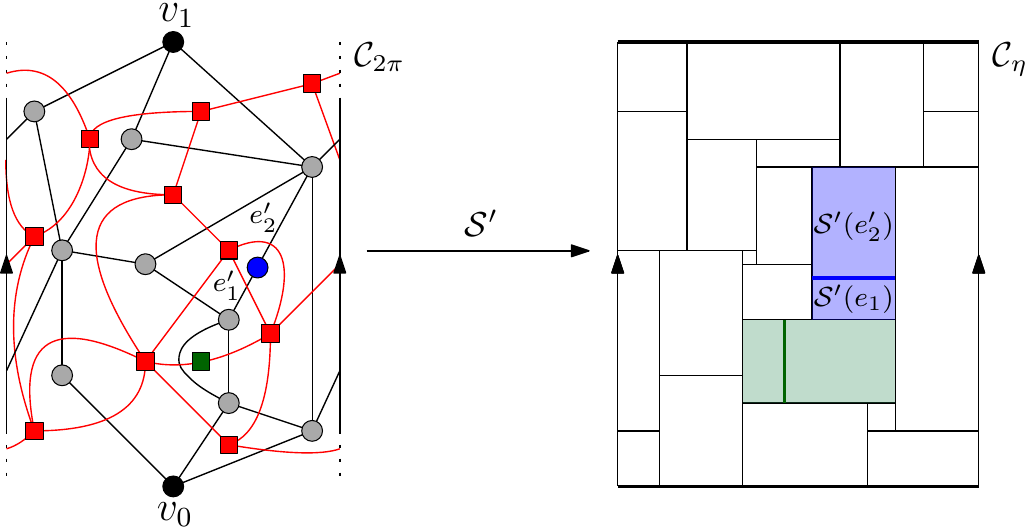}
	\caption[short form]{\small An edge $e$ is split into two edges, $\smash{e_1'}$ and $\smash{e_2'}$, with conductances $c_{e'_1}$ and $c_{e'_2}$, as specified in Definition~\ref{def_new_graph}, by adding a new blue vertex. On the right, the corresponding original rectangle $\smash{\SS(e)}$ is split into two rectangles $\smash{\SS'(e_1')}$ and $\smash{\SS'(e_2')}$ such that $\smash{\SS(e) = \SS'(e_1') \cup \SS'(e_2')}$. Similarly, a dual edge is split into two dual edges with suitable conductances by adding a new green dual vertex. On the right, the corresponding original rectangle is then split into the union of two rectangles.}
	\label{fig_smith_tiling_add_new}
\end{figure}

\noindent
Let $\frkh': \VV\GG' \to [0, 1]$ be the voltage function associated to the quadruple $(\GG', c', v_0, v_1)$. The following lemma relates the function $\frkh$ with $\frkh'$.

\begin{lemma}
\label{lm_harm_coincides}
For every $x \in \VV\GG$, it holds that $\frkh(x) = \frkh'(x)$. 
\end{lemma}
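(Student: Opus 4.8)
The plan is to show that the restriction of $\frkh'$ to $\VV\GG$ satisfies the defining properties of $\frkh$ (Definition~\ref{def_voltage_fct}), and then invoke uniqueness. There are exactly two things to check: that $\frkh'|_{\VV\GG}$ has the correct boundary values at $v_0$ and $v_1$, and that it is discrete harmonic on $\VV\GG\setminus\{v_0,v_1\}$ with respect to the conductances $c$. The boundary values are immediate, since $v_0,v_1\in\VV\GG\subset\VV\GG'$ and by construction $\frkh'(v_0)=0$, $\frkh'(v_1)=1$. So the entire content is the harmonicity check, which amounts to understanding how a single subdivided edge contributes to the harmonic average at an original vertex.

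\textbf{Key steps.} First I would reduce to the case of a single added vertex $w$ lying in the interior of a single edge $e\in\EE\GG$ with endpoints $a=e^-$ and $b=e^+$; the general case follows by induction on $|W|$ (each step produces a new weighted planar graph of the same type, and one can add the points one at a time). Write $e_1',e_2'$ for the two new edges, with $e_1'$ joining $a$ to $w$ and $e_2'$ joining $w$ to $b$, and let $\alpha:=\d^\G(a,w)$, $\beta:=\d^\G(w,b)$, so $\alpha+\beta=1$ (the edge had unit length as a metric-graph edge) and by Definition~\ref{def_new_graph} the new conductances are $c_{e_1'}=c_e/\alpha$ and $c_{e_2'}=c_e/\beta$. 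The key computation is at the vertex $w$ itself: since $w\notin\{v_0,v_1\}$ and $w$ has degree $2$ in $\GG'$, harmonicity of $\frkh'$ at $w$ reads
\begin{equation*}
\frkh'(w)=\frac{c_{e_1'}\frkh'(a)+c_{e_2'}\frkh'(b)}{c_{e_1'}+c_{e_2'}}=\frac{\beta\,\frkh'(a)+\alpha\,\frkh'(b)}{\alpha+\beta}=\beta\,\frkh'(a)+\alpha\,\frkh'(b),
\end{equation*}
i.e.\ $\frkh'(w)$ is the linear interpolation of $\frkh'(a)$ and $\frkh'(b)$. Plugging this back, the current flowing from $a$ into the path $a$--$w$--$b$ equals $c_{e_1'}(\frkh'(w)-\frkh'(a))=(c_e/\alpha)\cdot\alpha(\frkh'(b)-\frkh'(a))=c_e(\frkh'(b)-\frkh'(a))$, which is exactly the current that the single edge $e$ with conductance $c_e$ would carry between $a$ and $b$ for the same boundary values. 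Hence, for every original vertex $x\in\VV\GG\setminus\{v_0,v_1\}$, the net flow out of $x$ in $\GG'$ (summing over the edges of $\GG'$ incident to $x$, some of which are pieces $e_i'$ of subdivided original edges but each contributing exactly $c_e(\frkh'(\text{other end})-\frkh'(x))$) equals the net flow out of $x$ in $\GG$ computed from $\frkh'|_{\VV\GG}$; since this net flow vanishes by harmonicity of $\frkh'$ at $x$, the function $\frkh'|_{\VV\GG}$ is discrete harmonic at $x$ with respect to $c$. By Definition~\ref{def_voltage_fct}'s uniqueness clause, $\frkh'|_{\VV\GG}=\frkh$, which is the claim.

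\textbf{Main obstacle.} There is no serious obstacle here; this is essentially the well-known fact that subdividing a resistor into series resistors (with resistances summing to the original) does not change any voltages or currents in the rest of the network, combined with the series law $1/c_{e_1'}+1/c_{e_2'}=\alpha/c_e+\beta/c_e=1/c_e$. The only points requiring a little care are bookkeeping ones: making sure the induction is set up cleanly so that adding points one at a time is legitimate (the class of doubly marked finite weighted planar graphs properly embedded in $\CC_{2\pi}$ is preserved, and $v_0,v_1$ are never among the added points since $W\subset\G$ can be taken to avoid the vertices, or if it does not, the point is simply already a vertex and nothing happens), and handling the possibility that an original edge is a self-loop or that several added points lie on the same edge — but none of these change the computation, since each maximal new edge $e_i'$ still satisfies $c'_{e_i'}=c_e/\d^\G(e_i^{\prime,-},e_i^{\prime,+})$ and the telescoping/series argument goes through verbatim. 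I would present the single-edge, single-point computation in full and then state that the general case follows by iterating.
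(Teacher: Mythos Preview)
Your proposal is correct and follows essentially the same approach as the paper: reduce by induction to the case of a single added vertex, then invoke the series law for resistors (which you spell out explicitly, whereas the paper simply cites \cite[Section~2.3.I]{LP16}). The only difference is level of detail; the underlying argument is identical.
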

\begin{proof}
We denote by $K := \#\{\VV\GG' \setminus \VV\GG\}$ the number of new vertices added to the graph $\GG'$. If $K = 0$, then the result follows immediately. Let us now assume that $K = 1$ and suppose that the edge $e \in \EE\GG$ is split into two new edges $e_1'$ and $e_2'$ with conductances $\smash{c'_{e_1'}}$ and $\smash{c'_{e_2'}}$ as specified in Definition~\ref{def_new_graph}. Then, the desired result follows immediately from the series law of electrical network (cf.\ \cite[Section~2.3.I]{LP16}). The general case follows from a simple induction argument on $K$.
\end{proof}

\begin{lemma}
\label{lm_random_walk_assoc}
For $x \in \VV\GG$, let $X^{\prime, x}$ be the random walk on $(\GG', c')$ started from $x$. Let $\tau_0 := 0$ and, for every $k \in \N_0$, we define inductively $\tau_{k+1} := \inf\{j > \tau_{k} \, : \, X^{\prime, x}_j \in \VV\GG\}$. Then $\{X_{\tau_k}^{\prime, x}\}_{k \in \N}$ has the same distribution as the random walk on $(\GG, c)$ started from $x$.
\end{lemma}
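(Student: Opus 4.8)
The plan is to verify that the induced chain $\{X_{\tau_k}^{\prime,x}\}_{k\in\N_0}$ on $\VV\GG$ has the correct one-step transition probabilities, namely $\P(X_{\tau_1}^{\prime,x}=y)=c_{xy}/\pi(x)$ for $y\in\VV\GG(x)$, and then invoke the Markov property of $X^{\prime,x}$ at the stopping times $\tau_k$ to conclude that $\{X_{\tau_k}^{\prime,x}\}_{k\in\N_0}$ is a Markov chain with these transitions, hence equal in law to the random walk on $(\GG,c)$ started from $x$. Since adding the vertices in $W$ only subdivides edges of $\GG$ (and never creates new adjacencies between original vertices), between consecutive visits to $\VV\GG$ the walk $X^{\prime,x}$ either stays at $x$ (if $W$ adds no interior vertex, there is nothing to check) or travels along one subdivided edge $e=xy\in\EE\GG$; the subdivision of $e$ is a finite path $x=u_0,u_1,\dots,u_{m},u_{m+1}=y$ of vertices of $\GG'$, with the interior $u_1,\dots,u_m\in W$, and on this path $X^{\prime,x}$ performs a one-dimensional birth-death walk until it exits at one of the two endpoints $x$ or $y$.

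The key computation is therefore a gambler's-ruin-type identity. First I would note that by the series law for electrical networks (cf.\ \cite[Section~2.3.I]{LP16}), the effective conductance of the subdivided path between $x$ and $y$ equals $c_e$, exactly as in the proof of Lemma~\ref{lm_harm_coincides}; equivalently, the conductances $c'_{e'_i}=c_e/\d^{\G}(e_i^{\prime,-},e_i^{\prime,+})$ were chosen precisely so that the resistances along the path sum to $1/c_e$. Then, for the walk started at $x$, the probability that it reaches $y$ before returning to $x$ equals the effective conductance between $x$ and $y$ (in the subgraph consisting of that path) divided by $\pi'(x)$; but one must be slightly careful because $x$ may have several incident edges in $\EE\GG$, each subdivided. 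Cleanly: conditioned on the first step out of $x$ going into the subdivision of a particular edge $e=xy$, the walk performs a birth-death walk on $u_0,\dots,u_{m+1}$ and exits at $u_{m+1}=y$ rather than at $u_0=x$ with probability (by the standard harmonic/effective-resistance computation on a line) proportional to $c_e$, and by summing over which incident edge is chosen first, together with the strong Markov property at each return to $x$, one gets $\P(X_{\tau_1}^{\prime,x}=y)=c_{xy}/\pi(x)$. Antisymmetry in the two endpoints of each subdivided edge makes the bookkeeping transparent: what matters is only the effective conductance of each subdivided edge, which is $c_e$.

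The main obstacle is organizing the argument so that the possibility of $x$ returning to itself many times — and the possibility that several edges incident to $x$ are subdivided — does not confuse the computation. The cleanest route is to first treat the case where exactly one interior vertex is added to a single edge, deduce the one-step transition probabilities directly from a three-vertex gambler's-ruin computation (which is identical in spirit to the series law used in Lemma~\ref{lm_harm_coincides}), then induct on $K:=\#(\VV\GG'\setminus\VV\GG)$: having subdivided all but the last new vertex, Lemma~\ref{lm_harm_coincides}-style reasoning plus the strong Markov property reduce adding one more interior point to the single-vertex case. Throughout, the strong Markov property of $X^{\prime,x}$ at the $\tau_k$ (which are finite a.s.\ since $\GG'$ is finite and connected) is what upgrades the one-step statement to equality in distribution of the whole chain.
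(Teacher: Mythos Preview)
Your proposal is correct and follows essentially the same approach as the paper: both argue by induction on $K=\#(\VV\GG'\setminus\VV\GG)$, verify the one-step transition probability $\P_x(X'_{\tau_1}=v)=c_{xv}/\pi(x)$ via the series-law/gambler's-ruin computation in the base case $K=1$, and then invoke the strong Markov property of $X^{\prime,x}$ at the stopping times $\tau_k$ to conclude.
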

\begin{proof}
As in the proof of Lemma~\ref{lm_harm_coincides}, let $K := \#\{\VV\GG' \setminus \VV\GG\}$. If $K = 0$, then the result is obvious. Let us now assume that $K = 1$ and let $y \in W$ be the vertex added to $\GG'$. We claim that the distribution of $X^{\prime, x}_{\tau_1}$ is equal to that of $X^{x}_{1}$. If all the edges in $\EE\GG(x)$ do not contain $y$, then the claim is obvious. Let us assume that there exists an edge in $\EE\GG(x)$ which contains $y$. Then, thanks to an easy computation, one can verify that
\begin{equation*}
\P_x(X'_{\tau_1} = v) = \frac{c_{xv}}{\pi(x)}, \quad \forall v \in \VV\GG(x).
\end{equation*}
Therefore, thanks to the strong Markov property of $X^{\prime, x}$, we get that for all $k \geq 2$, it holds that
\begin{equation*}
\P_x(X'_{\tau_k} = v \mid X'_{\tau_{k-1}} = w) = \begin{cases}
	c_{v w}/\pi(w), & v \in \VV\GG(w)  \\
	0 , & \text{otherwise}, 
\end{cases}\quad \forall k \geq 2,
\end{equation*}
which proves the result if $K=1$. The general case follows from a simple induction argument on $K$.
\end{proof}

\begin{remark}
\label{rem_width_coinc}
Given a finite set $\hat{W} \subset \hat{\G}$, following a similar procedure to the one described above, one can also construct the weighted dual graph $(\hat{\GG}', \hat{c}')$ associated to $(\hat{\GG}, c)$ and $\hat{W}$. In particular, results similar to the one stated above hold also for the respective dual counterparts. For example, if $\frkw': \VV\hat{\GG}' \to \R/\eta \Z$ is the width coordinate function associated to the new weighted graph, then $\frkw'$ restricted to $\VV\hat{\GG}$ coincides with the original width coordinate function. Moreover, adding new dual vertices to the interior of some dual edges corresponds to vertically dissecting the associated rectangles in the Smith diagram. We refer to Figure~\ref{fig_smith_tiling_add_new} for a diagrammatic representation of this procedure.
\end{remark}

\subsection{Periodicity}
\label{sub_periodicity}
We collect here some properties of the lifted Smith diagram that are due to its periodicity. We recall that the map $\dotSS^{\dag, \rm{rand}}$ is defined in Definition~\ref{def_Smith_rand}.
\begin{lemma}
\label{lm_indep_position}
Let $\x_1$, $\x_2 \in \VV\GG^{\dag}$ be such that $\sigma_{2 \pi}(\x_1) = \sigma_{2 \pi}(\x_2)$. Then, it holds almost surely that
\begin{equation*}
\left|\frac{\Re(\dotSS^{\dag, \rm{rand}}(\x_2)) - \Re(\dotSS^{\dag, \rm{rand}}(\x_1))}{\eta} - \frac{\Re(\x_2)- \Re(\x_1)}{2 \pi}\right| \leq 1. 
\end{equation*}
\end{lemma}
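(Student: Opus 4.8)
The plan is to reduce everything to the periodicity of the lifted tiling together with the key identity from Lemma~\ref{lm_harm_conj} relating $\frkw^{\dag}$ at two lifts of the same dual vertex. First I would recall the structure: by Lemma~\ref{lm_harm_conj}, for any two dual vertices $\hx_1$, $\hx_2 \in \VV\hat{\GG}^{\dag}$ with $\sigma_{2\pi}(\hx_1) = \sigma_{2\pi}(\hx_2)$ we have $(\frkw^{\dag}(\hx_1) - \frkw^{\dag}(\hx_2))/\eta = (\Re(\hx_1) - \Re(\hx_2))/(2\pi)$, which says precisely that the ``horizontal shift in the Smith picture divided by $\eta$'' equals the ``horizontal shift in the a priori picture divided by $2\pi$.'' Since the lifted rectangle tiling $\{\sfR_{\e}\}_{\e \in \EE\GG^{\dag}}$ is periodic of period $\eta$ and this periodicity is compatible with the covering maps $\sigma_{2\pi}$ and $\sigma_\eta$, it follows that if $\x_1$, $\x_2 \in \VV\GG^{\dag}$ satisfy $\sigma_{2\pi}(\x_1) = \sigma_{2\pi}(\x_2)$, then the horizontal line segments $\SS^{\dag}(\x_1)$ and $\SS^{\dag}(\x_2)$ are translates of one another by exactly $k\eta$ in the first coordinate, where $k = (\Re(\x_2) - \Re(\x_1))/(2\pi) \in \Z$ (this is an integer because the two lifts differ by a deck transformation of $\CC_{2\pi}$, whose translation amounts are $2\pi\Z$). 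Indeed, $\x_1$ and $\x_2$ project to the same vertex $x \in \VV\GG$, and the lift $\x_2$ is obtained from $\x_1$ by applying the deck transformation $(t, s) \mapsto (t + 2\pi k, s)$; by the periodicity of $\SS^{\dag}$ this sends $\SS^{\dag}(\x_1)$ to $\SS^{\dag}(\x_2)$ via $(u, s) \mapsto (u + \eta k, s)$.

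Given that the two horizontal segments are exact translates by $\eta k$, their midpoints also differ by exactly $\eta k$ in the first coordinate: $\Re(\dotSS^{\dag}(\x_2)) - \Re(\dotSS^{\dag}(\x_1)) = \eta k$, so in the deterministic (midpoint) case the left-hand quantity would be exactly $0$. For the randomized map $\dotSS^{\dag, \mathrm{rand}}$, the two chosen points are uniform on $\SS^{\dag}(\x_1)$ and on its translate $\SS^{\dag}(\x_2) = \SS^{\dag}(\x_1) + (\eta k, 0)$, but they are sampled \emph{independently}, so their horizontal coordinates can differ by as much as the full width $W$ of the segment $\SS^{\dag}(\x_1)$ (in either direction) on top of the deterministic offset $\eta k$. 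Hence
\[
\frac{\Re(\dotSS^{\dag, \mathrm{rand}}(\x_2)) - \Re(\dotSS^{\dag, \mathrm{rand}}(\x_1))}{\eta} - \frac{\Re(\x_2) - \Re(\x_1)}{2\pi}
= \frac{\Re(\dotSS^{\dag, \mathrm{rand}}(\x_2)) - \Re(\dotSS^{\dag, \mathrm{rand}}(\x_1))}{\eta} - k,
\]
and the numerator on the right equals $\eta k$ plus a quantity of absolute value at most the width $W$ of the horizontal segment. So the whole expression has absolute value at most $W/\eta$. It therefore remains to argue that the width of any horizontal segment $\sfH_{\x}$ is at most $\eta$; this is immediate because $\sfH_{\x}$ lies inside the cylinder $\CC_\eta = \R/\eta\Z \times [0,1]$ (equivalently, the union of intervals $\sfI_{\e}$ over $\e \in \EE\GG^{\dag,\downarrow}(\x)$ is a bounded segment whose projection to $\R/\eta\Z$ is an arc, hence of length $\le \eta$; no horizontal segment can wind around the cylinder since the rectangles tile $\CC_\eta$ and each contributes disjoint interior). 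This gives $W \le \eta$ and the bound $\le 1$ follows.

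The only genuinely delicate point — and the one I'd expect to be the main obstacle — is making rigorous the claim that $\SS^{\dag}(\x_1)$ and $\SS^{\dag}(\x_2)$ are \emph{exact} translates by $(\eta k, 0)$, i.e., carefully tracking how the period-$\eta$ periodicity of $\{\sfR_{\e}\}$ interacts with the relation $\frkh^{\dag}(\x) = \frkh(\sigma_{2\pi}(\x))$ and with the identity of Lemma~\ref{lm_harm_conj} at all the dual vertices appearing in the definition \eqref{eq_hor} of $\sfH_{\x}$. Concretely, for each harmonically oriented edge $\e \in \EE\GG^{\dag,\downarrow}(\x_1)$ there is a corresponding edge $\e' \in \EE\GG^{\dag,\downarrow}(\x_2)$ with $\sigma_{2\pi}(\e') = \sigma_{2\pi}(\e)$, and one must check that the interval $\sfI_{\e'}$ is exactly $\sfI_{\e} + \eta k$, which follows by applying Lemma~\ref{lm_harm_conj} to the endpoints of the dual edges $\he$ and $\he'$ — using that these endpoints are pairwise related by the same deck transformation. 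Once this bookkeeping is done, everything else is the short estimate above. I would present the argument by first handling the deterministic translation statement as a lemma-internal claim, then reading off the randomized bound.
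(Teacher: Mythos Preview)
Your proposal is correct and follows essentially the same approach as the paper: both use Lemma~\ref{lm_harm_conj} applied to the dual endpoints of the edges incident to $\x_1$ and $\x_2$ to show that the horizontal segments $\SS^{\dag}(\x_1)$ and $\SS^{\dag}(\x_2)$ are exact translates by $\eta k$ where $k=(\Re(\x_2)-\Re(\x_1))/(2\pi)$, and then bound the error from the independent uniform sampling by the segment width, which is at most $\eta$. The paper writes this out slightly more computationally (using $\EE\GG^{\dag,\uparrow}(\x_i)$ rather than $\EE\GG^{\dag,\downarrow}(\x_i)$, and tracking the interval endpoints explicitly), but the substance is identical.
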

\begin{proof}
We observe that the proof of this result is an easy consequence of Lemma~\ref{lm_harm_conj}. However, it does not follow immediately from Lemma~\ref{lm_harm_conj} since the points $\dotSS^{\dag, \rm{rand}}(\x_1)$ and $\dotSS^{\dag, \rm{rand}}(\x_2)$ are points chosen uniformly at random from the horizontal segments $\SS^{\dag}(\x_1)$ and $\SS^{\dag}(\x_2)$, respectively. We give here the details for completeness. 

\medskip
\noindent
Let $\EE\GG^{\dag, \uparrow}(\x_1) = [\e^1_{n}]$ be the set of lifted harmonically oriented edges with tails equals to $\x_1$ ordered in such a way that $\he^1_{1} \cdots \he^1_{n}$ forms a path in $\hat{\GG}^{\dag}$ oriented from left to right. Define $\EE\GG^{\dag, \uparrow}(\x_2) = [\e^2_{n}]$ in a similar way. Then, by construction of the map $\dotSS^{\dag, \rm{rand}}$, it holds that 
\begin{equation*}
\Re(\dotSS^{\dag, \rm{rand}}(\x_1)) \in \l[\frkw^{\dag}(\hat{\e}^{1, -}_{1}) , \frkw^{\dag}(\hat{\e}^{1, +}_{n})\r], \qquad \Re(\dotSS^{\dag, \rm{rand}}(\x_2)) \in \l[\frkw^{\dag}(\hat{\e}^{2, -}_{1}) , \frkw^{\dag}(\hat{\e}^{2, +}_{n})\r].
\end{equation*}
Moreover, by Lemma~\ref{lm_harm_conj}, we have that
\begin{equation*}
\frkw^{\dag}(\hat{\e}^{1, -}_{1}) - \frkw^{\dag}(\hat{\e}^{2, -}_{1})= \frac{\eta}{2\pi}\left(\Re(\x_1) - \Re(\x_2)\right), \qquad \frkw^{\dag}(\hat{\e}^{1, +}_{n}) - \frkw^{\dag}(\hat{\e}^{2, +}_{n}) = \frac{\eta}{2\pi}\left(\Re(\x_1) - \Re(\x_2)\right),
\end{equation*}
and also 
\begin{equation*}
\l|\frkw^{\dag}(\hat{\e}^{1, +}_{n}) - \frkw^{\dag}(\hat{\e}^{1, -}_{1})\r| = \l|\frkw^{\dag}(\hat{\e}^{2, +}_{n})- \frkw^{\dag}(\hat{\e}^{2, -}_{1})\r| \leq \eta, 
\end{equation*}
where the first equality is again due to Lemma~\ref{lm_harm_conj}, and the last inequality is due to the fact that, by construction of the Smith diagram, every horizontal line segment has width at most $\eta$.
Therefore, putting together all these facts, one can readily obtain the desired result.
\end{proof}

\medskip
\noindent
In order to state and prove the next lemma of this subsection, we need to introduce some notation. Let $a \in (0, 1)$, and assume that all points in the set $\frkh^{-1}(a)$ are vertices in $\VV\GG$. We define the set of harmonically oriented edges $\EE\GG_a$ and the corresponding set of dual edges $\EE\hat\GG_a$ as follows
\begin{equation*}
\EE\GG_a := \l\{e \in \EE\GG \, : \, \frkh(e^{-}) = a < \frkh(e^{+})\r\}, \qquad \EE\hat\GG_a := \l\{\hat e \in \EE\hat\GG \, : \, \hat{e} \text{ is the dual of some } e \in \EE\GG_a \r\},
\end{equation*}
and we define the following sets of vertices 
\begin{equation*}
\VV\GG_a := \l\{x \in \VV\GG \, : \, x = e^{-} \text{ for some } e \in \EE\GG_a\r\}, \qquad \VV\hat{\GG}_a := \l\{\hat{x} \in \VV\hat{\GG} \, : \, \hat{x} = \hat{e}^{-} \text{ for some } \hat{e} \in \EE\hat{\GG}_a\r\}, 
\end{equation*} 
i.e., $\VV\GG_a$ is nothing but $\frkh^{-1}(a)$.
Furthermore, we denote by $\smash{\EE\GG_a^{\dag}}$, $\smash{\EE\hat{\GG}^{\dag}_a}$, $\smash{\VV\GG_a^{\dag}}$, $\smash{\VV\hat{\GG}_a^{\dag}}$ the lifts to the universal cover of the corresponding objects.

\medskip
\noindent
Now, we let $\smash{\hx^a_0}$ be the vertex in $\smash{\VV\hat{\GG}^{\dag}_a}$ whose real coordinate is nearest to $0$. We note that the set $\smash{\EE\GG_a}$ is a cutset and that the associated set of oriented dual edges $\smash{\EE\hat{\GG}_a}$ admits an enumeration $\smash{[\hat{e}_n]}$ such that $\smash{\hat{e}_1 \cdots \hat{e}_n}$ forms a counter-clockwise oriented noncontractible simple closed loop in the dual graph $\smash{\hat{\GG}}$. In particular, every edge $\smash{\hat{e}_i}$ admits a lift $\smash{\he_i}$ such that $\smash{\he_1 \cdots \he_n}$ forms a simple path oriented from left to right joining $\smash{\hx^a_0}$ to the shifted vertex $\smash{\hx^a_0 + (0, 2\pi)}$ (see Lemma~\ref{lm_path_lifting}). We refer to Figure~\ref{fig_period} for a diagrammatic representation.

\begin{definition}
\label{def_sets_path}
Letting $[\he_n]$ be as specified above, we define the set of lifted dual vertices 
\begin{equation}
\label{eq_imp_dual}
\hat{\PPP}_a^\dagger := \l\{\hx \in \VV\hat{\GG}^{\dag}_a \, : \, \hx = \he_i^{-} \text{ for some } i \in [n] \r\},
\end{equation}
and we also define the set of lifted vertices 
\begin{equation}
\label{eq_imp_primal}
\PPP_a^\dagger := \l\{\x \in (\frkh^{\dag})^{-1}(a) \, : \, \x = \e_i^{-} \text{ for some } i \in [n] \r\} , 
\end{equation}
where $[\e_n]$ is the set of lifted edges associated to $[\he_n]$.
\end{definition}

\begin{remark}
\label{rem_large_exc}
Consider the infinite strip $\smash{\sfS^{a}_{2\pi} := \l[\Re(\hx_0^a), \Re(\hx_0^a) + 2\pi\r) \times \R}$. We remark that in general the set $\smash{\hat{\PPP}_a^\dagger}$ is not fully contained in $\smash{\sfS^{a}_{2\pi}}$.
We also observe that, thanks to the fact that $\EE\GG_a$ is a cutset, it holds that $\smash{\sigma_{2\pi}(\PPP_a^\dagger)  = \frkh^{-1}(a)}$.
\end{remark}

\noindent
We are now ready to state and prove the next lemma of this subsection. We refer to Figure~\ref{fig_period} for a diagram illustrating the various objects involved in the proof of Lemma~\ref{lm_bound_width_dot}.

\begin{figure}[h]
\centering
\includegraphics[scale=1]{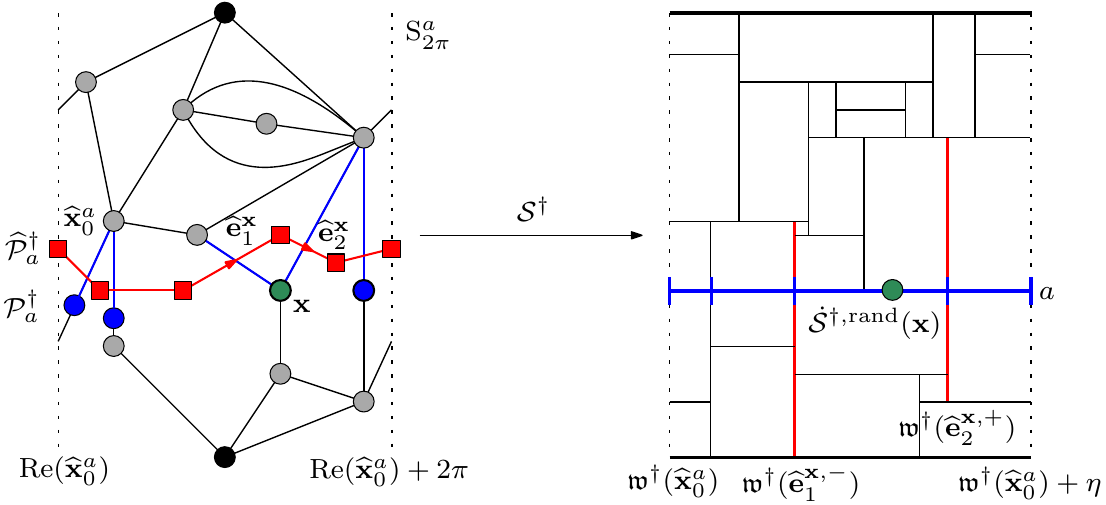}
\caption[short form]{\small A diagram illustrating the proof of Lemma~\ref{lm_bound_width_dot}. \textbf{Left:} A portion of the lifted graph $\smash{\GG^{\dag}}$ embedded in $\mathbb{R}^2$. The blue/green vertices are the vertices in the set $\smash{\PPP_a^\dagger}$. The green vertex $\x$ is a fixed vertex in $\smash{\PPP_a^\dagger}$. The red dual vertices are the vertices in $\smash{\hat{\PPP}_a^\dagger}$. The blue edges belong to the set $\smash{\EE\GG_a^{\dag}}$, and the red dual edges belong to the set $\EE\hat{\GG}^{\dag}_a$. The leftmost red dual vertex corresponds to the vertex $\hx^a_0$. The two red dual edges with an arrow are the edges $\smash{\he_1^{\x}}$ and $\smash{\he_2^{\x}}$. \textbf{Right:} A portion of the tiling of $\mathbb{R} \times [0,1]$ constructed via the lifted tiling map $\SS^{\dag}$. The blue horizontal line corresponds to $\smash{\cup_{\x \in \PPP_a^\dagger} \SS^{\dag}(\x)}$. The vertical red segments correspond to $\smash{\SS^{\dag}(\he_1^{\x, -})}$ and $\smash{\SS^{\dag}(\he_2^{\x, +})}$. The green point corresponds to a possible realization of $\dotSS^{\dag, \rm{rand}}(\x)$.}
\label{fig_period}
\end{figure}

\begin{lemma}
\label{lm_bound_width_dot}
Fix $a \in (0, 1)$ and assume that all the points in $\frkh^{-1}(a)$ are vertices in $\VV\GG$. 
Let $\hx^a_0 \in \VV\hat{\GG}_a^{\dag}$ be as specified above, then it holds that 
\begin{equation*}
0 \leq \frkw^{\dag}(\hx) - \frkw^{\dag}(\hx^a_0) \leq \eta, \quad \forall \hx \in \hat{\PPP}_a^\dagger,
\end{equation*}
and also, it holds almost surely that
\begin{equation*}
0 \leq  \Re(\dotSS^{\dag, \rm{rand}}(\x)) - \frkw^{\dag}(\hx^a_0) \leq \eta, \quad \forall \x \in \PPP_a^\dagger.
\end{equation*}
\end{lemma}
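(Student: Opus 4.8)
The plan is to prove the two displayed inequalities in order, the first being about the lifted width function $\frkw^\dag$ on the dual vertices $\hat{\PPP}_a^\dagger$, and the second being the analogous statement for the random Smith embedding $\dotSS^{\dag,\rm{rand}}$ on the primal vertices $\PPP_a^\dagger$; the second will follow from the first together with a geometric argument about the horizontal segments of the lifted Smith diagram. For the first inequality, recall from Definition~\ref{def_sets_path} that $\hat{\PPP}_a^\dagger = \{\he_i^{-} : i \in [n]\}$, where $\he_1 \cdots \he_n$ is a simple path oriented from left to right joining $\hx^a_0$ to $\hx^a_0 + (0, 2\pi)$. Using the formula \eqref{eq_conj_harmo_lift} for $\frkw^\dag$ (and the fact established right after Lemma~\ref{lm_harm_conj} that $\nabla\frkw^\dag(\he) = \frkh^\dag(\e^{+}) - \frkh^\dag(\e^{-})$ along dual edges), I would write $\frkw^\dag(\he_k^{-}) - \frkw^\dag(\hx^a_0) = \sum_{i=1}^{k-1} \nabla\frkw^\dag(\he_i) = \sum_{i=1}^{k-1}(\frkh^\dag(\e_i^{+}) - \frkh^\dag(\e_i^{-}))$. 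Since every edge $\e_i \in \EE\GG_a^\dag$ is harmonically oriented and crosses the level $a$, i.e., $\frkh^\dag(\e_i^{-}) = a < \frkh^\dag(\e_i^{+})$, each summand $\nabla\frkh^\dag(\e_i) = \nabla\frkw^\dag(\he_i)$ is strictly positive; this gives the lower bound $\frkw^\dag(\hx) - \frkw^\dag(\hx^a_0) \ge 0$ and monotonicity along the path. For the upper bound, I take $k = n+1$ (i.e., the full path, ending at $\hx^a_0 + (0,2\pi)$): by Lemma~\ref{lm_harm_conj}, $\frkw^\dag(\hx^a_0 + (0,2\pi)) - \frkw^\dag(\hx^a_0) = \eta$, so the total sum over the whole path equals $\eta$, and since all partial sums are nondecreasing and the summands are positive, every partial sum lies in $[0,\eta]$; this yields $0 \le \frkw^\dag(\hx) - \frkw^\dag(\hx^a_0) \le \eta$ for all $\hx \in \hat{\PPP}_a^\dagger$.

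For the second inequality, fix $\x \in \PPP_a^\dagger$, say $\x = \e_j^{-}$ for some $j$. The key is to locate the horizontal segment $\SS^\dag(\x) = \sfH_{\x}$ — defined in \eqref{eq_hor} as $\bigcup_{\e \in \EE\GG^{\dag,\downarrow}(\x)} \sfI_{\e} \times \{\frkh^\dag(\x)\}$, and equivalently (by the harmonicity remark) as the union of the $\sfI_{\e}$ over harmonically oriented edges $\e$ with tail $\x$ — horizontally within $\R \times [0,1]$. Each interval $\sfI_{\e} = [\frkw^\dag(\he^{-}), \frkw^\dag(\he^{+})]$ for the upward edges $\e$ at $\x$ has its endpoints given by the $\frkw^\dag$-values of consecutive dual vertices around $\x$; concretely, if $\he_1^{\x} \cdots \he_m^{\x}$ is the left-to-right ordering of the dual edges corresponding to $\EE\GG^{\dag,\uparrow}(\x)$ (as in the proof of Lemma~\ref{lm_indep_position}), then $\Re(\dotSS^{\dag,\rm{rand}}(\x)) \in [\frkw^\dag(\he_1^{\x,-}), \frkw^\dag(\he_m^{\x,+})]$ almost surely. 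It therefore suffices to show both $\frkw^\dag(\he_1^{\x,-}) \ge \frkw^\dag(\hx^a_0)$ and $\frkw^\dag(\he_m^{\x,+}) \le \frkw^\dag(\hx^a_0) + \eta$. The dual vertices $\he_1^{\x,-}$ and $\he_m^{\x,+}$ are the two dual vertices in $\VV\hat{\GG}^\dag_a$ flanking $\x$ at level $a$ — they lie in $\hat{\PPP}_a^\dagger$ (up to the boundary identification of the strip, which is exactly where the ``$\le \eta$'' rather than ``$< \eta$'' comes in), so the bounds for them follow from the first part of the lemma.

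The main obstacle I anticipate is the bookkeeping at the ``seam'' of the strip: the set $\hat{\PPP}_a^\dagger$ need not be contained in $\sfS^a_{2\pi}$ (Remark~\ref{rem_large_exc}), and a vertex $\x \in \PPP_a^\dagger$ sitting near $\hx^a_0$ could in principle have one of its flanking dual vertices be $\hx^a_0$ itself and the other be a vertex congruent to one near $\hx^a_0 + (0,2\pi)$; one must check that the relevant dual vertices are precisely $\he_i^{-}$ for $i$ in the path enumeration $[\he_n]$ (possibly including the terminal vertex $\he_n^{+} = \hx^a_0 + (0,2\pi)$, whose $\frkw^\dag$-value is exactly $\frkw^\dag(\hx^a_0) + \eta$), so that the weak inequalities are sharp and correct. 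This requires carefully invoking that $\EE\GG_a$ is a cutset and that $\he_1 \cdots \he_n$ is a \emph{simple} noncontractible loop (after projection), so that going once around the cylinder at height $a$ traverses each dual edge of $\EE\hat\GG_a$ exactly once and the horizontal segments $\sfH_{\x}$ for $\x \in \PPP_a^\dagger$ tile — modulo $\eta$ — the full circle $\R/\eta\Z$ at height $a$ without overlap or gap. Once this combinatorial picture is pinned down, everything reduces to the telescoping-sum computation and Lemma~\ref{lm_harm_conj}, both routine.
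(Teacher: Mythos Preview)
Your proposal is correct and follows essentially the same approach as the paper: a telescoping sum along the dual path $\he_1\cdots\he_n$ combined with Lemma~\ref{lm_harm_conj} for the first inequality, and then the observation that $\Re(\dotSS^{\dag,\rm{rand}}(\x))$ lies in $[\frkw^\dag(\he_1^{\x,-}),\frkw^\dag(\he_m^{\x,+})]$ with both endpoints controlled by the first part. You are in fact more careful than the paper about the seam case (where the right flanking dual vertex may be $\hx^a_0+(0,2\pi)$ rather than an element of $\hat{\PPP}_a^\dagger$ proper), which the paper glosses over; note also that both you and the paper write $\nabla\frkw^\dag(\he_i)$ for the raw increment $\frkw^\dag(\he_i^+)-\frkw^\dag(\he_i^-)$, which strictly speaking differs from the definition in~\eqref{eq_Cauchy_Riemann} by the dual conductance, but this does not affect the sign argument or the conclusion.
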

\begin{proof}
We start by proving the first part of the lemma. We consider the set of dual edges $[\he_n]$ such that $\he_1 \cdots \he_n$ forms a simple path oriented from left to right joining $\hx^a_0$ to the shifted vertex $\hx^a_0 + (0, 2\pi)$, as specified before the lemma statement. Thanks to Lemma~\ref{lm_harm_conj}, we know that
\begin{equation*}
\sum_{i = 1}^n \nabla \frkw^{\dag}(\he_i) = \eta,
\end{equation*}
and each summand in the sum is non-negative. Now, let $\hx \in \hat{\PPP}_a^\dagger$, then there exists a subpath $\he^{\hx}_1 \cdots \he^{\hx}_k$ of $\he_1 \cdots \he_n$ which connects $\hx^a_0$ to $\hx$. Therefore, we have that
\begin{equation*}
\frkw^{\dag}(\hx) - \frkw^{\dag}(\hx^a_0) = \sum_{i = 1}^k \nabla \frkw^{\dag}(\he^{\hx}_i),
\end{equation*} 
and the conclusion then follows thanks to the fact that
\begin{equation*}
0 \leq \sum_{i = 1}^k \nabla \frkw^{\dag}(\he^{\hx}_i) \leq \sum_{i = 1}^n \nabla \frkw^{\dag}(\he_i)  = \eta.
\end{equation*}

\medskip
\noindent
We now prove the second part of the lemma. To this end, we fix $\x \in \PPP_a^\dagger$, and we let $\smash{\EE\GG^{\dag, \uparrow}(\x) = [\e^\x_k]}$ be the set of lifted harmonically oriented edges with tails equal to $\x$ ordered in such a way that the corresponding lifted dual edges $\smash{\he^\x_{1} \cdots \he^\x_k}$ forms a path in $\smash{\hat{\GG}^{\dag}}$ oriented from left to right (see~Figure~\ref{fig_period}). By construction of the map $\smash{\dotSS^{\dag, \rm{rand}}}$, it holds that
\begin{equation*}
\Re(\dotSS^{\dag, \rm{rand}}(\x)) \in \l[\frkw^{\dag}(\he^{\x,-}_{1}), \frkw^{\dag}(\he^{\x, +}_{k})\r].
\end{equation*}
Finally, since both $\he^{\x,-}_{1}$ and $\he^{\x, +}_{k}$ belong to $\hat{\PPP}_a^\dagger$, the conclusion follows thanks to the first part of the lemma. 
\end{proof}

\subsection{Hitting distribution of a horizontal line}
\label{sub_hitting}
Roughly speaking, the main goal of this subsection is to characterize the hitting distribution of a horizontal line on the Smith diagram for the Smith embedded random walk. Before precisely stating the result, we need to introduce some notation. 

\begin{definition}	
We define the map $\length: \VV\GG \to[0, \eta)$ as the function that assigns to each vertex $x \in \VV\GG$ the length of the horizontal segment $\SS(x)$ associated to $x$ by the Smith embedding, i.e., we let
\begin{equation*}
\length(x) := \sum_{\e\in \EE\GG^{\downarrow}(x)} \nabla \frkh(e), \quad \forall x \in \VV\GG, 
\end{equation*}
where $\EE\GG^{\downarrow}(x)$ denotes the set of harmonically oriented edges in $\EE\GG$ with heads equal to $x$.
\end{definition}

\noindent
We recall that the difference of the width coordinate function between the endpoints of a dual edge is equal to the gradient of the height coordinate of the corresponding primal edge. Moreover, thanks to the fact that the height coordinate function is harmonic, it is plain to see that 
\begin{equation*}
\length(x) = \sum_{\e\in \EE\GG^{\uparrow}(x)} \nabla \frkh(e), \quad \forall x \in \VV\GG, 
\end{equation*}
where $\EE\GG^{\uparrow}(x)$ denotes the set of harmonically oriented edges in $\EE\GG$ with tails equal to $x$. We can also naturally extend the definition of the length function to the metric graph $\G$. More precisely, given a point $x \in \G \setminus \VV\GG$, if $x$ lies in the interior of the edge $e \in \EE\GG$, then we set $\length(x):= \nabla\frkh(e)$. 
\begin{definition}
\label{def_measure_exit}
Given a value $a \in (0, 1)$, we define on the set $\frkh^{-1}(a) \subset \G$ the measure $\mu_a$ by letting
\begin{equation*}
\mu_a(x) := \frac{\length(x)}{\eta}, \quad  \forall x \in \frkh^{-1}(a).	
\end{equation*}
Since, by construction of the Smith diagram, it holds that $\smash{\sum_{x \in \frkh^{-1}(a)} \length(x) = \eta}$, the measure $\mu_a$ is a probability measure on the set $\smash{\frkh^{-1}(a)}$.
\end{definition}

\begin{remark}
\label{rem_starting_lifted_embedded}
We emphasize that, thanks to Remark~\ref{rem_large_exc}, one can also view $\mu_a$ as a probability measure on the set $\PPP_a^\dagger$, where we recall that $\PPP_a^\dagger$ is the set defined in \eqref{eq_imp_primal}.
\end{remark}

\noindent
From now on, we assume throughout the whole subsection that all the points in the set $\cup_{x \in \VV\GG}\frkh^{-1}(\frkh(x))$ are vertices in $\VV\GG$. At the level of the Smith diagram, this means that for all $x \in \VV\GG$, it holds that the set $\smash{\cup_{y \in \frkh^{-1}(\frkh(x))} \SS(y)}$ is equal to the noncontractible closed loop $\smash{\R/\eta\Z \times \{\frkh(x)\} \subset \CC_{\eta}}$. We observe that, for any given finite weighted planar graph, we can always canonically construct from it another finite weighted planar graph that satisfies this assumption. Indeed, suppose that there is a vertex $x \in \VV\GG$ such that $\smash{\frkh^{-1}(\frkh(x)) \not\subset \VV\GG}$, then, using the procedure described in Subsection~\ref{subsec_aux_height}, we can declare all the points in the finite set $\smash{\frkh^{-1}(\frkh(x)) \setminus \VV\GG}$ to be vertices of the graph.

\medskip
\noindent
We fix a value $a \in (0, 1)$ such that all the points in the set $\frkh^{-1}(a)$ are vertices in $\VV\GG$, and we let $X^{\mu_a}$ be the random walk on $(\GG, c)$ started from a point in $\frkh^{-1}(a)$ sampled according to the probability measure $\mu_a$.

\begin{definition}
\label{def_admissible_heights}
Let $X^{\mu_a}$ be as specified above. For $N \in \N$, we say that a finite sequence of height coordinates $[a_N]_0 \subset (0, 1)$ is \emph{admissible} for the random walk $X^{\mu_a}$ if $a_0 = a$ and 
\begin{equation*}
\P_{\mu_a}\l(\frkh(X_{n+1}) = a_{n+1} \mid \frkh(X_{n}) = a_{n}\r) > 0, \quad \forall n \in [N-1]_0.
\end{equation*}
\end{definition}

\noindent
We can now state the main result of this subsection.
\begin{lemma}
\label{lm_hitting_hor}
Let $X^{\mu_a}$ be as specified above. For $N \in \N$, let $[a_N]_0 \subset (0, 1)$ be an admissible sequence of height coordinates for the random walk $X^{\mu_a}$.  Then, for all $i \in [N]_0$, it holds that
\begin{equation*}
\P_{\mu_a}\l(X_{i} = x_i \mid \{\frkh(X_n) = a_n\}_{n=1}^{N}\r) = \mu_{a_i}(x_i), \quad \forall x_i \in \frkh^{-1}(a_i).
\end{equation*}
\end{lemma}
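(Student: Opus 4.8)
The key structural fact is that the Smith embedding "respects" the hitting measure $\mu_a$ in a very strong sense: not only does a walk started from $\mu_a$ have the property that its first hit of the level set $\frkh^{-1}(b)$ is distributed according to $\mu_b$ (this is essentially \cite[Lemma~6.2]{Geo16}), but conditioning on the \emph{entire sequence} of height levels visited does not break this. My plan is to prove the statement by (downward) induction on $i$, starting from $i=N$, using a reversibility/time-reversal argument for the weighted random walk together with the harmonic-conjugate structure of the Smith diagram. Specifically, I would first reduce to a one-step statement: it suffices to show that for any two admissible consecutive levels $a_n,a_{n+1}$ and any $x_{n+1}\in\frkh^{-1}(a_{n+1})$,
\begin{equation*}
\P_{\mu_a}\l(X_{n+1}=x_{n+1}\,\Big|\,\frkh(X_m)=a_m \text{ for } m\le n+1\r)=\mu_{a_{n+1}}(x_{n+1}),
\end{equation*}
and symmetrically a "backward" one-step statement conditioning additionally on the future levels $a_{n+2},\dots,a_N$. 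The forward-in-time part is the easy direction and follows from the induction hypothesis applied at level $n$ plus the observation, analogous to Georgakopoulos's lemma, that from a $\mu_{a_n}$-distributed point the conditional law of $X_{n+1}$ given that it lands on level $a_{n+1}$ is exactly $\mu_{a_{n+1}}$; this last fact is where one uses that the width function $\frkw$ is the discrete harmonic conjugate, so that the "flux" of the walk across the cutset $\EE\GG_{a_n}$ into a given horizontal segment of the next level is proportional to $\length$.

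The substantive point is that the conditioning on the remaining future levels $a_{n+2},\dots,a_N$ does not disturb this. Here I would invoke the Markov property: conditionally on $\frkh(X_{n+1})=a_{n+1}$, the event $\{\frkh(X_m)=a_m,\ m\ge n+2\}$ depends on the future of the walk only through $X_{n+1}$, so by Bayes' rule the conditional law of $X_{n+1}$ given all levels is $\mu_{a_{n+1}}$ \emph{reweighted} by the function $x\mapsto \P_x(\frkh(X_m)=a_m,\ m=n+2,\dots,N \mid \frkh(X_{n+1})=a_{n+1})$. So the whole lemma reduces to the claim that this reweighting function is \emph{constant} on $\frkh^{-1}(a_{n+1})$ --- equivalently, that the probability of realizing a given admissible future level-sequence does not depend on which vertex of a fixed level the walk currently sits at. This in turn should follow by another application of the same one-step fact propagated forward: if $X_{n+1}$ has \emph{any} distribution supported on $\frkh^{-1}(a_{n+1})$, then conditioning it to move to level $a_{n+2}$ makes $X_{n+2}\sim\mu_{a_{n+2}}$ regardless of the starting distribution --- but one must be careful, since the \emph{probability} of that move can still depend on the start. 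The clean way around this is to prove directly, again via the flux/harmonic-conjugate identity, that $\P_x(\frkh(X_1)=b)$ is the same for all $x\in\frkh^{-1}(a)$ with $a,b$ admissible-adjacent: the total flux leaving level $a$ and arriving at level $b$, restricted to a single source segment, is $\length(x)$ times a factor independent of $x$ by Kirchhoff's node law applied across the strip between the two levels. Iterating gives that $\P_x(\frkh(X_m)=a_m,\ m=n+2,\dots,N\mid\frkh(X_{n+1})=a_{n+1})$ is constant in $x$, completing the reweighting argument.

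A cleaner alternative, which I would present if the flux bookkeeping gets unwieldy, is to set up the computation entirely in terms of the \emph{time-reversed} walk. Since the weighted walk on $(\GG,c)$ is reversible with respect to $\pi$, and since one checks that $\mu_a$ is (up to the normalizing constant $\eta$) the restriction of $\pi$-harmonic-measure-type weights adapted to the Smith strip, the event $\{\frkh(X_m)=a_m,\ m=0,\dots,N\}$ together with the endpoint constraints is symmetric under reversal, so the forward one-step identity automatically yields the backward one-step identity, and the downward induction on $i$ closes. I expect the main obstacle to be exactly the verification that conditioning on future levels does not bias the current position --- i.e. establishing the constancy of the reweighting function --- and in particular getting the normalizations in the definition of $\mu_a$ (the $\length(x)/\eta$) to interact correctly with Kirchhoff's node law across a multi-level strip; everything else (the base case $i=N$, which is a single application of the Georgakopoulos-type computation, and the propagation of the Markov property) is routine.
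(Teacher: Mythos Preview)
Your proposal is correct and identifies precisely the same three ingredients as the paper: (i) the one-step ``Georgakopoulos-type'' computation that from $\mu_{a_n}$, conditioning on landing at level $a_{n+1}$ yields $\mu_{a_{n+1}}$; (ii) the constancy of $\P_x(\frkh(X_1)=b)$ over $x\in\frkh^{-1}(a)$, which makes the future-level conditioning harmless; and (iii) the Markov/Bayes bookkeeping to combine these. The paper's proof is organized as a forward induction (Step~2) followed by a conditional-independence observation (Step~3), rather than your downward induction, but the content is the same.

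One point worth noting: your justification of the constancy fact via ``flux/harmonic-conjugate identity \dots\ Kirchhoff's node law across the strip'' is more elaborate than necessary. The paper's Step~1 observes that, under the standing assumption $\cup_{x}\frkh^{-1}(\frkh(x))\subset \VV\GG$, from any $x\in\frkh^{-1}(a)$ the walk can reach exactly two heights $a_1,\tilde a_1$ in one step, and harmonicity of $\frkh$ at $x$ gives $a = a_1 p + \tilde a_1(1-p)$ with $p=\P_x(\frkh(X_1)=a_1)$; solving, $p=|a-\tilde a_1|/|a_1-\tilde a_1|$, manifestly independent of $x$. This is the cleaner route to the constancy you correctly flagged as the crux, and it avoids the normalization bookkeeping you anticipated as the main obstacle. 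Your time-reversal alternative would also work but is not what the paper does.
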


\noindent
Intuitively, the proof of the above lemma goes as follows. If $i = 1$ and we only had the conditioning on the event $\frkh(X_{1}) = a_{1}$, then the result would follow from a simple computation. However, since, in general, $i \neq 1$, and since we are also conditioning on the events $\{\frkh(X_n) = a_n\}_{n=1}^{i-1}$ and $\{\frkh(X_n) = a_n\}_{n=i+1}^{N}$, which represent the height coordinates of the random walk for past and future times respectively, the proof of the result is not immediate. However, the proof follows by a simple induction argument in which we show that we can ``forget'' about the conditioning on past and future times. Roughly speaking, the reason why we can forget about past times is due to the fact that the height component of the random walk is itself a Markov process, while the reason why we can forget about future times is due to the harmonicity of the height coordinate function. We now proceed with the proof of the lemma.

\begin{proof}[Proof of Lemma~\ref{lm_hitting_hor}] 
The proof involves three main steps. 

\medskip
\noindent\textbf{Step~1:} We start by proving that $\smash{\P_{\mu_a}(\frkh(X_1) = a_1 \mid X_0 = x_0) = \P_{\mu_a}(\frkh(X_1) = a_1)}$, for all $\smash{x_0 \in \frkh^{-1}(a)}$. Since we are assuming that $\smash{\VV\GG = \cup_{x \in \VV\GG}\frkh^{-1}(\frkh(x))}$, at its first step, the random walk can only reach two heights: height $a_{1}$ or height $\tilde{a}_{1}$ say. If $a_{1}> a$, since $\frkh$ is harmonic at $x_0$, it holds that
\begin{equation}
\label{eq_inter_hitting}
\frkh(x_0) = \frac{\sum_{e \in \EE\GG^{\downarrow}(x_0)} c_e \frkh(e^-) +\sum_{e \in \EE\GG^{\uparrow}(x_0)} c_e \frkh(e^+)}{\pi(x_0)} = \tilde{a}_{1} \frac{\sum_{e \in \EE\GG^{\downarrow}(x_0)} c_e} {\pi(x_0)} +  a_{1} \frac{\sum_{e \in \EE\GG^{\uparrow}(x_0)} c_e}{\pi(x_0)}, 
\end{equation}
where, as usual, $\EE\GG^{\downarrow}(x_0)$ (resp.\ $\EE\GG^{\uparrow}(x_0)$) denotes the set of harmonically oriented edges with heads (resp.\ tails) equal to $x_0$. 
Now, we observe that
\begin{equation*}
\P_{\mu_a}\l(\frkh(X_1) = a_{1} \mid X_0 = x_0 \r) = \frac{\sum_{e \in \EE\GG^{\uparrow}(x_0)} c_e}{\pi(x_0)}, \qquad \P_{\mu_a}\l(\frkh(X_1) = \tilde{a}_{1} \mid X_0 = x_0 \r) = \frac{\sum_{e \in \EE\GG^{\downarrow}(x_0)} c_e}{\pi(x_0)}.
\end{equation*}
In particular, plugging these expressions into \eqref{eq_inter_hitting} and rearranging, we obtain that
\begin{equation*}
\P\l(\frkh(X_1) = a_{1} \mid X_0 = x_0\r) = \frac{|\tilde{a}_{1} - a|}{|\tilde{a}_{1} - a_{1}|}, \quad \forall x_0 \in \frkh^{-1}(a),
\end{equation*}
from which the desired result follows since the right-hand side of the above expression does not depend on the particular choice of $\smash{x_0 \in \frkh^{-1}(a)}$. A similar conclusion also holds if $a_{1} < a$. Now, proceeding by induction, one can prove that for any $i \in[N]_0$ and for all $x_0 \in \frkh^{-1}(a), \ldots, x_i \in \frkh^{-1}(a_i)$, it holds that 
\begin{equation*}
\P_{\mu_a}\l(\{\frkh(X_{i+n}) = a_{i+n}\}_{n = 1}^{N-i} \mid \{X_j = x_j\}_{j = 0}^{i}\r) = \P_{\mu_a} (\{\frkh(X_{i+n}) = a_{i+n}\}_{n = 1}^{N-i} \mid \frkh(X_i) = a_i) .
\end{equation*} 

\medskip
\noindent\textbf{Step~2:} 
Thanks to the previous step and Bayes' rule, we have that $\smash{\P_{\mu_a}\l(X_0 = x_0 \mid \frkh(X_1) = a_1\r) = \mu_{a}(x_0)}$, for all $x_0 \in \frkh^{-1}(a)$. In particular, using this fact, we will now prove that 
\begin{equation*}
\P_{\mu_a}\l(X_{1} = x_{1} \mid \frkh(X_1) = a_1\r) = \mu_{a_1}(x_{1}), \quad \forall x_{1} \in \frkh^{-1}(a_1).
\end{equation*}
To this end, fix $x_{1} \in \frkh^{-1}(a_1)$ and suppose that $a_1 > a$. Then we can proceed as follows
\begin{align*}
& \P_{\mu_{a}}\l(X_1 = x_{1} \mid \frkh(X_1) = a_1\r) \\
& \qquad = \sum_{x_0 \in \frkh^{-1}(a) \cap \VV\GG(x_1)}  \P_{x_0}\l(X_1 = x_{1} \mid \frkh(X_1) = a_1\r) \P_{\mu_a}\l(X_0 = x_0 \mid \frkh(X_1) = a_1\r) \\
& \qquad = \sum_{x_0 \in \frkh^{-1}(a) \cap \VV\GG(x_{1})}\frac{c_{x_{0} x_{1}}}{\sum_{v \in \frkh^{-1}(a_1) \cap \VV\GG(x_0)} c_{x_0 v}} \mu_{a}(x_0) \\
& \qquad = \sum_{x_0 \in \frkh^{-1}(a) \cap \VV\GG(x_1)} \frac{c_{x_0 x_1}}{\sum_{v \in \frkh^{-1}(a_1) \cap \VV\GG(x_0)} c_{x_0 v}} \frac{|a_1 - a| \sum_{v \in \frkh^{-1}(a_1) \cap \VV\GG(x_0)} c_{x_0 v}}{\eta} \\
& \qquad = \frac{1}{\eta} \sum_{e \in \EE\GG^{\downarrow}(x)} \nabla \frkh(e) \\
& \qquad = \mu_{a_1}(x),
\end{align*}
where $\EE\GG^{\downarrow}(x)$ denotes the set of harmonically oriented edges with heads equal to $x$. In order to pass from the second line to the third line of the above display, we used the definition of the probability measure $\mu_{a}$, and the fact that, for all $e \in \EE\GG^{\downarrow}(x)$, it holds that $\frkh(e^{-}) = a$ and $\frkh(e^{+}) = a_1$. One can proceed similarly if $a_1 < a$. Now, proceeding again by induction, one can easily prove that, for all $i \in[N]$, it holds that
\begin{equation*}
\P_{\mu_a}\l(X_{i} = x_i \mid \{\frkh(X_n) = a_n\}_{n = 1}^{i}\r) = \mu_{a_i}(x_i), \quad \forall x_i \in \frkh^{-1}(a_i).
\end{equation*}

\medskip
\noindent\textbf{Step~3:} For $i \in [N]_0$, we observe that a consequence of Step~1 is that the sequence of random variables $\{X_j\}_{j = 0}^{i}$ is conditionally independent from $\{\frkh(X_{i+n})\}_{n = 1}^{N-i}$ given $\frkh(X_i) = a_i$. Therefore, the conditional law of $\{X_j\}_{j = 0}^{i}$ given $\{\frkh(X_{n}) = a_n\}_{n = 1}^{i}$ is the same as the conditional law given $\{\frkh(X_{n}) = a_n\}_{n = 1}^{N}$. Hence, this implies that 
\begin{equation*}
\P_{\mu_a}\l(X_{i} = x_i \mid \l\{\frkh(X_n) = a_n\r\}_{n=1}^{N}\r) = \P_{\mu_a}\l(X_{i} = x_i \mid \l\{\frkh(X_n) = a_n\r\}_{n=1}^{i}\r), \quad \forall x_i \in \frkh^{-1}(a_i), 
\end{equation*}
and so the conclusion follows from Step~2.
\end{proof}

\subsection{Expected horizontal winding}
\label{subsec_winding}
Roughly speaking, the main goal of this subsection is to establish that the average winding that the Smith-embedded random walk does before hitting a given horizontal line on the Smith diagram is zero. Before precisely stating the result, we need to introduce some notation.

\begin{definition}
Given the random walk $\X^{\x}$ on the lifted weighted graph $(\GG^{\dag}, c^{\dag})$ started from $\x \in \VV\GG^{\dag}$, we let 
\begin{equation*}
	\N_0 \ni n  \mapsto \dotX^{\x}_n 
\end{equation*}
be the discrete time process taking values in $\R \times [0, 1]$ such that, for each $n \in \N_0$, the conditional law of $\dotX^{\x}_n$, given $\X^{\x}_n$, is equal to the law of $\dotSS^{\dag, \rm{rand}}(\X_n^{\x})$, where we recall that $\dotSS^{\dag, \rm{rand}}$ is defined in Definition~\ref{def_Smith_rand}. We call the process $\dotX^{\x}$ the \emph{lifted Smith-embedded random walk} associated to $\X^{\x}$.
\end{definition}

\noindent
It follows from the definition of $\dotX^{\x}$ that, at each time step $n \in \N_0$, the location of the point $\dotX^{\x}_n$ is sampled uniformly at random and independently of everything else from the horizontal line segment $\smash{\SS^{\dag}(\X^{\x}_n)}$.
With a slight abuse of notation, we also denote by $\smash{\dotX^{\x}}$ the continuous time version $\smash{\{\dotX^{\x}_t\}_{t \geq 0}}$, where the continuous path is generated by piecewise linear interpolation at constant speed. 

\medskip
\noindent
We assume also in this subsection that all the points in the set $\cup_{x \in \VV\GG}\frkh^{-1}(\frkh(x))$ are vertices in $\VV\GG$. Furthermore, we fix a value $a \in (0, 1)$ such that all the points in the set $\frkh^{-1}(a)$ are vertices in $\VV\GG$, and we let $X^{\mu_a}$ be the random walk on $(\GG, c)$ started from a point in $\frkh^{-1}(a)$ sampled according to the probability measure $\mu_a$ defined in Definition~\ref{def_measure_exit}. We also adopt the convention to denote by 
\begin{equation*}
	\N_0 \ni n  \mapsto \X^{\mu_a}_n
\end{equation*}
the lift of the random walk $X^{\mu_a}$ to the lifted weighted graph $(\GG^{\dag}, c^{\dag})$ started from a point in $ \PPP_a^\dagger$ sampled according to the probability measure $\mu_a$ (see Remark~\ref{rem_starting_lifted_embedded}). Moreover, we denote by $\dotX^{\mu_a}$ the lifted Smith-embedded random walk associated to $\X^{\mu_a}$ as specified above.

\medskip
\noindent
In complete analogy with the definition of winding of a path in the infinite cylinder $\CC_{2\pi}$, we have the following definition of winding on the cylinder $\CC_{\eta}$.
\begin{definition}
\label{def_winding_smith}
Let $0 \leq t_1 < t_2$, consider a path $P :[t_1, t_2] \to \CC_{\eta}$, and let $\PP:[t_1, t_2] \to \R \times [0, 1]$ be its lift to the covering space $\R \times [0, 1]$. Then, we define the winding of $P$ as follows
\begin{equation}
\label{eq_wind_eta}
\wind_\eta(P) =\frac{\Re(\PP(t_2)) - \Re(\PP(t_1))}{\eta}.
\end{equation}
\end{definition}
\noindent
In what follows, with a slight abuse of notation, if $\PP:[t_1, t_2] \to \R \times [0, 1]$ is a lift of a path $P :[t_1, t_2] \to \CC_{\eta}$, then we may write $\wind_{\eta}(\PP)$ to denote $\wind_{\eta}(P)$.

\medskip
\noindent
We are now ready to state the main result of this subsection.
\begin{lemma}
\label{lm_winding_embed_walk}
Let $X^{\mu_a}$ and $\dotX^{\mu_a}$ be as specified above. For $N \in \N$, let $[a_N]_0 \subset (0, 1)$ be an admissible sequence of height coordinates for the random walk $X^{\mu_a}$ as specified in Definition~\ref{def_admissible_heights}. Then it holds that
\begin{equation*}
\E_{\mu_a}\l[\wind_{\eta}\l(\dotX|_{[0, N]}\r) \mid \l\{\frkh(X_n) = a_n \r\}_{n = 1}^{N}\r] = 0.
\end{equation*}
\end{lemma}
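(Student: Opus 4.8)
The plan is to condition on the full height trajectory $\{\frkh(X_n)=a_n\}_{n=1}^N$ and use the key structural fact, established in Lemma~\ref{lm_hitting_hor}, that conditionally on this event, the position $X_i$ is distributed according to $\mu_{a_i}$ on $\frkh^{-1}(a_i)$ for each $i\in[N]_0$. Write $\wind_\eta(\dotX|_{[0,N]}) = \bigl(\Re(\dotX_N) - \Re(\dotX_0)\bigr)/\eta$, so by linearity of expectation it suffices to analyze the increments. First I would decompose the total winding as a telescoping sum of one-step increments $\Re(\dotX_{i+1}) - \Re(\dotX_i)$ for $i\in[N-1]_0$ (plus the effect of the uniform resampling within each horizontal segment, which I handle separately). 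The heart of the argument is to show that, conditionally on the height trajectory, the expected value of each such increment vanishes.

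For a single step from height $a_i$ to height $a_{i+1}$: by Lemma~\ref{lm_hitting_hor}, the law of $X_i$ given the height trajectory is $\mu_{a_i}$, and the conditional law of $X_{i+1}$ given $X_i$ and the height trajectory is the random walk transition restricted to $\frkh^{-1}(a_{i+1})$, reweighted by conductances. I would lift everything to the universal cover and use the periodicity/tiling structure. The crucial point is Kirchhoff's cycle law~\eqref{eq_kirch_cycle} together with the construction of the width coordinate $\frkw^\dagger$ as the discrete harmonic conjugate: the horizontal coordinates $\Re(\dotX_i)$ are, up to the within-segment resampling, governed by the values of $\frkw^\dagger$ at dual vertices adjacent to the crossed edges. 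Summing the increment $\frkw^\dagger(\he^+) - \frkw^\dagger(\he^-) = \nabla\frkh^\dagger(\e)$ over the edges crossed, weighted by the $\mu_{a_i}$-probabilities, and exploiting the fact that $\mu_{a_i}(x) = \length(x)/\eta$ with $\length(x) = \sum_{\e\in\EE\GG^\uparrow(x)}\nabla\frkh(e)$, produces a telescoping/cancellation: moving up from the line $\frkh^{-1}(a_i)$ to the line $\frkh^{-1}(a_{i+1})$, the weighted average displacement along the circle of circumference $\eta$ is exactly the full period $\eta$ distributed proportionally to edge flows, so that the net expected signed winding contribution is zero modulo the periodic identification. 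Concretely, I expect to show that the conditional expectation of $\Re(\dotX_{i+1}) - \Re(\dotX_i)$, read in the universal cover with the natural choice of lift consistent with Lemma~\ref{lm_path_lifting}, equals zero because the "center of mass" of the measure $\mu_{a_{i+1}}$ pushed forward through the transition step coincides with the "center of mass" of $\mu_{a_i}$ in the periodic coordinate — this is precisely the content of $\frkw^\dagger$ being a well-defined harmonic conjugate (Lemma~\ref{lm_harm_conj}) combined with the node law~\eqref{eq_kirch_node}.

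A few technical points need care. One should justify that the within-segment uniform resampling contributes zero in expectation: the point $\dotX_n$ is resampled uniformly and independently on $\SS^\dagger(\X_n)$, so its expected horizontal coordinate is the midpoint of that segment, and by Lemma~\ref{lm_bound_width_dot}-type estimates the bookkeeping of lifts stays consistent; moreover since this resampling is independent of everything and we are computing an expectation, the fresh randomness at the initial and final steps cancels in the telescoping identity, or can be absorbed by working directly with $\dotSS^\dagger$ (midpoints) and noting the conditional expectation is unchanged. The other subtlety is the choice of lift: winding is defined via a lift $\PP$ of the path $P$, and to compute $\E[\wind_\eta]$ one must fix lifts coherently along the trajectory using path-lifting (Lemma~\ref{lm_path_lifting}); the admissibility of $[a_N]_0$ guarantees all the conditional transition probabilities are positive so no degenerate conditioning arises.

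The main obstacle I anticipate is making the one-step cancellation fully rigorous: one must carefully track, for the lifted walk, how the dual vertices bordering the crossed edge determine $\Re(\dotX_{i+1})$ relative to $\Re(\dotX_i)$, and then verify that averaging against the correct conditional laws (from Lemma~\ref{lm_hitting_hor}) together with the conductance-weighting in $\mu_{a_i}$ yields exactly zero rather than a nonzero multiple of $\eta$. This is where the interplay between the harmonicity of $\frkh$ (which makes $\mu_{a_i}$ the "right" measure both going up and going down, via $\length(x) = \sum_{\EE\GG^\downarrow(x)}\nabla\frkh = \sum_{\EE\GG^\uparrow(x)}\nabla\frkh$) and the cycle law (which makes $\frkw^\dagger$ path-independent) must be combined precisely. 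I would organize this as: (Step 1) reduce to one-step increments and dispose of the resampling; (Step 2) express the one-step conditional expected horizontal displacement in terms of $\frkw^\dagger$ and the measures $\mu_{a_i}, \mu_{a_{i+1}}$; (Step 3) use Lemma~\ref{lm_hitting_hor}, Kirchhoff's laws, and the definition of $\length$ to show this equals zero; (Step 4) sum over $i$.
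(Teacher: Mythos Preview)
Your overall strategy matches the paper's: decompose the winding into one-step increments $\Re(\dotX_{j+1})-\Re(\dotX_j)$ and use Lemma~\ref{lm_hitting_hor} to know that, conditionally on the full height trajectory, $X_j$ has law $\mu_{a_j}$. Where you diverge is in how you show the expected one-step increment vanishes. You propose to track the width coordinate $\frkw^\dagger$ along crossed edges and invoke Kirchhoff's node and cycle laws to engineer a cancellation --- a computation you yourself flag as the ``main obstacle'' and do not carry out. The paper bypasses this entirely with a much simpler observation: since the conditional law of $X_j$ is $\mu_{a_j}$, which by definition weights each vertex $x\in\frkh^{-1}(a_j)$ proportionally to $\length(x)$, and since $\dotX_j$ is then sampled uniformly and independently on the segment $\SS^\dagger(\X_j)$, the \emph{projected} horizontal coordinate $\Re(\sigma_\eta(\dotX_j))$ is uniform on $[0,\eta)$ for every $j$. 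Hence all the conditional expectations $\E[\Re(\sigma_\eta(\dotX_j))\mid\cdots]$ coincide (with $\eta/2$), and the telescoping sum is zero. Your ``center of mass'' intuition is pointing at this, but you are treating the within-segment resampling as a nuisance to be ``disposed of'', whereas in the paper it is precisely what makes the argument one line: uniform over segments, with segments chosen proportionally to length, gives uniform on the circle. No Kirchhoff computation is needed.
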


\medskip
\noindent
Heuristically speaking, the proof of the above lemma goes as follows. First, we observe that, by definition of winding, it holds that 
\begin{equation*}
\wind_{\eta}(\dotX|_{[0, N]}) = \sum_{j = 0}^{N-1} \wind_{\eta}(\dotX|_{[j, j+1]}).
\end{equation*}
In particular, this implies that we can reduce the problem to studying the expected winding at each time step of the random walk. Suppose for a moment that $N=1$ in the lemma statement. Then we only need to prove that the random walk started uniformly at random from height $a$ and conditioned to hit height $a_1$ at its first time step has zero expected winding. The reason why this is true is due to the fact that the re-randomization procedure that takes place inside each small segment ensures that the conditional hitting distribution of the segment at height $a_1$ is also uniformly distributed. To extend the argument to the general case $N \in \N$, we just need to use Lemma~\ref{lm_hitting_hor}.  

\begin{proof}[Proof of Lemma~\ref{lm_winding_embed_walk}]
We start by finding an equivalent condition under which the relation stated in the lemma holds true.
To this end, we observe that, thanks to the Definition~\ref{def_winding_smith} of winding, it holds that
\begin{equation*}
\label{eq_decomp_winding}
\E_{\mu_a}\l[\wind_{\eta}\l(\dotX|_{[0, N]}\r) \mid \l\{\frkh(X_n) = a_n\r\}_{n = 1}^{N}\r] = \frac{1}{\eta} \sum_{j = 0}^{N-1} \E_{\mu_a}\l[\Re(\dotX_{j+1}) - \Re(\dotX_{j}) \mid \l\{\frkh(X_n) = a_n\r\}_{n = 1}^{N}\r],
\end{equation*}
and so the claim follows if we prove that 
\begin{equation*}
 \E_{\mu_a}\l[\Re(\dotX_{j+1})\mid \l\{\frkh(X_n) = a_n\r\}_{n = 1}^{N}\r]	=  \E_{\mu_a}\l[\Re(\dotX_{j}) \mid \l\{\frkh(X_n) = a_n\r\}_{n = 1}^{N}\r], \quad \forall j \in [N-1]_0.
\end{equation*} 
An immediate application of Lemma~\ref{lm_hitting_hor} shows that the previous equality holds true if and only if 
\begin{equation}
\label{eq_pre_aux_winding_11}
 \E_{\mu_a}\l[\Re(\dotX_{j+1})\mid \frkh(X_{j+1}) = a_{j+1}\r]	=  \E_{\mu_a}\l[\Re(\dotX_{j}) \mid \frkh(X_j) = a_j\r], \quad \forall j \in [N-1]_0.
\end{equation} 
Now, for every $k \in [N-1]_{0}$, we let $X^{k}$ be the random walk on the weighted graph $(\GG, c)$ started from a point in $\frkh^{-1}(a_k)$ sampled according to the probability measure $\mu_{a_k}$. A consequence of Lemma~\ref{lm_hitting_hor} is that the conditional law of $X^{\mu_a}$ given $\frkh(X_k^{\mu_a}) = a_k$ is equal to the law of $X^k$. In particular, this implies that \eqref{eq_pre_aux_winding_11} is equivalent to the fact that
\begin{equation}
\label{eq_aux_winding}
\E\l[\Re(\dotX^{j+1}_{0})\r] = \E\l[\Re(\dotX^j_{0})\r], \quad \forall j \in [N-1]_0,
\end{equation}
where, for $k \in [N-1]_{0}$, $\smash{\X^{k}}$ denotes the lift of $\smash{X^{k}}$ started from a point in $\smash{\PPP^{\dag}_{a_k}}$, and $\smash{\dotX^{k}}$ is the lifted Smith-embedded random walk associated to $\X^{k}$. Let us also emphasize that here we are relying on the fact that the definition of winding does not depend on the particular choice of the lift.
In order to show that \eqref{eq_aux_winding} holds true, recalling that $\sigma_{\eta}$ is defined in \eqref{eq_covering_eta} and denotes the covering map of the cylinder $\CC_{\eta}$, the equality \eqref{eq_aux_winding} can be rewritten as follows
\begin{equation}
\label{eq_aux_winding_unlifted}
\E\l[\Re(\sigma_{\eta}(\dotX^{j+1}_{0})) \r] = \E\l[\Re(\sigma_{\eta}(\dotX^j_{0}))\r], \quad \forall j \in [N-1]_0.
\end{equation}
By construction, for every $k \in [N-1]_{0}$, the random variable $\smash{\Re(\sigma_{\eta}(\dotX^k_{0}))}$ is uniformly distributed on the interval $[0, \eta)$, and so the result follows.
\end{proof}

\section{Proof of the main result}
\label{sec_assumption_main}
The main goal of this section is to prove Theorem~\ref{th_main_1}. To this end, we fix a sequence $\{(\GG^n, c^n, v_0^n, v_1^n)\}_{n \in \N}$ of doubly marked finite weighted planar maps and we let $\{(\hat{\GG}^n, \hat{c}^n)\}_{n \in \N}$ be the sequence of associated weighted dual planar graphs. We assume throughout this section that such sequences satisfy hypotheses \ref{it_embedding}, \ref{it_invariance}, \ref{it_invariance_dual}.

\medskip
\noindent
Before moving to the proof of the main theorem, we prove a couple of simple results which are immediate consequences of our assumptions and which will be useful later on. In particular, the next lemma is basically an analogue of assumptions~\ref{it_invariance}~and~\ref{it_invariance_dual} in the setting of the universal cover. 
\begin{lemma}
For each $n \in \N$, view the embedded random walk on the lifted weighted graph $\smash{(\GG^{\dag, n}, c^{\dag, n})}$, stopped when it traverses an unbounded edge, as a continuous curve in $\R^2$ obtained by piecewise linear interpolation at constant speed. For each $R > 0$ and for any $z \in \R \times [-R, R]$, the law of the random walk on $\smash{(\GG^{\dag, n}, c^{\dag, n})}$ started from the vertex $\smash{\x_z^n \in \VV\GG^{\dag, n}}$ nearest to $z$ weakly converges as $n \to \infty$ to the law of the Brownian motion in $\R^2$ started from $z$ with respect to the local topology on curves viewed modulo time parameterization specified in Subsection \ref{subsub_metric_CMP}, uniformly over all $z \in \R \times [-R, R]$. Furthermore, the same result holds for the random walk on the sequence of lifted weighted dual graphs $\smash{\{(\hat{\GG}^{\dag, n}, \hat{c}^{\dag, n})\}_{n \in N}}$.
\end{lemma}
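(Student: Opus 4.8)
The plan is to transfer the invariance principles \ref{it_invariance} and \ref{it_invariance_dual} from the cylinder $\CC_{2\pi}$ to its universal cover $\R^2$ via the covering map $\sigma_{2\pi}$, using the fact that convergence in the local topology on curves modulo time parameterization is a purely local statement. First I would fix $R > 0$ and a compact set $K \subset \CC_{2\pi}$; the projection $\sigma_{2\pi}(\R \times [-R,R])$ is a compact subset of $\CC_{2\pi}$, so \ref{it_invariance} gives the convergence of the walk on $(\GG^n, c^n)$ to Brownian motion on $\CC_{2\pi}$, uniformly over starting points in this compact set. The lifted walk $\X^n$ on $(\GG^{\dag,n}, c^{\dag,n})$ started from $\x_z^n$ is, by construction (see Subsection~\ref{sub_rand_elect}), the canonical lift of the walk $X^n$ on $(\GG^n, c^n)$ started from $\sigma_{2\pi}(\x_z^n)$, which is the vertex of $\VV\GG^n$ nearest to $\sigma_{2\pi}(z)$; and Brownian motion on $\R^2$ is likewise the canonical lift of Brownian motion on $\CC_{2\pi}$.

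The main point is then a continuity/lifting argument: if $P_n \to P$ in the metric $\dCMPloc$ on curves in $\CC_{2\pi}$, and $\PP_n$, $\PP$ are the lifts started from a fixed point $\x \in \sigma_{2\pi}^{-1}(P(0))$, then $\PP_n \to \PP$ in the corresponding metric on curves in $\R^2$, at least as long as the curves are stopped before leaving a fixed ball. Concretely, I would argue as follows. On any ball $B(0,r) \subset \R^2$, the covering map $\sigma_{2\pi}$ restricted to a suitable neighborhood is a local isometry onto its image (away from the singular fibers at $\pm\infty$, which are irrelevant since we stop before traversing an unbounded edge and $B(0,r)$ is bounded). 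Since $\sigma_{2\pi}$ is a covering map and $[0,T]$ is simply connected, each curve $P_n|_{[0,T_{n,r}]}$ has a unique lift starting at the prescribed point, and the lift depends continuously on the curve in the uniform-on-compacts sense; reparameterizing does not affect this. Thus $\dCMP(\PP_n|_{[0,T_{n,r}]}, \PP|_{[0,T_{\cdot,r}]}) \to 0$ for a.e.\ $r$, which is exactly convergence in the local metric on curves in $\R^2$. This, combined with the uniformity over starting points inherited from \ref{it_invariance}, gives the stated conclusion for the primal graphs.

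The hard part — though it is more bookkeeping than conceptual difficulty — is making the "uniformly over all $z \in \R \times [-R,R]$" claim precise: one must check that the uniformity in \ref{it_invariance} over the compact set $\sigma_{2\pi}(\R \times [-R,R])$, together with the fact that the lift of $X^n$ started from $\sigma_{2\pi}(z)$ to the point $\x_z^n$ is canonically determined, yields uniformity over $z$ in the strip. Here one has to be slightly careful that the nearest vertex $\x_z^n$ to $z$ in $\VV\GG^{\dag,n}$ is indeed a lift of the nearest vertex to $\sigma_{2\pi}(z)$ in $\VV\GG^n$ — true because $\sigma_{2\pi}$ is a local isometry on scales below the injectivity radius, and for $n$ large the nearest vertex is within such a scale. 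Also, since the strip $\R \times [-R,R]$ is noncompact, one invokes the $2\pi\Z$-periodicity of $\GG^{\dag,n}$ (noted in Subsection~\ref{sub_universal_cover}) to reduce to the compact fundamental domain $[0,2\pi) \times [-R,R]$, so that the uniformity over the strip follows from uniformity over this compact piece. Finally, the argument for the dual graphs $\{(\hat{\GG}^{\dag,n}, \hat{c}^{\dag,n})\}_{n}$ is verbatim the same, starting from \ref{it_invariance_dual} in place of \ref{it_invariance}, so I would simply remark that it applies \emph{mutatis mutandis}.
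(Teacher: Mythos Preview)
Your proposal is correct and follows essentially the same approach as the paper: both transfer the cylinder invariance principle to $\R^2$ by observing that the lifted walk is the canonical lift of the projected walk and that lifting is continuous for the local CMP topology. The paper makes your ``continuity of the lift'' step concrete by introducing stopping times $\tau_{k+1} := \inf\{t \geq \tau_k : \Re(\X_t) \notin [\Re(\X_{\tau_k}) - \pi, \Re(\X_{\tau_k}) + \pi)\}$, so that on each interval $[\tau_k, \tau_{k+1}]$ the covering map restricts to a biholomorphism and one can invoke \ref{it_invariance} together with conformal invariance of Brownian motion directly; your more topological phrasing via covering-space lifting accomplishes the same thing.
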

\begin{proof}
Obviously, we can just prove the result for the random walk on the sequence of primal graphs as the result for the random walk on the sequence of dual graphs can be proved in the same exact way.
Fix $R > 0$ and $z \in \R \times [-R, R]$, and let $\X^{n, z}$ be the continuous time random walk on $(\GG^{\dag, n}, c^{\dag, n})$ started from $\x_z^n \in \VV\GG^{\dag, n}$ as specified in the lemma statement. We let $\tau_0 := 0$ and for $k \in \N_0$ we define inductively
\begin{equation*}
\tau_{k+1} := \inf\l\{t \geq \tau_{k} \, : \, \Re(X_t) \not\in [\Re(X_{\tau_k}) - \pi, \Re(X_{\tau_k}) + \pi)\r\}.
\end{equation*}
For each $k \in \N_0$, we observe that the universal covering map $\sigma_{2\pi}: \R^2 \to \CC_{2\pi}$ is a biholomorphism when restricted to $[\Re(X_{\tau_k}) - \pi, \Re(X_{\tau_k}) + \pi)$. Moreover, by assumption~\ref{it_invariance}, we know that the law of $\sigma_{2\pi}(\X^{n, z})$ weakly converges as $n \to \infty$ to the law of the Brownian motion in $\CC_{2\pi}$ with respect to the metric $\dCMPloc$ specified in Subsection~\ref{subsub_metric_CMP}. Therefore, since Brownian motion is conformally invariant and putting together the previous facts, we obtain that the law of the random walk $\X^{n, z}$ weakly converges as $n \to \infty$ to the law of the Brownian motion in $\R^2$ with respect to the metric $\dCMP$ on the time interval $[\tau_{k}, \tau_{k+1}]$, for each $k \in \N_0$. Hence, the desired result follows.
\end{proof}

\noindent
For each $n \in \N$, we recall that $\FF\GG^n$ denotes the set of faces of the graph $\GG^n$. The next lemma states that, thanks to the invariance principle assumption on the sequence of weighted graphs, the maximal diameter of the faces in $\FF\GG^n$ which intersect a compact subset of $\CC_{2\pi}$ is of order $o_n(1)$. 

\begin{lemma}
\label{lm_no_macroscopic_edges}
Let $R > 0$, and, for any $n \in \N$, consider the set $\FF\GG^{n}(R)$ of faces in $\FF\GG^n$ that intersect $\R/2\pi\Z \times [-R, R]$. Then, for any $\eps > 0$ and for any $n>n(R, \eps)$ large enough, it holds that
\begin{equation*}
\diam(f) \leq \eps, \quad \forall f \in \FF\GG^n(R).
\end{equation*}
The same result holds also for the sequence of dual graphs, i.e., with $\FF\hat\GG^{n}$ in place of $\FF\GG^{n}$.
\end{lemma}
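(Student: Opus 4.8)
The plan is to argue by contradiction, splitting into a ``fat'' and a ``thin'' case and reaching in each a contradiction with one of the invariance principles via the Portmanteau theorem. The one structural input I would use repeatedly is elementary: the piecewise-linear interpolation of the random walk on $\GG^n$ (or on $\hat\GG^n$) is a curve whose image lies in the embedded graph, hence is disjoint from the interior of every face. Suppose the claim fails: there are $R,\eps>0$, a subsequence, and faces $f_n\in\FF\GG^n(R)$ with $\diam(f_n)\ge\eps$. First I would extract two points $a_n,b_n\in\bar f_n$ lying in the fixed compact $K_0:=\R/2\pi\Z\times[-R-1,R+1]$ with $|a_n-b_n|\ge\eps':=\min(\eps/2,1)$: if $f_n$ stays within $K_0$ this is immediate from $\diam(\bar f_n)\ge\eps$, and otherwise one joins a point of $f_n\cap(\R/2\pi\Z\times[-R,R])$ to a point of $f_n$ above height $R+1$ by a path in $f_n$ and applies the intermediate value theorem to the height coordinate (a face wrapping around the cylinder would separate its two ends and is ruled out separately). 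Fix also a finite cover of $\R/2\pi\Z\times[-R,R]$ by balls $B(z_1,\eps'/10),\dots,B(z_M,\eps'/10)$.

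\emph{Fat case.} Suppose $f_n$ contains a ball $B(m_n,\rho_0)$ with $m_n\in K_0$ and $\rho_0:=\eps'/100$. A Harnack/compactness argument gives $c=c(R,\rho_0)>0$ such that for every $z\in\R/2\pi\Z\times[-R,R]$ and every $m\in K_0$, Brownian motion from $z$ enters $B(m,\rho_0)$ before exiting $\R/2\pi\Z\times(-R-100,R+100)$ with probability at least $c$, and this event is a continuity set for Brownian motion in the local curves-modulo-time-parameterization topology of Subsection~\ref{subsub_metric_CMP}. By \ref{it_invariance} and the Portmanteau theorem, for $n$ large the walk from $x^n_{z_1}$ enters $B(m_n,\rho_0)\subseteq f_n$ with probability at least $c/2>0$, contradicting the fact that the interpolated walk misses $f_n$.

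\emph{Thin case.} If $f_n$ contains no such ball, I can take $a_n,b_n$ on $\partial f_n$ (push a slightly interior point to the boundary, losing at most $\rho_0$; this cannot re-create a macroscopic inball), hence lying on primal edges $e^a_n,e^b_n$ bounding $f_n$. A preliminary fact, which I would prove by the same Portmanteau argument applied to the closed event ``the interpolated walk contains a straight line segment of length $\ge\delta$'' (a Brownian-null event), is that under \ref{it_invariance} all primal edges with endpoints at distance $\ge\delta$ meeting a fixed compact set disappear as $n\to\infty$, and likewise for dual edges under \ref{it_invariance_dual}. Granting this, $e^a_n\ne e^b_n$ (else $\diam e^a_n\ge|a_n-b_n|\ge\eps'$), the dual vertex $\hat x_n$ lying in $f_n$ is a common endpoint of the dual edges $\hat e^a_n,\hat e^b_n$, and each of these crosses its (now short) primal edge within $o(1)$ of $a_n$, resp.\ $b_n$, so
\begin{equation*}
\diam(\hat e^a_n)+\diam(\hat e^b_n)\ \ge\ |\hat x_n-a_n|+|\hat x_n-b_n|-o(1)\ \ge\ |a_n-b_n|-o(1)\ \ge\ \eps'/2 .
\end{equation*}
Thus one of $\hat e^a_n,\hat e^b_n$ is a dual edge of diameter $\ge\eps'/4$ meeting a fixed compact set, contradicting the disappearance of macroscopic dual edges. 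The statement for the dual graphs follows by exchanging the roles of $\GG^n$ and $\hat\GG^n$ throughout.

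\emph{Main obstacle.} I expect the crux to be the preliminary fact on edges, i.e.\ the genuine interplay of the two hypotheses: a macroscopic edge traversed by the walk only with tiny probability is invisible to that walk, so it cannot be detected from \ref{it_invariance} alone. I would circumvent this by noting that such an edge lies on the boundary of a face of diameter $\ge\delta$ and running the fat/thin dichotomy on that face, which turns the whole argument into a (terminating) induction on a suitable notion of ``scale'' rather than a single estimate. Organizing this recursion so that it does not loop, and checking that the path-space events used (entering an open ball before exiting a box; containing a straight subsegment) are bona fide continuity sets for Brownian motion in the topology of Subsection~\ref{subsub_metric_CMP}, are the points that require real care; everything else is routine.
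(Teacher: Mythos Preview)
Your strategy hinges on a ``preliminary fact'' --- that macroscopic primal and dual edges disappear --- whose proof you defer to a recursion you do not carry out. That recursion is precisely the loop
\[
\text{large primal face (thin)} \ \to\ \text{long dual edge} \ \to\ \text{large dual face (thin)} \ \to\ \text{long primal edge} \ \to\ \cdots,
\]
and you provide no quantity that strictly decreases along it; a face can be thin at every scale, so there is no reason the fat case must eventually occur. This is a genuine gap, not bookkeeping, and you yourself flag it as the crux without resolving it. Note also that your thin case for \emph{primal} faces already invokes \ref{it_invariance_dual}, so your argument intertwines the two hypotheses.

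The paper avoids the fat/thin dichotomy entirely with a short topological argument using only the single relevant hypothesis. If $\diam(f_n)>\eps$ along a subsequence, choose a connected subset $P_n\subset f_n$ of diameter $>\eps$ lying in a fixed compact band, and pass to a Hausdorff subsequential limit $P$. Then $P$ is a continuum of diameter $\ge\eps$, hence separates two opposite sides of some small rectangle $\sfQ$; by Hausdorff convergence and connectedness, $P_n$ separates the same sides of $\sfQ$ for all large $n$. Since $P_n$ lies in the open face, the walk path cannot cross $\sfQ$ between those sides, whereas Brownian motion does so with positive probability --- contradicting \ref{it_invariance} alone (and symmetrically \ref{it_invariance_dual} for the dual). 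The point your approach is missing is that a thin face already provides a topological barrier to crossing: one does not need it to contain a ball, only a continuum of macroscopic diameter, and that comes for free.
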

\begin{proof}
Obviously, we can just prove the result for the sequence of primal graphs as the result for the sequence of dual graphs can be proved in the same exact way.
We proceed by contradiction by assuming that there is $\eps > 0$ such that, for any $n \in \N$, there is a face $f_n \in \FF\GG^n(R)$ for which $\diam(f_n) > \eps$. We notice that each set $f_n \cap \R/2\pi\Z \times [-2R, 2R]$ is compact and contains a path $P_n$ with $\diam(P_n) > \eps$. By compactness of the Hausdorff distance $\dH$, we can assume, by possibly taking a subsequential limit, that $\lim_{n \to \infty} \dH(P_n, P) = 0$ for some compact and connected set $P \subset \R/2\pi\Z \times [-2R, 2R]$. 
Now, choose a rectangle $\sfQ$ such that $P$ disconnects the left and right sides of $\sfQ$, or the top and bottom sides of $\sfQ$. For any $n>n(R, \eps, \sfQ)$ large enough, also the path $P_n$ disconnects the rectangle $\sfQ$ in the same way as $P$. Therefore, for any $x \in \R/2\pi\Z \times [-R, R]$, the random walk $X^{n, x}$ on the weighted graph $(\GG^n, c^n)$ started from $x$ cannot cross the rectangle $\sfQ$ from left to right, or from top to bottom. However, the Brownian motion $B^x$ on $\CC_{2\pi}$ started from $x$ has positive probability to do so. This is in contradiction with assumption~\ref{it_invariance}, and so the desired result follows.  
\end{proof}

\noindent
We are now ready to start the proof of Theorem~\ref{th_main_1}. We observe that this result admits a natural version on the sequence of lifted weighted graphs. Indeed, in order to prove it, we will work in the setting of the universal cover. To be more precise, we will first study in Subsection~\ref{sub_height} the sequence of lifted height coordinate functions, and then, in Subsection~\ref{sub_width}, we will study the sequence of lifted width coordinate functions. Once this is done, we will put everything together and we will conclude the proof of Theorem~\ref{th_main_1} in Subsection~\ref{sub_proof_main}.

\subsection{Height coordinate function}
\label{sub_height}
For each $n \in \N$, consider the lifted height coordinate function 
\begin{equation*}
\frkh^{\dag}_n:\VV\GG^{\dag, n} \to [0,1],	
\end{equation*}
as defined in \eqref{eq_harmo_lift}.
The main goal of this subsection is to show that there exists an affine transformation of the function $\frkh^{\dag}_n$ that is asymptotically close, uniformly on lifts of compact subsets of the infinite cylinder $\CC_{2\pi}$, to the height coordinate function $\x \mapsto \Im(\x)$ in the a priori lifted embedded graph $\GG^{\dag, n}$, as $n \to \infty$.
More precisely, we have the following result.

\begin{proposition}
\label{pr_main_height}
There exit two sequences $\{b^{\frkh}_n\}_{n \in \N}$,  $\{c^{\frkh}_n\}_{n \in \N} \subset \R$ such that for all $R > 1$, $\delta \in (0, 1)$, and for any $n > n(R, \delta)$ large enough, it holds that
\begin{equation*}
\l|c_n^{\frkh} \frkh^{\dag}_n(\x) + b_n^{\frkh} - \Im(\x)\r| \leq \delta, \quad \forall \x \in \VV\GG^{\dag, n}(\R \times [-R, R]).
\end{equation*}
\end{proposition}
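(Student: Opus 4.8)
The plan is to exploit the martingale property of $\frkh^{\dag}_n$ together with the invariance principle (assumption \ref{it_invariance}, in its lifted form) to identify the limit of suitable affine normalizations of $\frkh^{\dag}_n$ with a bounded harmonic function on a strip, and then to pin that limit down to be the linear function $\x \mapsto \Im(\x)$. Concretely, fix a large height $R_0$ (to be chosen much bigger than the $R$ appearing in the statement) and consider the horizontal lines $\{\Im = \pm R_0\}$ in $\R^2$. For each $n$, sample two reference vertices $\x_{\pm}^n$ near the points $(0,\pm R_0)$ and define the normalization constants by requiring $c_n^{\frkh}\frkh^{\dag}_n(\x_+^n) + b_n^{\frkh} = R_0$ and $c_n^{\frkh}\frkh^{\dag}_n(\x_-^n) + b_n^{\frkh} = -R_0$; set $g_n(\x) := c_n^{\frkh}\frkh^{\dag}_n(\x) + b_n^{\frkh}$. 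Since $\frkh^{\dag}_n = \frkh_n\circ\sigma_{2\pi}$ is harmonic for the walk on $(\GG^{\dag,n}, c^{\dag,n})$ away from the lifts of $v_0^n, v_1^n$ — and in particular harmonic on all of $\VV\GG^{\dag,n}(\R\times[-2R_0, 2R_0])$ once $n$ is large, because by assumption the marked vertices are at $\pm\infty$ so no lift of $v_0^n$ or $v_1^n$ lies in this strip — the function $g_n$ is harmonic there too. The key input is then: if $\X^{n,\x}$ is the walk started near $\x$, stopped at the first exit $\tau^n$ of the strip $\R\times[-2R_0,2R_0]$, the optional stopping theorem gives $g_n(\x) = \E[g_n(\X^{n,\x}_{\tau^n})]$, and by the (lifted) invariance principle $\X^{n,\x}_{\tau^n}$ converges in law to $B^{\x}$ stopped at its exit from the strip. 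Provided the family $\{g_n\}$ is equicontinuous and uniformly bounded on the strip $\R\times[-3R_0/2, 3R_0/2]$ say, any subsequential limit $g$ is harmonic in $(0,\cdot)$, hence is a genuine harmonic function, bounded, with $g(0,\pm R_0)$ trouble — we need to be careful, see below — and satisfies the mean-value identity against two-dimensional Brownian motion exiting the strip.

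The heart of the matter, and the step I expect to be the main obstacle, is proving that $\{g_n\}$ is \emph{uniformly bounded and equicontinuous} on a slightly smaller strip, and then that the limiting harmonic function is forced to be the linear function $\Im$. Equicontinuity should follow from an a priori oscillation (Hölder-type) estimate for discrete harmonic functions coming purely from the invariance principle: if a discrete harmonic function has bounded oscillation on a macroscopic ball, then its oscillation on a ball of half the radius is a definite fraction smaller, because the walk from two nearby interior points can be coupled to exit the ball at the same point with probability bounded below (this is exactly the content of Lemma \ref{lm_tot_variation} combined with the fact, from \ref{it_invariance}, that the walk disconnects a nearby point from the boundary of the ball with uniformly positive probability — here one also uses Lemma \ref{lm_no_macroscopic_edges} to rule out macroscopic faces that could obstruct the disconnection). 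Iterating this oscillation bound yields equicontinuity of $g_n$ on compact subsets of the strip with a modulus independent of $n$. Uniform boundedness on, say, $\R\times[-R_0,R_0]$ then follows from equicontinuity plus the normalization at $\x_\pm^n$: a harmonic function pinned at two points of the strip and with controlled modulus cannot blow up in between (use the maximum principle along with the modulus estimate, or extend the strip slightly and note harmonicity forces any interior maximum to propagate to the boundary where values are controlled). Periodicity of $\frkh^{\dag}_n$ in the real direction (with period $2\pi$) means all of these estimates are in fact uniform in the real coordinate, which is what lets us pass to the strip rather than just a box.

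Once $\{g_n\}$ is precompact, extract a subsequential limit $g$ on $\R\times[-R_0, R_0]$; it is harmonic in the interior, continuous, $2\pi$-periodic in $\Re$, and satisfies $g(0,R_0) = R_0$, $g(0,-R_0) = -R_0$. By periodicity and harmonicity, $g$ is a bounded harmonic function on a strip that is periodic in the unbounded direction, hence — by separation of variables / Fourier expansion in the periodic variable, the non-constant Fourier modes decay or grow exponentially and are killed by boundedness together with the two point constraints being met only by the zeroth mode — $g$ must be of the form $g(\x) = \alpha\Im(\x) + \beta$; the two normalization conditions force $\alpha = 1$, $\beta = 0$, i.e. $g(\x) = \Im(\x)$. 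Since every subsequential limit equals $\Im$, the full sequence $g_n = c_n^{\frkh}\frkh^{\dag}_n + b_n^{\frkh}$ converges uniformly to $\Im$ on $\R\times[-R,R]$ for every fixed $R < R_0$, which is the assertion of the proposition (with the caveat that $R_0$ is chosen after $R$; since $R$ is arbitrary this is no loss, though one should record that the constants $b_n^{\frkh}, c_n^{\frkh}$ can be taken independent of $R$ by a diagonalization over $R_0\to\infty$, or simply fix one large $R_0$ and note the statement only claims existence of the sequences). A subtle point to handle carefully is that a priori $c_n^{\frkh}$ could degenerate (tend to $0$ or $\infty$): this is precisely excluded by the equicontinuity-plus-normalization argument, since if $c_n^{\frkh}\to\infty$ the functions $g_n$ would have unbounded oscillation (contradicting the modulus estimate applied to $\frkh^{\dag}_n$ itself scaled so its oscillation on a fixed ball is order one), and if $c_n^{\frkh}\to 0$ the pinning $g_n(\x_+^n) - g_n(\x_-^n) = 2R_0$ would be violated because $\frkh^{\dag}_n$ is bounded in $[0,1]$.
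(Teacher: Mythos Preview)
Your compactness-plus-Liouville strategy differs from the paper's direct route, and as written it has a real gap. The paper does not normalize $\frkh_n^\dag$ and pass to subsequential limits; instead it writes, via optional stopping,
\[
\frkh_n^\dag(\x)=\bar a\,\P_\x\bigl(\X^n_{\tau_\x}\in(\frkh_n^\dag)^{-1}(\bar a)\bigr)+\underline a\,\P_\x\bigl(\X^n_{\tau_\x}\in(\frkh_n^\dag)^{-1}(\underline a)\bigr),
\]
where $\bar a,\underline a$ are the extremal values of $\frkh_n$ on the circles at heights $\pm R'=\pm R/\delta$, and then proves that each hitting probability is itself $\delta$-close to an explicit affine function of $\Im(\x)$ (Lemma~\ref{lm_main_height}). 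The ingredients are: the gambler's-ruin approximation $\P_\x(\X^n_{\sigma_\x}\in V^\dag_{R'})\approx(R'+\Im(\x))/(2R')$ coming straight from the invariance principle (Lemma~\ref{lm_height_tech1}); a $O(1/R')$ bound on the chance that the walk from $V^\dag_{-R'}$ ever reaches $(\frkh_n^\dag)^{-1}(\bar a)$ (Lemma~\ref{lm_height_tech2}); and the coupling Lemma~\ref{lm_tot_variation} together with a disconnection estimate (Lemma~\ref{lm_height_tech3}) to show that the discrepancy between the two hitting probabilities is, up to $O(R/{R'}^2)$, \emph{the same} for all $\x\in\VV\GG^{\dag,n}(R)$. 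The constants $c_n^\frkh,b_n^\frkh$ then emerge a posteriori from $\bar a,\underline a$ and these $\x$-independent shifts, with no a priori lower bound on $\bar a-\underline a$ needed.

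The gap in your argument is exactly the one the paper flags in the introduction: nothing in your setup rules out $\frkh_n^\dag$ being nearly constant on the bulk (e.g.\ if the graph structure near $v_0^n$ makes it overwhelmingly easy for the walk to reach $v_0^n$). In that regime $\frkh_n^\dag(\x_+^n)-\frkh_n^\dag(\x_-^n)$ can be far smaller than the oscillation of $\frkh_n^\dag$ along $\R\times\{R_0\}$ --- your H\"older/oscillation-decay estimate applied to $\frkh_n^\dag\in[0,1]$ only bounds that oscillation \emph{above} (by something like $R_0^{-\alpha}$), never below --- so $c_n^\frkh\to\infty$ while $g_n$ need not stay bounded. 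Your sentence ``$c_n^\frkh\to\infty$ would give unbounded oscillation, contradicting the modulus estimate'' is circular: oscillation-decay yields upper bounds at small scales, not the lower bound on $\frkh_n^\dag(\x_+^n)-\frkh_n^\dag(\x_-^n)$ you need. The Liouville step inherits the same defect: on a \emph{finite} strip $\R\times[-R_0,R_0]$ the bounded periodic harmonic functions include all modes $\sin(k\Re)\sinh(k\Im)$, so two interior point constraints (at $(0,\pm R_0)$, where $\sin$ vanishes) do not force the limit to be affine; you would need constant boundary values along $\R\times\{\pm R_0\}$, which again requires the uniform boundedness you have not established. The paper's device --- working with hitting probabilities, which are in $[0,1]$ for free --- is precisely what breaks this circularity.
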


\noindent
The proof of this proposition is given in Subsection~\ref{subsub_proof_main_height}. As we will see, the proof will follow easily thanks to the harmonicity of the height coordinate function together with Lemma~\ref{lm_main_height} below.

\subsubsection{Setup of the proof}
For each $n \in \N$, consider the metric graph $\G^n$ associated to $\GG^n$, and let $\frkh^n : \G^n \to [0, 1]$ be the extended height coordinate function as specified in Remark~\ref{rm_extened_harm}. Given a value $S \in \R$, we define the set 
\begin{equation*}
V^n_{S} := \l\{x \in \G^n \, : \,  \Im(x) = S\r\},
\end{equation*}
and we let
\begin{equation}
\label{eq_key_values_height}
\bar{a}_{S}^n :=\sup\left\{a \in (0, 1) \, : \, \frkh^{-1}_n(a) \cap V^n_S \neq \emptyset\right\}, \quad \underline{a}_{S}^n := \inf\left\{a \in (0, 1) \, : \, \frkh^{-1}_n(a) \cap V^n_S \neq \emptyset\right\}.
\end{equation}
We fix throughout $R > 1$ and $\delta \in (0, 1)$ as in the proposition statement, and we let 
\begin{equation*}
R' := \frac{R}{\delta}.
\end{equation*}
We consider the set 
\begin{equation*}
W_{R, \delta}^n := \left\{V^n_{R'} \cup V^n_{-R'}\right\} \cup \left\{\frkh^{-1}_n(\bar{a}_{R'}^n) \cup \frkh^{-1}_n(\underline{a}_{-R'}^n)\right\} \subset \G^n.
\end{equation*}
For each $n \in \N$, by possibly locally modifying the a priori embedding of the graph $\GG^n$ in the infinite cylinder $\CC_{2\pi}$, we can assume without loss of generality that each edge in $\EE\GG^n$ crosses the circles at height $R'$ and $-R'$ at most finitely many times. In particular, this implies that we can assume that the set $W_{R, \delta}^n$ contains at most finitely many points, and therefore, by Lemma~\ref{lm_harm_coincides}, we can assume, without any loss of generality, that $\VV\GG^n$ contains all the points in $W^n_{R, \delta}$. 

\medskip
\noindent
In what follows, in order to lighten up notation, we adopt the following notational convention 
\begin{equation*}
	\hat{V}_{S} \equiv \hat{V}^n_{S}, \qquad \bar{a} \equiv \bar{a}^n_{R'}, \quad \underline{a} \equiv \underline{a}^n_{-R'}.
\end{equation*}
Furthermore, we denote by $V^{\dag}_S$ the lift to the universal cover of $V_S$, and we write $\VV\GG^{\dag, n}(S)$ (resp.\ $\VV\GG^{n}(S)$) as a shorthand for $\VV\GG^{\dag, n}(\R \times [-S, S])$ (resp.\ $\VV\GG^{n}(\R/2\pi\Z \times [-S, S])$). We refer to Figure~\ref{fig_height} for an illustration of the sets involved in the proof of Proposition~\ref{pr_main_height}.

\begin{figure}[h]
\centering
\includegraphics[scale=1]{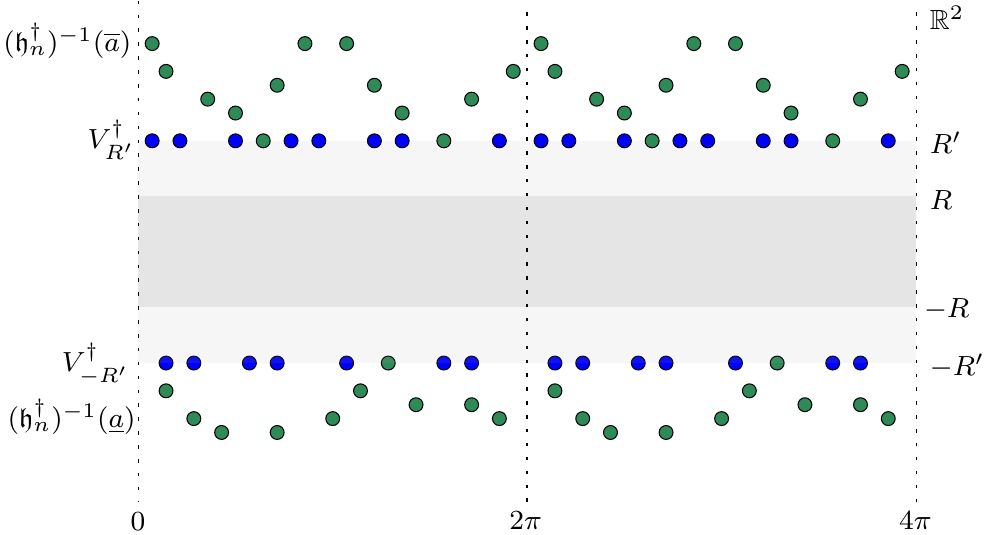}
\caption[short form]{\small A diagram illustrating the sets involved in the proof of Proposition~\ref{pr_main_height}. The shaded dark-gray strip between heights $R$ and $-R$ contains all the vertices in the set $\smash{\VV\GG^{\dag, n}(R)}$. The shaded light-gray strip between heights $R'$ and $-R'$ contains all the vertices in the set $\smash{\VV\GG^{\dag, n}(R')}$. The blue vertices at the top (resp.\ bottom) are the vertices in the set $\smash{V^{\dag}_{R'}}$ (resp.\ $\smash{V^{\dag}_{-R'}}$). The green vertices at the top (resp.\ bottom) are the vertices in the set $\smash{(\frkh_n^{\dag})^{-1}(\bar{a})}$ (resp.\ $(\frkh_n^{\dag})^{-1}(\underline{a})$). Note that, by definition of $\bar{a}$, the set $\smash{(\frkh_n^{\dag})^{-1}(\bar{a})}$ intersects $\smash{V^{\dag}_{R'}}$, and similarly, by definition of $\underline{a}$, the set $\smash{(\frkh_n^{\dag})^{-1}(\underline{a})}$ intersects $\smash{V^{\dag}_{-R'}}$.}  
\label{fig_height}
\end{figure}

\paragraph{Random walk notation.}
For $\x \in \VV\GG^{\dag, n}(R')$, we consider the continuous time random walk $\{\X^{n, \x}_t\}_{t \geq 0}$
on the lifted weighted graph $(\GG^{\dag, n}, c^{\dag, n})$ started from $\x \in \VV\GG^{\dag, n}(R')$. We recall that the continuous path of this random walk is generated by piecewise linear interpolation at constant speed. We consider the following stopping times
\begin{equation}
\label{eq_stopping_times}
\sigma_{\x} := \inf\l\{t \geq 0  \, : \, \X^{n, \x}_t \in V_{R'}^{\dag} \cup V_{-R'}^{\dag}\r\}, \quad \tau_{\x} := \inf \l\{t \geq 0  \, : \, \X^{n, \x}_t \in (\frkh^{\dag}_n)^{-1}(\bar{a}) \cup (\frkh^{\dag}_n)^{-1}(\underline{a})\r\},
\end{equation}
and we observe that, thanks to the definitions of $\bar{a}$ and $\underline{a}$, it holds that $\tau_{\x} \geq \sigma_{\x}$, for all $\x \in \VV\GG^{\dag, n}(R')$. Looking at Figure~\ref{fig_height}, the stopping time $\sigma_\x$ accounts for the first time at which the random walk hits one of the blue vertices, while the stopping time $\tau_\x$ accounts for the first time at which the random walk hits one of the green vertices.

\subsubsection{Proof of Proposition~\ref{pr_main_height}}
\label{subsub_proof_main_height}
We can now state a key lemma for the proof of Proposition~\ref{pr_main_height}. The proof of the below result is given in Subsection~\ref{sub_aux_res_height}. 

\begin{lemma}
\label{lm_main_height}
For any $n > n(R, \delta)$ large enough, there exists a real number $b'_n = b_n'(R, \delta)$ such that
\begin{equation*}
\l|2 R' \P_{\x}\l(\X^n_{\tau_\x} \in (\frkh_n^{\dag})^{-1}(\bar{a})\r) - b'_n  -  \Im(\x)\r| \leq \delta, \quad \forall \x \in  \VV\GG^{\dag, n}(R).
\end{equation*}	
Similarly, for any $n > n(R, \delta)$ large enough, there exists a real number $b''_n = b_n''(R, \delta)$ such that
\begin{equation*}
\l|2 R' \P_\x\l(\X^n_{\tau_\x} \in \sigma_{2\pi}^{-1}(\frkh_n^{-1}(\underline{a}))\r)+ b''_n  + \Im(\x)\r| \leq \delta, \quad \forall \x \in  \VV\GG^{\dag, n}(R).
\end{equation*}	
\end{lemma}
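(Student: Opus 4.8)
The plan is to analyse the function $u_n(\x):=\P_\x\big(\X^n_{\tau_\x}\in(\frkh^{\dag}_n)^{-1}(\bar a)\big)$ and reduce the statement to one about $\frkh^{\dag}_n$ alone. First I would observe that $u_n$ is discrete harmonic on $\VV\GG^{\dag,n}$ away from the two level sets $(\frkh^{\dag}_n)^{-1}(\bar a)$ and $(\frkh^{\dag}_n)^{-1}(\underline a)$, with boundary values $1$ and $0$ there. Since $\VV\GG^n$ has been enlarged to contain the full level sets $\frkh_n^{-1}(\bar a)$ and $\frkh_n^{-1}(\underline a)$, the walk cannot step across either value, so $\tau_\x$ is precisely the first time the bounded martingale $\frkh^{\dag}_n(\X^n)$ enters $\{\bar a,\underline a\}$, and optional stopping gives
\begin{equation*}
u_n(\x)=\frac{\frkh^{\dag}_n(\x)-\underline a}{\bar a-\underline a},\qquad\text{whenever }\frkh^{\dag}_n(\x)\in[\underline a,\bar a].
\end{equation*}
Next, by the maximum principle $\{\frkh_n>\bar a\}$ is connected, contains $v_1$, and (by the definition of $\bar a$ as the largest value of $\frkh_n$ on the circle $\{\Im=R'\}$) misses that circle, hence lies in $\{\Im>R'\}$; with the mirror statement for $\underline a$ and $v_0$ this shows that every vertex of height in $[-R',R']$ has $\frkh^{\dag}_n$-value in $[\underline a,\bar a]$ (so the displayed identity applies there) and also that $\tau_\x\ge\sigma_\x$. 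It therefore suffices to show that, uniformly over vertices of height in $[-R,R]$, $\frkh^{\dag}_n$ differs from an affine function of the height of slope $(\bar a-\underline a)/(2R')$ by at most $o_n(1)\cdot(\bar a-\underline a)$.

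I would prove this using the invariance principle twice. Since $\frkh_n$ is harmonic on the slab $\{|\Im|<R'\}$ (the marked vertices lie at $\pm\infty$), for $\x$ of height at most $R$ we have $\frkh^{\dag}_n(\x)=\E_\x[\frkh^{\dag}_n(\X^n_{\sigma_\x})]$ with $\X^n_{\sigma_\x}$ on one of the circles $\{\Im=\pm R'\}$. By \ref{it_invariance} applied on the compact set $\R/2\pi\Z\times[-R',R']$, the exit position of the walk converges in law to the Brownian one uniformly in the start, so $\P_\x(\Im(\X^n_{\sigma_\x})=R')=(\Im(\x)+R')/(2R')+o_n(1)$ uniformly over height at most $R$ — this is the Brownian harmonic measure of the top of the slab, affine in the starting height. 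The main point that remains is that the oscillation of $\frkh_n$ over each circle $\{\Im=\pm R'\}$ is $o_n(1)$ times $\bar a-\underline a$; granting this, inserting it into $\frkh^{\dag}_n(\x)=\E_\x[\frkh^{\dag}_n(\X^n_{\sigma_\x})]$ yields $\frkh^{\dag}_n(\x)=\underline a+(\bar a-\underline a)(\Im(\x)+R')/(2R')+o_n(1)(\bar a-\underline a)$, as needed. For the oscillation bound I would iterate the coupling estimate Lemma~\ref{lm_tot_variation}: for two vertices $y_1,y_2$ of the same height, running the two walks until they leave a slab $\R/2\pi\Z\times[-M,M]$, their exit positions are at total variation distance at most the probability that the first walk's trace fails to disconnect $y_2$ from the two boundary circles, and this tends to $0$ as $M\to\infty$ (for $n$ large, by \ref{it_invariance}) because planar Brownian motion on $\CC_{2\pi}$ almost surely disconnects a fixed point from the two ends inside a bounded region. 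Combining this with the harmonic representation of $\frkh_n$, and using that one circumference $2\pi$ of the cylinder is negligible next to the full slab height $2R'=2R/\delta$ — so the non-constant part of $\frkh_n$ has decayed by a large factor by the time one reaches $\{\Im=\pm R'\}$ from far out — gives the required bound on the oscillation relative to $\bar a-\underline a$.

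Plugging back in, $u_n(\x)=(\Im(\x)+R')/(2R')+o_n(1)$ uniformly over height at most $R$; multiplying by $2R'$ and setting $b'_n:=2R'u_n(\x_0^n)-\Im(\x_0^n)$ for an arbitrary reference vertex $\x_0^n$ of height $0$ (which absorbs the still-unidentified additive constant) proves the first inequality for $n$ large. The second is immediate: $(\frkh^{\dag}_n)^{-1}(\underline a)=\sigma_{2\pi}^{-1}(\frkh_n^{-1}(\underline a))$ and the walk hits exactly one of the two level sets first, so $\P_\x\big(\X^n_{\tau_\x}\in\sigma_{2\pi}^{-1}(\frkh_n^{-1}(\underline a))\big)=1-u_n(\x)$, and one takes $b''_n:=2R'-b'_n$. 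The delicate step is the oscillation estimate — one needs the oscillation of $\frkh_n$ on the circles $\{\Im=\pm R'\}$ to be a vanishing \emph{fraction} of $\bar a-\underline a$ (which itself may tend to $0$), and this is exactly where the scale iteration of Lemma~\ref{lm_tot_variation} against the large ratio $R'/2\pi$ is needed.
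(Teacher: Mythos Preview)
Your opening identity $u_n(\x)=(\frkh_n^{\dag}(\x)-\underline a)/(\bar a-\underline a)$ is correct and is precisely what the paper uses (in the opposite direction) to deduce Proposition~\ref{pr_main_height} from Lemma~\ref{lm_main_height}, so the two statements are equivalent and your reduction to controlling the oscillation of $\frkh_n$ on $V_{\pm R'}$ \emph{relative to} $\bar a-\underline a$ is legitimate.

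The gap is in that oscillation bound. Coupling two walks from $y_1,y_2\in V_{R'}$ until they exit a large slab $[-M,M]$ via Lemma~\ref{lm_tot_variation} gives only $|\frkh_n(y_1)-\frkh_n(y_2)|\le\dTV$, with no relation to $\bar a-\underline a$; the continuum heuristic that the non-affine part ``decays over $R'$ units of height'' cannot be made rigorous here, because the hypotheses give no control of $\frkh_n$ near $v_0,v_1$. Even granting the correct bound $\mathrm{osc}(u_n;V_{\pm R'})\lesssim 1/R'$ (a \emph{fixed}, not $o_n(1)$, quantity), your one-step decomposition yields only $|u_n(\x)-p(\x)|\lesssim 1/R'$ where $p(\x):=\P_\x(\X^n_{\sigma_\x}\in V_{R'}^{\dag})$, hence $2R'|u_n(\x)-p(\x)|\lesssim 1$: an $\x$-dependent $O(1)$ error that $b'_n$ cannot absorb. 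The paper recovers the missing factor of $\delta$ by a two-scale argument. First, a \emph{winding} estimate (Lemma~\ref{lm_height_tech2}): a walk from $V_{-R'}$ that winds around the cylinder below height $-R'$ must hit $\frkh_n^{-1}(\underline a)$ (which meets $V_{-R'}$ by definition of $\underline a$), so $u_n\lesssim 1/R'$ on $V_{-R'}$, and symmetrically $1-u_n\lesssim 1/R'$ on $V_{R'}$. Second, a TV coupling of walks from $\x,\y\in\VV\GG^{\dag,n}(R)$ stopped on $V_{\pm R'}$ (Lemma~\ref{lm_height_tech3} with Lemma~\ref{lm_tot_variation}) gives $\dTV\lesssim R/R'$. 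Writing $\frkf_n:=u_n-p$ and combining, one gets $|\frkf_n(\x)-\frkf_n(\y)|\lesssim(1/R')\cdot(R/R')=R/{R'}^2$, so $2R'|\frkf_n(\x)-\frkf_n(\y)|\lesssim R/R'=\delta$, and \emph{this} oscillation is what $b'_n$ absorbs. Your sketch has the coupling ingredient at the wrong scale and lacks the winding input; it is their product that produces the required $\delta$.
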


\noindent
Given Lemma~\ref{lm_main_height}, the proof of Proposition~\ref{pr_main_height} follows by using the harmonicity of the height coordinate function. 

\begin{proof}[Proof of Proposition~\ref{pr_main_height}]
We divide the proof in two steps.

\medskip
\noindent\textbf{Step 1.} In this first step we show that, for fixed $R > 1$ and $\delta \in (0, 1)$, for any $n > n(R, \delta)$ large enough, we can find real numbers $b^{R, \delta}_n$ and $c^{R, \delta}_n$ such that the conclusion of the proposition holds. To this end, let $R' := R/\delta$, fix $\x \in \VV\GG^{\dag, n}(R)$ and let $\X^{n,\x}$ be the random walk on $(\GG^{\dag, n}, c^{\dag, n})$ started from $\x$. Thanks to the harmonicity of the height coordinate function $\frkh^{\dag}_n$ and to the optional stopping theorem, we have that
\begin{equation*}
\frkh^{\dag}_n(\x)  = \bar{a} \P_\x\l(\X^n_{\tau_\x} \in (\frkh_n^{\dag})^{-1}(\bar{a})\r)  + \underline{a} \P_\x\l(\X^n_{\tau_\x} \in (\frkh_n^{\dag})^{-1}(\underline{a})\r), \quad \forall \x \in \VV\GG^{\dag, n}(R).
\end{equation*}
Therefore, the problem has been reduced to proving that the probabilities appearing in the previous expressions are approximately affine transformations of the height coordinate function in the a priori embedding. By Lemma~\ref{lm_main_height}, for all $n > n(R, \delta)$ large enough, there exist real numbers $b_n' = b_n'(R, \delta)$ and $b_n'' = b_n''(R, \delta)$ for which, for all $\x \in \VV\GG^{\dag, n}(R)$, it holds that
\begin{align*}
& \l|2 R' \frkh^{\dag}_n(\x) - \bar{a} (b_n' + \Im(\x))  + \underline{a} (b_n'' + \Im(\x)\r|  \\
& \qquad \leq \bar{a} \l|2 R' \P_\x\l(\X^n_{\tau_\x} \in (\frkh_n^{\dag})^{-1}(\bar{a})\r)  - b_n'  - \Im(\x)\r| + \underline{a} \l| 2 R' \P_\x\l(\X^n_{\tau_\x} \in (\frkh_n^{\dag})^{-1}(\underline{a})\r) + b_n'' + \Im(\x)\r| \\
& \qquad \leq \l|2 R' \P_\x\l(\X^n_{\tau_\x} \in (\frkh_n^{\dag})^{-1}(\bar{a})\r)  - b_n'  - \Im(\x)\r| + \l| 2 R' \P_\x\l(\X^n_{\tau_\x} \in (\frkh_n^{\dag})^{-1}(\underline{a})\r) + b_n'' + \Im(\x)\r| \\
& \qquad \leq \delta.
\end{align*}
Therefore, rearranging the terms in the above expression and letting
\begin{equation*}
c^{R, \delta}_n :=\frac{2R'}{|\bar{a} -\underline{a}|},  \qquad b^{R, \delta}_n := \frac{\underline{a}b_n''-\bar{a}b_n'}{|\bar{a} -\underline{a}|},
\end{equation*}
we find that, for any $n>n(R, \delta)$ large enough, it holds that
\begin{equation*}
\l|c^{R, \delta}_n \frkh^{\dag}_n(\x) + b^{R, \delta}_n  - \Im(\x)\r| \leq \delta, \quad \forall \x\in \VV\GG^{\dag, n}(R).
\end{equation*}

\medskip
\noindent\textbf{Step 2.} In this second step, we show how we can define real sequences $\{b_n\}_{n \in \N}$ and $\{c_n\}_{n \in \N}$ independent of $R$, $\delta$ such that the conclusion of the proposition holds. To this end, consider an increasing sequence $\{R_k\}_{k \in \N} \subset [1, \infty)$ and a decreasing sequence $\{\delta_k\}_{k \in \N} \subset (0, 1)$ such that $R_k \to \infty$ and $\delta_k \to 0$, as $k \to \infty$. Then, thanks to the previous step, we know that, for all $k \in \N$, there is $n_k = n_k(R_k, \delta_k) \in \N$ such that, for all $n > n_k$, there exist real numbers $c_n^{R_k, \delta_k}$, $b_n^{R_k, \delta_k} \in \R$ such that
\begin{equation*}
\l|c^{R_k, \delta_k}_n \frkh^{\dag}_n(\x) + b^{R_k, \delta_k}_n - \Im(\x)\r| \leq \delta_k, \quad \forall \x \in \VV\GG^{\dag, n}(R_k).
\end{equation*}
Without any loss of generality, we can assume that the sequence $\{n_k\}_{k \in \N}$ is increasing. Then, for all $n \in [0, n_1) \cap \N$,  we let $c^{\frkh}_n := 1$, $b^{\frkh}_n := 1$, and for all $k \in \N$ and $n \in [n_k, n_{k+1}) \cap \N$, we set $c^{\frkh}_n := c^{R_k, \delta_k}_n$, $b^{\frkh}_n := b^{R_k, \delta_k}_n$. Therefore, if we fix $R > 1$ and $\delta \in (0, 1)$, for all $n > n(R, \delta)$ large enough, it holds that 
\begin{equation*}
\l|c^{\frkh}_n \frkh^{\dag}_n(\x) + b^{\frkh}_n - \Im(\x)\r| \leq \delta, \quad \forall \x \in \VV\GG^{\dag, n}(R), 
\end{equation*}
which concludes the proof.
\end{proof}

\subsubsection{Proof of Lemma~\ref{lm_main_height}}
\label{sub_aux_res_height}
The main goal of this subsection is prove the key Lemma~\ref{lm_main_height}. Roughly speaking, the first estimate in Lemma~\ref{lm_main_height} states that, for $n > n(R, \delta)$ large enough, the probability that the lifted random walk started inside $\smash{\VV\GG^{\dag, n}(R)}$ hits the set $\smash{(\frkh_n^{\dag})^{-1}(\bar{a})}$ before hitting $\smash{(\frkh_n^{\dag})^{-1}(\underline{a})}$ depends linearly on the height coordinate of the starting point of the walk on the a priori embedding. In order to prove this result, we need to rule out the possibility that the preimage of a horizontal line on the Smith embedding has large vertical fluctuations (see Figure~\ref{fig_height}). To do so, we use the invariance principle assumption on the sequence of primal graphs, and more precisely we will follow the following steps.
\begin{enumerate}[(a)]
	\item We start by proving that the probability that the lifted random walk started inside $\smash{\VV\GG^{\dag, n}(R)}$ hits the set $\smash{V^{\dag}_{R'}}$ before hitting $\smash{V^{\dag}_{-R'}}$ depends linearly on the height coordinate of the starting point of the walk on the a priori embedding. This follows easily thanks to the invariance principle assumption and it is the content of Lemma~\ref{lm_height_tech1} below.
	\item We then prove that the probability that the random walk started from $\smash{V^{\dag}_{-R'}}$ has probability of order $1/R'$ to hit $\smash{(\frkh_n^{\dag})^{-1}(\bar{a})}$ before hitting $\smash{(\frkh_n^{\dag})^{-1}(\underline{a})}$. Once again, this is an easy consequence of the invariance principle assumption, and it is the content of Lemma~\ref{lm_height_tech2} below. 
	\item Finally, roughly speaking, in order to conclude, we need to improve the bound on the probability appearing in the previous step from order $1/R'$ to order $o_n(1/R')$. This is done by using Lemma~\ref{lm_tot_variation} together with the invariance principle assumption. Indeed, as it will be more clear in the proof of Lemma~\ref{lm_main_height}, it is sufficient to estimate the probability that a random walk started inside $\VV\GG^n(R)$ does not disconnect $V_R \cup V_{-R}$ from $V_{R'} \cup V_{-R'}$ before hitting the latter set. In Lemma~\ref{lm_height_tech3} below, we will see that this probability is of order $R'/R$, and this will be enough to conclude.
\end{enumerate}

\noindent
Before proceeding, we observe that, for all $\x \in  \VV\GG^{\dag, n}(R)$, it holds that
\begin{equation*}
\P_\x\l(\X^n_{\tau_\x} \in \sigma_{2\pi}^{-1}(\frkh_n^{-1}(\underline{a}))\r) = 1 - \P_{\x}\l(\X^n_{\tau_\x} \in (\frkh_n^{\dag})^{-1}(\bar{a})\r),
\end{equation*}
hence, from now on, we can just focus on the first estimate in the statement of Lemma~\ref{lm_main_height}.

\medskip
\noindent
We can now state and prove the technical lemmas mentioned above. We start with the following lemma where we study the probability that the lifted random walk started inside $\smash{\VV\GG^{\dag, n}(R)}$ hits the set $\smash{V^{\dag}_{R'}}$ before hitting $\smash{V^{\dag}_{-R'}}$.
\begin{lemma}
\label{lm_height_tech1}
For any $n > n(R, \delta)$ large enough, it holds that 
\begin{equation*}
\l| 2 R' \P_\x\l(\X^{n}_{\sigma_\x} \in V_{R'}^{\dag}\r) - R' - \Im(\x)\r| \leq \delta, \quad \forall \x \in \VV\GG^{\dag, n}(R).
\end{equation*}	
\end{lemma}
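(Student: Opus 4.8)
The plan is to compare the exit distribution of the lifted random walk from the horizontal strip $\sfS := \R\times(-R',R')$ with the corresponding quantity for planar Brownian motion, which is explicit. Write $B^z$ for Brownian motion in $\R^2$ started from $z$, and for $z\in\sfS$ let $\sigma$ be its first exit time from $\sfS$. Since $\Im(B^z)$ is a one-dimensional Brownian motion started in $(-R',R')$, the time $\sigma$ is almost surely finite and the stopped process $t\mapsto\Im(B^z_{t\wedge\sigma})$ is a bounded martingale, so the optional stopping theorem gives $\Im(z)=R'\,p(z)-R'(1-p(z))$, where $p(z):=\P_z(B^z\text{ exits }\sfS\text{ through }\R\times\{R'\})$; hence $p(z)=\frac{\Im(z)+R'}{2R'}$, and in particular $2R'\,p(z)-R'-\Im(z)=0$ for all $z\in\sfS$. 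It therefore suffices to show that, for $n$ large enough depending on $R$ and $\delta$, the discrete exit probability $\P_\x(\X^n_{\sigma_\x}\in V^{\dag}_{R'})$ is within $\delta/(2R')$ of $p(\x)$, uniformly over $\x\in\VV\GG^{\dag,n}(R)$.

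First I would check compatibility with the setup. After the subdivision carried out above, every point of $V^{\dag}_{\pm R'}$ is a vertex of $\GG^{\dag,n}$, so $\sigma_\x$ is the first time $\X^{n,\x}$ reaches height $R'$ or $-R'$; since $\GG^n$ is a finite connected graph, its random walk visits every vertex and so $\sigma_\x<\infty$ almost surely. Moreover, after the subdivision every unbounded edge of $\GG^{\dag,n}$ has a finite endpoint at height $R'$ or $-R'$, so the walk run up to time $\sigma_\x$ never traverses an unbounded edge; hence the lifted version of the invariance principle proved at the start of this section applies to $\X^{n,\x}$ stopped at $\sigma_\x$. I would also record that, up to time $\sigma_\x$, this walk stays in $\overline{\sfS}$, and that $\X^n_{\sigma_\x}\in V^{\dag}_{R'}$ precisely when it exits $\sfS$ through the top.

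Then I would run a compactness-and-continuity argument. Supposing the claimed bound fails for the given $R,\delta$, extract a subsequence $n_k\to\infty$ and vertices $\x_k\in\VV\GG^{\dag,n_k}(R)$ with $|2R'\,\P_{\x_k}(\X^{n_k}_{\sigma_{\x_k}}\in V^{\dag}_{R'})-R'-\Im(\x_k)|>\delta$ for all $k$; by the $2\pi$-periodicity of the lifted graphs (Subsection~\ref{sub_universal_cover}) I may take $\Re(\x_k)\in[0,2\pi)$, so $(\x_k)$ lies in the compact set $[0,2\pi]\times[-R,R]$ and, along a further subsequence, $\x_k\to z^\ast$ with $\Im(z^\ast)\in[-R,R]\subset(-R',R')$. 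Applying the lifted invariance principle with the point $z=\x_k$ (whose nearest vertex is $\x_k$ itself), using the uniformity in the starting point together with the continuity of the law of Brownian motion in its starting point, one gets that $\X^{n_k,\x_k}$ stopped at $\sigma_{\x_k}$ converges in law to $B^{z^\ast}$ in the local topology on curves modulo time parametrization of Subsection~\ref{subsub_metric_CMP}. Combining this with the fact that the exit side from $\sfS$ is a $\P_{z^\ast}$-almost sure continuity point of the path-to-exit-side map, one obtains $\P_{\x_k}(\X^{n_k}_{\sigma_{\x_k}}\in V^{\dag}_{R'})\to p(z^\ast)=\frac{\Im(z^\ast)+R'}{2R'}$, while $\Im(\x_k)\to\Im(z^\ast)$; hence $2R'\,\P_{\x_k}(\X^{n_k}_{\sigma_{\x_k}}\in V^{\dag}_{R'})-R'-\Im(\x_k)\to 0$, contradicting the choice of the $\x_k$'s.

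The main obstacle is precisely the continuity statement just invoked: since $\sfS$ is an unbounded strip, whereas $\dCMPloc$ only controls curves up to their exit from balls $B(0,r)$, one must first use that for $r$ large $B^{z^\ast}$ exits $\sfS$ before leaving $B(0,r)$ with probability close to $1$, and then argue on that event. The needed non-degeneracy — that almost every Brownian path, started strictly inside $\sfS$ (recall $[-R,R]\subset(-R',R')$), reaches exactly one of the two boundary levels first, stays a positive distance from the other level until then, and strictly overshoots the level it first hits — follows from standard one-dimensional Brownian motion facts; the remaining point is a routine (if slightly delicate) time-reparametrization estimate showing that any curve close enough to such a path in $\dCMP$, up to the relevant ball-exit time, must exit $\sfS$ on the same side.
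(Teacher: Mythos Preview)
Your proposal is correct and follows the same essential approach as the paper: compute the Brownian exit probability from the strip via the gambler's ruin formula, then invoke the invariance principle to transfer this to the random walk. The paper's proof is simply terser: it directly appeals to assumption~\ref{it_invariance} (in its lifted form) to obtain the uniform bound $|\P_\x(\X^n_{\sigma_\x}\in V_{R'}^\dag)-\P_\x(\Im(B_{\sigma_{B,\x}})=R')|\le\delta/(2R')$ without spelling out the continuity argument, whereas you run an explicit compactness-and-contradiction argument and carefully discuss why the exit-side functional is continuous for the $\dCMPloc$ topology. Your extra care is not wasted---the paper glosses over exactly the point you flag in your final paragraph---but the subsequence/compactness detour is not strictly needed, since the uniformity over starting points in compact sets is already built into the hypothesis.
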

\begin{proof}
For $n \in \N$, fix a vertex $\x \in \VV\GG^{\dag, n}(R)$, consider a planar Brownian motion $B^\x$ started from $\x$, and define the stopping time 
\begin{equation*}
\sigma_{B, \x} := \inf\l\{t \geq 0 \, : \, \l|\Im(B^\x_t)\r| = R'\r\}.
\end{equation*}
Then, by assumption~\ref{it_invariance}, for any $n > n(R, \delta)$ large enough, we have that
\begin{equation*}
\l|\P_\x(\X^{n}_{\sigma_\x} \in \sigma_{2\pi}^{-1}(V_{R'}))  - \P_\x\l(\Im(B_{\sigma_{B, \x}}) = R'\r) \r| \leq \frac{\delta} {2 R'}, \quad\forall \x \in \VV\GG^{\dag, n}(R).
\end{equation*}
Since $\Im(B^\x)$ is just a linear Brownian motion started from $\Im(\x)$, thanks to the gambler's ruin formula, it holds that
\begin{equation*}
\P_\x\l(\Im(B_{\sigma_{B, \x}}) = R'\r) = \frac{R' + \Im(\x)}{2 R'} ,
\end{equation*}
from which the conclusion follows.
\end{proof}

\noindent
We can now move to the second lemma which gives an estimate for the probability that the random walk started from $\smash{V^{\dag}_{-R'}}$ hits $\smash{(\frkh_n^{\dag})^{-1}(\bar{a})}$ before hitting $\smash{(\frkh_n^{\dag})^{-1}(\underline{a})}$.
\begin{lemma}
\label{lm_height_tech2}
For any $n > n(R, \delta)$ large enough, it holds that
\begin{equation*}
\P_\x\l(\X^n_{\tau_\x} \in (\frkh_n^\dag)^{-1}(\bar{a})\r) \lesssim \frac{1}{R'}, \quad \forall \x \in V_{-R'}^{\dag},
\end{equation*}
where the implicit constant is independent of everything else.
\end{lemma}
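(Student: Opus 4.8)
The plan is to derive the bound from the harmonicity of $\frkh^{\dag}_n$ together with the invariance principle~\ref{it_invariance}. Set $W := (\frkh_n^{\dag})^{-1}(\bar a)\cup(\frkh_n^{\dag})^{-1}(\underline a)$, and for $\x\in V_{-R'}^{\dag}$ let $\tau_\x$ be the first hitting time of $W$ by $\X^{n,\x}$. First I would record what harmonicity forces for free. Both $\frkh_n^{-1}(\bar a)$ and $\frkh_n^{-1}(\underline a)$ are cutsets separating $v_0^n$ from $v_1^n$, so the projection of $\X^{n,\x}$ to $\GG^n$ — which is absorbed at $\{v_0^n,v_1^n\}$ almost surely — must meet one of them beforehand; hence $\tau_\x<\infty$ $\P_\x$-a.s. (The degenerate case $\x\in W$, as well as the case $\bar a=\underline a$, is disposed of separately, using that $\frkh_n^{\dag}$ is nonconstant.) Since $\frkh_n^{\dag}(\X^n)$ is a bounded martingale, optional stopping at $\tau_\x$ gives
\begin{equation*}
\frkh_n^{\dag}(\x)=\bar a\,\P_\x\bigl(\X^n_{\tau_\x}\in(\frkh_n^{\dag})^{-1}(\bar a)\bigr)+\underline a\,\P_\x\bigl(\X^n_{\tau_\x}\in(\frkh_n^{\dag})^{-1}(\underline a)\bigr),
\end{equation*}
so that $\P_\x(\X^n_{\tau_\x}\in(\frkh_n^{\dag})^{-1}(\bar a))=\frac{\frkh_n^{\dag}(\x)-\underline a}{\bar a-\underline a}$; in particular $\underline a<\bar a$ and $\frkh_n^{\dag}(\x)\in[\underline a,\bar a]$ for every $\x\in V_{-R'}^{\dag}$, whence also $\max_{V_{-R'}^n}\frkh_n\le\bar a$. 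It therefore suffices to show that the harmonic function $g_n:=(\frkh_n^{\dag}-\underline a)/(\bar a-\underline a)$ — which equals $0$ on $(\frkh_n^{\dag})^{-1}(\underline a)$ and $1$ on $(\frkh_n^{\dag})^{-1}(\bar a)$ — satisfies $g_n\lesssim 1/R'$ on $V_{-R'}^{\dag}$.

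To bound $g_n(\x)=\P_\x\bigl(\X^{n}\text{ hits }(\frkh_n^{\dag})^{-1}(\bar a)\text{ before }(\frkh_n^{\dag})^{-1}(\underline a)\bigr)$ I would use two geometric facts. First, $(\frkh_n^{\dag})^{-1}(\underline a)$ contains a point of height exactly $-R'$ (a lift of a vertex of $V_{-R'}^n$ minimizing $\frkh_n$) and, being a cutset separating the two ends of $\CC_{2\pi}$, is crossed by \emph{every} noncontractible loop of $\X^n$. Second, by the maximum principle together with the identity above, $\frkh_n<\bar a$ at all heights below $-R'$, so $(\frkh_n^{\dag})^{-1}(\bar a)$ contains no point of height $<-R'$; moreover — and this is the nontrivial input from~\ref{it_invariance} — it stays at heights $\ge-cR'$ for a fixed $c\in(0,1)$. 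Granting this, if $\X^{n,\x}$ (started at height $-R'$) performs a noncontractible loop around the cylinder while staying in the strip $\R/2\pi\Z\times[-2R',-cR']$ before hitting $(\frkh_n^{\dag})^{-1}(\bar a)$, then it has already hit $(\frkh_n^{\dag})^{-1}(\underline a)$; consequently $g_n(\x)$ is at most the probability that $\X^{n,\x}$ exits the lifted strip $\R\times[-2R',-cR']$ through its top or bottom before winding once around the cylinder. By~\ref{it_invariance} this transfers to the corresponding Brownian estimate, and planar Brownian motion confined to a cylinder of circumference $2\pi$ winds around many times before its height coordinate travels a distance of order $R'$; a routine first-eigenvalue (Beurling-type) computation then gives a bound of order $e^{-cR'}\lesssim 1/R'$, with an absolute implicit constant, which is more than enough.

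The main obstacle is the nontrivial input flagged above: controlling the location of the level set $(\frkh_n^{\dag})^{-1}(\bar a)$ so that a noncontractible loop of the walk near height $-R'$ meets $(\frkh_n^{\dag})^{-1}(\underline a)$ but avoids $(\frkh_n^{\dag})^{-1}(\bar a)$. A priori one only knows that this set reaches height $R'$ and does not descend below $-R'$; one must rule out that it comes all the way down to height $\approx-R'$, which is equivalent to showing that $\frkh_n$ is not merely $\le\bar a$ on $V_{-R'}^n$ but varies by only $O(1/R')$ there (and, relatedly, that $\bar a-\underline a$ does not degenerate). For the variation bound I would invoke Lemma~\ref{lm_tot_variation}: coupling the walk from $\x$ with the walk from a minimizing vertex $\x_\ast\in V_{-R'}^{\dag}$, the two exit distributions from $\{v_0^n,v_1^n\}$ differ in total variation by at most the probability that the walk from $\x$ fails to disconnect $\x_\ast$ from $\{v_0^n,v_1^n\}$ before absorption, and — since $\x$ and $\x_\ast$ sit at the same height on a cylinder of circumference $2\pi$ — the walk encircles $\x_\ast$ (by winding around at a range of heights inside the strip $\R/2\pi\Z\times[-2R',-cR']$) with probability $1-O(1/R')$, again via~\ref{it_invariance} and the Brownian winding estimate. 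Making this "winds around and encircles a nearby point" statement quantitative and uniform in the starting vertex, and combining it with Lemma~\ref{lm_height_tech1} to keep $\bar a-\underline a$ under control, is where the bulk of the work lies.
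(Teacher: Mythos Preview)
Your core mechanism is right and matches the paper's: a noncontractible loop of the walk below height $-R'$ forces a crossing of $(\frkh_n^\dag)^{-1}(\underline a)$, so the event $\{\X^n_{\tau_\x}\in(\frkh_n^\dag)^{-1}(\bar a)\}$ is contained in ``the walk does not wind around below $-R'$ before $\tau_\x$''. After that, however, you take a long detour that the paper avoids entirely.

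The ``main obstacle'' you flag --- that $(\frkh_n^\dag)^{-1}(\bar a)$ might descend to heights $\approx -R'$ --- is not an obstacle. The very same optional--stopping argument you use to show $\frkh_n<\bar a$ below $-R'$ gives the far stronger fact $\frkh_n<\bar a$ at \emph{every} height strictly below $R'$: from any such vertex the walk must hit $V_{R'}^n$ (where $\frkh_n\le\bar a$) or $v_0^n$ (where $\frkh_n=0$) before reaching $v_1^n$, and generically it hits $v_0^n$ with positive probability. Hence $(\frkh_n^\dag)^{-1}(\bar a)$ sits at heights $\ge R'$, nowhere near $-R'$, and no input from~\ref{it_invariance} is needed for this. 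With that in hand the paper simply bounds the desired probability by $\P_\x(\text{walk reaches height }R'\text{ before winding below }-R')$, transfers to Brownian motion via~\ref{it_invariance}, and invokes the Brownian estimate of Lemma~\ref{lm_std_BM_1} to get $\lesssim 1/R'$. No total--variation coupling appears.

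Your alternative route --- bound $\frkh_n(\x)-\underline a$ via Lemma~\ref{lm_tot_variation} and then divide by $\bar a-\underline a$ --- also has a gap you do not close: you need $\bar a-\underline a$ bounded below by an absolute constant, and Lemma~\ref{lm_height_tech1} (which controls hitting probabilities of $V_{\pm R'}^\dag$, not values of $\frkh_n$) does not obviously deliver this. The total--variation machinery is indeed used in the paper, but for Lemma~\ref{lm_main_height}, not here. Finally, the Brownian bound you quote is $\lesssim 1/R'$ (as in Lemma~\ref{lm_std_BM_1}), not $e^{-cR'}$; the exponential estimate would hold for a single traversal of a strip of height $\sim R'$, which is not the event at hand.
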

\begin{proof}
We start by noticing that, for all $\x \in V_{-R'}^{\dag}$, it holds that
\begin{equation*}
\P_\x\l(\X^n_{\tau_\x} \in (\frkh_n^\dag)^{-1}(\bar{a})\r) \leq \P_\x\l(\sigma_{2\pi}(\X^n|_{[0, \tau_\x]})  \text{ does not wind around the cylinder below height } -R'\r), 
\end{equation*}
where we recall that $\sigma_{2\pi}$ is defined in \eqref{eq_covering_map} and denotes the covering map of the infinite cylinder $\CC_{2\pi}$. The above inequality is due to the fact that, if $\sigma_{2\pi}(\X^n|_{[0, \tau_\x]})$ winds around the cylinder below height $-R'$, then, by definition of $\underline{a}$, it has to hit the set $\frkh^{-1}_n(\underline{a})$. We can now exploit assumption~\ref{it_invariance} and find the corresponding upper bound for the Brownian motion. More precisely, let $B^{\x}$ be a planar Brownian motion started from $\x \in \sigma_{2\pi}^{-1}(V_{-R'})$ and define the stopping time 
\begin{equation*}
\tau_{B, \x} := \inf\l\{t \geq 0 \, : \, \Im(B^\x_t) = -2R' \text{ or } \Im(B^\x_t) = R'\r\}.
\end{equation*}
Then, for any $n > n(R, \delta)$ large enough, we have that
\begin{align*}
& \P_\x\l(\sigma_{2\pi}(\X^n|_{[0, \tau_\x]})  \text{ does not wind around the cylinder below height } -R'\r) \\
& \qquad \leq 2 \P_{\x}\l(\sigma_{2\pi}(B|_{[0, \tau_{B, \x}]}) \text{ does not wind around the cylinder below height } -R'\r).
\end{align*} 
Therefore, to conclude, it is sufficient to find a uniform upper bound for the quantity on the right-hand side of the above expression. This is done in Lemma~\ref{lm_std_BM_1} from which the conclusion follows.
\end{proof}

\noindent
In order to prove Lemma~\ref{lm_main_height}, we also need the following lemma which provides an estimate for the probability that a random walk started inside $\VV\GG^n(R)$ disconnects $V_R \cup V_{-R}$ from $V_{R'} \cup V_{-R'}$ before hitting the latter set.
\begin{lemma}
\label{lm_height_tech3}
For any $n > n(R, \delta)$ large enough, it holds that
\begin{equation*}
\P_\x\l(\sigma_{2\pi}(\X^n|_{[0, \sigma_\x]})  \text{ does not disconnect } V_{R} \cup V_{-R} \text{ from } V_{R'} \cup V_{-R'}\r) \lesssim \frac{R}{R'}, \quad \forall \x \in \VV\GG^{\dag, n}(R),
\end{equation*}
where the implicit constant is independent of everything else.
\end{lemma}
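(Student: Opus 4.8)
The plan is to transfer the statement to Brownian motion by means of the invariance principle~\ref{it_invariance} and then to prove the corresponding Brownian estimate using the gambler's ruin formula together with a standard estimate on the winding of Brownian motion across a long cylindrical annulus (in the spirit of Lemma~\ref{lm_std_BM_1}). Note first that $\sigma_{2\pi}(\X^n|_{[0,\sigma_\x]})$ is the projection of the walk run until it first reaches height $R'$ or $-R'$, hence a curve in $\CC_{2\pi}$ with range in $\R/2\pi\Z\times[-R',R']$ ending on $V_{R'}$ or $V_{-R'}$. The event that such a curve \emph{does not} disconnect $V_R\cup V_{-R}$ from $V_{R'}\cup V_{-R'}$ is not closed in the curve topology, so instead I would introduce an \emph{open} event $E$ that \emph{implies} disconnection: the curve contains a subarc whose range lies in the closed annulus $\R/2\pi\Z\times[R+\eps,R'-\eps]$ and whose winding exceeds $1$ in absolute value, and likewise a subarc with range in $\R/2\pi\Z\times[-R'+\eps,-R-\eps]$ and winding exceeding $1$ (for a small fixed $\eps$). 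Since a connected compact subset of an open annulus through which some path of winding at least $1$ passes separates the two boundary circles of that annulus, the first subarc separates $V_{R'}$ from $V_R\cup V_{-R}\cup V_{-R'}$ and the second separates $V_{-R'}$ from $V_{R'}\cup V_R\cup V_{-R}$, so $V_R\cup V_{-R}$ is disconnected from $V_{R'}\cup V_{-R'}$.

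Using the lifted invariance principle (uniformly over starting points in the relevant compact set) and the fact that Brownian motion a.s.\ exits the strip $\R/2\pi\Z\times(-R',R')$ transversally, the walk stopped at $\sigma_\x$ converges in law to Brownian motion on $\CC_{2\pi}$ stopped at its first exit from that strip; since $E^c$ is closed, the portmanteau theorem gives
\begin{equation*}
\limsup_{n\to\infty}\ \sup_{\x\in\VV\GG^{\dag,n}(R)} \P_\x\big(\sigma_{2\pi}(\X^n|_{[0,\sigma_\x]})\notin E\big)\ \le\ \sup_{z\in\R/2\pi\Z\times[-R,R]}\ \P_z\big(B|_{[0,\sigma_B]}\notin E\big),
\end{equation*}
where $B$ is Brownian motion on $\CC_{2\pi}$ and $\sigma_B$ its first exit from the strip. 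As ``does not disconnect'' is contained in $E^c$, it suffices to bound the right-hand side by $\lesssim R/R'$. For this, let $B$ start at height $h_0\in[-R,R]$. On the event that $\Im B$ hits $-R'$ before reaching $R$ — which forces $B$ to exit through the bottom — the gambler's ruin formula gives probability $\tfrac{R-h_0}{R+R'}\le\tfrac{2R}{R+R'}\lesssim R/R'$, and symmetrically for the top; so up to an error $\lesssim R/R'$ the motion reaches both heights $R$ and $-R$ before $\sigma_B$. Conditionally on $B$ reaching height $R$, the gambler's ruin formula also gives $\P(\Im B\text{ never reaches }R+L\text{ before }\sigma_B)\le L/R'$ for every $L>0$, so with probability $1-\lesssim L/R'$ the excursion of $B$ above $R$ of maximal height rises at least to $R+L$; during (the up-crossing part of) such an excursion $B$ crosses a cylindrical annulus of modulus $\gtrsim L$, which produces a subarc of winding $>1$ except with probability controlled by the standard winding estimate for a crossing of an annulus of that modulus. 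The same applies to $A_-$. Optimizing the auxiliary scale $L$ against these errors, and using $R'=R/\delta$ with $R\ge1$ so that $1/R'\le R/R'$, yields $\P_z(B|_{[0,\sigma_B]}\notin E)\lesssim R/R'$ uniformly in $z$; feeding this into the displayed inequality gives the lemma for $n$ large.

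The main obstacle is the interplay of the two ``standard Brownian'' ingredients with the topological bookkeeping that links them: one must choose $E$ so that it is genuinely open in the local curve topology while still implying disconnection (and check that the convergence from~\ref{it_invariance} survives stopping at $\sigma_\x$, uniformly in the starting vertex), and one needs the quantitative winding estimate for a Brownian crossing of a long cylindrical annulus to be strong enough that the residual configurations — those in which the walk only pokes shallowly into one of the two annuli $A_\pm$ — are absorbed into the $R/R'$ error via the relation $R'=R/\delta$. These are exactly the kind of auxiliary Brownian lemma that the paper isolates separately, as with the Lemma~\ref{lm_std_BM_1} used in the proof of Lemma~\ref{lm_height_tech2}.
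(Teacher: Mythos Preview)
Your overall strategy matches the paper's exactly: transfer to Brownian motion via the invariance principle, then prove a purely Brownian bound (the paper isolates this as Lemma~\ref{lm_std_BM_2}). Your portmanteau/open-event formulation of the transfer step is in fact more explicit than the paper's one-line ``by assumption~\ref{it_invariance}, $\le 2\,\P(\cdot)$''.

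The difference is in how you prove the Brownian estimate, and this is where you lose a logarithm. The paper (Lemma~\ref{lm_std_BM_2}) iterates over successive crossings between heights $R$ and $R{+}1$: the number $K$ of such crossings before exit has $\P(K{=}0)\lesssim R/R'$ and, given $K\ge 1$, is approximately geometric with parameter $\asymp 1/R'$; since each crossing independently produces a loop with fixed probability $p$, one gets $\E[(1{-}p)^K]\lesssim R/R' + 1/R' \lesssim R/R'$. Your single-long-crossing approach instead incurs the two errors $L/R'$ (for never reaching $R{+}L$) and $e^{-cL}$ (for not winding during that one crossing). Optimising gives $L\asymp \log R'$ and a total error $\asymp (R+\log R')/R' = (R+\log(1/\delta))/R'$, which is \emph{not} $\lesssim R/R'$ uniformly once $\log(1/\delta)\gg R$. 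Your sentence ``optimizing the auxiliary scale $L$ \dots\ yields $\lesssim R/R'$'' therefore overclaims; the remark ``$1/R'\le R/R'$'' does not help because the optimum is $(\log R')/R'$, not $1/R'$. The fix is precisely the iteration the paper uses: many short crossings, each with a constant chance of winding, rather than one long crossing.

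This log loss would not break Proposition~\ref{pr_main_height} (after re-choosing $\delta$), but it does not deliver the lemma as stated with a universal implicit constant. There are also some loose ends in your construction of $E$: the event ``subarc with range in the \emph{closed} annulus $[R{+}\eps,R'{-}\eps]$ and winding $>1$'' is not open under $\dCMP$ (a limit can push the range slightly outside); one must nest two concentric closed annuli, and one must pick the subarc via stopping times (e.g.\ the first down-crossing from $R{+}L$ to $R{+}\eps$ before exiting $(R{+}\eps,R'{-}\eps)$), since ``the up-crossing of the maximal excursion'' is not defined by a stopping time.
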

\begin{proof}
For $\x \in \VV\GG^{\dag, n}(R)$, let $B^\x$ be a planar Brownian motion started from $\x$, and define the stopping time 
\begin{equation*}
\sigma_{B, \x} : = \inf \l\{t \geq 0 \, : \, |\Im(B_t^\x)| = R'\r\}.
\end{equation*}
By assumption~\ref{it_invariance}, we know that for any $n > n(R, \delta)$ large enough, it holds that
\begin{align*}
& \P_\x\l(\sigma_{2\pi}(\X^n|_{[0, \sigma_\x]})  \text{ does not disconnect } V_{R} \cup V_{-R} \text{ from } V_{R'} \cup V_{-R'}\r) \\
& \qquad \leq 2 \P_{\x}\l(\sigma_{2\pi}(B|_{[0, \sigma_{B, \x}]}) \text{ does not disconnect } \R/2\pi\Z \times \{-R, R\} \text{ from } \R/\Z \times \{-R', R'\}\r).
\end{align*}
Therefore, it is sufficient to find a uniform upper bound for the quantity on the right-hand side of the above expression. This is the content of Lemma~\ref{lm_std_BM_2} from which the desired result follows.
\end{proof}

\noindent
We are now ready to give a proof of Lemma~\ref{lm_main_height}. As we have already remarked, this will be a consequence of the previous three lemmas and of Lemma~\ref{lm_tot_variation}.
\begin{proof}[Proof of Lemma~\ref{lm_main_height}]
We start by defining the function $\frkf^{\dag}_n : \VV\GG^{\dag, n}(R') \to \R$ as follows
\begin{equation*}
\frkf^{\dag}_n(\x) : = \P_\x\l(\X^n_{\tau_\x} \in  (\frkh_n^{\dag})^{-1}(\bar{a})\r) - \P_\x\l(\X^n_{\sigma_\x} \in V_{R'}^{\dag}\r), \qquad  \forall \x \in \VV\GG^{\dag, n}(R'),
\end{equation*}
so that, we can write
\begin{equation}
\label{eq_deco_height_coordinate}
\P_\x\l(\X^n_{\tau_\x} \in  (\frkh_n^{\dag})^{-1}(\bar{a})\r) = \P_\x\l(\X^n_{\sigma_\x} \in V_{R'}^{\dag}\r) + \frkf^{\dag}_n(\x), \qquad  \forall \x \in \VV\GG^{\dag, n}(R').
\end{equation}
We now observe that, thanks to Lemma~\ref{lm_height_tech1}, for any $n > n(R, \delta)$ large enough, it holds that
\begin{equation}
\label{eq_bound_height_coordinate_1}
\l|2 R' \P_\x\l(\X^n_{\sigma_\x} \in  V_{R'}^{\dag}\r) - R' - \Im(\x)\r| \leq \delta,  \quad \forall \x \in \VV\GG^{\dag, n}(R).
\end{equation} 
Therefore, it is sufficient to study the function $\smash{\frkf^{\dag}_n}$ appearing in \eqref{eq_deco_height_coordinate}. To this end, we consider the functions $\smash{\bar{\frkf}^{\dag}_n : \VV\GG^{\dag, n}(R') \to [0,1]}$ and $\smash{\underline{\frkf}^{\dag}_n : \VV\GG^{\dag, n}(R') \to [0,1]}$ defined as follows
\begin{equation*}
\bar{\frkf}^{\dag}_n(\x) := \P_\x\l(\X^n_{\sigma_\x} \in V_{-R'}^{\dag}, \, \X^n_{\tau_\x} \in  (\frkh_n^{\dag})^{-1}(\bar{a})\r), \qquad \underline{\frkf}^{\dag}_n(\x) := \P_\x\l(\X^n_{\sigma_\x} \in V_{R'}^{\dag}, \, \X^n_{\tau_\x} \in (\frkh_n^{\dag})^{-1}(\underline{a})\r).
\end{equation*}
In particular, as one can easily check, it holds that 
\begin{equation*}
 \l|\frkf_n^{\dag}(\x)\r| \leq \bar{\frkf}_n^{\dag}(\x) + \underline{\frkf}_n^{\dag}(\x), \quad \forall \x \in \VV\GG^{\dag, n}(R'),
\end{equation*}
and so, we can reduce the problem to the study of the functions $\smash{\bar{\frkf}^{\dag}_n}$ and $\smash{\underline{\frkf}^{\dag}_n}$. We will only study the function $\smash{\bar{\frkf}^{\dag}_n}$ as the function $\smash{\underline{\frkf}^{\dag}_n}$ can be treated similarly. Thanks to the strong Markov property of the random walk $\X^{n, \x}$, we have that 
\begin{equation*}
\bar{\frkf}^{\dag}_n(\x) = \E_\x\l[\bar{\frkf}^{\dag}_n(\X^n_{\sigma_\x})\r], \quad \forall \x \in \VV\GG^{\dag, n}(R').
\end{equation*}
 Therefore, for $\x$, $\y \in \VV\GG^{\dag, n}(R)$, it holds that 
\begin{equation}
\label{eq_bound_frkf1}
\l|\bar{\frkf}^{\dag}_n(\x) - \bar{\frkf}^{\dag}_n(\y)\r| \leq \sup \l\{\l|\bar{\frkf}^{\dag}_n(\v)\r| \, : \, \v \in V_{-R'}^{\dag} \r\} \dTV\l(\X^{n, \x}_{\sigma_\x}, \X^{n, \y}_{\sigma_\y}\r),
\end{equation}
where $\dTV$ denotes the total variation distance.
Hence, it is sufficient to find an upper bound for the two terms on the right-hand side of \eqref{eq_bound_frkf1}. We treat the two factors separately. 
\begin{itemize}
\item In order to bound the first factor, we just need to bound uniformly on $\smash{\v \in V_{-R'}^{\dag}}$ the probability that a random walk on $\smash{(\GG^{\dag, n}, c^{\dag, n})}$ started from $\v$ hits $\smash{(\frkh_n^{\dag})^{-1}(\bar{a})}$ before hitting $\smash{(\frkh_n^{\dag})^{-1}(\underline{a})}$. This is exactly the content of Lemma~\ref{lm_height_tech2} from which we can deduce that, for all $n > n(R, \delta)$ large enough, it holds that 
\begin{equation*}	
\sup \l\{\l|\bar{\frkf}^{\dag}_n(\v)\r| \, : \, \v \in V_{-R'}^{\dag} \r\} \lesssim \frac{1}{R'},
\end{equation*}
where the implicit constant is universal.
\item In order to bound the second factor, we can use Lemma~\ref{lm_tot_variation}. More precisely, it is sufficient to estimate the probability that $\sigma_{2\pi}(\X^{n,\x}|_{[0, \sigma_\x]})$ disconnects $V_{R} \cup V_{-R}$ from $V_{R'} \cup V_{-R'}$. This is exactly the content of Lemma~\ref{lm_height_tech3} which guarantees that, for all $n > n(R, \delta)$ large enough and for all $\x \in \VV\GG^{\dag, n}(R)$, it holds that
\begin{equation*}
\P_\x\l(\sigma_{2\pi}(\X^{n}|_{[0, \sigma_\x]})  \text{ does not disconnect } V_{R} \cup V_{-R} \text{ from } V_{R'} \cup V_{-R'}\r) \lesssim \frac{R}{R'},
\end{equation*}
where the implicit constant is independent of everything else. Therefore, this fact together with Lemma~\ref{lm_tot_variation} imply that 
\begin{equation*}
\dTV\l(\X^{n, \x}_{\sigma_\x}, \X^{n, \y}_{\sigma_\y}\r) \lesssim \frac{R}{R'}, \quad \forall \x, \y \in \VV\GG^{\dag, n}(R). 
\end{equation*}
\end{itemize}
Therefore, putting together the previous two bullet points and going back to \eqref{eq_bound_frkf1}, we get that for every $n > n(R, \delta)$ large enough, it holds that
\begin{equation*}
\l|\bar{\frkf}^{\dag}_n(\x) - \bar{\frkf}^{\dag}_n(\y)\r| \lesssim \frac{R}{{R'}^2}, \quad \forall \x, \y \in \VV\GG^{\dag, n}(R),
\end{equation*}
Furthermore, the same uniform bound can be also obtained for the function $\underline{\frkf}^{\dag}_n$, but we omit the details since the argument is similar. Summing up, we obtained that, for all $n  > n(R, \delta)$ large enough, it holds that
\begin{equation*}
\l|\frkf^{\dag}_n(\x) - \frkf^{\dag}_n(\y)\r| \lesssim \frac{R}{{R'}^2}, \quad \forall \x, \,\y \in \VV\GG^{\dag, n}(R).	
\end{equation*}
Hence, to conclude, we can simply proceed as follows. For every $n > n(R, \delta)$ large enough, fix an arbitrary vertex $\y \in \VV\GG^{\dag, n}(R)$. Then, recalling that by definition $R' = R/\delta$, it holds that 
\begin{equation*}
2 R' |\frkf^{\dag}_n(\x) -\frkf^{\dag}_n(\y)| \lesssim \delta, \quad  \forall \x \in \VV\GG^{\dag, n}(R).
\end{equation*}
Therefore, thanks to \eqref{eq_deco_height_coordinate} and estimate \eqref{eq_bound_height_coordinate_1}, we find that, for any $n > n(R, \delta)$ large enough, it holds that
\begin{equation*}
\l|2 R' \P_\x\l(\X^n_{\tau_\x} \in (\frkh_n^{\dag})^{-1}(\bar{a})\r) - b'_n - \Im(\x)\r| \leq \delta, \quad \forall \x \in \VV\GG^{\dag, n}(R),
\end{equation*}
where $ b'_n := 2 R' \frkf^{\dag}_n(\y) + R'	$.
\end{proof}

\subsection{Width coordinate function}
\label{sub_width}
For each $n \in \N$, consider the lifted width coordinate function 
\begin{equation*}
\frkw^{\dag}_n:\VV\hat{\GG}^{\dag, n} \to \R	,
\end{equation*}
as defined in \eqref{eq_conj_harmo_lift}.
The main goal of this subsection is to show that there exists an affine transformation of the function $\frkw^{\dag}_n$ that is asymptotically close, uniformly on lifts of compact subsets of the infinite cylinder $\CC_{2\pi}$, to the width coordinate function $\hx \mapsto \Re(\hx)$ in the a priori lifted embedded graph $\GG^{\dag, n}$, as $n \to \infty$. More precisely, we have the following result.

\begin{proposition}
\label{pr_main_witdh}
There exits a sequence $\{b^{\frkw}_n\}_{n \in \N} \subset \R$ such that for all $R > 1$, $\delta \in (0, 1)$, and for any $n > n(R, \delta)$ large enough, it holds that
\begin{equation}
\label{eq_width_prop}
\left|\frac{2\pi}{\eta_n} \frkw^{\dag}_n(\hx) + b^{\frkw}_n - \Re(\hx)\right| \leq \delta, \quad \forall \hx \in\VV\hat{\GG}^{\dag, n}(\R \times [-R, R]),
\end{equation}
where we recall that $\eta_n$ denotes the strength of the flow induced by $\frkh_n$ as defined in \eqref{eq_flow_strength}.
\end{proposition}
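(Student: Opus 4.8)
The plan is to exploit that $\frkw^{\dag}_n$ is discrete harmonic on the dual graph — so that the dual invariance principle~\ref{it_invariance_dual} can be fed into a harmonic‑measure representation — and to control its ``boundary values'' on horizontal circles by combining the height result (Proposition~\ref{pr_main_height}) with the Smith‑embedded‑walk identities of Section~\ref{sec_prop_Smith}. As in the proof of Proposition~\ref{pr_main_height}, by adding vertices along edges (Lemma~\ref{lm_harm_coincides}) one may assume that all level sets of $\frkh_n$ that will be used consist of vertices. Fix $R>1$, $\delta\in(0,1)$ and let $R'=R'(R,\delta)>1$ be a large parameter to be chosen. The key object is
\begin{equation*}
\psi_n(\hx):=\frac{2\pi}{\eta_n}\frkw^{\dag}_n(\hx)-\Re(\hx),\qquad \hx\in\VV\hat{\GG}^{\dag,n}.
\end{equation*}
By Lemma~\ref{lm_harm_conj}, $\frkw^{\dag}_n(\hx+(2\pi,0))=\frkw^{\dag}_n(\hx)+\eta_n$, so $\psi_n$ descends to a function on $\VV\hat{\GG}^{n}$ and \eqref{eq_width_prop} is equivalent to the assertion that, for $n$ large, the oscillation $M_n$ of $\psi_n$ over $\VV\hat{\GG}^{\dag,n}(\R\times[-R,R])$ is at most $\delta$; the constant $b^{\frkw}_n$ will be minus the value of $\psi_n$ at a fixed bulk reference dual vertex.

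First I would set up the harmonic‑measure representation. Since $\tfrac{2\pi}{\eta_n}\frkw^{\dag}_n$ is discrete harmonic on $\hat{\GG}^{\dag,n}$, optional stopping applied to the lifted dual walk $\hat{\X}$ from $\hx$, run until its first exit time $T_{\hx}$ from a box $B_{\hx}=(\Re(\hx)-L,\Re(\hx)+L)\times(-R',R')$ (with $L=L(R,\delta)$ large, to keep $\frkw^{\dag}_n$ bounded along the way), gives $\tfrac{2\pi}{\eta_n}\frkw^{\dag}_n(\hx)=\E_{\hx}[\tfrac{2\pi}{\eta_n}\frkw^{\dag}_n(\hat{\X}_{T_{\hx}})]$. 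Because $\Re$ is bounded on $\overline{B_{\hx}}$ and harmonic for Brownian motion, the lifted form of~\ref{it_invariance_dual} (proved at the start of Section~\ref{sec_assumption_main}) together with conformal invariance yields $\E_{\hx}[\Re(\hat{\X}_{T_{\hx}})]=\Re(\hx)+o_n(1)$, uniformly in $\hx$ in the bulk; subtracting, $\psi_n(\hx)=\E_{\hx}[\psi_n(\hat{\X}_{T_{\hx}})]+o_n(1)$. A further Brownian estimate (a wide, short box is exited through its long sides) shows that, for $L\gg R'$, the dual walk exits $B_{\hx}$ through $\{\Im=\pm R'\}$ with probability $1-O(R'/L)$, and on that event $|\Re(\hat{\X}_{T_{\hx}})-\Re(\hx)|\le C(R,\delta)$. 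Hence $\psi_n(\hx)$ equals an average of the values of $\psi_n$ at dual vertices lying a priori near heights $\pm R'$, up to an error $o_n(1)+O(R'/L)\,M_n$.

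The heart of the matter is controlling $\psi_n$ on the two circles near heights $\pm R'$. Let $\bar a=\bar a^n_{R'}$ and $\underline a=\underline a^n_{-R'}$ be the extreme levels of $\frkh_n$ meeting heights $\pm R'$, as in the proof of Proposition~\ref{pr_main_height}; by the setup their level sets are vertex sets, so their images under $\dotSS_n$ are full horizontal circles of $\CC_{\eta_n}$, and by Proposition~\ref{pr_main_height} they lie a priori near heights $\pm R'$. I claim that $\psi_n$ oscillates by $o_n(1)+\eps(R')$ on each of them, where $\eps(R')\to0$ as $R'\to\infty$. To see this, run the primal walk from the measure $\mu_{\bar a}$ (Definition~\ref{def_measure_exit}) until its $\frkh_n$‑level reaches $\underline a$. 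Conditioning on the sequence of visited levels — which by Proposition~\ref{pr_main_height} is essentially conditioning on the a priori height path — Lemma~\ref{lm_hitting_hor} shows the walk's position at that time is distributed as $\mu_{\underline a}$, so its Smith coordinate is \emph{exactly} uniform on the level‑$\underline a$ circle of $\CC_{\eta_n}$, while Lemma~\ref{lm_winding_embed_walk} shows the accumulated horizontal winding has conditional mean zero, which fixes the lift. On the other hand, by~\ref{it_invariance} the walk converges in the a priori embedding to Brownian motion on $\CC_{2\pi}$ started near height $R'$ and observed when it reaches height $-R'$, and — the circular component of Brownian motion on $\CC_{2\pi}$ being an independent circular Brownian motion run for a time of order $R'^2$ — that component is within $\eps(R')$ of uniform. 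Matching the two descriptions forces the pushforward of $\mu_{\bar a}$ to the a priori embedding to be within $o_n(1)+\eps(R')$ of the uniform measure on the a priori circle; since the Smith coordinate of $\mu_{\bar a}$ is exactly uniform, this is precisely the assertion that $\psi_n$ is constant up to $o_n(1)+\eps(R')$ on that circle, and symmetrically for the level‑$\bar a$ circle. (As a byproduct this rules out macroscopic horizontal segments $\length_n(x)$ of size comparable to $\eta_n$ for bulk $x$, hence macroscopic rectangles in the bulk — a fact needed anyway to pass from $\dotSS_n$ to $\frkw^{\dag}_n$; Lemma~\ref{lm_bound_width_dot} and Lemma~\ref{lm_no_macroscopic_edges} are used to compare ``near height $\pm R'$'' with ``on the level‑$\bar a$/$\underline a$ circle'' and to keep all errors uniform.)

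Finally I would assemble the pieces: the representation of the second paragraph, with $b^{\frkw}_n$ taken to be (minus) one of the two boundary constants, gives an inequality of the form $M_n\le o_n(1)+\eps(R')+\tfrac{R}{R'}\Delta_n+O(R'/L)\,M_n$, where $\Delta_n$ measures the discrepancy between the boundary constants at heights $+R'$ and $-R'$; a bootstrap in $M_n$ absorbs the $O(R'/L)M_n$ term once $L$ is large, and iterating the representation on the nested strips $[-2^kR',2^kR']$ controls $\Delta_n$, with Lemma~\ref{lm_bound_width_dot} furnishing the base case at the two ends of the cylinder. Choosing $L$ large, then $R'$ large (so that $\eps(R')$ and $\tfrac{R}{R'}\Delta_n$ are $\le\delta/3$), and finally $n$ large gives $M_n\le\delta$, which is \eqref{eq_width_prop}; taking $\hx$ a fixed reference vertex pins $b^{\frkw}_n$. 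The main obstacle is the circle estimate of the third paragraph — in particular the step that upgrades the soft ``oscillation $\le 2\pi$ per circle'' bound coming from Lemma~\ref{lm_bound_width_dot} to an $o_n(1)$ bound. This is exactly where the (primal) invariance principle~\ref{it_invariance} must be married to the Smith‑embedded‑walk identities (Lemmas~\ref{lm_hitting_hor} and~\ref{lm_winding_embed_walk}), since neither the harmonicity of $\frkw^{\dag}_n$ nor the dual invariance principle alone sees the relationship between the Smith and a priori horizontal coordinates — it is the primal walk, whose Smith image has an exactly uniform hitting law on each level, that transfers information from one embedding to the other.
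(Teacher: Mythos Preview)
Your overall strategy---represent $\psi_n$ via dual harmonic measure, control boundary values using the primal Smith-embedded-walk identities, and propagate inward---matches the paper's, but your third paragraph contains a genuine gap, and the paper's mechanism for the final step is different from yours.

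The gap: your ``matching two descriptions'' argument does not establish that $\psi_n$ has small oscillation on the level-$\bar a$ circle. Running the primal walk from $\mu_{\bar a}$ to level $\underline a$ and observing that the endpoint is (approximately) uniform in \emph{both} embeddings gives no information about the starting measure $\mu_{\bar a}$ in the a~priori embedding: Brownian motion started from \emph{any} distribution near height $R'$ and run to height $-R'$ arrives approximately uniform, so the matching is vacuous. In fact the paper never proves (and does not need) that $\psi_n$ has $o(1)$ oscillation on the boundary circles. What is actually proved (Lemma~\ref{lm_order_one_width_1}) is only an $O(1)$ bound: a single constant $b'_n$ with $|\frkw^{\dag}_n(\hx)+b'_n|\lesssim\eta_n$ for every $\hx\in\hat\PPP^{\dag,n}_{\bar a}\cup\hat\PPP^{\dag,n}_{\underline a}$. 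The crucial point---which replaces your unresolved ``discrepancy $\Delta_n$'' and nested-strip bootstrap---is that the \emph{same} $b'_n$ works on both circles. This is obtained by starting the primal walk from a \emph{middle} level $a$ with $(\frkh_n^{\dag})^{-1}(a)\subset\R\times[-1,1]$, conditioning on which of $\bar a,\underline a$ is hit first, and combining the zero-expected-winding identity (Lemma~\ref{lm_winding_embed_walk}) in the Smith picture with the $O(1)$ expected horizontal displacement in the a~priori picture (Lemma~\ref{lm_wind_apriori_walk}) via the periodicity relation of Lemma~\ref{lm_indep_position}.

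The passage from this $O(1)$ boundary bound to the $\delta$ bulk bound is not a maximum principle but the total-variation/disconnection coupling of Lemma~\ref{lm_tot_variation}, exactly as in the height proof. The paper works with $\frkf^{\dag}_n(\hx):=\E_{\hx}\bigl[\Re(\hat\X^n_{\tau_{\hx}})/(2\pi)-(\frkw^{\dag}_n(\hat\X^n_{\tau_{\hx}})+b'_n)/\eta_n\bigr]$, which by dual optional stopping and~\ref{it_invariance_dual} equals $-\psi_n(\hx)/(2\pi)$ plus a constant up to $o_n(1)$. The $O(1)$ bound on $\frkf^{\dag}_n$ over the whole strip $[-R'+1,R'-1]$ (Lemma~\ref{lm_order_one_width}) together with strong Markov at the first exit from $[-R'+1,R'-1]$ gives $|\frkf^{\dag}_n(\hx)-\frkf^{\dag}_n(\hy)|\le\sup|\frkf^{\dag}_n|\cdot\dTV(\hat\X^{n,\hx}_{\sigma_{\hx}},\hat\X^{n,\hy}_{\sigma_{\hy}})\lesssim 1\cdot R/R'=\delta$. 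Your box-exit representation with the side parameter $L$ is unnecessary and introduces the unresolved $O(R'/L)M_n$ term; the paper simply stops the dual walk at the level sets $\VV\hat\GG^{\dag,n}_{\bar a}\cup\VV\hat\GG^{\dag,n}_{\underline a}$ themselves (which by Proposition~\ref{pr_main_height} sit in thin strips near $\pm R'$), so there are no vertical-side contributions to control.
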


\noindent
The proof of this proposition is given in Subsection~\ref{subsub_proof_main_width}. As we will see, the proof is based on Lemma~\ref{lm_order_one_width} below and the following two facts: (a) the harmonicity of the lifted width coordinate function $\frkw^{\dag}_n$; (b) the invariance principle assumption \ref{it_invariance_dual} on the sequence of dual maps.

\subsubsection{Setup of the proof}
For each $n \in \N$, consider the metric graphs $\G^n$ and $\hat{\G}^n$ associated to $\GG^n$ and $\hat{\GG}^n$, respectively. Let $\frkh_n : \G^n \to [0, 1]$ be the extended height coordinate function as specified in Remark~\ref{rm_extened_harm}, and $\frkw_n : \hat{\G}^n \to \R$ be the extended width coordinate function as specified in Remark~\ref{rm_extened_discrte_harm}. Given a value $S \in \R$, we define the set  
\begin{equation*}
\hat{V}^n_S := \l\{\hat{x} \in \hat{\G}^n \, : \, \Im(\hat{x}) = S\r\}.
\end{equation*}
Given a value $a \in (0, 1)$, we recall that the sets $\smash{\EE\GG_a^{\dag}}$, $\smash{\EE\hat{\GG}^{\dag}_a}$, $\smash{\VV\GG_a^{\dag}}$, $\smash{\VV\hat{\GG}_a^{\dag}}$ are all defined in Subsection~\ref{sub_periodicity}. We also recall that $\smash{\hx^{n,a}_0}$ denotes the vertex in $\smash{\VV\hat{\GG}^{\dag, n}}$ whose real coordinate is nearest to $0$, and we refer to \eqref{eq_imp_dual} and \eqref{eq_imp_primal} for the definitions of the sets $\smash{\hat{\PPP}^{\dag, n}_{a}}$ and $\smash{\PPP^{\dag, n}_{a}}$, respectively.

\medskip
\noindent
We fix throughout $R > 1$ and $\delta \in (0, 1)$ as in the proposition statement, and we let 
\begin{equation*}
R' := \frac{R}{\delta}.
\end{equation*}
For each $n \in \N$, we consider $\bar{a}_{R'}^n$ and $\underline{a}_{-R'}^n$ as defined in \eqref{eq_key_values_height}. Moreover, thanks to Proposition~\ref{pr_main_height}, for any $n > n(R, \delta)$ large enough, we can choose a value $a^n_{R'}$ such that the set $\frkh_n^{-1}(a^n_{R'})$ is fully contained in the infinite strip $\R \times [-1,1]$.

\medskip
\noindent
In what follows, in order to lighten up notation, we adopt the following notational convention 
\begin{equation*}
	\hat{V}_{S} \equiv \hat{V}^n_{S}, \qquad \bar{a} \equiv \bar{a}^n_{R'}, \quad \underline{a} \equiv \underline{a}^n_{-R'}, \quad a \equiv a^n_{R'} .
\end{equation*}
Furthermore, we denote by $\smash{\hat{V}^{\dag}_S}$ the lift to the universal cover of $\smash{\hat{V}_S}$, and we write $\smash{\VV\hat{\GG}^{\dag, n}(S)}$ (resp.\ $\smash{\VV\hat{\GG}^{n}(S)}$) as a shorthand for $\smash{\VV\hat{\GG}^{\dag, n}(\R \times [-S, S])}$ (resp.\ $\smash{\VV\hat{\GG}^{n}(\R/2\pi\Z \times [-S, S])}$). 

\medskip
\noindent
We also adopt the convention to fix the additive constant of the lifted width coordinate function $\frkw^{\dag}_n$ by setting it to zero on the point $\hx^{n, a}_0$. We observe that changing the base vertex of the lifted width coordinate function has only the effect of changing the value of $b_n^{\frkw}$ appearing in the statement of Proposition~\ref{pr_main_witdh}.

\medskip
\noindent
Similarly to what we discussed in the preceding subsection,  by Lemma~\ref{lm_harm_coincides} and Remark~\ref{rem_width_coinc}, we can assume, without loss of generality, that $\VV\GG^n$ contains all the points in the set
\begin{equation*}
\frkh_n^{\dag}(\bar{a}) \cup \frkh_n^{-1}(\underline{a}) \cup \frkh_n^{-1}(a) \subset \G^n,
\end{equation*}
and that $\VV\hat{\GG}^n$ contains all the points in the set
\begin{equation*}
\hat{V}_{R'-1} \cup \hat{V}_{-R'+ 1} \subset \hat{\G}^n.
\end{equation*}
This is allowed since, as in the case of the height coordinate function, by possibly locally modifying the a priori embedding of the dual graph $\hat{\GG}^n$ in $\CC_{2\pi}$, we can assume that each edge in $\EE\hat{\GG}^n$ crosses the circles at height $R'-1$ and $-R'+1$ at most finitely many times. We refer to Figure~\ref{fig_width} for an illustration of the sets involved in the proof of Proposition~\ref{pr_main_witdh}.

\begin{figure}[h]
\centering
\includegraphics[scale=1]{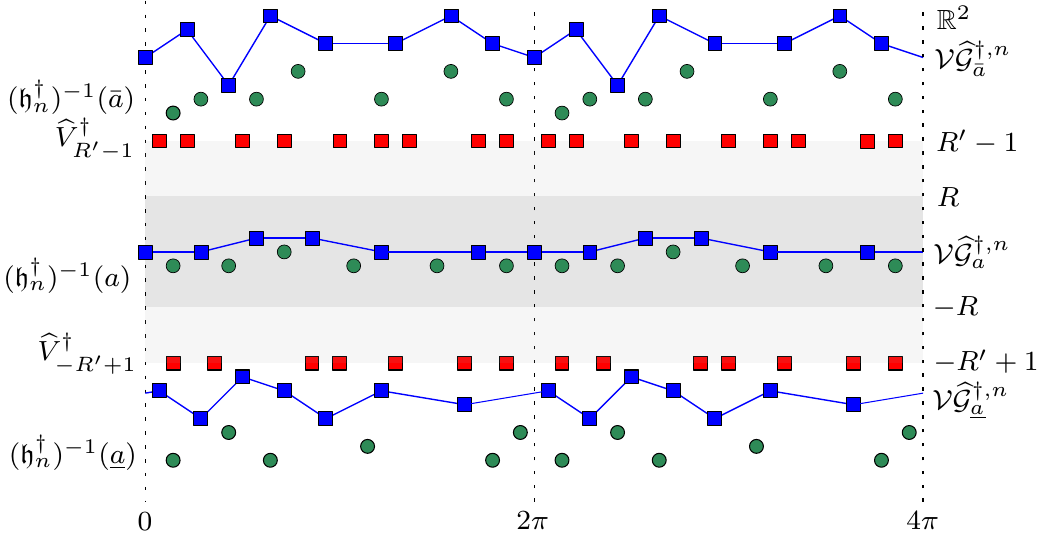}
\caption[short form]{\small A diagram illustrating the sets involved in the proof of Proposition~\ref{pr_main_witdh}.   
The shaded dark-gray strip between heights $R$ and $-R$ contains all the vertices in the set $\VV\hat{\GG}^{\dag, n}(R)$. The shaded light-grey strip between heights $R'-1$ and $-R'+1$ contains all the vertices in the set $\smash{\VV\hat{\GG}^{\dag, n}(R')}$. The red dual vertices at the top (resp.\ bottom) are the vertices in the set $\smash{\hat{V}^{\dag}_{R'-1}}$ (resp.\ $\smash{\hat{V}^{\dag}_{-R'+1}}$). Moving from top to bottom, the blue vertices are the dual vertices in the sets $\smash{\VV\hat{\GG}^{\dag, n}_{\bar{a}}}$, $\smash{\VV\hat{\GG}^{\dag, n}_{a}}$, $\smash{\VV\hat{\GG}^{\dag, n}_{\underline{a}}}$; the blue dual edges are the edges in the sets $\smash{\EE\hat{\GG}^{\dag, n}_{\bar{a}}}$, $\smash{\EE\hat{\GG}^{\dag, n}_{a}}$,$\smash{\EE\hat{\GG}^{\dag, n}_{\underline{a}}}$; and finally the green vertices are the vertices in the sets $\smash{(\frkh_n^{\dag})^{-1}(\bar{a})}$, $\smash{(\frkh_n^{\dag})^{-1}(a)}$, $\smash{(\frkh_n^{\dag})^{-1}(\underline{a})}$.}
\label{fig_width}
\end{figure}

\begin{remark}
\label{rem_stays_narrow_band}
We observe that, by Proposition~\ref{pr_main_height}, for any $\eps \in (0, 1)$ and for any $n > n(R, \delta, \eps)$ large enough, it holds that
\begin{equation*}
(\frkh_n^{\dag})^{-1}(\bar{a}) \subset \R \times [R' - \eps, R' + \eps], \qquad (\frkh_n^{\dag})^{-1}(\underline{a}) \subset \R \times [-R'-\eps, -R'+\eps].
\end{equation*}
Furthermore, by definitions of $\EE\hat{\GG}^{\dag, n}_{\bar{a}}$, $\EE\hat{\GG}^{\dag, n}_{\underline{a}}$ and Lemma~\ref{lm_no_macroscopic_edges}, we have that, for any $\eps \in (0, 1)$ and for any $n > n(R, \delta, \eps)$ large enough, it holds that
\begin{equation*}
\EE\hat{\GG}^{\dag, n}_{\bar{a}} \subset \R \times [R' - \eps, R' + \eps], \qquad \EE\hat{\GG}^{\dag, n}_{\underline{a}} \subset \R \times [-R'-\eps, -R'+\eps].
\end{equation*}
These facts will be of key importance in the remaining part of this subsection and they will be used several times.
\end{remark}

\paragraph{Random walk notation.}
For $\smash{\hx \in \VV\hat{\GG}^{\dag, n}(R'-1)}$, we consider the continuous time  random walk $\smash{\{\hat{\X}_t^{n, \hx}\}_{t \geq 0}}$ on the lifted weighted dual graph $\smash{(\hat{\GG}^{\dag, n}, \hat{c}^{\dag, n})}$ started from $\hx$. We recall that the continuous path of this random walk is simply generated by piecewise linear interpolation at constant speed. We consider the following stopping times 
\begin{equation}
\label{eq_stopping_times_dual}
\sigma_{\hx} := \inf\l\{t \geq 0  \, : \, \hat{\X}^{n, \hx}_t \in \hat{V}^{\dag}_{R'-1} \cup \hat{V}^{\dag}_{-R'+1}\r\}, \quad \tau_{\hx} := \inf\l\{t \geq 0  \, : \, \hat{\X}^{n, \hx}_t \in \VV\hat{\GG}^{\dag, n}_{\bar{a}} \cup \VV\hat{\GG}^{\dag, n}_{\underline{a}}\r\}.
\end{equation}
As we will observe more precisely below, for $n > n(R, \delta)$ large enough, thanks to Proposition~\ref{pr_main_height}, we have that
\begin{equation*}
\VV\hat{\GG}^{\dag, n}_{\bar{a}} \subset \R \times [R'-1, R'+1], \qquad \VV\hat{\GG}^{\dag, n}_{\underline{a}} \subset \R \times [-R'-1, -R'+1],
\end{equation*}
and so, it holds that $\smash{\tau_{\hx} \geq \sigma_{\hx}}$, for all $\smash{\hx \in \VV\hat{\GG}^{\dag, n}(R'-1)}$. Looking at Figure~\ref{fig_width}, the stopping time $\smash{\sigma_{\hx}}$ accounts for the first time at which the random walk hits one of the red dual vertices, while the stopping time $\smash{\tau_{\hx}}$ accounts for the first time at which the random walk hits one of the blue dual vertices at the top or bottom of the figure.

\medskip
\noindent
In what follows, we also need to consider random walks on the sequence of primal lifted weighted graphs. To be precise, let $a$ be as specified in the introduction of this section (we recall that here, $a$ is a shorthand for $a_{R'}^n$) and consider the probability measure $\mu_a$ on the set $\smash{\frkh^{-1}(a)}$ as specified in Definition~\ref{def_measure_exit}. We let $\smash{X^{n, \mu_a}}$ be the random walk on the weighted graph $(\GG^n, c^n)$ started from a point in $\smash{\frkh^{-1}_n(a)}$ sampled according to $\mu_{a}$. We also consider the associated continuous time lifted random walk $\smash{\{\X^{n, \mu_a}_t\}_{t \geq 0}}$ on $\smash{(\GG^{\dag, n}, c^{\dag, n})}$ started from a point in $\smash{\PPP^{\dag, n}_{a}}$ sampled according to $\mu_{a}$. We define the following stopping times
\begin{equation}
\label{eq_stopping_times_dual_primal}
\theta_+ := \inf \l\{t \geq 0  \, : \, \X^{n, \mu_a}_t \in (\frkh_n^{\dag})^{-1}(\bar{a})\r\} , \quad \theta_- := \inf \l\{t \geq 0  \, : \, \X^{n,  \mu_a}_t \in (\frkh_n^{\dag})^{-1}(\underline{a})\r\}.
\end{equation}
Furthermore, we will also need to consider the lifted Smith-embedded random walk $\smash{\{\dotX_t^{n ,\mu_a}\}_{t \geq 0}}$ associated to $\smash{\X^{n ,\mu_a}}$ defined in Subsection~\ref{subsec_winding}.

\subsubsection{Proof of Proposition~\ref{pr_main_witdh}}
\label{subsub_proof_main_width}
We can now state a key lemma for the proof of Proposition~\ref{pr_main_witdh}. The proof of the below result is given in Subsection~\ref{sub_aux_res_width}.
\begin{lemma}
\label{lm_order_one_width}
For any $n > n(R, \delta)$ large enough there is a finite constant $b'_n \in \R$ such that
\begin{equation*}
\left|\E_{\hx}\left[\frac{\Re(\hat{\X}^{n}_{\tau_{\hat{x}}})}{2 \pi} - \frac{\frkw_n^{\dag}(\hat{\X}^{n}_{\tau_{\hat{x}}}) + b'_n}{\eta_n} \right] \right| \lesssim 1, \quad  \forall \hx \in \VV\hat{\GG}^{\dag, n}(R'-1),
\end{equation*}
where the implicit constant is universal, and $\eta_n$ denotes the strength of the flow induced by $\frkh_n$ as defined in \eqref{eq_flow_strength}.
\end{lemma}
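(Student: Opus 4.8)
The plan is to compare the expected winding of the Smith-embedded \emph{dual} random walk $\hat{\X}^n$ to the winding of its a priori embedding, using the fact that the random walk $\X^{n,\mu_a}$ on the \emph{primal} graph, suitably Smith-embedded, has zero expected horizontal winding (Lemma~\ref{lm_winding_embed_walk}) as an exact identity that fixes the otherwise floating constant $b'_n$. First I would observe that for a path on the dual metric graph $\hat{\G}^n$, the quantity $\frkw_n^\dag/\eta_n$ plays the role of a ``width coordinate'' just as $\Re(\hx)/2\pi$ does in the a priori picture; the content of the lemma is that the difference of these two, when the walk runs from $\VV\hat{\GG}^{\dag,n}(R'-1)$ to $\VV\hat{\GG}^{\dag,n}_{\bar a}\cup\VV\hat{\GG}^{\dag,n}_{\underline a}$, has $O(1)$ expectation regardless of starting point. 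The natural route is to show that the a priori displacement $\Re(\hat{\X}^n_{\tau_{\hat x}})-\Re(\hx)$ and the Smith displacement $\frac{2\pi}{\eta_n}(\frkw_n^\dag(\hat{\X}^n_{\tau_{\hat x}})-\frkw_n^\dag(\hx))$ differ by $O(1)$, and that the dependence on $\hx$ of each is mild. The a priori side is controlled directly: by assumption~\ref{it_invariance_dual} the dual walk run from height $\approx R'-1$ to height $\approx \pm R'$ is close to Brownian motion, whose horizontal displacement over such a short vertical excursion has $O(1)$ fluctuations; combined with Lemma~\ref{lm_no_macroscopic_edges} (no macroscopic dual faces, so $\bar a$ and $\underline a$ are reached at heights within $o_n(1)$ of $\pm R'$ by Remark~\ref{rem_stays_narrow_band}), this gives $|\Re(\hat{\X}^n_{\tau_{\hat x}})-\Re(\hx)|=O(1)$ in expectation.

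The heart of the matter is the \emph{other} side: relating $\frkw_n^\dag$-increments of the dual walk to the winding of a Smith-embedded \emph{primal} walk, so that Lemma~\ref{lm_winding_embed_walk} can be invoked. Here I would use the duality between the dual graph's width coordinate and the primal graph's structure: by the discrete Cauchy--Riemann equation \eqref{eq_Cauchy_Riemann}, $\frkw_n^\dag$ evaluated at a dual vertex records a sum of $\nabla\frkh_n^\dag$ over a path of primal edges crossing the dual path, i.e.\ the value $\frkw_n^\dag(\hx)$ for $\hx\in\VV\hat{\GG}^{\dag,n}_a$ is essentially the ``horizontal coordinate'' $\frkw_n^\dag(\hx_0^{n,a})+\sum_j \nabla\frkh_n^\dag(\e_j)$ along the cutset $\EE\GG^{\dag,n}_a$, as set up in Definition~\ref{def_sets_path} and Lemma~\ref{lm_bound_width_dot}. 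Thus $\frkw_n^\dag(\hat{\X}^n_{\tau_{\hat x}})-b'_n$, with $b'_n:=\frkw_n^\dag(\hx_0^{n,a})$, is (up to an additive $O(\eta_n)$ coming from the width of the relevant horizontal segments, cf.\ Lemma~\ref{lm_bound_width_dot}) the $\frkw^\dag$-coordinate at which the dual walk crosses the cutset at height $\bar a$ or $\underline a$. Running the primal walk $\X^{n,\mu_a}$ from the cutset $\frkh_n^{-1}(a)$ up to $(\frkh_n^\dag)^{-1}(\bar a)$ (time $\theta_+$) or down to $(\frkh_n^\dag)^{-1}(\underline a)$ (time $\theta_-$), its lifted Smith-embedded version $\dotX^{n,\mu_a}$ has, by Lemma~\ref{lm_winding_embed_walk}, expected winding exactly zero conditionally on the sequence of heights it visits — and this zero-winding statement is precisely what pins the constant $b'_n$ so that the two width displacements match in expectation up to $O(\eta_n)$, whence dividing by $\eta_n$ gives the claimed $O(1)$.

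More concretely, the key steps in order: (1) fix $b'_n:=\frkw_n^\dag(\hx_0^{n,a})$ and use Remark~\ref{rem_stays_narrow_band} to place $(\frkh_n^\dag)^{-1}(\bar a)$, $(\frkh_n^\dag)^{-1}(\underline a)$, $\EE\hat{\GG}^{\dag,n}_{\bar a}$, $\EE\hat{\GG}^{\dag,n}_{\underline a}$ all within an $o_n(1)$-neighborhood of heights $\pm R'$; (2) bound $\E_{\hx}|\Re(\hat{\X}^n_{\tau_{\hat x}})-\Re(\hx)|=O(1)$ via assumption~\ref{it_invariance_dual} and a gambler's-ruin/winding estimate for Brownian motion over a strip of $O(1)$ width (analogous to Lemma~\ref{lm_height_tech1}); (3) identify $\frac{1}{\eta_n}(\frkw_n^\dag(\hat{\X}^n_{\tau_{\hat x}})-b'_n)$ with the position along the relevant cutset $\EE\hat{\GG}^{\dag,n}_{\bar a}$ or $\EE\hat{\GG}^{\dag,n}_{\underline a}$, using Lemma~\ref{lm_bound_width_dot} to control the error by $O(\eta_n)$; (4) relate this cutset position to the endpoint of a Smith-embedded primal walk and invoke Lemma~\ref{lm_winding_embed_walk} to get that the difference of the two width displacements has expectation $O(\eta_n)$; (5) divide by $\eta_n$ and combine with step (2). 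I expect step (3)--(4) — correctly matching the dual walk's $\frkw^\dag$-increment to a \emph{primal} Smith-embedded walk's winding so that the zero-winding lemma applies — to be the main obstacle, since it requires carefully tracking how the dual random walk's exit point on the cutset corresponds (via planar duality and the foliation of rectangles described in Remark~\ref{rem_general_metric}) to where a primal walk would exit, and verifying the starting measures match up to negligible error; the $O(1)$ bound (rather than $o_n(1)$) is what makes this feasible, as it tolerates the $O(\eta_n)$ slack after division.
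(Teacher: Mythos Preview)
Your high-level instincts are sound — the zero-winding lemma for the Smith-embedded primal walk (Lemma~\ref{lm_winding_embed_walk}) is indeed the pivot, and the primal walk $\X^{n,\mu_a}$ run from height $a$ to $\bar a$ or $\underline a$ is the right vehicle — but the architecture you sketch in steps (3)--(4) misses how the argument actually closes, and your proposed $b'_n$ and step (2) are off-target.

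First, setting $b'_n:=\frkw_n^\dag(\hx_0^{n,a})$ is trivial: by the normalization chosen in the setup this equals $0$. What you actually need is a $b'_n$ such that $|\frkw_n^\dag(\hx)+b'_n|\lesssim\eta_n$ holds \emph{pointwise} for every $\hx\in\hat\PPP^{\dag,n}_{\bar a}\cup\hat\PPP^{\dag,n}_{\underline a}$. Lemma~\ref{lm_bound_width_dot} only tells you the values of $\frkw_n^\dag$ along a \emph{single} cutset $\hat\PPP^{\dag,n}_{\bar a}$ differ from one another by at most $\eta_n$; it does \emph{not} tell you those values are close to the values along $\hat\PPP^{\dag,n}_{a}$ (which is where your $b'_n$ lives) or along $\hat\PPP^{\dag,n}_{\underline a}$. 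Establishing this — that the $\frkw^\dag$-values at the three cutsets are all within $O(\eta_n)$ of a common constant — is the whole content of the intermediate Lemma~\ref{lm_order_one_width_1} in the paper, and it is where the primal zero-winding lemma plus Lemma~\ref{lm_wind_apriori_walk} are actually spent. The $b'_n$ that emerges there is $\E_{\mu_a}[\Re(\X^n_0)-\Re(\tilde\X^n_{\theta})]$, which the paper explicitly notes need not be of constant order.

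Second, and more structurally: there is no coupling between the dual walk and a primal walk, and none is needed. Once the pointwise bound $|\frkw_n^\dag(\hx)+b'_n|\lesssim\eta_n$ on the target cutsets is in hand, the dual walk's role reduces, via periodicity (Lemma~\ref{lm_harm_conj}), to showing $|\E_{\hx}[\Re(\tilde{\hat\X}^{n}_{\tau_{\hat x}})]|\lesssim 1$, where $\tilde{\hat\X}^{n}_{\tau_{\hat x}}$ is the fundamental-domain representative of the exit vertex. Your step (2), bounding $\E_{\hx}|\Re(\hat\X^n_{\tau_{\hat x}})-\Re(\hx)|$, is not the right quantity for this lemma (it belongs instead in the proof of Proposition~\ref{pr_main_witdh}). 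What is actually needed, for both the primal and dual parts, are the ``tube'' estimates (Lemmas~\ref{lm_tube_primal} and~\ref{lm_tube_dual}): since the cutset at height $\bar a$ is confined to a band of height $o_n(1)$ around $R'$, the walk cannot drift far horizontally while inside that band, which is what controls the discrepancy between $\hat\X^n_{\tau_{\hat x}}$ and its fundamental-domain representative. You did not identify these, and without them neither the primal step (getting the pointwise $\frkw^\dag$ bound) nor the dual step (controlling $\Re(\tilde{\hat\X}^n_{\tau_{\hat x}})$) goes through.
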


\noindent
Given Lemma~\ref{lm_order_one_width}, the proof of Proposition~\ref{pr_main_witdh} follows by using the harmonicity of the width coordinate function and the invariance principle assumption \ref{it_invariance_dual}. 

\begin{proof}[Proof of Proposition~\ref{pr_main_witdh}]
For $\smash{\hx \in \VV\hat{\GG}^{\dag, n}(R)}$, let $\smash{\hat{\X}^{n, \hx}}$ be the random walk on the lifted dual weighted graph $\smash{(\hat{\GG}^{\dag, n}, \hat{c}^{\dag, n})}$ started from $\hx$. Moreover, let $B^{\hx}$ be a planar Brownian motion started from $\hx$, and define the stopping time
\begin{equation}
\label{eq_dual_BM_stopping}
\tau_{B, \hx} := \inf\l\{t \geq 0 \, : \, |\Im(B_t^{\hx})| = R'\r\}.
\end{equation}
We divide the proof into several steps.

\medskip
\noindent\textbf{Step~1.} 
In this first step, we show that, for any $n > n(R, \delta)$ large enough, it holds that
\begin{equation}
\label{eq_width_step1_main}
\l|\E_{\hat{\x}}\l[\Re(\hat{\X}^{n}_{\tau_{\hx}})\r] - \Re(\hat{\x})\r| \leq \delta,
\end{equation}
where we recall that the stopping time $\tau_{\hx}$ is defined in \eqref{eq_stopping_times_dual}. As we will see below, this is an easy consequence of assumption~\ref{it_invariance_dual}.
We start by observing that, thanks to well-known properties of Brownian motion, it holds that
\begin{equation*}
\l|\E_{\hx}[\Re(B_{\tau_{B, \hx}})] - \Re(\hx)\r| = 0, \quad \forall \hx \in \VV\GG^{\dag, n}(R).
\end{equation*}
Indeed, this follows from the fact that $\smash{|\Re(B_{\tau_{B, \hx}})] - \Re(\hx)|}$ has exponentially decaying tails and from the optional stopping theorem. As we observed in Remark~\ref{rem_stays_narrow_band}, we have that, for any $\eps \in (0, 1)$ and for any $n > n(R, \delta, \eps)$ large enough, it holds that
\begin{equation*}
\EE\hat{\GG}^{\dag, n}_{\bar{a}} \subset \R \times [R' - \eps, R' + \eps], \qquad \EE\hat{\GG}^{\dag, n}_{\underline{a}} \subset \R \times [-R'-\eps, -R'+\eps].
\end{equation*}
Therefore, from this fact and assumption~\ref{it_invariance_dual}, we can deduce that, for any $n > n(R, \delta)$ large enough, it holds that
\begin{equation*}
\l|\E_{\hx}\l[\Re(B_{\tau_{B, \hx}})\r]- \E_{\hat{\x}}\l[\Re(\hat{\X}^{n}_{\tau_{\hat{x}}})\r]\r| \leq \delta.
\end{equation*}
More precisely, this fact can be obtained from assumption~\ref{it_invariance_dual} by arguing in the same exact way as in the proof of Lemma~\ref{lm_wind_apriori_walk} below. 

\medskip
\noindent\textbf{Step~2.} The main goal of this step is to prove that, for any $n > n(R, \delta)$ large enough, it holds that
\begin{equation}
\label{eq_width_step2_main}
\E_{\hat{\x}}\l[\frkw_n^{\dag}\l(\hat{\X}^{n}_{\hat{\tau}_{\hx}}\r)\r] = \frkw_n^{\dag}(\hat{\x}), \quad \forall \hx \in \VV\GG^{\dag, n}(R).
\end{equation}
We start by recalling that the function $\smash{\frkw_n^{\dag} : \VV\hat{\GG}^{\dag, n} \to \R}$ is harmonic, and so the process $\smash{\frkw_n^{\dag}(\hat{\X}^{n, \hx})}$ is a discrete martingale with respect to the filtration generated by $\smash{\hat{\X}^{n, \hx}}$. Therefore, if we prove that this martingale is uniformly integrable, then the claim follows from the optional stopping theorem. To this end, it is sufficient to prove that, for $M \in \N$, the probability of the event $\smash{|\frkw_n^{\dag}(\hat{\X}^{n, \hx}_{\hat{\tau}_{\hx}}) - \frkw_n^{\dag}(\hx)| > M R'}$ decays exponentially fast in $M$, uniformly in $\smash{\hx \in \VV\hat{\GG}^{\dag, n}(R)}$ and for all $n > n(R, \delta)$ large enough. This fact can be obtained from assumption~\ref{it_invariance_dual} by arguing in the same exact way as in the proof of Lemma~\ref{lm_unif_integr} below. Hence, the desired result follows.

\medskip
\noindent\textbf{Step~3.} Consider the function $\frkf^{\dag}_n : \VV\hat{\GG}^{\dag, n}(R'-1) \to \R$ defined as follows
\begin{equation*}
\frkf^{\dag}_n(\hat{\x}) : = \E_{\hat{\x}} \left[\frac{\Re(\hat{\X}^{n}_{\tau_{\hx}})}{2 \pi} - \frac{\frkw_n^{\dag}(\hat{\X}^{n}_{\tau_{\hx}}) + b'_n}{\eta_n} \right] , \quad \forall \hat{\x} \in \VV\hat{\GG}^{\dag, n}(R'-1),
\end{equation*}
where $b'_n$ is the same constant appearing in the statement of Lemma~\ref{lm_order_one_width}.
Now, recalling the definition \eqref{eq_stopping_times_dual} of the stopping time $\sigma_{\hx}$ and that $\sigma_{\hat{\x}}\leq \tau_{\hat{\x}}$, thanks to the strong Markov property of the random walk, for all $\hx$, $\hat{\y} \in \VV\hat{\GG}^{\dag, n}(R)$, it holds that
\begin{equation}
\label{eq_tot_var_dist_dual}
\l|\frkf^{\dag}_n(\hat{\x}) - \frkf^{\dag}_n(\hat{\y})\r| \leq \sup \l\{\l|\frkf^{\dag}_n(\hat{\v})\r|\, : \, {\hat{\v} \in \VV\hat{\GG}^{\dag, n}(R'- 1)}\r\} \dTV\l(\hat{\X}^{n, \hx}_{\sigma_{\hx}}, \hat{\X}^{n, \hy}_{\sigma_{\hy}}\r),
\end{equation}
where $\dTV$ denotes the total variation distance.
Hence, it is sufficent to find an upper bound for the two terms on the right-hand side of \eqref{eq_tot_var_dist_dual}. We treat the two factors separately.
\begin{itemize}
\item In order to bound the first factor, we just need to bound uniformly on $\smash{\VV\hat{\GG}^{\dag, n}(R'-1)}$ the quantity $\smash{|\frkf^{\dag}_n(\v)|}$. This is exactly the content of Lemma~\ref{lm_order_one_width} from which we can deduce that, for all $n > n(R, \delta)$ large enough, it holds that
\begin{equation*}
\sup \l\{\l|\frkf^{\dag}_n(\hat{\v})\r|\, : \, {\hat{\v} \in \VV\hat{\GG}^{\dag, n}(R'- 1)}\r\} \lesssim 1,
\end{equation*}
where the implicit constant is independent of everything else.
\item In order to bound the second factor, we can use Lemma~\ref{lm_tot_variation}. Indeed, as we have already remarked, thanks to Proposition~\ref{pr_main_height} and to Lemma~\ref{lm_no_macroscopic_edges}, for any $n \in \N$ large enough, it holds that
\begin{equation*}
\VV\hat{\GG}^{\dag, n}_{\bar{a}} \subset \R \times \l[R'-1, R'+1\r], \qquad \VV\hat{\GG}^{\dag, n}_{\underline{a}} \subset \R \times \l[-R'-1, -R'+1\r].
\end{equation*}
Therefore, it is sufficient to estimate the probability that $\smash{\sigma_{2\pi}(\hat{\X}^{n, \hat{\x}}|_{[0, \sigma_{\hx}]})}$ disconnects $\smash{\hat{V}_{R} \cup \hat{V}_{-R}}$ from $\smash{\hat{V}_{R'-1} \cup \hat{V}_{-R'+1}}$. To this end, one can argue in the same exact way as in Lemma~\ref{lm_height_tech3} in order to prove that, for any $n > n(R, \delta)$ large enough and for all $\smash{\hx \in \VV\hat{\GG}^{\dag, n}(R)}$, it holds that
\begin{equation*}
\P_{\hx}(\sigma_{2\pi}(\hat{\X}^{n}|_{[0, \hat{\sigma}_{\hx}]}) \text{ does not disconnect } \hat{V}_{R} \cup \hat{V}_{-R} \text{ from } \hat{V}_{R' - 1} \cup \hat{V}_{-R' + 1})  \lesssim \frac{R}{R'},
\end{equation*}
where the implicit constant is independent of everything else. Therefore, this fact together with Lemma~\ref{lm_tot_variation} imply that 
\begin{equation*}
\dTV\l(\hat{\X}^{n, \hat{x}}_{\hat{\sigma}_{\hat{\x}}}, \hat{\X}^{n, \hat{\y}}_{\hat{\sigma}_{\hat{y}}}\r) \lesssim \frac{R}{R'}, \quad \forall \hx, \hy \in \VV\hat{\GG}^{\dag, n}(R).
\end{equation*}
\end{itemize}
Therefore, putting together the two bullet points above, recalling that $R'=R/\delta$, and going back to \eqref{eq_tot_var_dist_dual}, we find that, for any $n>n(R, \delta)$ large enough, it holds that
\begin{equation}
\label{eq_dtv_main_width}
\l|\frkf^{\dag}_n(\hat{\x}) - \frkf^{\dag}_n(\hat{\y})\r| \lesssim \delta, \quad \forall \hat{\x}, \, \hat{\y} \in \VV\hat{\GG}^{\dag, n}(R),
\end{equation}
where the implicit constant is independent of everything else.

\medskip
\noindent\textbf{Step~4.} 
To conclude, for every $n > n(R, \delta)$ large enough, fix an arbitrary vertex $\hy \in \VV\hat{\GG}^{\dag, n}(R)$. Then, thanks to \eqref{eq_width_step1_main}, \eqref{eq_width_step2_main}, and \eqref{eq_dtv_main_width}, we have that for any $n > n(R, \delta)$ large enough, it holds that
\begin{equation*}
\left|\frac{2\pi}{\eta_n}\frkw_n^{\dag}(\hat{\x}) + b^{R, \delta}_n  - \Re(\hat{\x})\right| \leq \delta, \quad \forall \hx \in \VV\hat{\GG}^{\dag, n}(R),  
\end{equation*}
where $b^{R, \delta}_n := 2\pi \frkf^{\dag}_n(\hat{\y})$. Finally, in order to conclude, we need to remove the dependence of $b^{R, \delta}_n$ from $R$ and $\delta$. This can be easily done by arguing in the same exact way as in the second step of the proof of Proposition~\ref{pr_main_height}. Therefore, the proof is concluded. 
\end{proof}

\subsubsection{Proof of Lemma~\ref{lm_order_one_width}}
\label{sub_aux_res_width}
The proof of Lemma~\ref{lm_order_one_width} is relatively long and it is based on several intermediate technical lemmas.

\medskip
\noindent
We recall that, for $n \in \N$, the map $\dotSS^{\dag, \rm{rand}}_n$, defined in Definition~\ref{def_Smith_rand},  assigns to each vertex $ \x \in \VV\GG^{\dag, n}$ the random variable $\dotSS_n^{\dag, \rm{rand}}(\x)$, which is uniformly distributed on the horizontal segment $\SS_n^{\dag}(\x)$. The proof of Lemma~\ref{lm_order_one_width} can be basically divided into four main steps.
\begin{enumerate}[(a)]
	\item We start by considering the continuous time lifted random walk $\X^{n ,\mu_a}$. In Lemma~\ref{lm_unif_integr}, we prove that, for $M \in \N$, the conditional probability, given $\smash{\theta_{+} < \theta_{-}}$, of the event $\smash{|\Re(\X^{n, \mu_a}_{\theta_{+}}) - \Re(\X^{n, \mu_a}_{0})| > M R'}$ decays exponentially in $M$. Using this result, we then prove in Lemma~\ref{lm_wind_apriori_walk} that the conditional expectation of $\smash{\Re(\X^{n, \mu_a}_{\theta_{+}})- \Re(\X^{n, \mu_a}_{0})}$, given $\smash{\theta_{+} < \theta_{-}}$, is of order one. Both the proofs of Lemmas~\ref{lm_unif_integr}~and~\ref{lm_wind_apriori_walk} are based on the invariance principle assumption on the sequence of primal maps.
	\item We then consider the Smith-embedded random walk $\smash{\dotX^{n ,\mu_a}}$ associated with $\smash{\X^{n ,\mu_a}}$. In Lemma~\ref{lm_wind_embedded_walk}, we prove that the conditional expectation of $\smash{\Re(\dotX^{n, \mu_a}_{\theta_{+}})}$, given $\smash{\theta_{+} < \theta_{-}}$, is of order $\eta_n$. This result is basically a consequence of Lemma~\ref{lm_winding_embed_walk}, which guarantees that the expected horizontal winding of the Smith-embedded random walk is equal to zero.
	\item In Lemma~\ref{lm_order_one_width_1}, using the relation between the maps $\dotSS^{\dag, \rm{rand}}_n$ and $\frkw^{\dag}_n$, together with the results proved in the previous steps, we prove that there is a finite constant $b_n' \in \R$ such that the values of the width coordinate function in $\smash{\PPP^{\dag, n}_{\bar a}}$ plus $\smash{b'_n}$ is of order $\eta_n$. The key input to prove this result comes from Lemma~\ref{lm_tube_primal} in which we prove that the probability that the random walk $\X^{n, \mu_a}$ travels a large horizontal distance in a narrow horizontal tube decays exponentially fast. The proof of this fact follows from the invariance principle assumption on the sequence of primal maps.
	\item Finally, in Lemma~\ref{lm_tube_dual}, we state the analogous dual result of Lemma~\ref{lm_tube_primal}. This fact and the periodicity of the Smith diagram will allow us to deduce Lemma~\ref{lm_order_one_width}.
\end{enumerate}

\noindent
Let us emphasize that all the results explained above hold also with the role of $\theta_{+}$ and $\theta_{-}$ interchanged. In particular, the same result stated in point (c) holds with the set $\smash{\PPP^{\dag, n}_{\bar a}}$ replaced by $\smash{\PPP^{\dag, n}_{\underline a}}$.

\medskip
\noindent
We can now proceed to state and prove the technical lemmas mentioned above. We start with the following lemma which states that, for $M \in \N$, the conditional probability of the event $\smash{|\Re(\X^{n, \mu_a}_{\theta_{+}}) - \Re(\X^{n, \mu_a}_{0})| > M R'}$, given $\smash{\theta_{+} < \theta_{-}}$, decays exponentially in $M$. Heuristically speaking, this is due to the fact that, after each time that the random walk $\X^{n, \mu_a}$ travels horizontal distance $R'$, there is a positive chance of hitting the set $\smash{(\frkh_n^{\dag})^{-1}(\bar{a})}$.

\begin{lemma}
\label{lm_unif_integr}
There exists a universal constant $C > 0$ such that, for any $n > n(R, \delta)$ large enough, it holds that
\begin{equation*}
\P_{\mu_a}\l(|\Re(\X^n_{\theta_{+}}) - \Re(\X^n_{0})|> M R' \mid \theta_{+} < \theta_{-}\r) \lesssim \exp(-C M),
\quad \forall M \in \N.
\end{equation*}
where the implicit constant is independent of everything else. The same conclusion holds with the role of $\theta_{+}$ and $\theta_{-}$ interchanged.
\end{lemma}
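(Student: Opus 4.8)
The statement concerns the continuous-time lifted walk $\X^{n,\mu_a}$ started according to $\mu_a$ on $(\frkh_n^{\dag})^{-1}(a)$, conditioned on the event $\{\theta_+ < \theta_-\}$ that it reaches $(\frkh_n^{\dag})^{-1}(\bar a)$ before $(\frkh_n^{\dag})^{-1}(\underline a)$, and we want exponential tail bounds on the total horizontal displacement $|\Re(\X^n_{\theta_+}) - \Re(\X^n_0)|$ measured in units of $R'$. The natural approach is a renewal/strong-Markov argument: I would tile the horizontal direction into blocks of width comparable to $R'$ and show that each time the walk has accumulated horizontal displacement $R'$, it has a chance bounded below (uniformly in $n$ large, and in the starting vertex) of hitting $(\frkh_n^{\dag})^{-1}(\bar a)$ (equivalently, of winding around the cylinder at the appropriate height and thus being absorbed) before moving another $R'$ horizontally. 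Iterating this and using the strong Markov property then gives geometric decay in the number of blocks, i.e. in $M$.

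\medskip
\noindent\textbf{Key steps.} First I would pass from the lifted picture to the cylinder: recall $\sigma_{2\pi}$ is a local biholomorphism on horizontal strips of width $2\pi$, and note that a horizontal displacement of $2\pi$ for the lifted walk corresponds to the projected walk $\sigma_{2\pi}(\X^n)$ winding once around the cylinder $\CC_{2\pi}$. Second, since $a$ is chosen (via Proposition~\ref{pr_main_height}) so that $\frkh_n^{-1}(a) \subset \R\times[-1,1]$, and $(\frkh_n^\dag)^{-1}(\bar a) \subset \R \times [R'-\eps, R'+\eps]$ by Remark~\ref{rem_stays_narrow_band}, the walk starting near height $0$ stays in a bounded band (in the a priori embedding) until it is absorbed; in particular the projected walk must at some point wind around the cylinder above some height in $[-1,1]$ in order to reach $(\frkh_n^\dag)^{-1}(\bar a)$. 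Third — the quantitative core — I would use assumption \ref{it_invariance}: for Brownian motion on $\CC_{2\pi}$ started anywhere in, say, $\R/2\pi\Z \times [-1,1]$, the probability that it winds around the cylinder (at a height in a bounded range) before its projection to the real line moves by $2\pi$ is bounded below by a universal constant $p_0 > 0$; this is an elementary Brownian estimate, comparable to the ones invoked in Lemmas~\ref{lm_std_BM_1} and~\ref{lm_std_BM_2}. By \ref{it_invariance}, for $n$ large the same holds for $\X^{n}$ with constant $p_0/2$, uniformly over starting points in the relevant compact set. Fourth, I would set up the iteration: let $\rho_0 := 0$ and $\rho_{k+1}$ be the first time after $\rho_k$ that $|\Re(\X^n_t) - \Re(\X^n_{\rho_k})| = R'$; at each such epoch, conditionally on not yet being absorbed, there is probability $\geq c > 0$ (depending only on the number of $2\pi$-windings fitting into horizontal distance $R'$, hence universal once $R'$ is fixed — actually one should take $R' \geq 2\pi$, which holds since $R>1$, $\delta<1$) that the walk gets absorbed into $(\frkh_n^\dag)^{-1}(\bar a)$ before $\rho_{k+1}$. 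Hence $\P_{\mu_a}(\theta_+ < \theta_-, \, |\Re(\X^n_{\theta_+}) - \Re(\X^n_0)| > MR') \leq (1-c)^{M-1}$ by the strong Markov property. Finally, divide by $\P_{\mu_a}(\theta_+ < \theta_-)$; here one uses that this denominator is bounded below uniformly — by \ref{it_invariance} it converges to the corresponding Brownian quantity, which is a fixed positive number (the walk started near height $0$ in a band reaches the top of the band before the bottom with probability $\asymp 1/2$, up to the geometry), and one should record this as a preliminary bound $\P_{\mu_a}(\theta_+<\theta_-) \gtrsim 1$ following from a gambler's-ruin comparison as in Lemma~\ref{lm_height_tech1}. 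Combining, $\P_{\mu_a}(|\Re(\X^n_{\theta_+}) - \Re(\X^n_0)| > MR' \mid \theta_+ < \theta_-) \lesssim (1-c)^M = \exp(-CM)$.

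\medskip
\noindent\textbf{Main obstacle.} The delicate point is the uniformity of the absorption probability lower bound $c$ over all starting vertices and all large $n$, and the fact that between successive epochs $\rho_k$ the walk might have drifted vertically close to $\pm R'$ — but the structure of $a$, $\bar a$, $\underline a$ controls this: the walk can only be absorbed at heights near $+R'$ or $-R'$, and being absorbed at $\underline a$ is exactly the complementary event I am not conditioning on, so I must be careful to argue that, conditionally on $\{\theta_+ < \theta_-\}$ and on not being absorbed by time $\rho_k$, the walk is still "free" in a way that lets me apply the Markov property with a uniform lower bound. The clean way is to bound the unconditioned joint probability $\P_{\mu_a}(\theta_+<\theta_-,\, |\Re(\X^n_{\theta_+})-\Re(\X^n_0)|>MR')$ directly by $(1-c)^{M-1}$ — at each epoch, either the walk is already absorbed (into $\bar a$ or $\underline a$, either way it stops) or, conditionally on the past, it has probability $\geq c$ of winding and being absorbed before moving another $R'$ horizontally, regardless of which of $\bar a$, $\underline a$ it hits — and only divide by $\P_{\mu_a}(\theta_+<\theta_-) \gtrsim 1$ at the very end. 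This sidesteps the conditioning issue entirely. A second minor technical nuisance is transferring the Brownian winding estimate to $\X^n$ uniformly over starting points in a compact set, but this is exactly the form in which \ref{it_invariance} is stated, so it causes no real trouble.
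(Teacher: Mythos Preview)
Your overall architecture matches the paper exactly: bound the unconditioned probability $\P_{\mu_a}(|\Re(\X^n_\theta) - \Re(\X^n_0)| > MR')$ (with $\theta := \theta_+ \wedge \theta_-$) via strong-Markov epochs $\rho_k$ at successive horizontal increments of $R'$, then divide at the end by $\P_{\mu_a}(\theta_+ < \theta_-)$, which is bounded below by \ref{it_invariance} and the band inclusions of Remark~\ref{rem_stays_narrow_band}.

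The gap is in your step~3. You justify the uniform lower bound $c$ via horizontal winding, asserting that the projected walk ``must at some point wind around the cylinder above some height in $[-1,1]$ in order to reach $(\frkh_n^\dag)^{-1}(\bar a)$''. This is false: absorption is governed by the walk reaching heights near $\pm R'$, not by horizontal winding; the walk can go straight up and hit the level set without winding at all, and conversely winding at a height well inside $(-R',R')$ does not force crossing either level set. Your Brownian winding estimate is also circular as stated, since on the lift ``winding once'' \emph{is} horizontal displacement $2\pi$. The paper's version of this step is the natural repair: at each epoch, let $A^n_k$ be the event that the walk makes a \emph{vertical} excursion of size $3R'$ during $[\rho_k,\rho_{k+1}]$, i.e.\ that $\X^{n,\mu_a}|_{[\rho_k,\rho_{k+1}]} \not\subset \R \times [\Im(\X^{n,\mu_a}_{\rho_k}) - 3R', \Im(\X^{n,\mu_a}_{\rho_k}) + 3R']$. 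By \ref{it_invariance} and an elementary Brownian estimate this has probability at least some $p>0$ uniformly; and since the level sets lie in thin bands near heights $\pm R'$ while the walk at time $\rho_k$ sits between them, the event $A^n_k$ forces $\theta < \rho_{k+1}$. Hence $\{|\Re(\X^n_\theta) - \Re(\X^n_0)| > MR'\} \subset \bigcap_{k=0}^M \bar{A^n_k}$, and independence of the $A^n_k$ gives the bound $(1-p)^{M}$. Swapping your winding mechanism for this vertical-excursion event makes the argument go through.
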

\begin{proof}
As observed in Remark~\ref{rem_stays_narrow_band}, for any $n > n(R, \delta)$ large enough, we can assume that 
\begin{equation}
\label{eq_prelim_bands}
(\frkh_n^{\dag})^{-1}(\bar{a}) \subset \R \times [R', R' + 1], \quad  (\frkh_n^{\dag})^{-1}(\underline{a}) \subset \R \times [- R'-1, - R'], \quad (\frkh_n^{\dag})^{-1}(a) \subset \R \times [-1, 1].
\end{equation}
Now, letting $\theta := \theta_{+} \wedge \theta_{-}$ and $M \in \N$, we can write
\begin{equation*}
\P_{\mu_a}\l(|\Re(\X^n_{\theta_{+}}) - \Re(\X^n_{0})|> M R' \mid \theta_{+} < \theta_{-}\r) \leq \frac{\P_{\mu_a}\l(|\Re(\X^n_{\theta}) - \Re(\X^n_{0})|> M R'\r) }{\P_{\mu_a}\l(\theta_{+} < \theta_{-}\r)}.
\end{equation*}
We note that, thanks to assumption~\ref{it_invariance} and \eqref{eq_prelim_bands}, for any $\smash{n >n(R, \delta)}$ large enough, the probability on the denominator can be lower bounded by a constant independent of everything else. Therefore, we can just focus on the probability appearing on the numerator. To this end, let $\rho_0 : = 0$, and for $k \in \N_0$ define inductively 
\begin{equation*}
\rho_{k+1} := \inf\l\{t \geq \rho_{k} \, : \, \l|\Re(\X^{n , \mu_a}_t) - \Re(\X^{n , \mu_a}_{\rho_{k}})\r| \geq R'\r\}.
\end{equation*}
Moreover, for $k \in \N_0$, consider the event
\begin{equation*}
A^n_k := \l\{\X^{n, \mu_a}|_{[\rho_{k}, \rho_{k+1}]} \not\subset \R\times\l[\Im(\X^{n, \mu_a}_{\rho_{k}}) - 3R', \Im(\X^{n, \mu_a}_{\rho_{k}}) + 3R'\r]\r\}.
\end{equation*}
We observe that, thanks to the strong Markov property of the random walk, the events $\{A^n_k\}_{k \in \N_0}$ are independent and identically distributed. Moreover, thanks to assumption~\ref{it_invariance}, to estimates \eqref{eq_prelim_bands}, and to well-known properties of Brownian motion, for any $n \in \N$ large enough, the event $A^n_0$ happens with uniformly positive probability $p$ independent of everything else.
Therefore, we have that
\begin{equation*}
\P_{\mu_a}\l(|\Re(\X^n_{\theta_{+}}) - \Re(\X^n_{0})|> M R'\r)\leq \P_{\mu_a}\left(\bigcap_{i = 0}^M \bar{A^n_i}\right) = (1-p)^{M},\end{equation*} 
from which the desired result follows. Finally, the same argument also applies with the role of $\theta_{+}$ and $\theta_{-}$ interchanged.
\end{proof}

\noindent
In the next lemma, we use Lemma~\ref{lm_unif_integr} to prove that the conditional expectation of $\smash{\Re(\X^{n, \mu_a}_{\theta_{+}})- \Re(\X^{n, \mu_a}_{0})}$, given $\smash{\theta_{+} < \theta_{-}}$, is of order one.
\begin{lemma}
\label{lm_wind_apriori_walk}
For any $n > n(R, \delta)$ large enough, it holds that
\begin{equation*}
\l|\E_{\mu_{a}}\l[\Re(\X^n_{\theta_{+}}) - \Re(\X^n_{0}) \mid \theta_{+} < \theta_{-}\r]\r| \lesssim 1,
\end{equation*}
where the implicit constant is universal. The same result holds with the role of $\theta_{+}$ and $\theta_{-}$ interchanged.
\end{lemma}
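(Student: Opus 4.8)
The plan is to show that the \emph{numerator} $\E_{\mu_a}\bigl[(\Re(\X^n_{\theta_+}) - \Re(\X^n_0))\,\mathbbm{1}_{\theta_+ < \theta_-}\bigr]$ tends to $0$ as $n \to \infty$ (with $R,\delta$ fixed). Since $\P_{\mu_a}(\theta_+ < \theta_-)$ is bounded away from $0$ for $n$ large --- this follows from assumption~\ref{it_invariance}, Remark~\ref{rem_stays_narrow_band} (the cutsets $(\frkh_n^{\dag})^{-1}(\bar a),(\frkh_n^{\dag})^{-1}(\underline a)$ concentrate near heights $\pm R'$), and the fact that planar Brownian motion started at height in $[-1,1]$ hits a horizontal line near height $R'$ before one near height $-R'$ with probability close to $1/2$ --- the desired conditional expectation then also tends to $0$, and in particular is at most $1$ for all $n > n(R,\delta)$, which is the claimed universal bound. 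The statement with $\theta_+$ and $\theta_-$ interchanged follows by the identical argument.

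The continuum input is the elementary fact that, if $B$ is a planar Brownian motion started at a point $z$ with $\Im(z) \in (-R',R')$ and $\tau_{\pm}$ denotes the first hitting time of the horizontal line at height $\pm R'$, then $\E_z\bigl[(\Re(B_{\tau_+}) - \Re(z))\,\mathbbm{1}_{\tau_+ < \tau_-}\bigr] = 0$. Indeed, $\tau := \tau_+ \wedge \tau_-$ is a stopping time for the vertical coordinate $\Im(B)$ with exponentially decaying tails, the event $\{\tau_+ < \tau_-\}$ is $\sigma(\Im(B))$-measurable, and $\Re(B)$ is a linear Brownian motion independent of $\sigma(\Im(B))$; conditioning on $\sigma(\Im(B))$ and applying the optional stopping theorem to the $L^2$-bounded martingale $s \mapsto \Re(B_{s\wedge\tau}) - \Re(z)$ kills the expectation.

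To transfer this to the walk, I would proceed as follows. By Proposition~\ref{pr_main_height} and the choice of $a = a^n_{R'}$, every starting vertex $\x \in \PPP^{\dag,n}_a$ has $|\Im(\x)| \le 1$, so the points $\x$ lie in a fixed compact region. By Remark~\ref{rem_stays_narrow_band}, for every $\eps>0$ and $n$ large the cutsets $(\frkh_n^{\dag})^{-1}(\bar a)$ and $(\frkh_n^{\dag})^{-1}(\underline a)$ are contained in $\R\times[R'-\eps,R'+\eps]$ and $\R\times[-R'-\eps,-R'+\eps]$; since they are cutsets separating the two ends of the cylinder, $\theta_+$ (resp.\ $\theta_-$) is squeezed between the first hitting times of the horizontal lines at heights $R'-\eps$ and $R'+\eps$ (resp.\ $-R'+\eps$ and $-R'-\eps$). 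Consequently, using the invariance principle for the lifted walk established at the beginning of Section~\ref{sec_assumption_main} (uniformly over starting vertices in the compact region above), letting $n\to\infty$ and then $\eps\to 0$, the stopped curve $\X^{n}|_{[0,\theta_+\wedge\theta_-]}$, the event $\{\theta_+<\theta_-\}$, and the displacement $\Re(\X^n_{\theta_+})-\Re(\X^n_0)$ converge to their Brownian analogues with hitting of the lines $\pm R'$ --- an a.e.-continuous functional of the Brownian path. Lemma~\ref{lm_unif_integr} supplies the uniform integrability needed to upgrade this weak convergence to convergence of expectations: $\E_{\mu_a}\bigl[|\Re(\X^n_{\theta_+})-\Re(\X^n_0)|\,\mathbbm{1}_{\theta_+<\theta_-}\,\mathbbm{1}_{|\Re(\X^n_{\theta_+})-\Re(\X^n_0)|>MR'}\bigr] \lesssim R'\,M\,e^{-CM}\to 0$ as $M\to\infty$, uniformly in $n$ large. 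Integrating the continuum vanishing identity against the starting distribution $\mu_a$ (passing, if necessary, to subsequences along which $\mu_a$ converges weakly), the numerator tends to $0$, as desired.

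\textbf{The main obstacle} is this transfer step: one must check that the functional ``signed horizontal displacement at the first time one of the two cutsets is hit, restricted to the event that the upper one is hit first'' really converges, under assumption~\ref{it_invariance}, to the corresponding Brownian functional. This is exactly where the cutset property and the concentration of the level sets near horizontal lines (Proposition~\ref{pr_main_height}, Remark~\ref{rem_stays_narrow_band}) are indispensable, and where one must check that the $\mu_a$-averaged tail bound of Lemma~\ref{lm_unif_integr} --- stated only on the coarse scale $R'$ --- is nonetheless enough to force uniform integrability for each fixed $R,\delta$, so that the limiting (conditional) mean is genuinely the Brownian value $0$ rather than merely $O(R')$.
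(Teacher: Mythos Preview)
Your proposal is correct and follows essentially the same route as the paper: both arguments identify the Brownian vanishing identity $\E_z[(\Re(B_{\tau_+})-\Re(z))\,\mathbbm{1}_{\tau_+<\tau_-}]=0$ via independence of the real and imaginary coordinates, use Remark~\ref{rem_stays_narrow_band} to localize the cutsets near horizontal lines, invoke assumption~\ref{it_invariance} for weak convergence of the displacement, and then appeal to the exponential tail bound of Lemma~\ref{lm_unif_integr} (the paper phrases this as Vitali's theorem) to upgrade to convergence of expectations. One minor remark: your aside about passing to subsequences along which $\mu_a$ converges is unnecessary, since the Brownian identity is zero for \emph{every} starting point in the strip, so the $n$-dependence of $\mu_a$ is harmless; the paper simply ignores this issue for the same reason.
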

\begin{proof}
The proof of this result is based on assumption~\ref{it_invariance}. More precisely, let $B^{\mu_a}$ be a planar Brownian motion started from a point in $\PPP^{\dag, n}_{a}$ sampled according to the probability measure $\mu_a$, and consider the following stopping times 
\begin{equation*}
\theta_{B,+} := \inf\l\{t \geq 0 \, : \, \Im(B_t^{\mu_a}) = R'\r\}, \qquad \theta_{B,-} := \inf\l\{t \geq 0 \, : \, \Im(B_t^{\mu_a}) = -R'\r\}.	
\end{equation*}
Since $\Re(B^{\mu_a})$ and $\Im(B^{\mu_a})$ are independent, and since the stopping times $\theta_{B, +}$ and $\theta_{B,-}$ only depend on $\Im(B^{\mu_a})$, thanks to well-known properties of Brownian motion, we have that
\begin{equation}
\label{eq_BM_cond_optional}
\E_{\mu_a}\l[\Re(B_{\theta_{B, +}}) - \Re(B_0) \mid \theta_{B, +} < \theta_{B, -} \r] = 0.
\end{equation}
Furthermore, as observed in Remark~\ref{rem_stays_narrow_band}, for any $\eps \in (0, 1)$ and for any $n > n(R, \delta, \eps)$ large enough, it holds that 
\begin{equation*}
(\frkh_n^{\dag})^{-1}(\bar{a}) \subset \R \times [R', R' + \eps], \qquad  (\frkh_n^{\dag})^{-1}(\underline{a}) \subset \R \times [- R'-\eps, - R']. 
\end{equation*}
In particular, this fact and Lemma~\ref{lm_no_macroscopic_edges} imply that the set $(\frkh_n^{\dag})^{-1}(\bar{a})$ (resp.\ $(\frkh_n^{\dag})^{-1}(\underline{a})$) converges in the Hausdorff metric to the horizontal line $\R \times \{R'\}$ (resp.\ $\R \times \{-R'\}$). Therefore, thanks to assumption~\ref{it_invariance}, the following weak convergence of laws holds
\begin{equation*}
\lim_{n \to \infty} \mathrm{Law}\l(\Re(\X^{n, \mu_a}_{\theta_{+}}) - \Re(X^{n, \mu_a}_0) \mid \theta_{+} < \theta_{-}\r) = \mathrm{Law}\l(\Re(B^{\mu_a}_{\theta_{B, +}}) - \Re(B^{\mu_a}_0) \mid \theta_{B, +} < \theta_{B, -}\r).
\end{equation*}
Hence, thanks to Lemma~\ref{lm_unif_integr} and Vitali's convergence theorem, for any $n > n(R, \delta)$ large enough, it holds that
\begin{equation}
\label{eq_Vitali}
\l|\E_{\mu_{a}}\l[\Re(\X^n_{\theta_{+}}) - \Re(\X^n_0) \mid \theta_{+} < \theta_{-}\r] - \E_{\mu_a}\l[\Re(B_{\theta_{B, +}}) - \Re(B_{0}) \mid \theta_{B, +} < \theta_{B, -} \r] \r| \leq 1.
\end{equation}
Hence, putting together \eqref{eq_BM_cond_optional} and \eqref{eq_Vitali}, we obtain the desired result. Finally, the same argument also applies with the role of $\theta_{+}$ and $\theta_{-}$ interchanged.
\end{proof}

\noindent
In the next lemma, we see how we can use Lemma~\ref{lm_winding_embed_walk} to prove that that the conditional expectation of $\smash{\Re(\dotX^{n, \mu_a}_{\theta_{+}})}$, given $\smash{\theta_{+} < \theta_{-}}$, is of order $\eta_n$. 
\begin{lemma}
\label{lm_wind_embedded_walk}
For any $n \in \N$, it holds that
\begin{equation*}
\l|\E_{\mu_{a}}\l[\Re(\dotX^n_{\theta_{+}}) \mid \theta_{+} < \theta_{-}\r]\r| \leq \eta_n,
\end{equation*}
and the same with the role of $\theta_{+}$ and $\theta_{-}$ interchanged.
\end{lemma}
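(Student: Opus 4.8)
The plan is to deduce the statement directly from the zero-expected-winding identity of Lemma~\ref{lm_winding_embed_walk}, together with the pathwise bound on the starting point of the lifted Smith-embedded walk provided by the second part of Lemma~\ref{lm_bound_width_dot}. Write $\X^{n,\mu_a}$, $\dotX^{n,\mu_a}$ for the walk and its lifted Smith embedding, and recall $\theta_\pm$ from \eqref{eq_stopping_times_dual_primal}. The first thing I would record is that on the event $\{\theta_+<\theta_-\}$ one has $\theta_+<\infty$ (if $\theta_-<\infty$ then $\theta_+<\theta_-<\infty$, while if $\theta_-=\infty$ then $\theta_+<\theta_-$ simply means $\theta_+<\infty$), so that Definition~\ref{def_winding_smith} applies and gives, on this event,
\[
\Re\!\left(\dotX^{n,\mu_a}_{\theta_+}\right) \;=\; \Re\!\left(\dotX^{n,\mu_a}_{0}\right) \;+\; \eta_n\,\wind_{\eta_n}\!\left(\dotX^{n,\mu_a}|_{[0,\theta_+]}\right).
\]
Taking conditional expectation given $\{\theta_+<\theta_-\}$ (which has positive probability for $n$ large, by Remark~\ref{rem_stays_narrow_band} and assumption~\ref{it_invariance}), it suffices to prove (i) $\E_{\mu_a}[\Re(\dotX^n_0)\mid\theta_+<\theta_-]\in[0,\eta_n]$ and (ii) $\E_{\mu_a}[\wind_{\eta_n}(\dotX^n|_{[0,\theta_+]})\mid\theta_+<\theta_-]=0$.

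Claim (i) is immediate and needs no information about the conditioning: since $\X^{n,\mu_a}_0$ lies in $\PPP^{\dag,n}_{a}$ almost surely and the lifted width coordinate function has been normalized by $\frkw^\dag_n(\hx^{n,a}_0)=0$, the second part of Lemma~\ref{lm_bound_width_dot} gives $0\le\Re(\dotSS^{\dag,\mathrm{rand}}_n(\x))\le\eta_n$ for every $\x\in\PPP^{\dag,n}_{a}$; since the conditional law of $\dotX^{n,\mu_a}_0$ given $\X^{n,\mu_a}_0$ is the law of $\dotSS^{\dag,\mathrm{rand}}_n(\X^{n,\mu_a}_0)$, we get $0\le\Re(\dotX^{n,\mu_a}_0)\le\eta_n$ almost surely, hence the same bound for any conditional expectation.

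For claim (ii), the key point is that the event $\{\theta_+<\theta_-\}$ is measurable with respect to the height-coordinate process $\{\frkh_n(\X^{n,\mu_a}_m)\}_{m\ge0}$, because $\theta_+$ (resp.\ $\theta_-$) is the first time this process equals $\bar a$ (resp.\ $\underline a$), using that all points of $\frkh_n^{-1}(\bar a)\cup\frkh_n^{-1}(\underline a)\cup\frkh_n^{-1}(a)$ are vertices. Consequently $\{\theta_+<\theta_-\}$ is, up to a null set, the disjoint union over $N\in\N$ and over all finite sequences $[a_N]_0$ with $a_0=a$, $a_N=\bar a$, $a_i\notin\{\bar a,\underline a\}$ for $1\le i\le N-1$, and $\P_{\mu_a}(\{\frkh_n(X_m)=a_m\}_{m=1}^N)>0$, of the events $E_{[a_N]_0}:=\{\frkh_n(X_m)=a_m\}_{m=1}^N$. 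By the Markov property of the height process (Step~1 of the proof of Lemma~\ref{lm_hitting_hor}), positivity of the joint probability forces positivity of each one-step transition, so each such $[a_N]_0$ is admissible in the sense of Definition~\ref{def_admissible_heights}. On $E_{[a_N]_0}$ one has $\theta_+=N$, hence $\wind_{\eta_n}(\dotX^n|_{[0,\theta_+]})=\wind_{\eta_n}(\dotX^n|_{[0,N]})$, and Lemma~\ref{lm_winding_embed_walk} applied to the admissible sequence $[a_N]_0$ gives $\E_{\mu_a}[\wind_{\eta_n}(\dotX^n|_{[0,N]})\mid E_{[a_N]_0}]=0$. Averaging over these pieces, whose conditional probabilities given $\{\theta_+<\theta_-\}$ sum to one, yields (ii). Combining (i) and (ii) we obtain $\E_{\mu_a}[\Re(\dotX^n_{\theta_+})\mid\theta_+<\theta_-]\in[0,\eta_n]$, which is the assertion; the case with $\theta_+$ and $\theta_-$ interchanged is identical, the relevant sequences now terminating at $\underline a$.

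The only delicate bookkeeping — and what I expect to be the main obstacle — is the reduction from the fixed-length conditioning in Lemma~\ref{lm_winding_embed_walk} to the random stopping time $\theta_+$: one must check carefully that $\{\theta_+<\theta_-\}$ decomposes into the countable family of events $E_{[a_N]_0}$ above, that on each of them $\theta_+$ is the constant $N$ so the winding decomposition matches, and that each sequence occurring is admissible so that Lemma~\ref{lm_winding_embed_walk} genuinely applies term by term. Once this is set up, everything else is a one-line computation.
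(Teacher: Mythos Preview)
Your argument follows essentially the same route as the paper's proof: bound $\Re(\dotX^{n,\mu_a}_0)$ via Lemma~\ref{lm_bound_width_dot}, then show the conditional expected winding vanishes by decomposing $\{\theta_+<\theta_-\}$ over admissible height sequences and applying Lemma~\ref{lm_winding_embed_walk} on each piece. Two technical points are missing from your write-up that the paper handles explicitly.

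First, Lemma~\ref{lm_winding_embed_walk} (and Step~1 of Lemma~\ref{lm_hitting_hor}, which you cite) are stated under the standing assumption of Subsection~\ref{sub_hitting} that \emph{every} level set $\frkh_n^{-1}(\frkh_n(x))$ consists of vertices; the setup of Subsection~\ref{sub_width} only guarantees this for the three levels $\bar a$, $\underline a$, $a$. The paper deals with this by passing to the auxiliary graph of Definition~\ref{def_new_graph} with all level-set points added, invoking Lemma~\ref{lm_random_walk_assoc} to recover the original walk. You should note this reduction before appealing to Lemma~\ref{lm_winding_embed_walk}.

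Second, ``averaging over these pieces'' in your claim~(ii) is an application of the tower property over a countably infinite partition, which requires $\E_{\mu_a}\bigl[\,\bigl|\wind_{\eta_n}(\dotX^n|_{[0,\theta_+]})\bigr|\,\bigr]<\infty$; otherwise the conditional expectation on the left need not even be defined, let alone equal the sum. The paper checks this via Lemmas~\ref{lm_indep_position} and~\ref{lm_wind_apriori_walk} (alternatively, for fixed $n$ it follows from finiteness of the graph and exponential tails of $\theta_+$). This integrability step is the content of the paper's equation~\eqref{eq_finite_wind} and should be made explicit.
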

\begin{proof}
Since we are assuming that the base vertex $\hx^{n, a}_0$ of the lifted width coordinate function $\frkw^{\dag}_n$ belongs to the set $\hat{\PPP}^{\dag, n}_{a}$, then, thanks to Lemma~\ref{lm_bound_width_dot}, it holds almost surely that
\begin{equation*}
\l| \Re(\dotSS^{\dag, \rm{rand}}_n(\x)) \r| \leq \eta_n, \quad \forall \x \in \PPP^{\dag, n}_{a}.
\end{equation*}
In particular, since the embedded random walk $\X^{n, \mu_a}$ is started from a point in the set $\PPP^{\dag, n}_{a}$, then it holds almost surely that 
\begin{equation}
\label{eq_bound_start_embedded}
\l| \Re(\dotX^{n, \mu_a}_0) \r| \leq \eta_n.
\end{equation}

\medskip
\noindent
Now, we would like to apply Lemma~\ref{lm_winding_embed_walk} in order to conclude. However, we note that we cannot directly apply this result since, a priori, it does not hold that $\cup_{x \in \VV\GG^n}\frkh_n^{-1}(\frkh_n(x)) \subseteq \VV\GG^n$. In order to overcome this issue, we could consider the weighted graph associated to $(\GG^n, c^n)$ and $\cup_{x \in \VV\GG^n}\frkh_n^{-1}(\frkh_n(x))$ as specified in Definition~\ref{def_new_graph}. We could then apply Lemma~\ref{lm_winding_embed_walk} to the random walk on this new weighted graph and then transfer the result to the original weighted graph by means of Lemma~\ref{lm_random_walk_assoc}. In order to lighten up the proof, we will assume directly that $\cup_{x \in \VV\GG^n}\frkh_n^{-1}(\frkh_n(x)) = \VV\GG^n$. Since the stopping time $\theta_{+}$ is almost surely finite, we can proceed as follows
\begin{equation}
\label{eq_wind_inter_1}
\begin{alignedat}{1}
& \E_{\mu_{a}}\l[\wind_{\eta_n}(\dotX^n|_{[0, \theta_{+}]}) \mid \theta_{+} < \theta_{-}\r] \\
& \qquad = \frac{1}{\P_{\mu_{a}}(\theta_{+} < \theta_{-})} \E_{\mu_{a}}\left[\sum_{N \in \N}\wind_{\eta_n}(\dotX^n|_{[0, \theta_{+}]}) \mathbbm{1}_{\{\theta_{+} < \theta_{-}, \theta_{+} = N\}}\right] \\
& \qquad = \frac{1}{\P_{\mu_{a}}(\theta_{+} < \theta_{-})}\sum_{N \in \N}\E_{\mu_{a}}\l[\wind_{\eta_n}(\dotX^n|_{[0, \theta_{+}]}) \mid  \theta_{-} > N, \theta_{+} = N\r] \P_{\mu_a}(\theta_{-} > N, \theta_{+} = N).
\end{alignedat}
\end{equation}
In order to pass from the first line to the second line of the above expression, we used the fact that 
\begin{equation}
\label{eq_finite_wind}
	\E\l[\l|\wind_{\eta_n}(\dotX^{n, \mu_a}|_{[0, \theta_{+}]})\r|\r] < \infty,
\end{equation}
and Fubini's theorem. The reason why \eqref{eq_finite_wind} holds is an immediate consequence of Lemmas~\ref{lm_indep_position}~and~\ref{lm_wind_apriori_walk}. In order to conclude, it is sufficient to prove that the expectation in each summand of the sum appearing in \eqref{eq_wind_inter_1} is equal to zero. To this end, fix $N \in \N$ and consider a sequence of admissible height coordinates $[a_N]_0 \subset (0, 1)$ for the random walk $X^{\mu_a}$, as specified in Definition~\ref{def_admissible_heights}, such that $a_m > \underline{a}$ for all $m \in [N]_0$ and $a_N = \bar{a}$. Thanks, to Lemma~\ref{lm_winding_embed_walk}, we have that 
\begin{equation*}
	\E_{\mu_{a}}\l[\wind_{\eta_n}(\dotX^n|_{[0, N]}) \mid \{\frkh_n(X_m) = a_m\}_{m = 1}^N\r] = 0,
\end{equation*}
uniformly over all such sequences of height coordinates. Hence, this is sufficient to conclude that 
\begin{equation}
\label{eq_winding_final_proof}
\E_{\mu_{a}}\l[\wind_{\eta_n}(\dotX^n|_{[0, \theta_{+}]}) \mid  \theta_{-} > N, \theta_{+} = N\r] = 0.
\end{equation}
Therefore, the conclusion follows thanks to \eqref{eq_bound_start_embedded}, and to the fact that \eqref{eq_wind_inter_1} and \eqref{eq_winding_final_proof} imply that $\smash{\E_{\mu_{a}}[\wind_{\eta_n}(\dotX^n|_{[0, \theta_{+}]}) \mid \theta_{+} < \theta_{-}] = 0}$. Finally, we observe that the same argument also applies with the role of $\theta_{+}$ and $\theta_{-}$ interchanged.
\end{proof}

\noindent
In order to prove Lemma~\ref{lm_order_one_width}, we basically need to prove that the difference between the value of the width coordinate in the set $\smash{\hat{\PPP}^{\dag, n}_{\bar{a}}}$ and in the set $\smash{\hat{\PPP}^{\dag, n}_{\underline{a}}}$ is of order $\eta_n$. This fact is the content of Lemma~\ref{lm_order_one_width_1} below. However, in order to prove this fact, we first need to prove that it is extremely unlikely for the random walk $\smash{\X^{n, \mu_a}}$ to travel a large horizontal distance in a ``narrow horizontal tube''. We refer to Figure~\ref{fig_tube_primal} for a diagram illustrating the various objects involved in the proof of Lemma~\ref{lm_tube_primal}.

\begin{figure}[h]
\centering
\includegraphics[scale=0.9]{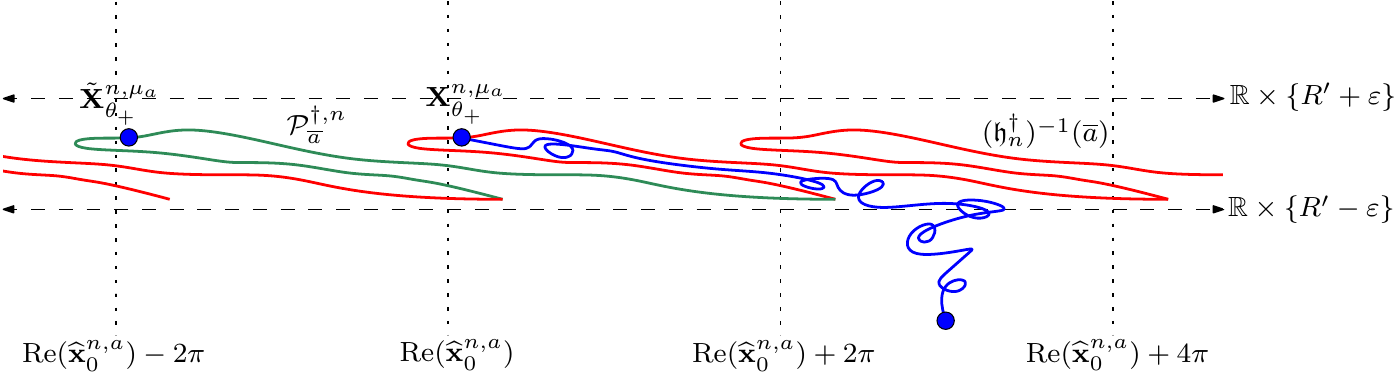}
\caption[short form]{\small The vertices $(\frkh^{\dag}_n)^{-1}(\bar{a})$ are drawn for simplicity as a continuous red/green path contained in the infinite strip $\R \times [R'-\eps, R'+\eps]$. In particular, the green part of this path contains the vertices in the set $\smash{\PPP^{\dag, n}_{\bar{a}}}$. The blue vertex at the bottom of the figure is a vertex sampled from the set $\smash{\PPP^{\dag, n}_{a}}$ according to the probability measure $\mu_a$. The blue path represents a trajectory of the random walk $\X^{n, \mu_a}$, and the blue point at the top-right coincides with the hitting point $\X^{n, \mu_a}_{\theta_{+}}$. The blue point $\tilde \X^{n, \mu_a}_{\theta_{+}}$ at the top-left is the vertex in $\smash{\PPP^{\dag, n}_{\bar{a}}}$ such that $\smash{\sigma_{2\pi}(\tilde \X^{n, \mu_a}_{\theta_{+}}) = \sigma_{2\pi}(\X^{n, \mu_a}_{\theta_{+}})}$. Roughly speaking, Lemma~\ref{lm_tube_primal} says that the event depicted in the figure is extremely unlikely to happen. A consequence of this fact is that the conditional expectation of $\smash{\Re(\tilde \X^{n, \mu_a}_{\theta_{+}})}$, given $\theta_{+} < \theta_{-}$, is of order one.}
\label{fig_tube_primal}
\end{figure}

\begin{lemma}
\label{lm_tube_primal}
Fix $\eps \in (0, 1)$, $M \in \N$, and define the following event
\begin{equation*}
\mathrm{A}_{M, \eps}^{n, +} := \l\{ \exists \, s, t \in [0, \theta_{+}] \, : \, \l|\Re(\X^{n, \mu_a}_t) - \Re(\X^{n, \mu_a}_s)\r| > M ; \; \l|\Im(\X^{n, \mu_a}_u)\r| \in [R' - \eps, R' + \eps], \; \forall u \in [s, t]\r\}.
\end{equation*} 
There exists a universal constant $C > 0$ such that, for any $n > n(R, \delta, \eps)$ large enough, it holds that
\begin{equation*}
\P_{\mu_a}\l(\mathrm{A}_{M, \eps}^{n, +} \mid \theta_{+} < \theta_{-}\r) \lesssim \exp(-C M/\eps),
\quad \forall M \in \N,
\end{equation*}
where the implicit constant is independent of everything else. Furthermore, the same conclusion holds with the role of $\theta_{+}$ and $\theta_{-}$ interchanged.
\end{lemma}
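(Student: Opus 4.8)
\textbf{Proof proposal for Lemma~\ref{lm_tube_primal}.} The plan is to combine the invariance principle on the primal maps (assumption~\ref{it_invariance}) with a chaining argument modelled on the proof of Lemma~\ref{lm_unif_integr}; the new ingredient, which is also the crux of the argument, is that before time $\theta:=\theta_{+}\wedge\theta_{-}$ the walk $\X^{n,\mu_a}$ is trapped between the two level-set cutsets of $\frkh^{\dag}_n$, which by Proposition~\ref{pr_main_height} sit essentially at heights $\pm R'$; this is what converts the a~priori \emph{unconfined} statement ``before $\theta_{+}$'' into a confined one to which the invariance principle (uniform only on compacts) applies. As in the first lines of the proof of Lemma~\ref{lm_unif_integr}, since $\P_{\mu_a}(\theta_{+}<\theta_{-})$ is bounded below by a universal positive constant for $n>n(R,\delta)$ large (by assumption~\ref{it_invariance} and Remark~\ref{rem_stays_narrow_band}), and since on the event $\{\theta_{+}<\theta_{-}\}$ we have $\theta_{+}=\theta$, it suffices to bound $\P_{\mu_a}(\mathrm{A}^{n,+})$ where $\mathrm{A}^{n,+}$ is defined exactly as $\mathrm{A}_{M,\eps}^{n,+}$ but with $\theta_{+}$ replaced by $\theta$.

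First I would establish the confinement. Since $\underline{a}<a<\bar a$ for $n$ large (Proposition~\ref{pr_main_height}, using $R'>1$), and since the level sets $(\frkh^{\dag}_n)^{-1}(\bar a)$, $(\frkh^{\dag}_n)^{-1}(\underline a)$ together with the straddling primal edges are cutsets (as in Subsection~\ref{sub_periodicity}), before time $\theta$ the walk keeps $\frkh^{\dag}_n\in(\underline a,\bar a)$. Proposition~\ref{pr_main_height} then forces the height of the walk to stay in $\R\times(-R'-\eps_0,R'+\eps_0)$, where $\eps_0=\eps_0(n)\to0$: one first rules out heights above $R'+1$ (vertices there have $\frkh^{\dag}_n>\bar a$, by the proposition applied in the compact strip $\{|\Im|\le R'+1\}$), after which the proposition applies to the whole excursion. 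In the same way, before $\theta$ the walk stays below the curve $\Gamma^{+}$ consisting of $(\frkh^{\dag}_n)^{-1}(\bar a)$ and the edges of $\EE\GG^{\dag,n}_{\bar a}$, and above the corresponding curve $\Gamma^{-}$ near $-R'$; by Remark~\ref{rem_stays_narrow_band} and Lemma~\ref{lm_no_macroscopic_edges}, both $\Gamma^{\pm}$ lie within height $\pm\eps_0$ of $\pm R'$, and $\theta$ is exactly the first time the walk touches $\Gamma^{+}\cup\Gamma^{-}$.

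Next, write $\mathrm{A}^{n,+}=\mathrm{A}^{n,\mathrm{top}}\cup\mathrm{A}^{n,\mathrm{bot}}$ according to whether the distinguished interval $[s,t]$ keeps the walk near $+R'$ or near $-R'$; by the reflection symmetry $y\mapsto-y$ (which fixes $\theta$ and swaps $\Gamma^{+}\leftrightarrow\Gamma^{-}$) it suffices to bound $\P_{\mu_a}(\mathrm{A}^{n,\mathrm{top}})$. Decompose $[0,\theta]$ into the successive ``macroscopic visits'' to the band $\{\Im\ge R'-\eps\}$: a visit starts when $\Im\X^{n,\mu_a}$ reaches $R'-\eps$ and ends when it next returns to $R'-\eps-1$ or when $\theta$ occurs. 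By a gambler's ruin comparison — from height $R'-\eps$ the walk hits $\Gamma^{+}$ (at height $\approx R'$) before height $R'-\eps-1$ with probability at least $\tfrac13$ for $n$ large, by assumption~\ref{it_invariance} and the squeezing of $\Gamma^{+}$ from the previous step — each visit ends at $\theta$ with uniformly positive probability, so the expected number of visits is bounded by a universal constant. On $\mathrm{A}^{n,\mathrm{top}}$ the interval $[s,t]$ lies inside a single visit, during which the walk stays in the thin strip $[R'-\eps,R'+\eps_0]$ (of vertical width $\lesssim\eps$) while its horizontal coordinate varies by more than $M$. Hence, by the strong Markov property at the start of each visit,
\[
\P_{\mu_a}(\mathrm{A}^{n,\mathrm{top}})\;\le\;\E_{\mu_a}[\#\text{visits}]\cdot\sup_{x}\P_{x}\big(\text{before leaving the width-}{\approx}1\text{ band of the visit, the walk traverses a sub-interval in a strip of vertical width}\lesssim\eps\text{ along which }\Re\text{ moves by}>M\big),
\]
the supremum being over starting points $x$ at height $\approx R'-\eps$. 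Combining assumption~\ref{it_invariance} with a standard planar Brownian estimate — of the same flavour as Lemmas~\ref{lm_std_BM_1}–\ref{lm_std_BM_2}, reflecting that planar Brownian motion confined to a strip of vertical width $h$ for a long time (long enough to displace horizontally by $M$) is exponentially unlikely, and handled by excursion theory or by iterating the ``horizontal $\eps$ versus leaving the strip'' comparison — this supremum is $\lesssim\exp(-CM/\eps)$ for a universal $C>0$. Together with the bounded expected number of visits and the conditioning reduction above, this proves the stated bound; the case with $\theta_{+}$ and $\theta_{-}$ interchanged follows by the same argument (exchanging the roles of $\Gamma^{+}$ and $\Gamma^{-}$).

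I expect the main obstacle to be precisely the confinement step and its interface with the chaining: before $\theta_{+}$ alone the walk may wander arbitrarily far vertically (it is not stopped at $\theta_{-}$), so one cannot directly invoke the invariance principle; the resolution is to pass through $\theta=\theta_{+}\wedge\theta_{-}$ (legitimate after removing the conditioning), to use the cutset structure of the level sets of $\frkh^{\dag}_n$ together with Proposition~\ref{pr_main_height} to confine the walk and to identify $\theta$ with the hitting time of the near-horizontal curves $\Gamma^{\pm}$, and to bound the number of macroscopic visits by a gambler's-ruin comparison. The Brownian confinement estimate itself, while it requires some care with the infinitely many short excursions into the thin strip, is routine.
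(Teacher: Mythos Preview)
Your proposal is correct and follows the same high-level strategy as the paper: reduce the conditioning to an unconditioned estimate via the lower bound on $\P_{\mu_a}(\theta_{+}<\theta_{-})$, use Proposition~\ref{pr_main_height} and Remark~\ref{rem_stays_narrow_band} to confine the walk between the level-set cutsets $\Gamma^{\pm}$ near heights $\pm R'$, decompose the time interval into blocks via stopping times and the strong Markov property, and invoke the invariance principle together with a Brownian strip estimate.

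The genuine difference is the choice of decomposition. The paper slices time by horizontal displacement: it defines $\tilde\rho_{k+1}$ as the first time the walk moves horizontal distance $M$ from $\rho_k$, and $\rho_{k+1}$ as the next return to height $R'-\eps$; the number $K$ of such blocks is controlled by showing that during each block the walk exits the wide band $[-R'-1,R'+1]$ with probability $\gtrsim(M+1)^{-1}$, so $\P(K=k)\lesssim(M+1)^{-(k-1)}$, and the summation $\sum_k k(M+1)^{-(k-1)}$ converges. You instead slice by \emph{visits} to $\{\Im\ge R'-\eps\}$, bound the expected number of visits by a constant via gambler's ruin (since from $R'-\eps$ the walk hits $\Gamma^{+}$ before $R'-\eps-1$ with probability bounded away from zero), and push the exponential estimate entirely into the single-visit probability. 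Your decomposition is arguably more natural, but it shifts the work: the within-visit estimate now requires controlling \emph{all} up-excursions above $R'-\eps$ during a visit, which for Brownian motion means summing over excursion scales (the ``infinitely many short excursions'' you flag). This is indeed routine once written out---summing $\sum_k 2^k\exp(-c2^kM/\eps)$ over dyadic excursion heights---but it is a step the paper's decomposition sidesteps by building the horizontal displacement $M$ directly into the block structure.
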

\begin{proof}
Fix $\eps \in (0, 1)$. We start by recalling that, as observed in Remark~\ref{rem_stays_narrow_band}, for any $n > n(R, \delta, \eps)$ large enough, it holds that
\begin{equation}
\label{eq_eghat_narrow}
(\frkh_n^{\dag})^{-1}(\bar{a}) \subset \R \times [R' - \eps, R' + \eps].
\end{equation}
It is easy to see that, without any loss of generality, we can assume that $\VV\GG^{n}$ contains all the points in the set
\begin{equation*}
	\l\{x \in \G^n \, : \, \Im(x) = R' - \eps\r\}. 
\end{equation*}
Therefore, we can define the stopping time $\rho_0 : = \inf\l\{t \geq 0 \, : \, \Im(\X^{n, \mu_a}_t) = R'- \eps\}$, and for $k \in \N_0$ we define inductively 
\begin{equation*}
\tilde \rho_{k+1} := \inf\l\{t \geq \rho_{k} \, : \, \l|\Re(\X^{n, \mu_a}_t) - \Re(\X^{n, \mu_a}_{\rho_{k}})\r| > M \}, \quad \rho_{k+1} := \inf\l\{t \geq \tilde \rho_{k+1} \, : \, \Im(\X^{n, \mu_a}_t) = R' - \eps \r\}.
\end{equation*}
Moreover, for $k \in \N$ we consider the events
\begin{equation*}
\mathrm{A}_{M, \eps}^{n, +}(\rho_{k-1}, \tilde \rho_{k}) := \l\{\Im(\X^{n, \mu_a}|_{[\rho_{k-1}, \tilde \rho_{k}]}) \subset [R'-\eps, R'+\eps]\r\} , \quad F_{k} := \l\{\rho_{k} > \hat{\tau}_{\hx}\r\}.
\end{equation*}
Let $K$ be the smallest $k \in \N$ such that $F_k$ occurs. Then, we have that
\begin{equation}
\label{eq_inter_step_narrow}
\begin{alignedat}{1}
\P_{\mu_a}\l(\mathrm{A}_{M, \eps}^{n, +} \mid \theta_{+} < \theta_{-}\r) & \leq \P_{\mu_a}\left(\bigcup_{i = 1}^{K} \mathrm{A}_{M, \eps}^{n, +}(\rho_{i-1}, \tilde \rho_{i}) \mid \theta_{+} < \theta_{-} \right) \\
&  \leq \sum_{k \in \N} \sum_{i = 1}^{k}\P_{\mu_a}\left(\mathrm{A}_{M, \eps}^{n, +}(\rho_{i-1}, \tilde \rho_{i})\mid \theta_{+} < \theta_{-} \right) \P_{\mu_a}\l(K = k \mid \theta_{+} < \theta_{-}\r).
\end{alignedat}
\end{equation}
Thanks to the strong Markov property of the random walk, the events $\{\mathrm{A}_{M, \eps}^{n, +}(\rho_{k-1}, \tilde \rho_{k})\}_{k \in \N}$ are conditionally independent and identically distributed given $\theta_{+} < \theta_{-}$. 
Now, thanks to assumption~\ref{it_invariance} and well-known properties of Brownian motion, it is possible to prove that there is a universal constant $C > 0$ such that for any $n \in \N$ large enough it holds that
\begin{equation}
\label{eq_bound_exp_k}
	\P_{\mu_a}\l(\mathrm{A}_{M, \eps}^{n, +}(\rho_{0}, \tilde \rho_{1}) \mid \theta_{+} < \theta_{-}\r) \lesssim \exp\l(-C M / \eps\r), 
\end{equation}
where the implicit constant is independent of everything else. More precisely, in order to obtain the above upper bound, it is sufficient to study the probability that the random walk travels horizontal distance $M$ before exiting an infinite horizontal band of height of order $\eps$. This can be done by proceeding similarly to the proof of Lemma~\ref{lm_unif_integr}, and so we do not detail the argument here. Now, for $k \in \N$, we consider the event
\begin{equation*}
\mathrm{B}_{M}^{n, +}(\rho_{k-1}, \tilde \rho_{k}) := \l\{\Im(\X^{n, \mu_a}|_{[\rho_{k-1}, \tilde \rho_{k}]}) \subset [-R'-1, R'+1]\r\}.
\end{equation*}
For the same reason explained above, we have that the events $\{\mathrm{B}_{M}^{n, +}(\rho_{k-1}, \tilde \rho_{k})\}_{k \in \N}$ are conditionally independent and identically distributed given $\theta_{+} < \theta_{-}$. Also, using again assumption~\ref{it_invariance} and a standard gambler's ruin estimate for Brownian motion, one can prove that  
\begin{equation}
\label{eq_bound_exp_k_1}
	\P_{\mu_a}\l(\mathrm{B}_{M}^{n, +}(\rho_{0}, \tilde \rho_{1}) \mid \theta_{+} < \theta_{-} \r) \lesssim (M+1)^{-1}, 
\end{equation}
where the implicit constant is independent of everything else. In particular, thanks to \eqref{eq_eghat_narrow}, for any $k \in \N$, it holds that 
\begin{align*}
	\P_{\mu_a}\l(K = k \mid \theta_{+} < \theta_{-} \r) & \leq \P_{\mu_a}\left(\bigcap_{i = 1}^{k-1} \mathrm{B}_{M}^{n, +}(\rho_{i-1}, \tilde \rho_{i}) \mid \theta_{+} < \theta_{-}\right) \\
	& = \prod_{i=1}^{k-1} \P_{\mu_a}\l(\mathrm{B}_{M}^{n, +}(\rho_{0}, \tilde \rho_{1}) \mid \theta_{+} < \theta_{-} \r) \\
	& \lesssim (M+1)^{-(k-1)}.
\end{align*}
Therefore, putting together \eqref{eq_inter_step_narrow}, \eqref{eq_bound_exp_k} and \eqref{eq_bound_exp_k_1}, we obtain that there is a universal constant $C > 0$ such that
\begin{equation*}
\P_{\mu_a}\l(\mathrm{A}_{M, \eps}^{n, +} \mid \theta_{+} < \theta_{-}\r)  \lesssim \exp \l(-C M / \eps\r) \sum_{k \in \N} k (M+1)^{-(k-1)} \lesssim \exp \l(-C M / \eps\r).
\end{equation*}
Finally, we observe that the result with the role of $\theta^{+}$ and $\theta^{-}$ interchanged can be obtained in the same way. Therefore, this concludes the proof.
\end{proof}

\noindent
We are now ready to prove that the difference between the value of the width coordinate in the set $\smash{\hat{\PPP}^{\dag, n}_{\bar{a}}}$ and in the set $\smash{\hat{\PPP}^{\dag, n}_{\underline{a}}}$ is of order $\eta_n$. We refer to Figure~\ref{fig_period_width} for a diagram illustrating the various objects involved in the proof of Lemma~\ref{lm_order_one_width_1}.

\begin{figure}[h]
\centering
\includegraphics[scale=1]{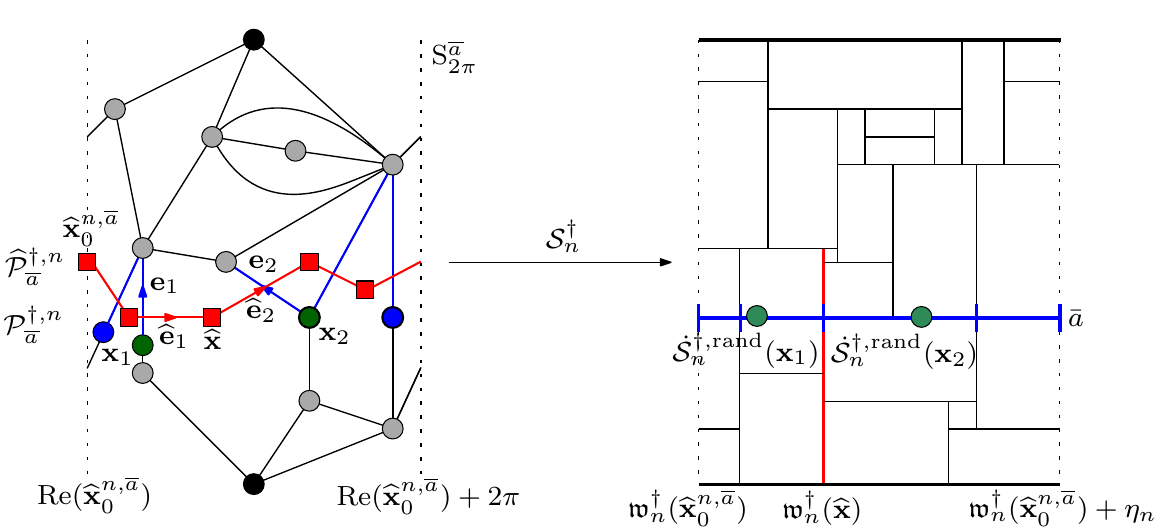}
\caption[short form]{\small A diagram illustrating Step~2 of the proof of Lemma~\ref{lm_order_one_width_1}. \textbf{Left:} A portion of the lifted graph $\smash{\GG^{\dag, n}}$ embedded in $\R^2$. The blue/green vertices are the vertices in $\smash{\PPP_{\bar a}^{\dagger, n}}$. The blue edges are the edges in the set $\smash{\EE\GG_{\bar{a}}^{\dag, n}}$. The red dual edges are the edges in the set $\smash{\EE\hat{\GG}^{\dag, n}_{\bar{a}}}$. The red dual vertices are the vertices in the set $\smash{\hat{\PPP}^{\dag, n}_{\bar{a}}}$. Consider the dual vertex $\hx$ in $\smash{\hat{\PPP}^{\dag, n}_{\bar{a}}}$, then the two red dual edges with an arrow are the edges $\he_1$, $\he_2$ associated to $\hx$. The blue edges with an arrow are the corresponding primal edges $\e_1$, $\e_2$. The two green points are the endpoints of $\e_1$, $\e_2$ in $\smash{(\frkh_n^{\dag})^{-1}(\bar{a})}$. \textbf{Right:} A portion of the tiling of $\mathbb{R} \times [0,1]$ constructed via the lifted tiling map $\smash{\SS_n^{\dag}}$. The blue horizontal line corresponds to $\smash{\cup_{\x \in \PPP^{\dag, n}_{\bar{a}}} \SS^{\dag}_n(\x)}$. The two green points are possible realizations of $\smash{\dotSS_n^{\dag,\rm{rand}}(\x_1)}$ and $\smash{\dotSS_n^{\dag,\rm{rand}}(\x_2)}$. The vertical red segment with horizontal coordinate $\smash{\frkw_n^{\dag}(\hx)}$ corresponds to $\smash{\SS_n^{\dag}(\hx)}$.}
\label{fig_period_width}
\end{figure}
 
\begin{lemma}
\label{lm_order_one_width_1}
For any $n > n(R, \delta)$ large enough, there is a finite constant $b'_n \in \R$ such that 
\begin{equation*}
\l|\frkw^{\dag}_n(\hat{\x}) + b_n'\r| \lesssim \eta_n, \quad \forall \hx \in \hat{\PPP}^{\dag, n}_{\bar{a}} \cup \hat{\PPP}^{\dag, n}_{\underline{a}},
\end{equation*}
where the implicit constant is independent of everything else.
\end{lemma}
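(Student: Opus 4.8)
The plan is to combine the three preparatory facts already proved in this subsection — Lemmas~\ref{lm_unif_integr}, \ref{lm_wind_apriori_walk}, \ref{lm_wind_embedded_walk} and the tube estimate Lemma~\ref{lm_tube_primal} — to control simultaneously the ``a priori horizontal displacement'' and the ``Smith-diagram horizontal displacement'' of the walk $\X^{n,\mu_a}$ run until it hits $(\frkh_n^\dagger)^{-1}(\bar a)$ (resp.\ $(\frkh_n^\dagger)^{-1}(\underline a)$). I will give the argument for $\bar a$; the case $\underline a$ is identical with $\theta_+$ and $\theta_-$ interchanged, and the constant $b'_n$ is then chosen to make the two agree. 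First I would fix a vertex $\x^\star$ in $\PPP^{\dag,n}_{a}$ and set $b'_n := -\E_{\mu_a}\big[\frkw^\dag_n(\widetilde\X^{n}_{\theta_+})\mid\theta_+<\theta_-\big]$, where $\widetilde\X^{n}_{\theta_+}$ denotes the unique vertex of $\PPP^{\dag,n}_{\bar a}$ lying over $\sigma_{2\pi}(\X^{n,\mu_a}_{\theta_+})$ (see Figure~\ref{fig_tube_primal}); the lemma will then follow once I show that $\frkw^\dag_n$ has fluctuations of order $\eta_n$ over the set $\hat\PPP^{\dag,n}_{\bar a}$ — equivalently, over the landing distribution of $\widetilde\X^{n}_{\theta_+}$.

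The core mechanism is that the lifted Smith-embedded walk $\dotX^{n,\mu_a}$ tracks $\frkw^\dag_n$ up to the width of the horizontal segments it passes through, and by Lemma~\ref{lm_wind_embedded_walk} its conditional expected real coordinate at time $\theta_+$ is $O(\eta_n)$. Concretely: at any vertex $\x$ of $\PPP^{\dag,n}_{\bar a}$, the real coordinate $\Re(\dotSS^{\dag,\mathrm{rand}}_n(\x))$ lies in an interval of length at most $\eta_n$ around $\frkw^\dag_n(\hx)$ for the dual vertices $\hx$ incident to $\x$ (this is exactly the computation in Lemma~\ref{lm_bound_width_dot}, applied at height $\bar a$). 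Hence $\big|\Re(\dotX^{n,\mu_a}_{\theta_+}) - \frkw^\dag_n(\widehat{\widetilde\X}^{\,n}_{\theta_+})\big|\le \eta_n$ almost surely, where $\widehat{\widetilde\X}^{\,n}_{\theta_+}$ is a dual vertex incident to $\widetilde\X^{n}_{\theta_+}$. Taking conditional expectations given $\theta_+<\theta_-$ and invoking Lemma~\ref{lm_wind_embedded_walk} gives $\big|\E_{\mu_a}[\frkw^\dag_n(\widehat{\widetilde\X}^{\,n}_{\theta_+})\mid\theta_+<\theta_-]\big|\lesssim\eta_n$. So the \emph{mean} of $\frkw^\dag_n + b'_n$ over the landing distribution is $O(\eta_n)$; what remains is to upgrade this to a \emph{uniform} bound over all of $\hat\PPP^{\dag,n}_{\bar a}$, which is where the tube estimate enters.

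The uniform upgrade goes as follows. Pick any $\hx\in\hat\PPP^{\dag,n}_{\bar a}$. Because $\EE\hat\GG^{\dag,n}_{\bar a}$ is the cutset enumerated as a left-to-right path $\he_1\cdots\he_n$ of dual edges (Definition~\ref{def_sets_path}) with $\sum_i \nabla\frkw^\dag_n(\he_i)=\eta_n$ and all summands nonnegative (Lemma~\ref{lm_bound_width_dot}), the value $\frkw^\dag_n(\hx) - \frkw^\dag_n(\hx^{n,\bar a}_0)$ is a partial sum of these increments, hence lies in $[0,\eta_n]$; moreover $\Re(\hx) - \Re(\hx^{n,\bar a}_0)$ is correspondingly bounded. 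Thus the variation of $\frkw^\dag_n$ within one fundamental domain of $\hat\PPP^{\dag,n}_{\bar a}$ is at most $\eta_n$, and the variation of $\Re(\cdot)$ within it is $O(1)$ once we know — via Lemma~\ref{lm_no_macroscopic_edges} and Remark~\ref{rem_stays_narrow_band} — that the edges of $\EE\hat\GG^{\dag,n}_{\bar a}$ stay in a thin band $\R\times[R'-\eps,R'+\eps]$. The remaining point is to rule out the possibility that $\frkw^\dag_n$ drifts by many multiples of $\eta_n$ before the walk's landing point is reached, i.e.\ that the walk $\X^{n,\mu_a}$, run until $\theta_+$, travels a large horizontal distance while confined to the band where $(\frkh_n^\dag)^{-1}(\bar a)$ lives: this is precisely the event $\mathrm{A}^{n,+}_{M,\eps}$ of Lemma~\ref{lm_tube_primal}, which has conditionally exponentially small probability in $M/\eps$. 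Combining the exponential tail from Lemma~\ref{lm_tube_primal} (to bound how far in ``Smith-width'' the landing point can be from a fixed reference) with the order-one conditional bound on $\Re(\X^{n}_{\theta_+})$ from Lemma~\ref{lm_wind_apriori_walk} and the order-$\eta_n$ bound on $\Re(\dotX^{n}_{\theta_+})$ from Lemma~\ref{lm_wind_embedded_walk}, every vertex of $\hat\PPP^{\dag,n}_{\bar a}$ is within $O(\eta_n)$ of the mean value $-b'_n$, and the lemma follows. The main obstacle is the bookkeeping in this last step: one must carefully translate ``horizontal distance traveled in the a priori embedding'' into ``number of $\eta_n$-increments of $\frkw^\dag_n$ crossed'' using the cutset structure and the tube confinement, so that the a priori-walk estimates of Lemmas~\ref{lm_unif_integr}--\ref{lm_wind_apriori_walk} and the tube estimate Lemma~\ref{lm_tube_primal} can be plugged in to dominate the width fluctuations; handling the fact that $\hat\PPP^{\dag,n}_{\bar a}$ need not sit inside a single strip $\sfS^{\bar a}_{2\pi}$ (Remark~\ref{rem_large_exc}) is the delicate part.
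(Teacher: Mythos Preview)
Your overall strategy matches the paper's --- use Lemma~\ref{lm_wind_embedded_walk} to pin the Smith horizontal coordinate of the landing point, Lemma~\ref{lm_wind_apriori_walk} to pin the a~priori horizontal displacement, Lemma~\ref{lm_bound_width_dot} to pass from one landing point to all of $\hat{\PPP}^{\dag,n}_{\bar a}$, and the tube estimate to handle excursions --- but your ``core mechanism'' paragraph contains a genuine gap. The inequality $\bigl|\Re(\dotX^{n,\mu_a}_{\theta_+}) - \frkw^\dag_n(\widehat{\widetilde\X}^{\,n}_{\theta_+})\bigr|\le \eta_n$ does \emph{not} hold almost surely: the Smith-embedded walk $\dotX^{n,\mu_a}_{\theta_+}$ sits on the segment $\SS^\dag_n(\X^{n,\mu_a}_{\theta_+})$, not on $\SS^\dag_n(\widetilde\X^{n}_{\theta_+})$, and these two segments are shifted from one another by exactly $\eta_n\cdot\bigl(\Re(\X^{n,\mu_a}_{\theta_+})-\Re(\widetilde\X^{n}_{\theta_+})\bigr)/(2\pi)$ by Lemma~\ref{lm_indep_position}. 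This shift is a random multiple of $\eta_n$ which can be large. The paper fixes this by explicitly invoking Lemma~\ref{lm_indep_position} (see~\eqref{eq_period_both_sides}), which after rearrangement and taking conditional expectations produces three terms: $\Re(\dotX^{n}_{\theta_+})$ (bounded by Lemma~\ref{lm_wind_embedded_walk}), $\eta_n\bigl|\Re(\X^n_{\theta_+})-\Re(\X^n_0)\bigr|$ (bounded by Lemma~\ref{lm_wind_apriori_walk}), and $\eta_n\bigl|\Re(\widetilde\X^n_{\theta_+})-\Re(\X^n_0)\bigr|$.

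This last term is where the tube estimate actually enters, and your description of its role is off. You write that the tube estimate rules out ``$\frkw^\dag_n$ drifting by many multiples of $\eta_n$'', but you have already (correctly) noted via Lemma~\ref{lm_bound_width_dot} that $\frkw^\dag_n$ varies by at most $\eta_n$ over all of $\hat\PPP^{\dag,n}_{\bar a}$, so there is no such drift to rule out. What Lemma~\ref{lm_tube_primal} is really used for is to bound $\E_{\mu_a}\bigl[\,|\Re(\widetilde\X^n_{\theta_+})|\,\bigm|\theta_+<\theta_-\bigr]$: the fundamental-domain representative $\widetilde\X^n_{\theta_+}\in\PPP^{\dag,n}_{\bar a}$ could have large real part precisely when the level set $(\frkh^\dag_n)^{-1}(\bar a)$ (hence the walk, once inside the narrow band of Remark~\ref{rem_stays_narrow_band}) makes a long horizontal excursion, and this is the event $\mathrm A^{n,+}_{M,\eps}$. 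Finally, your choice of $b'_n$ depends only on $\theta_+$, so you still owe an argument that the \emph{same} constant works for $\hat\PPP^{\dag,n}_{\underline a}$; the paper sidesteps this by taking $b'_n:=\E_{\mu_a}\bigl[\Re(\X^n_0)-\Re(\widetilde\X^n_\theta)\bigr]$ with $\theta=\theta_+\wedge\theta_-$, which is symmetric in the two cases.
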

\begin{proof}
We start by letting $\theta := \theta_{+} \wedge \theta_{-}$ and defining 
\begin{equation*}
	b_n' := \E_{\mu_{a}}\l[\Re(\X^n_{0}) - \Re(\tilde \X^n_{\theta})\r].
\end{equation*}
We observe that $b_n'$ is not of constant order in general. Indeed, this is due to the fact that $\smash{\Re(\X^{n, \mu_a}_{0})}$ can be far from being of order one since the starting point of the random walk $\X^{n, \mu_a}$ is sampled from the set $\smash{\PPP^{\dag,n}_{a}}$ over which we do not have any a priori control. We will only prove the result in the case $\smash{\hx \in \hat{\PPP}^{\dag, n}_{\bar{a}}}$ since the result for $\hx \in \smash{\PPP^{\dag,n}_{\underline{a}}}$ can be obtained similarly. In particular, we split the proof in two steps. 

\medskip
\noindent\textbf{Step~1:} In this first step, we claim that for any $n > n(R, \delta)$ large enough, it holds almost surely that 
\begin{equation*}
\l|\Re(\dotSS^{\dag, \rm{rand}}_n(\x)) + b_n'\r| \lesssim \eta_n , \quad \forall \x \in \PPP^{\dag, n}_{\bar{a}}, 
\end{equation*}
where the implicit constant does not depend on anything else. To this end, we consider the vertex $\smash{\tilde{\X}^{n, \mu_a}_{\theta_{+}} \in \PPP^{\dag, n}_{\bar{a}}}$ such that $\smash{\sigma_{2\pi}(\tilde{\X}^{n, \mu_a}_{\theta_{+}}) = \sigma_{2\pi}(\X^{n, \mu_a}_{\theta_{+}})}$. Now, applying Lemma~\ref{lm_indep_position}, we see that
\begin{equation}
\label{eq_period_both_sides}
\left|\frac{\Re(\dotSS^{\dag, \rm{rand}}_n(\tilde{\X}^{n, \mu_a}_{\theta_{+}})) - \Re(\dotX^{n, \mu_a}_{\theta_{+}})}{\eta_n} - \frac{\Re(\tilde{\X}^{n, \mu_a}_{\theta_{+}}) - \Re({\X}^{n, \mu_a}_{\theta_{+}})}{2\pi}\right| \leq 1.
\end{equation}
Therefore, rearranging the various terms in the previous inequality and then taking the conditional expectation given $\theta_{+} < \theta_{-}$, we get that
\begin{equation}
\label{eq_triang_bunch}
\begin{alignedat}{1} 
\l| \E_{\mu_{a}}\l[\Re(\dotSS^{\dag, \rm{rand}}_n(\tilde{\X}^{n}_{\theta_{+}})) \mid \theta_{+} < \theta_{-} \r] + b_n'\r| & \lesssim\l|\E_{\mu_a}\l[\Re(\dotX^{n}_{\theta_{+}}) \mid \theta_{+} < \theta_{-}\r]\r| \\
& + \eta_n \l|\E_{\mu_{a}}\l[\Re(\X^n_{\theta_{+}}) - \Re(\X^n_{0})  \mid \theta_{+} < \theta_{-}\r]\r| \\
& + \eta_n \l|b_n' + \E_{\mu_{a}}\l[\Re(\tilde \X^n_{\theta_{+}}) - \Re(\X_0^n) \mid \theta_{+} < \theta_{-}\r]\r|.
\end{alignedat}
\end{equation}
Thanks to Lemmas~\ref{lm_wind_embedded_walk}~and~\ref{lm_wind_apriori_walk}, we have that the following inequalities hold for any $n > n(R, \delta)$ large enough
\begin{equation}
\label{eq_ineqs_wind}
\l|\E_{\mu_a}\l[\Re(\dotX^{n}_{\theta_{+}}) \mid \theta_{+} < \theta_{-}\r]\r| \leq \eta_n, \qquad \l|\E_{\mu_{a}}\l[\Re(\X^n_{\theta_{+}}) - \Re(\X^n_{0}) \mid \theta_{+} < \theta_{-}\r]\r| \lesssim 1.
\end{equation}
Therefore, the claim follows if we prove that the term in the third line of \eqref{eq_triang_bunch} is of order one. More precisely, thanks to the definition of $b_n'$, it is sufficient to prove that, for any $n>n(R, \delta)$ large enough, it holds that
\begin{equation*}
	\l|\E_{\mu_{a}}\l[\Re(\tilde \X^n_{\theta})\r] - \E_{\mu_{a}}\l[\Re(\tilde \X^n_{\theta_{+}}) \mid \theta_{+} < \theta_{-}\r]\r| \lesssim 1.
\end{equation*}
Since, for any $n>n(R, \delta)$ large enough, $\smash{\P_{\mu_a}(\theta_{+} < \theta_{-})}$ is of constant order, this is equivalent to prove that
\begin{equation*}
\l|\E_{\mu_{a}}\l[\Re(\tilde \X^n_{\theta_{+}}) \mid \theta_{+} < \theta_{-}\r]\r| + \l|\E_{\mu_{a}}\l[\Re(\tilde \X^n_{\theta_{-}}) \mid \theta_{-} < \theta_{+}\r]\r| \lesssim 1,
\end{equation*}
where the implicit constant must be independent of everything else. We note that this inequality follows easily from Lemma~\ref{lm_tube_primal} (see Figure~\ref{fig_tube_primal}). Hence, putting everything together and going back to \eqref{eq_triang_bunch}, we obtain that 
\begin{align*}
\l| \E_{\mu_{a}}\l[\Re(\dotSS^{\dag, \rm{rand}}_n(\tilde{\X}^{n}_{\theta_{+}})) \mid \theta_{+} < \theta_{-} \r] + b_n'\r| \lesssim \eta_n ,
\end{align*}
where the implicit constant is independent of everything else. Therefore, the desired claim follows from the previous inequality and from the fact that, thanks to Lemma~\ref{lm_bound_width_dot}, it holds almost surely that 
\begin{equation*}
\l|\Re(\dotSS^{\dag, \rm{rand}}_n(\x_1)) - \Re(\dotSS^{\dag, \rm{rand}}_n(\x_2))\r| \leq 2 \eta_n, \quad  \forall \x_1, \smash{\x_2 \in \PPP^{\dag, n}_{\bar{a}}}
\end{equation*}

\medskip
\noindent\textbf{Step~2:}
In this step, we will actually prove the result in the lemma statement. To this end, we fix $\smash{\hx \in \hat{\PPP}_{\bar{a}}^{\dagger, n}}$ and we consider the dual edges $\smash{\he_1}$, $\smash{\he_2 \in \EE\hat{\GG}^{\dag, n}_{{\bar{a}}}}$ such that $\smash{\he_1^{+} = \hx = \he_2^{-}}$. Furthermore, we let $\smash{\e_1}$, $\smash{\e_2 \in \EE\GG^{\dag, n}_{{\bar{a}}}}$ be the corresponding primal edges, and we let $\smash{\x_1}$, $\smash{\x_2 \in \VV\GG^{\dag, n}}$ be the endpoints of $\e_1$ and $\e_2$ in the set $\smash{(\frkh_n^{\dag})^{-1}({\bar{a}})}$ (see Figure~\ref{fig_period_width}). At this point, we need to split the proof in two different cases:
\begin{itemize}
	\item If $\x_1 \neq \x_2$, then, by construction of the Smith diagram, it holds almost surely that 
\begin{equation*}
\frkw_n^{\dag}(\hat{\x}) \in \l[\Re(\dotSS_n^{\dag, \rm{rand}}(\x_1)), \Re(\dotSS_n^{\dag, \rm{rand}}(\x_2))\r].
\end{equation*}
\item If $\x_1 = \x_2$, then it holds that
\begin{equation*}
\frkw_n^{\dag}(\hat{\x}) \in \l[\min\l\{\Re(\v) \, : \v \in \SS_n^{\dag}(\x_1)\}, \max\{\Re(\v) \, : \v \in \SS_n^{\dag}(\x_1)\r\}\r],
\end{equation*}
where we recall that $\SS_n^{\dag}(\x_1)$ denotes the horizontal segment associated to $\x_1$ by the lifted tiling map. 
\end{itemize}
 In both cases, if $\smash{\x_1}$, $\smash{\x_2 \in \PPP^{\dag, n}_{\bar{a}}}$, then the conclusion follows from the previous step. However, in general, it could be that $\smash{\x_1 \in \PPP^{\dag, n}_{\bar{a}} - (2\pi, 0)}$ or $\smash{\x_2 \in \PPP^{\dag, n}_{\bar{a}} + (2\pi, 0)}$. In both these cases, we cannot directly appeal to the previous step to conclude. Nevertheless, a simple application of Lemma~\ref{lm_indep_position} implies that the same result of the first step holds also for the vertices in $\smash{\PPP^{\dag, n}_{\bar{a}} - (2\pi, 0)}$ and in $\smash{\PPP^{\dag, n}_{\bar{a}} + (2\pi, 0)}$. Therefore, this concludes the proof. 
\end{proof}

\noindent
Before proceeding with the proof of Lemma~\ref{lm_order_one_width}, we need to state a lemma which is the dual counterpart of Lemma~\ref{lm_tube_primal}.
\begin{lemma}
\label{lm_tube_dual}
Fix $\eps \in (0, 1)$, $\hx \in \VV\hat{\GG}^{\dag, n}(R'-1)$, $M \in \N$, and define the following event
\begin{equation*}
\hat{\mathrm{A}}_{M, \eps}^{n, \hx} := \l\{ \exists \, s, t \in [0, \hat{\tau}_{\hx}] \, : \, \l|\Re(\hat{\X}^{n, \hx}_t) - \Re(\hat{\X}^{n, \hx}_s)\r| > M ; \; \l|\Im(\hat{\X}^{n, \hx}_u)\r| \in [R' - \eps, R' + \eps], \; \forall u \in [s, t]\r\}.
\end{equation*} 
There exists a universal constant $C > 0$ such that, for any $n > n(R, \delta, \eps)$ large enough, it holds that
\begin{equation*}
\P_{\hx}\l(\hat{\mathrm{A}}_{M, \eps}^{n, \hx} \r) \lesssim \exp(-C M/\eps),
\quad \forall M \in \N,
\end{equation*}
where the implicit constant is independent of everything else.
\end{lemma}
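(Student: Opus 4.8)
The plan is to follow the proof of Lemma~\ref{lm_tube_primal} essentially verbatim, replacing the lifted primal random walk $\X^{n,\mu_a}$ by the lifted dual random walk $\hat{\X}^{n,\hx}$, replacing the invariance principle \ref{it_invariance} by its dual counterpart \ref{it_invariance_dual}, and replacing the stopping time $\theta_{+}\wedge\theta_{-}$ by $\hat{\tau}_{\hx}$; the argument is in fact slightly simpler, since there is now no conditioning event whose probability must be bounded from below. As in the setup of Subsection~\ref{sub_width}, by possibly modifying the a priori embedding of $\hat{\GG}^n$ we may assume that each dual edge crosses the circles at heights $R'-\eps$ and $-R'+\eps$ only finitely often, and then, using the dual analogues of Lemmas~\ref{lm_harm_coincides} and~\ref{lm_random_walk_assoc} recorded in Remark~\ref{rem_width_coinc}, we may assume without loss of generality that every point of $\hat{\G}^n$ at height $\pm(R'-\eps)$ is a dual vertex. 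By Remark~\ref{rem_stays_narrow_band}, for $n>n(R,\delta,\eps)$ large enough we have $\EE\hat{\GG}^{\dag,n}_{\bar{a}}\subset\R\times[R'-\eps,R'+\eps]$ and $\EE\hat{\GG}^{\dag,n}_{\underline{a}}\subset\R\times[-R'-\eps,-R'+\eps]$, hence $\VV\hat{\GG}^{\dag,n}_{\bar{a}}$ and $\VV\hat{\GG}^{\dag,n}_{\underline{a}}$ lie in these two narrow bands; since $|\Im(\hx)|\le R'-1<R'-\eps$, the walk starts strictly between the two bands, and $\hat{\tau}_{\hx}<\infty$ almost surely.

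First I would set up the excursion decomposition. Let $\rho_0:=\inf\{t\ge0:|\Im(\hat{\X}^{n,\hx}_t)|=R'-\eps\}$ and, for $k\in\N_0$, define inductively
\[
\tilde{\rho}_{k+1}:=\inf\{t\ge\rho_k:|\Re(\hat{\X}^{n,\hx}_t)-\Re(\hat{\X}^{n,\hx}_{\rho_k})|>M\},\qquad \rho_{k+1}:=\inf\{t\ge\tilde{\rho}_{k+1}:|\Im(\hat{\X}^{n,\hx}_t)|=R'-\eps\}.
\]
For $k\in\N$ consider the events
\[
\hat{\mathrm{A}}_{M,\eps}^{n,\hx}(\rho_{k-1},\tilde{\rho}_k):=\{|\Im(\hat{\X}^{n,\hx}_u)|\in[R'-\eps,R'+\eps]\text{ for all }u\in[\rho_{k-1},\tilde{\rho}_k]\},
\]
\[
\hat{\mathrm{B}}_{M}^{n,\hx}(\rho_{k-1},\tilde{\rho}_k):=\{\Im(\hat{\X}^{n,\hx}_u)\in[-R'-1,R'+1]\text{ for all }u\in[\rho_{k-1},\tilde{\rho}_k]\},
\]
let $F_k:=\{\rho_k>\hat{\tau}_{\hx}\}$, and let $K$ be the smallest $k\in\N$ for which $F_k$ occurs (a.s.\ finite). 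Since $\hat{\X}^{n,\hx}$ is continuous and the two bands are disjoint, on the event $\hat{\mathrm{A}}_{M,\eps}^{n,\hx}$ some $\hat{\mathrm{A}}_{M,\eps}^{n,\hx}(\rho_{i-1},\tilde{\rho}_i)$ with $i\le K$ occurs, so the strong Markov property gives
\[
\P_{\hx}(\hat{\mathrm{A}}_{M,\eps}^{n,\hx})\le\sum_{k\in\N}\sum_{i=1}^{k}\P_{\hx}(\hat{\mathrm{A}}_{M,\eps}^{n,\hx}(\rho_{i-1},\tilde{\rho}_i))\,\P_{\hx}(K=k).
\]

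Finally I would estimate the two families of probabilities, exactly as in Lemma~\ref{lm_tube_primal}. By the strong Markov property the walk restarts each excursion at a dual vertex at height $\pm(R'-\eps)$; combining assumption~\ref{it_invariance_dual} — used in its uniform-over-compacts form, so that the bound is independent of the uncontrolled horizontal position of the restart point — with the standard estimate that a planar Brownian motion confined to a horizontal strip of height $2\eps$ has probability $\lesssim\exp(-CM/\eps)$ of moving horizontal distance $M$ before exiting the strip (argued as in Lemma~\ref{lm_unif_integr}), one obtains $\P_{\hx}(\hat{\mathrm{A}}_{M,\eps}^{n,\hx}(\rho_{k-1},\tilde{\rho}_k))\lesssim\exp(-CM/\eps)$ for $n$ large. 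Similarly, assumption~\ref{it_invariance_dual} and a gambler's ruin estimate for a strip of height $\asymp R'$ give $\P_{\hx}(\hat{\mathrm{B}}_{M}^{n,\hx}(\rho_{k-1},\tilde{\rho}_k))\lesssim(M+1)^{-1}$, and since $\{K\ge k\}$ forces the first $k-1$ excursions to stay inside $\R\times[-R'-1,R'+1]$, peeling them off one at a time with the strong Markov property yields $\P_{\hx}(K=k)\lesssim(M+1)^{-(k-1)}$. Plugging both bounds into the last display and summing, $\P_{\hx}(\hat{\mathrm{A}}_{M,\eps}^{n,\hx})\lesssim\exp(-CM/\eps)\sum_{k\in\N}k\,(M+1)^{-(k-1)}\lesssim\exp(-CM/\eps)$, using $(M+1)^{-1}\le1/2$ for $M\ge1$. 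The one point that genuinely requires care — the same one as in the primal Lemma~\ref{lm_tube_primal} — is that every per-excursion estimate must hold uniformly in the walk's horizontal coordinate, which is precisely what the uniformity over compact sets in assumption~\ref{it_invariance_dual} supplies; beyond that, no new difficulty arises.
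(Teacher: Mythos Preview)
Your proposal is correct and follows exactly the approach the paper intends: the paper's own proof of this lemma consists of a single sentence stating that one argues as in Lemma~\ref{lm_tube_primal}, and you have faithfully transcribed that argument to the dual walk, with the appropriate simplification that no conditioning on $\theta_+<\theta_-$ is needed. Your choice to track $\lvert\Im\rvert=R'-\eps$ rather than only $\Im=R'-\eps$ is the natural adaptation, since without conditioning the bad stretch can lie in either band.
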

\begin{proof}
	The proof of this lemma can be done by employing a similar argument to that used in Lemma~\ref{lm_tube_primal}.
\end{proof}

\noindent
We are now ready to prove Lemma~\ref{lm_order_one_width}, which is now an immediate consequence of the results proved above.
\begin{proof}[Proof of Lemma~\ref{lm_order_one_width}]
The proof basically consists of putting together some of the previous results. More precisely, fix $\smash{\hx \in \VV\hat{\GG}^{\dag, n}(R'-1)}$ and let $\smash{\tilde{\X}^{n, \hx}_{\hat{\tau}_{\hat{x}}} \in \hat{\PPP}_{\bar{a}}^{\dagger, n}\cup \hat{\PPP}_{\underline{a}}^{\dagger, n}}$ be such that $\smash{\sigma_{2\pi}(\tilde{\X}^{n, \hx}_{\hat{\tau}_{\hat{x}}}) = \sigma_{2\pi}(\hat{\X}^{n, \hx}_{\hat{\tau}_{\hat{x}}})}$. Then, by Lemma~\ref{lm_harm_conj}, we have that
\begin{equation*}
\E_{\hx}\left[\frac{\Re(\hat{\X}^{n}_{\hat{\tau}_{\hat{x}}})}{2 \pi} - \frac{\frkw_n^{\dag}(\hat{\X}^{n}_{\hat{\tau}_{\hat{x}}}) + b'_n}{\eta_n} \right] = \E_{\hx}\left[\frac{\Re(\tilde{\X}^{n}_{\hat{\tau}_{\hat{x}}})}{2 \pi} - \frac{\frkw_n^{\dag}(\tilde{\X}^{n}_{\hat{\tau}_{\hat{x}}}) + b'_n}{\eta_n} \right].
\end{equation*}
Therefore, the result follows thanks to Lemma~\ref{lm_order_one_width_1} if we show that also $\smash{\l|\E_{\hx}[\Re(\tilde{\X}^{n}_{\hat{\tau}_{\hat{x}}})]\r| \lesssim 1}$. We note that this fact is easily implied by Lemma~\ref{lm_tube_dual}, and so the proof is completed.
\end{proof}

\subsection{Proof of Theorem~\ref{th_main_1}}
\label{sub_proof_main}
We are now ready to give a proof of the main theorem of this article. As we have already remarked, the proof of this theorem is a consequence of Propositions~\ref{pr_main_height}~and~\ref{pr_main_witdh}. 

\begin{proof}[Proof of Theorem~\ref{th_main_1}]
Fix $R > 1$, $\delta \in (0, 1)$, and consider a point $\x \in \VV\GG^{\dag, n}(R)$. We divide the proof into three main steps. 

\medskip
\noindent\textbf{Step~1.} 
By definition of the Smith embedding $\dotSS^{\dag}_n$, we have that $\Im(\dotSS^{\dag}_n(\x)) = \frkh_n^{\dag}(\x)$. Hence, from Proposition~\ref{pr_main_height}, we know that there exist two real sequences $\{b^{\frkh}_n\}_{n \in \N}$ and $\{c^{\frkh}_n\}_{n \in \N}$, independent of $R$ and $\delta$, such that, for $n> n(R, \delta)$ large enough, it holds that
\begin{equation*}
\l|c^{\frkh}_n \Im(\dotSS^{\dag}_n(\x)) + b^{\frkh}_n -\Im(\x)\r| \leq \frac{\delta}{\sqrt{2}}, \quad \forall \x \in \VV\GG^{\dag, n}(\R \times [-R, R]).
\end{equation*}

\medskip
\noindent\textbf{Step~2.} 
Let $\EE\GG^{\dag, n, \downarrow}(\x) = [\e_{k}]$ be the set of harmonically oriented edges in $\EE\GG^{\dag, n}$ with heads equal to $\x$ ordered in such a way that $\he_{1} \cdots \he_{k}$ forms a counter-clockwise oriented path in the lifted dual graph $\hat{\GG}^{\dag, n}$. Then, by construction of the Smith embedding $\dotSS^{\dag}_n$, we have that
\begin{equation}
\label{eq_point_inside_aux}
\Re(\dotSS^{\dag}_n(\x)) \in \l[\frkw^{\dag}(\hat{\e}^{-}_{1}) , \frkw^{\dag}(\hat{\e}^{+}_{k})\r].
\end{equation}
Therefore, letting $\{b^{\frkw}_n\}_{n \in \N}$ be the sequence in the statement of Proposition~\ref{pr_main_witdh}, we have that
\begin{align*}
& \left|\frac{2\pi}{\eta_n} \Re(\dotSS^{\dag}_n(\x)) + b^{\frkw}_n -\Re(\x)\right| \\
& \qquad \leq \left|\frac{2\pi}{\eta_n} \frkw_n(\he_1^{-}) + b^{\frkw}_n - \Re(\he_1^{-})\right| + \l|\Re(\he^{-}_1)  -\Re(\x)\r| + \frac{2\pi}{\eta_n}\l|\Re(\dotSS_n^{\dag}(\x)) - \frkw^{\dag}_n(\he_1)\r|.
\end{align*}
The first term on the right-hand side of the above expression is bounded by $\delta/(5\sqrt{2})$ thanks to Proposition~\ref{pr_main_witdh}. The second term is also bounded by $\delta/(5\sqrt{2})$ since Lemma~\ref{lm_no_macroscopic_edges} rules out the existence of macroscopic faces. Concerning the third term, recalling \eqref{eq_point_inside_aux}, we have that
\begin{align*}
& \frac{2\pi}{\eta_n} \l|\Re(\dotSS_n^{\dag}(\x)) - \frkw^{\dag}_n(\he_1^{-})\r| \\
& \qquad \leq \left|\frac{2\pi}{\eta_n} \frkw^{\dag}_n(\he_1^{-}) + b^{\frkw}_n -\Re(\he_1^{-})\right| + \left|\frac{2\pi}{\eta_n}\frkw^{\dag}_n(\he_k^{+}) + b^{\frkw}_n -\Re(\he_k^{+})\right| + \l|\Re(\he_k^{+}) - \Re(\he_1^{-})\r|.
\end{align*}
The first and second term on the right-hand side of the above expression are bounded by $\delta/(5\sqrt{2})$, thanks to Proposition~\ref{pr_main_witdh}. The third term is also bounded by $\delta/(5\sqrt{2})$, for $n> n(R, \delta)$ thanks, once again, to Lemma~\ref{lm_no_macroscopic_edges}. We remark that all the previous bounds obviously hold only for $n>n(R, \delta)$ large enough. Therefore, putting it all together, we obtain that, for all $n> n(R, \delta)$ large enough, it holds that
\begin{equation*}
\left|\frac{2\pi}{\eta_n}\Re(\dotSS^{\dag}_n(\x)) + b^{\frkw}_n -\Re(\x)\right| \leq \frac{\delta}{\sqrt{2}}, \quad \forall \x \in \VV\GG^{\dag, n}(\R \times [-R, R]). 
\end{equation*}

\medskip
\noindent\textbf{Step~3.} 
For each $n \in \N$, we define the affine transformation $T^{\dag}_n : \R \times [0, 1] \to \R^2$ by letting 
\begin{equation*}
\Re(T^{\dag}_n \x) := \frac{2\pi}{\eta_n} \Re(\x) + b_n^{\frkw} \quad \text{ and } \quad \Im(T^{\dag}_n \x) := c_n^{\frkh} \Im(\x) + b^{\frkh}_n, \qquad \forall \x \in \R \times [0, 1],
\end{equation*}
Therefore, the previous two steps yield that, for any $n > n(R, \delta)$ large enough, it holds that
\begin{equation*}
\d_{\R^2}\l(T^{\dag}_n \dotSS^{\dag}_n(\x), \x\r) \leq \delta, \quad \forall \x \in \VV\GG^{\dag, n}(\R \times [-R, R]),
\end{equation*}
where $\d_{\R^2}$ denotes the Euclidean distance in $\R^2$. This is obviously equivalent to the desired result, and so the proof is completed.
\end{proof}

\section{Application to mated-CRT maps}
\label{sec_mated}
The main goal of this section is to prove Theorem~\ref{th_main_2}. Roughly speaking, the plan is as follows. We will first introduce an a priori embedding of mated-CRT maps which is ``close'' to LQG. We then prove that this a priori embedding satisfies the assumptions of Theorem~\ref{th_main_1}. Finally, we show how this allows to conclude.

\subsection{SLE/LQG description of mated-CRT maps}
\label{sub_SLE_LQG}
We now discuss an equivalent description of mated-CRT maps in terms of SLE/LQG, which comes from the results of \cite{DMS21}. These results imply that mated-CRT maps can be realized as cell configurations constructed from space-filling $\SLE_{\kappa}$ curves parameterized by quantum mass with respect to a certain independent LQG surface. We will not need many properties of the SLE/LQG objects involved, so we will not give detailed definitions, but we will give precise references instead.

\paragraph{Liouville quantum gravity surfaces.}
For $\gamma \in (0, 2)$ and $D \subseteq \C$, a doubly marked $\gamma$-LQG surface is an
equivalence class of quadruples $(D, h,z_1, z_2)$ where $h$ is a random generalized function
on $D$ (which we will always take to be an instance of some variant of the Gaussian free field),
and $z_1$, $z_2 \in D$. Two such quadruples $\smash{(D, h, z_1, z_2)}$ and $\smash{(\tilde{D}, \tilde{h}, \tilde{z_1}, \tilde{z_2})}$ are declared to be equivalent if there is a conformal map $f : \tilde D \to D$ such that
\begin{equation}
\label{eq_equiv_LQG}
	\tilde{h} = h \circ f + Q \log |f'| \quad \text{ and } \quad f(\tilde{z}_1) = z_1, \; f(\tilde{z}_2) = z_2, \quad \text{ where } \quad Q = \frac{2}{\gamma} + \frac{\gamma}{2}.
\end{equation}
For $\gamma \in (0, 2)$, it is well-known that one can construct a random measure, called the $\gamma$-LQG area measure, which is formally given by $\mu_h := e^{\gamma h}\d^2z$, where $d^2z$ denotes the Lebesgue measure on $D$. Since $h$ is a random generalized function, this definition does not make rigorous sense and one should proceed using a standard regularization and limiting procedure \cite{DS11}. The $\gamma$-LQG area measure satisfies a certain change of coordinates formula. More precisely, given two equivalent doubly marked $\gamma$-LQG surface $(D, h, z_1, z_2)$ and $(\tilde{D}, \tilde{h}, \tilde{z_1}, \tilde{z_2})$, then it holds almost surely that $\mu_h(f(A)) = \mu_{\tilde{h}}(A)$ for all Borel sets $A \subseteq \tilde{D}$, where $f : \tilde D \to D$ is a conformal map such that \eqref{eq_equiv_LQG} holds.

\medskip
\noindent
In this article, we are interested in two different kind of doubly marked $\gamma$-LQG surfaces.
\begin{itemize}
 \item The \emph{doubly marked quantum sphere} $(\C, h, 0, \infty)$, where $h$ is a variant of the Gaussian free field precisely defined in \cite[Definition~4.21]{DMS21}. For $\gamma \in (0, 2)$, it is well-known that one can associate with the random generalized function $h$ a random measure $\mu_h$ on $\C$, the $\gamma$-LQG measure, with $\mu_h(\C) < \infty$ (again, we will not need the precise definition here). Typically, one considers a unit-area quantum sphere, which means that we fix $\mu_h(\C) = 1$. 
 \item The \emph{$0$-quantum cone} $(\C, h^{\rm{c}}, 0, \infty)$, where $h^{\rm{c}}$ is a variant of the Gaussian free field precisely defined in \cite[Definition 4.10]{DMS21}. Also in this case, for $\gamma \in (0, 2)$, we can associate to $h^{\rm{c}}$ the $\gamma$-LQG measure $\mu_{h^{\rm c}}$ which has infinite total mass, but it is locally finite.
\end{itemize}

\paragraph{Schramm--Loewner evolution.}
We do not need to precisely define SLE$_{\kappa}$, but rather it is sufficient to know that whole-plane space-filling SLE$_\kappa$, for $\kappa > 4$, is a random space-filling curve $\theta$ which travels from $\infty$ to $\infty$ in $\C$. It is a variant of SLE$_{\kappa}$ \cite{Sh00} which was introduced in \cite{IM_four, DMS21}. Space-filling SLE$_{\kappa}$ for $\kappa \geq 8$ is a two-sided version of ordinary SLE$_{\kappa}$ (which is already space-filling), whereas space-filling SLE$_{\kappa}$ for $\kappa \in (4, 8)$ can be obtained from ordinary SLE$_{\kappa}$ by iteratively filling in the ``bubbles'' which the path disconnects from $\infty$.

\paragraph{Construction of the a priori embedding.}
An important feature of the mated-CRT map is that it comes with an a priori embedding into $\C$ described by SLE-decorated LQG. To explain this embedding, consider a doubly marked quantum sphere $(\C, h, 0, \infty)$ and, for $\gamma \in (0,2)$, consider the associated $\gamma$-LQG measure $\mu_h$.  Sample a space-filling SLE$_{\kappa}$ curve $\theta$ with $\kappa = 16/\gamma^2$, independently from the random generalized function $h$, and reparametrize $\theta$ so that
\begin{equation*}
\theta(0) = 0 \quad \text{ and } \quad \mu_h\l(\theta([a, b]\r) = b-a, \quad \forall a, b \in \R \text{ with } a < b.
\end{equation*}
For $\gamma \in (0, 2)$ and $n \in \N$, we define the $n$-\emph{structure graph} $\GG^n$ associated with the pair $(h, \theta)$ as follows. The vertex set of $\GG^n$ is given by 
\begin{equation*}
\VV\GG^n := \frac{1}{n} \Z \cap (0, 1].	
\end{equation*}
Two distinct vertices $x_1$, $x_2 \in \VV\GG^n$ are connected by one edge (resp.\ two edges) if and only if the intersection of the corresponding cells $\theta([x_1 - 1/n, x_1])$ and $\theta([x_2 - 1/n, x_2])$ has one connected component which is not a singleton (resp.\ two connected components which are not singletons). We refer to Figure~\ref{fig_cells} for a diagrammatic construction of the SLE/LQG embedding of the mated-CRT map.

\begin{figure}[H]
\centering
\includegraphics[scale=0.54]{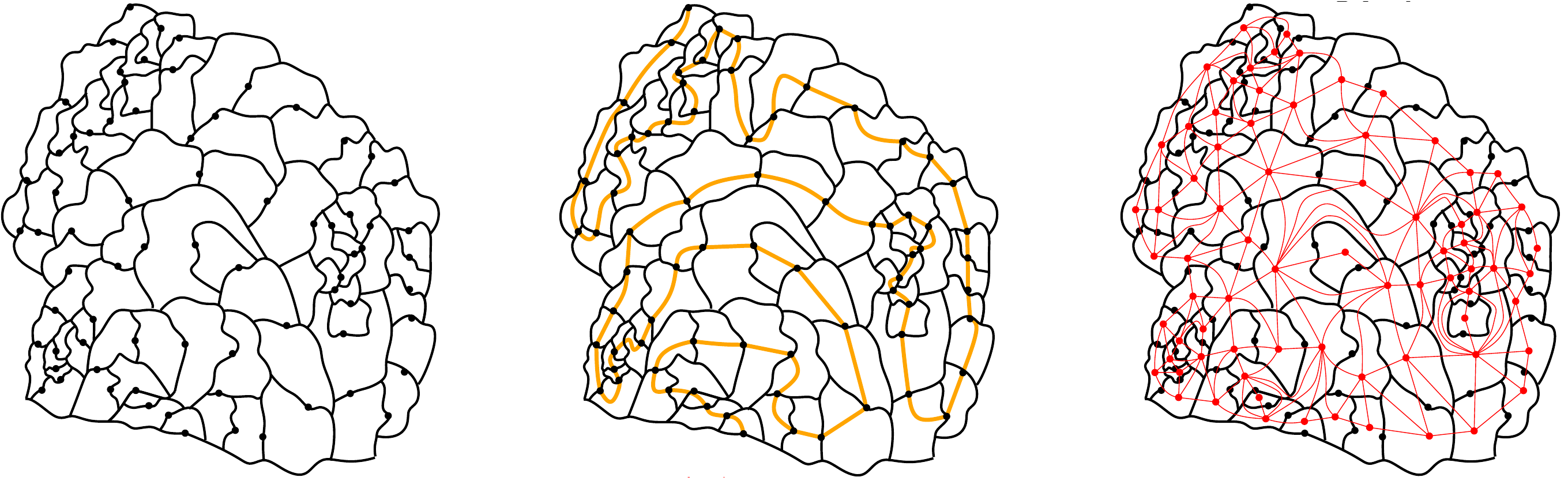}
\caption[short form]{\small \textbf{Left:} A space-filling SLE$_{\kappa}$ curve $\theta$, for $\kappa \geq 8$, divided into cells $\theta([x - 1/n, x])$ for a collection of $x \in \VV\GG^n$. Each cell has $\mu_h$-measure equal to $1/n$. \textbf{Middle:} The same as in the left figure but with a orange path showing the order in which cells are traversed by $\theta$. \textbf{Right:} In each cell we drawn a red point corresponding to a vertex in $\VV\GG^n$. Two vertices are connected by a red edge if the corresponding cells intersect. This illustrates how the SLE/LQG embedding of the $n$-mated CRT map with the sphere topology is built. A similar figure has appeared in \cite{GMS_Harmonic}.}
\label{fig_cells}
\end{figure}

\noindent
When $\kappa \geq 8$, the intersection of cells $\theta([x_1 - 1/n, x_1]) \cap \theta([x_2 - 1/n, x_2])$ is always either empty or the union of one or two non-singleton connected components. However, when $\kappa \in (4,8)$ it is possible that $\theta([x_1 - 1/n, x_1]) \cap \theta([x_2 - 1/n, x_2])$ is a totally disconnected Cantor-like set, in which case $x_1$ and $x_2$ are not joint by an edge in $\GG^n$ (see Figure~\ref{fig_cells_irr}).

\begin{figure}[h]
\centering
\includegraphics[scale=0.8]{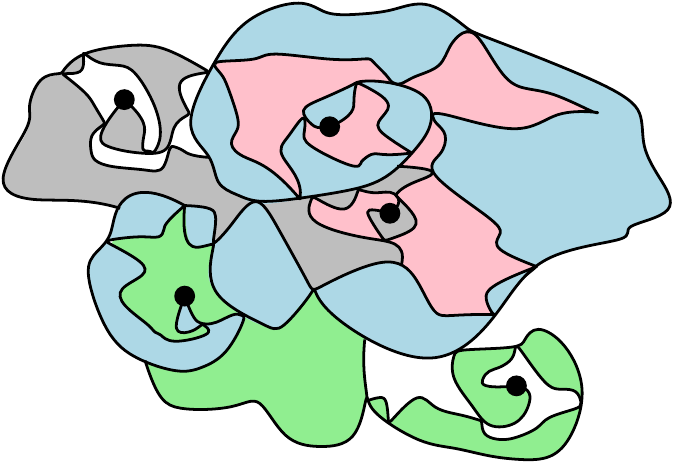}
\caption[short form]{\small A typical space-filling SLE$_{\kappa}$ curve $\theta$, for $\kappa \in (4, 8)$, divided into cells. The picture is slightly misleading since the set of ``pinch point'' where the left and right boundaries of each cell meet is actually uncountable. The black dots indicate the points where $\theta$ starts and finishes filling each cell. Note that the gray and green cells intersect at several points, but do not share a connected boundary arc so are not considered to be adjacent. A similar figure has appeared in \cite{GMS_Tutte}.}
\label{fig_cells_irr}
\end{figure}

\medskip
\noindent
The following result explains the connection between the pair $(h, \theta)$ and the mated-CRT map $\GG^n$, and it is a consequence of \cite[Theorems~1.9~and~8.18]{DMS21}.

\begin{proposition}
The family of structure graphs $\{\GG^n\}_{n \in \N}$ agrees in law with the family of $n$-mated-CRT maps with the sphere topology defined in Subsection~\ref{sec_mated-CRT}.
\end{proposition}

\noindent
The previous proposition gives us an a priori embedding of the mated-CRT map in $\C$ by sending each $x \in \VV\GG^n$ to the point $\theta(x) \in \C$. Furthermore, the graph $\GG^n$ comes naturally with two marked vertices. Indeed, we can let $v^n_0 \in \VV\GG^n$ (resp.\ $v^n_1 \in \VV\GG^n$) to be the vertex corresponding to the cell containing $0$ (resp.\ $\infty$). We also emphasize that here all the edges in $\EE\GG^n$ are assumed to have unit conductance. 

\paragraph{Construction of the a priori embedding of the dual graph.}
The a priori SLE/LQG embedding of $\smash{\GG^n}$ also induces an a priori embedding into $\C$ of the associated planar dual graph $\smash{\hat{\GG}^n}$. Indeed, each vertex of $\smash{\hat{\GG}^n}$ is naturally identified with the set of points in $\C$ where three of the cells $\theta([x - 1/n, x])$ meet\footnote{Note that there cannot be more than three cells meeting at a single point or we would have a face of degree greater than three.}, for $x \in \VV\GG^n$. The set of edges of $\smash{\hat{\GG}^n}$ can be identified with the boundary segments of the cells which connect these vertices. To be precise, this is not an embedding when $\kappa\in (4,8)$ since the edges can intersect (but not cross) each other (see Figure~\ref{fig_cells_irr}). To deal with this case, we can very slightly perturb the edges so that they do not intersect except at their endpoints. We refer to the proof of Proposition~\ref{pr_invariance_dual_mated} for more details on how one can handle this situation.

\subsection{Mated-CRT maps satisfy the assumptions}
\label{sub_mated_assumptions}
In this subsection, we show that the a priori embedding of mated-CRT maps with the sphere topology satisfies the assumptions of Theorem~\ref{th_main_1}. In particular, we will show here that assumptions~\ref{it_invariance}~and~\ref{it_invariance_dual} are satisfied in this specific case. We recall that, for each $n \in \N$ and $\gamma \in (0, 2)$, the mated-CRT map $\GG^n$ comes with two marked vertices $v_0^n$ and $v_1^n$ which correspond to the cell containing $0$ and $\infty$, respectively

\begin{proposition}[Invariance principle on the mated-CRT map, {\cite[Theorem~3.4]{GMS_Tutte}}]
\label{pr_invariance_mated}
For each $n \in \N$ and $\gamma \in (0, 2)$, let $\GG^n$ be the $n$-mated CRT map with the sphere topology embedded in $\C$ under the a priori SLE/LQG embedding specified in Subsection~\ref{sub_SLE_LQG}. View the embedded random walk on $\GG^n$, stopped when it hits $v_1^n$, as a continuous curve in $\C$ obtained by piecewise linear interpolation at constant speed. For each compact subset $K \subset \C$ and for any $z \in K$, the conditional law given the pair $(h, \theta)$ of the random walk on $\GG^n$ started from the vertex $x_z^n \in \VV\GG^n$ nearest to $z$ weakly converges in probability as $n \to \infty$ to the law of the Brownian motion on $\C$ started from $z$ with respect to the local topology on curves viewed modulo time parameterization specified in Subsection \ref{subsub_metric_CMP}, uniformly over all $z \in K$.
\end{proposition}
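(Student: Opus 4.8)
\textbf{Proof proposal for Proposition~\ref{pr_invariance_mated}.}
Since this is precisely \cite[Theorem~3.4]{GMS_Tutte}, the plan is to recall the structure of that proof and explain which ingredients transfer to the sphere-topology setting considered here. The core of the argument in \cite{GMS_Tutte} is to show that the random walk on the mated-CRT map, embedded via the SLE/LQG embedding $x \mapsto \theta(x)$, approximates Brownian motion modulo time parameterization. The key observation is that the mated-CRT map has a natural pair of coordinate functions coming from the peanosphere/mating-of-trees construction, and that the embedded walk can be analyzed through a combination of (i) an a priori estimate showing that the walk does not get stuck and that consecutive cells are comparable in size at the relevant scales, and (ii) a martingale/harmonicity argument identifying the limiting process.

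First I would set up the local coordinates: by \cite[Theorems~1.9~and~8.18]{DMS21}, the mated-CRT map $\GG^n$ is encoded by the correlated Brownian motion $(L,R)$, and the cells $\theta([x-1/n,x])$ tile $\C$ with $\mu_h$-mass exactly $1/n$. The crucial input is the regularity of the SLE/LQG embedding: with probability tending to one, for any fixed compact $K \subset \C$, all cells intersecting $K$ have Euclidean diameter tending to $0$, and the cells satisfy a comparability estimate (no cell intersecting $K$ is ``long and thin'' in a way that would trap the walk, and adjacent cells have comparable diameters up to polylog factors). These estimates are exactly the content of \cite[Section~3]{GMS_Tutte} (building on \cite{GHS_mated, GMS_Harmonic}) and are stated for the $\gamma$-quantum cone; the sphere-topology statement follows by absolute continuity between the quantum sphere and the quantum cone away from the marked points $0$ and $\infty$, together with the fact that $K$ is a fixed compact set bounded away from $\infty$. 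One also uses that $0$ is a $\mu_h$-typical point so that the local behavior near $v_0^n$ causes no problem for walks started in $K$.

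Next I would carry out the invariance-principle argument proper. The strategy of \cite{GMS_Tutte} is to show the embedded walk is a ``good approximation'' of Brownian motion by verifying the two standard criteria: the walk has asymptotically vanishing drift at scales intermediate between the cell size and the macroscopic scale, and its quadratic variation is comparable to that of (time-changed) Brownian motion. The vanishing-drift property is where the structure of the mating-of-trees encoding enters: the increments of the walk correspond to crossings between adjacent cells, and the symmetry of the cell configuration under the Brownian-motion description forces the conditional mean displacement to be negligible. The tightness of the family of (time-changed) walks follows from the comparability estimates above. Finally, any subsequential limit is a continuous martingale whose coordinates are orthogonal with equal quadratic variations, hence a time-changed Brownian motion; since this holds for walks started from $x_z^n$ for every $z \in K$ and the convergence is uniform over $z \in K$ by the quantitative form of the estimates, we get convergence of the whole sequence. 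The passage from ``in probability convergence of the conditional law'' is handled exactly as in \cite{GMS_Tutte}: the estimates hold with probability $1 - o_n(1)$ over the pair $(h,\theta)$.

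The main obstacle, and the only real point requiring care beyond citing \cite{GMS_Tutte} verbatim, is the topological difference: \cite{GMS_Tutte} works with mated-CRT maps with the disk topology (a quantum cone / half-plane picture with boundary), whereas here we need the sphere topology. I expect this to be resolved by the absolute-continuity argument sketched above — the quantum sphere restricted to a compact set avoiding its two marked points is mutually absolutely continuous with a corresponding restriction of the $0$-quantum cone, and all the estimates used are local — but making this precise (in particular quantifying the Radon--Nikodym derivative and checking it does not destroy the uniformity over $z \in K$) is the step I would spend the most effort on. A secondary technical point is the case $\kappa \in (4,8)$, where cells can touch at Cantor-like sets; here one simply notes that such intersections do not create edges, so the graph structure and the embedding are unaffected, exactly as in \cite{GMS_Tutte}.
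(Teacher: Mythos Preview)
Your proposal is far more elaborate than what the paper does, and your main worry is misplaced. In the paper this proposition is not proved at all: it is simply \cite[Theorem~3.4]{GMS_Tutte} quoted verbatim, together with a short remark explaining that while that theorem is stated for a quantum sphere with \emph{one} marked point, the doubly marked sphere is obtained by sampling the second point uniformly from $\mu_h$ and conformally mapping it to $0$; since Brownian motion is conformally invariant, the statement for two marked points follows immediately.

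Your proposal instead tries to outline the entire proof of \cite[Theorem~3.4]{GMS_Tutte} and then identifies as the ``main obstacle'' a disk-to-sphere transfer via absolute continuity with the $0$-quantum cone. That is a misreading of the situation: the cited theorem is already formulated for the sphere topology (singly marked), not the disk, so no such transfer is needed here. The only discrepancy is one marked point versus two, and that is handled by the trivial argument in the paper's remark, not by an absolute-continuity comparison with the cone. The cone/absolute-continuity machinery you describe is precisely what the paper uses for the \emph{dual} invariance principle (Proposition~\ref{pr_invariance_dual_mated}), where one genuinely needs to verify the hypotheses of the general embedded-lattice theorem from \cite{GMS22}; importing it here is unnecessary and conflates the two propositions.
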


\begin{remark}
To be precise, in \cite[Theorem~3.4]{GMS_Tutte}, the above theorem is stated for a mated-CRT map built using a quantum sphere with only one marked point. However, the quantum sphere with two marked points can be obtained from the quantum sphere with one marked point $(\C, h, \infty)$ by first sampling a point $z \in \C$ uniformly from the $\gamma$-LQG area measure $\mu_h$, then applying a conformal map which sends $z$ to $0$ (see \cite[Proposition~A.13]{DMS21}). Therefore, since Brownian motion is conformally invariant, the statement in \cite[Theorem~3.4]{GMS_Tutte} immediately implies Proposition~\ref{pr_invariance_mated} for the quantum sphere with two marked points. 
\end{remark}

\noindent
The main purpose of this subsection is to prove that an analogous result holds also on the sequence of a priori SLE/LQG embedding of the dual $n$-mated-CRT map with the sphere topology. More precisely, we want to prove the following proposition.

\begin{proposition}[Invariance principle on the dual mated-CRT map]
\label{pr_invariance_dual_mated}
For each $n \in \N$ and $\gamma \in (0, 2)$, let $\hat{\GG}^n$ be the dual planar graph associated to the $n$-mated-CRT map $\GG^n$ with the sphere topology embedded in $\C$ under the a priori embedding specified in Subsection~\ref{sub_SLE_LQG}. View the embedded random walk on $\hat{\GG}^n$ as a continuous curve in $\C$ obtained by piecewise linear interpolation at constant speed. For each compact subset $K \subset \C$ and for any $z \in K$, the conditional law given the pair $(h, \theta)$ of the random walk on $\hat{\GG}^n$ started from the vertex $\hat{x}_z^n \in \VV\hat{\GG}^n$ nearest to $z$ weakly converges as $n \to \infty$ to the law of the Brownian motion on $\C$ started from $z$ with respect to the local topology on curves viewed modulo time parameterization specified in Subsection \ref{subsub_metric_CMP}, uniformly over all $z \in K$.
\end{proposition}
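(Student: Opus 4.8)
The plan is to deduce the invariance principle for random walk on the dual mated-CRT map from the invariance principle for random walk on the primal mated-CRT map (Proposition~\ref{pr_invariance_mated}), exploiting the fact that the mated-CRT map is (essentially) self-dual in the relevant sense: the dual $\hat{\GG}^n$ is comparable to a mated-CRT map built from the same SLE/LQG data. More precisely, since $\GG^n$ is a triangulation, its dual $\hat{\GG}^n$ is a cubic (3-regular) planar map whose vertices are the triple points of the cell configuration $\{\theta([x-1/n,x])\}$. The first step is to set up the comparison: I would show that a random walk step on $\hat{\GG}^n$ from a triple point $\hat{x}$ moves to one of the (at most three) neighboring triple points, and that the collection of triple points, suitably grouped, can be coupled with the vertices of a coarsening of $\GG^n$ or of $\GG^{Cn}$ for a constant $C$. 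The cleanest route is probably to observe that the dual walk on $\hat{\GG}^n$, watched only when it is near a given compact set, is a perturbation of a walk on a graph that satisfies Proposition~\ref{pr_invariance_mated}: each dual edge is a boundary arc between two cells, and crossing dual edges corresponds to moving between cells, so the dual walk's trajectory stays within $o_n(1)$ (by Lemma~\ref{lm_no_macroscopic_edges} applied to the primal, which controls cell diameters) of a trajectory of the primal walk on $\GG^n$ up to a time change.

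The key technical steps, in order, would be: (i) Establish the a priori embedding of $\hat{\GG}^n$ is genuinely proper for $\kappa \geq 8$, and for $\kappa \in (4,8)$ carry out the perturbation of edges mentioned in Subsection~\ref{sub_SLE_LQG} so that edges meet only at endpoints; crucially, show this perturbation can be taken to have size $o_n(1)$ uniformly on compacts, so it does not affect the scaling limit. (ii) Use the results of \cite{DMS21} (and the degree bounds on mated-CRT maps) together with \cite{GMS_Harmonic}-type estimates to show that, on any fixed compact set, the number of cells and triple points is comparable to $n$ times the LQG area, and that the combinatorial structure of $\hat{\GG}^n$ near that compact set is, with high probability, that of a bounded-degree planar map with all faces of bounded degree (faces of $\hat{\GG}^n$ correspond to vertices of $\GG^n$, which have degree that is typically $O(1)$ but can be large — this is the subtle point). (iii) Transfer the invariance principle: condition on $(h,\theta)$, and show that for $z$ in a compact set the dual walk started near $z$ converges to Brownian motion. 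Here I would use the same strategy as the proof of Proposition~\ref{pr_invariance_mated} in \cite[Theorem~3.4]{GMS_Tutte}, namely verify a Dirichlet-energy / effective-resistance condition (the "RSW-type" and "Tutte embedding" crossing estimates of that paper) directly for $\hat{\GG}^n$; alternatively, build an explicit coupling between the dual walk and the primal walk.

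The main obstacle, which I expect to absorb most of the work, is controlling the high-degree vertices of the primal mated-CRT map, equivalently the large faces of the dual. A vertex $x\in\VV\GG^n$ of large degree $d$ corresponds to a cell $\theta([x-1/n,x])$ that touches many other cells; in the dual, this is a face bounded by $d$ dual edges, and a dual random walk near such a face can behave very differently from Brownian motion if $d$ is macroscopic. The remark in the excerpt after assumptions \ref{it_invariance}--\ref{it_invariance_dual} flags exactly this issue. So the heart of the proof is a quantitative estimate: the maximal degree of a vertex of $\GG^n$ whose cell intersects a fixed compact subset of $\C$ is $o_n(n^{\alpha})$ for small $\alpha$ (in fact polylogarithmic bounds are known from \cite{GHS_mated}), which forces the corresponding dual faces to have $o_n(1)$ diameter by the cell-diameter estimate, and hence to be invisible in the scaling limit. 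Combined with the moment/coupling bounds for the mated-CRT map and the conformal invariance of Brownian motion, this should let the dual walk inherit the invariance principle from the primal one.

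Finally, I would assemble these ingredients: given $\varepsilon > 0$ and a compact $K$, choose the perturbation scale and a degree-truncation threshold; on the high-probability event that all cells meeting a neighborhood of $K$ have diameter $<\varepsilon$ and all dual faces meeting $K$ have diameter $<\varepsilon$, couple the dual walk with a primal walk (or directly verify the crossing estimates), apply Proposition~\ref{pr_invariance_mated} and the conformal invariance of Brownian motion, and conclude weak convergence in the local topology on curves modulo time parameterization of Subsection~\ref{subsub_metric_CMP}, uniformly over $z\in K$. Since all estimates are in probability with respect to $(h,\theta)$, the conclusion is the "in probability" convergence asserted in the proposition.
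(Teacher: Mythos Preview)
Your proposal takes a fundamentally different route from the paper, and the main strategy you outline has a genuine gap. You want to \emph{deduce} the dual invariance principle from the primal one (Proposition~\ref{pr_invariance_mated}) via some form of self-duality or coupling between the dual walk and the primal walk. But the mated-CRT map is not self-dual in any useful sense: $\GG^n$ is a triangulation, so $\hat{\GG}^n$ is cubic, and there is no evident coupling that makes the dual walk a time-change or small perturbation of the primal one. The Remark after assumptions~\ref{it_invariance}--\ref{it_invariance_dual} that you yourself cite is precisely a warning that one should not expect such a reduction to work in general. Your control of high-degree primal vertices (large dual faces) via polylogarithmic degree bounds is relevant input but does not by itself yield a coupling of the two walks or a proof of convergence for the dual walk.

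The paper does \emph{not} attempt any such reduction. Instead it appeals to the general invariance principle of \cite[Theorem~1.11]{GMS22}, recorded here as Proposition~\ref{pr_invariance_general}, and verifies its three hypotheses \ref{a_1}--\ref{a_3} directly for the dual lattice. To get translation invariance modulo scaling and ergodicity, the argument first passes from the quantum sphere to the $0$-quantum cone (where these properties are already known for the primal cell configuration from \cite[Proposition~3.1]{GMS_Tutte} and transfer to the dual for free). The real work is the finite expectation hypothesis~\ref{a_3}: one must bound $\E\bigl[\sum_{x\in\VV\MM\cap\partial H_0}\Out(x)^2\deg(x)/\Area(H_0)\bigr]$ for the dual lattice $\MM$. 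This is done by rewriting the sum over triple points as a sum over neighbouring cells, applying an ergodic-average identity from \cite[Lemma~2.7]{GMS22}, and then invoking the diameter--degree moment bounds of \cite[Theorem~4.1]{GMS_Tutte} and \cite[Lemmas~2.8--2.10]{GMS22}. Rotational invariance gives that the limiting covariance is scalar, and an absolute continuity argument as in \cite[Subsection~3.3]{GMS_Tutte} transfers the result from the cone back to the sphere. None of this uses Proposition~\ref{pr_invariance_mated} as input.
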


\subsubsection{Invariance principle on embedded lattices}
In order to prove Proposition~\ref{pr_invariance_dual_mated}, we need to take a step back and state a general theorem from \cite{GMS22} which guarantees that the random walk on certain embedded lattices has Brownian motion as a scaling limit. For the reader's convenience, we recall here the main definitions needed and in what follows we adopt notation similar to that of \cite{GMS22}. 

\begin{definition}
\label{def_embedded_lattice}
An embedded lattice $\MM$ is a graph embedded in $\C$ in such a way that each edge is a simple curve with zero Lebesgue measure, the edges intersect only at their endpoints, and each compact subset of $\C$ intersects at most finitely many edges of $\MM$. As usual, we write $\VV\MM$ for the set of vertices of $\MM$, $\EE\MM$ for the set of edges of $\MM$, and $\FF\MM$ for the set of faces of $\MM$.
\end{definition}

\begin{definition}
For an embedded lattice $\MM$ and $x \in \VV\MM$, we define the \emph{outradius} of $x$ by setting
\begin{equation*}
\Out(x) := \diam\left(\bigcup_{\{H \in \FF\MM \, : \, x \in \partial H\}} H\right),
\end{equation*}
i.e., the diameter of the union of the faces with $x$ on their boundaries. Here $\partial H$ denotes the boundary of the face $H \in \FF\MM$.
\end{definition}

\noindent
For $C > 0$ and $z \in \C$, we write $C(\MM - z)$ for the embedded lattice obtained by first translating everything by the amount $-z$ and then by scaling everything by the factor $C$. We are interested in random embedded lattices that satisfy the following assumptions.
\begin{enumerate}[start=1,label={(I\arabic*})]
	\item \label{a_1}(\textbf{Translation invariance modulo scaling}) There is a (possibly random and $\MM$-dependent) increasing sequence of open sets $U_j \subset \C$, each of which is either a square or a disk, whose union is all of $\C$ such that the following is true. Conditional on $\MM$ and $U_j$, let $z_j$ for $j \in \N$ be sampled uniformly from the Lebesgue measure on $U_j$. Then the shifted lattice $\MM - z_j$ converge in law to $\MM$ modulo scaling as $j \to \infty$, i.e., there are random numbers $C_j > 0$ (possibly depending on $\MM$ and $z_j$) such that $C_j(\MM - z_j) \to  \MM$ in law with respect to the metric specified in \cite[Equation~(1.6)]{GMS22}.\footnote{Several equivalent formulations of this condition are given in \cite[Definition 1.2]{GMS22}.}
	\item \label{a_2} (\textbf{Ergodicity modulo scaling}) Every real-valued measurable function $F = F(\MM)$ which is invariant under translation and scaling, i.e., $F(C(\MM - z)) = F(\MM)$ for each $z \in \C$ and $C > 0$, is almost surely equal to a deterministic constant.
	\item \label{a_3} (\textbf{Finite expectation}) Let $H_0 \in \FF\MM$ be the face of $\MM$ containing 0, then 
\begin{equation*}
\E\left[\sum_{x \in \VV\MM \cap \partial H_0}\frac{\Out(x)^2 \deg(x)}{\Area(H_0)}\right] < \infty,
\end{equation*}
where, in this case, $\deg(x)$ denotes the number of edges incident to $x$.
\end{enumerate}

\begin{proposition}[{\cite[Theorem~1.11]{GMS22}}]
\label{pr_invariance_general}
Let $\MM$ be an embedded lattice satisfying assumptions~\ref{a_1}, \ref{a_2}, \ref{a_3}. For $\eps > 0$, view the embedded random walk on $\eps\MM$ as a continuous curve in $\C$ obtained by piecewise linear interpolation at constant speed. For each compact subset $K \subset \C$ and for any $z \in K$, the conditional law given $\MM$ of the random walk on $\eps\MM$ started from the vertex $\hat{x}_z^{\eps} \in \VV(\eps\MM)$ nearest to $z$ weakly converges in probability as $\eps \to 0$ to the law of a Brownian motion on $\C$, with some deterministic, non-degenerate covariance matrix $\Sigma$, started from $z$ with respect to the local topology on curves viewed modulo time parameterization specified in Subsection \ref{subsub_metric_CMP}, uniformly over all $z \in K$.
\end{proposition}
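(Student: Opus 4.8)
The plan is to adapt the classical \emph{corrector} method for quenched invariance principles of random walks in random environment --- the ``environment viewed from the particle'' combined with a harmonic deformation of the coordinate functions, as developed by Kipnis--Varadhan, Sidoravicius--Sznitman, Berger--Biskup and Mathieu--Piatnitski --- to the present setting of stationary random embedded lattices. \textbf{Step 1 (stationary ergodic environment).} Let $\Omega$ be the space of embedded lattices with a distinguished root vertex, viewed modulo scaling and equipped with the local topology of \cite[Equation~(1.6)]{GMS22}; the embedded random walk induces the Markov chain on $\Omega$ sending $(\MM,o)$ to $(\MM,o')$ with $o'$ a uniform neighbour of $o$. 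Assumption~\ref{a_1} provides a stationary-modulo-scaling law $\P_0$ on $\Omega$, obtained as the weak limit of the shifted-and-re-rooted lattices $\MM - z_j$, and the degree-biased law $\d\hat{\P}\propto\deg(o)\,\d\P_0$ is reversible and stationary for the environment chain. Assumption~\ref{a_3} --- via $|e^+ - e^-|\le\Out(e^-)$ --- is exactly what makes $\hat{\P}$ a probability measure and the displacement field $(\MM,e)\mapsto e^+-e^-$ square-integrable, while assumption~\ref{a_2} gives ergodicity of the environment chain under $\hat{\P}$. (One must keep track of the random overall scale when working modulo scaling, handled as in the standard treatment of stationary-modulo-scaling random structures.)

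\textbf{Step 2 (corrector and martingale CLT).} In the Hilbert space of square-integrable antisymmetric edge fields over $\hat{\P}$, decompose the displacement field $\Delta(e):=e^+-e^-$ into potential and solenoidal parts, $\Delta = -\nabla\chi + g$. Stationarity and ergodicity promote the potential part to a genuine function $\chi:\VV\MM\to\C$, defined up to an additive constant, with stationary mean-zero increments, so that $\Phi(x):=x+\chi(x)$ has solenoidal, hence $\MM$-harmonic, gradient $g$, and $\Phi(X_n)$ is a martingale with stationary ergodic increments under the walk started from $\hat{\P}$. The Kipnis--Varadhan martingale central limit theorem then gives $n^{-1/2}\Phi(X_{\lfloor n\,\cdot\,\rfloor})\Rightarrow B^{\Sigma}$, where $\Sigma$, the covariance of one increment under $\hat{\P}$, is a deterministic matrix by~\ref{a_2}; its non-degeneracy follows from the Dirichlet-energy (variational) characterization of $\Sigma$ together with~\ref{a_3}, which prevents the embedded lattice from collapsing onto a line.

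\textbf{Step 3 (sublinearity of the corrector --- the main obstacle).} One must show $\max\{|\chi(x)|:x\in\VV\MM,\ |x|\le R\sqrt n\}=o(\sqrt n)$, $\P$-almost surely, so that $n^{-1/2}X_{\lfloor n\,\cdot\,\rfloor}$ and $n^{-1/2}\Phi(X_{\lfloor n\,\cdot\,\rfloor})$ share the same scaling limit. The increment ergodic theorem readily gives $\chi(X_n)/n\to0$ a.s.\ and $\chi(X_n)/\sqrt n\to0$ in $\hat{\P}$-probability along the walk; upgrading this to the pointwise-in-space uniform estimate is the heaviest part, and is where two-dimensionality and the second-moment hypothesis~\ref{a_3} are used most delicately. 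I would follow the box-decomposition argument of Mathieu--Piatnitski and Biskup's survey: cover $\{|x|\le R\sqrt n\}$ by mesoscopic boxes, control $\chi$ on each box through the spatial ergodic theorem applied to the stationary field $x\mapsto\chi(x+e)-\chi(x)$, a maximal inequality, and a heat-kernel upper bound furnished by the volume and energy bounds implied by~\ref{a_3}.

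\textbf{Step 4 (conclusion: quenched and uniform).} Granting Step~3, tightness of $n^{-1/2}X_{\lfloor n\,\cdot\,\rfloor}$ follows from that of $\Phi$, and convergence of finite-dimensional distributions from Step~2 plus sublinearity; since $\dCMPloc$ reads the walk only up to bounded times, this yields convergence in the topology of Subsection~\ref{subsub_metric_CMP} (with $\eps=n^{-1/2}$). The \emph{quenched} statement --- convergence in probability of the conditional law given $\MM$ --- follows, as in Berger--Biskup, because the limit $B^{\Sigma}$ is deterministic: one bounds the $\hat{\P}$-variance of the conditional characteristic functions by a second-moment computation together with~\ref{a_2}, and Step~3 is already a quenched estimate. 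Finally, uniformity over $z$ in a compact $K$ follows by a covering argument using~\ref{a_1}, all the preceding estimates being uniform over the starting vertex $\hat{x}_z^{\eps}$ as $z$ ranges over $K$.
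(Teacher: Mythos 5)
First, note that the paper does not prove this proposition at all: it is quoted verbatim from \cite{GMS22}, and the only ``proof'' given is the remark that the uniform-over-compacts, convergence-in-probability refinement follows from the argument for \cite[Theorem~1.11]{GMS22} with \cite[Theorem~3.10]{GMS22} substituted for \cite[Theorem~1.16]{GMS22}. So your attempt to supply an actual proof is necessarily a different route; the question is whether it works.

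It does not, as written, because of a gap at the very first step that propagates through everything else. The Kipnis--Varadhan/corrector machinery you invoke requires a genuinely stationary environment, whereas assumption~\ref{a_1} only gives translation invariance \emph{modulo scaling}, and for the lattices this proposition is actually applied to (cell configurations of space-filling SLE on a quantum cone) there is provably no normalization that upgrades this to honest stationarity --- the local scale is random with heavy tails. This is not a bookkeeping issue to be ``handled as in the standard treatment'': every object in your Step~2 is scale-dependent and hence ill-defined on a space of lattices considered modulo scaling. The displacement field $\Delta(e)=e^+-e^-$, the Hilbert space of square-integrable edge fields over $\hat{\P}$, the corrector $\chi$, and above all the covariance matrix $\Sigma$ (``the covariance of one increment under $\hat{\P}$'') all change under $\MM\mapsto C\MM$, so there is no candidate for the Hilbert-space decomposition, and the limiting $\Sigma$ cannot be identified as a single-increment covariance. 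Circumventing exactly this obstruction is the main content of \cite{GMS22}, whose proof abandons the corrector entirely and instead works with scale-invariant observables --- exit distributions and suitably normalized exit times from macroscopic squares, ergodic averages of Dirichlet energies in the spirit of \cite[Lemma~2.7]{GMS22}, and maximum-principle arguments for discrete harmonic functions. Two secondary points: even in the genuinely stationary case, almost-sure sublinearity of the corrector in $d=2$ under only a second-moment hypothesis like~\ref{a_3} is delicate (your appeal to heat-kernel upper bounds ``furnished by~\ref{a_3}'' is unsupported, since~\ref{a_3} controls only one face and gives no uniform volume or elliptic-regularity input), and the non-degeneracy of $\Sigma$ is asserted rather than argued. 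If you want a self-contained proof, you should follow the scale-free strategy of \cite{GMS22}; if you want to cite, the honest statement is the one the paper makes.
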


\noindent
To be precise, the above theorem follows from the proof \cite[Theorem~1.11]{GMS22} (which gives the same statement but without the uniform rate of convergence on compact subsets) by using \cite[Theorem~3.10]{GMS22} in place of \cite[Theorem~1.16]{GMS22}.

\subsubsection{Proof of Proposition~\ref{pr_invariance_dual_mated}}
The purpose of this subsection is to transfer the general result of Proposition~\ref{pr_invariance_general} to the particular setting of Proposition~\ref{pr_invariance_dual_mated}. In order to do this, we need to proceed in two steps. First, we verify that the hypothesis of Proposition~\ref{pr_invariance_general} are satisfied for a sequence of embedded lattices built through the $0$-quantum cone. Then, we transfer the result to the sequence of $n$-mated CRT maps with the sphere topology by means of an absolute continuity argument.

\medskip
\noindent
Let $(\C, h^{\rm{c}}, 0, \infty)$ be a 0-quantum cone as specified in Subsection~\ref{sub_SLE_LQG}. We now define a graph associated with the 0-quantum cone in a way which is exactly analogous to the SLE/LQG description of mated-CRT maps with the sphere topology described in Section \ref{sub_SLE_LQG}.
 For $\gamma \in (0, 2)$, let $\theta$ be a whole-plane space-filling SLE$_\kappa$, with $\kappa = 16/\gamma^2$, sampled independently from $h^{\rm{c}}$ and then parameterized by the $\gamma$-LQG measure $\mu_{h^{\rm{c}}}$ in such a way that $\theta(0) = 0$. Let $\xi$ be sampled uniformly from the unit interval $[0, 1]$, independently from everything else, and let
\begin{equation*}
	\HH := \l\{\theta\l([x - 1, x]\r) \, : \, x \in \Z + \xi\r\}.
\end{equation*}
The reason for considering times in $\Z +\xi$ instead just in $\Z$ is to avoid making the point $0 = \theta(0)$ special. This is needed in order to check the translation invariance modulo scaling hypothesis \ref{a_1}. 
We view $\HH$ as a planar map whose vertex set is $\HH$ itself. Two vertices $H$, $H' \in \HH$ are joined by one edge (resp.\ two edges) if and only if $H$ and $H'$ intersect along one (resp.\ two) non-trivial connected  boundary arc. (resp.\ arcs). For $H$, $H' \in \HH$, with $H \neq H'$, we write $H \sim H'$ if they are joined by at least one edge. Given a set $A \subset \C$, we write 
\begin{equation*}
\HH(A):= \l\{H \in \HH \, : \, H \cap A \neq \emptyset\r\}
\end{equation*}
and moreover, for $H \in \HH$, we write $\deg(H)$ for the number of cells $H' \in \HH$ (counted with edge multiplicity) such that $H \sim H'$.

\begin{proof}[Proof of Proposition~\ref{pr_invariance_dual_mated}]
For $\gamma \in (0, 2)$, define the cell configuration $\HH$ associated with a space-filling SLE$_{\kappa}$ on a $0$-quantum cone as specified above. Let $\MM$ be the embedded defined as follows.
\begin{itemize}
	\item The vertex set of $\MM$ consists of points $x \in \C$ such that there are three cells in $\HH$ that meet at $x$ and with the property that the boundary of each of such cells has a connected subset that touches $x$;
	\item The edge set of $\MM$ consists of the boundary segments of the cells which connect the vertices.
\end{itemize}
In other words, $\MM$ is nothing but the planar dual of $\HH$. Moreover, by construction, $\MM$ is an embedded lattice in the sense of Definition~\ref{def_embedded_lattice}, except that embedded edges are allowed to intersect but not cross, in the case when $\gamma \in (\sqrt{2}, 2)$. To deal with this case, we can consider a different embedding of $\MM$ in which all the vertices occupy the same position, but the edges are slightly perturbed so that they do not intersect except at their endpoints. More precisely, for each edge with touching points in its interior, we can slightly perturb it in such a way that the modification only depends on the position of the edge itself and on the positions of the adjacent faces. In particular, this perturbation can be carried out in a translation and scaling invariant way. Therefore, since this procedure only depends on the local configuration of the lattice, if the starting cell configuration is translation invariant modulo scaling, then also the perturbed lattice is translation invariant modulo scaling.

\medskip
\noindent
We will now check that $\MM$ satisfies assumptions~\ref{a_1},~\ref{a_2},~and~\ref{a_3}. The translation invariance modulo scaling assumption~\ref{a_1} and the ergodicity modulo scaling assumption~\ref{a_2} follows from the corresponding properties for the associated cell configuration $\HH$ as checked in \cite[Proposition 3.1]{GMS_Tutte}. Therefore, we can just focus on proving the finite expectation assumption~\ref{a_3}. To this end, recalling that the cell $H_0$ is the face of $\MM$ containing $0$, we proceed as follows
\begin{equation}
\label{eq_first_0_cone}
\begin{alignedat}{1}
\sum_{x \in \VV\MM \cap \partial H_0} \frac{\Out(x)^2 \deg(x)}{\Area(H_0)} & \leq 12 \sum_{x \in \VV\MM \cap \partial H_0} \sum_{\{H \in \HH \, : \, x \in \partial H\}} \frac{\diam(H)^2}{\Area(H_0)} \\
& \leq 48 \sum_{\{H \in \HH \, : \,H \sim H_0\}} \frac{\diam(H)^2}{\Area(H_0)} + 12 \frac{\diam(H_0)^2}{\Area(H_0)}\deg(H_0).
\end{alignedat}
\end{equation}
In the first line of~\eqref{eq_first_0_cone}, we used the fact that each vertex of $\MM$ has degree at most $3$ and the inequality $(a+b+c)^2 \leq 4(a^2 +b^2 +c^2)$. In the second line, we use that each cell $H \in \HH$ with $H \sim H_0$ intersects $H_0$ along at most two disjoint connected boundary arcs (one on its left boundary and one on its right boundary), so there are at most $4$ vertices of $\MM$ in $H \cap H_0$. Now, we notice that the second quantity in the last line of \eqref{eq_first_0_cone} has finite expectation thanks to \cite[Theorem~4.1]{GMS_Tutte}. Therefore, we can just focus our attention to the sum appearing in the last line of \eqref{eq_first_0_cone}. Basically, the fact that this sum has finite expectation follows from the combination of several results in \cite{GMS_Tutte} and \cite{GMS22}.

\medskip
\noindent 
Let $F = F(\HH)$ denote the sum appearing in the last line of \eqref{eq_first_0_cone}. We will show that $\E[F] < \infty$ using an ergodic theory result from \cite[Section~2.2]{GMS22}. Let $\{S_k\}_{k \in \Z}$ be the bi-infinite sequence of origin-containing squares of a uniform dyadic system independent from $\HH$, as defined in \cite[Section 2.1]{GMS22}. We will not need the precise definition of these sequence here, but rather we only need to know $S_{k-1} \subset S_k$ for each $k \in \Z$ and $\cup_{k = 1}^{\infty} S_k = \C$. As shown in \cite[Proposition~3.1]{GMS_Tutte}, the cell configuration $\HH$ satisfies a suitable translation invariance modulo scaling assumption, and so, we can apply \cite[Lemma~2.7]{GMS22} to $\HH$ to find that the following is true. If we let $F_z$ for $z \in \C$ be defined in the same manner as $F$ but with the translated cell configuration $\HH-z$ in place of $\HH$, then it holds almost surely that
\begin{equation*}
\E[F] = \lim_{k \to \infty} \frac{1}{\Area(S_k)} \int_{S_k} F_z d z.
\end{equation*}
To bound the right-hand side of the above expression, we can proceed as follows
\begin{equation}
\label{eq_second_0_cone}
\begin{alignedat}{1}
\lim_{k \to \infty} \frac{1}{\Area(S_k)} \int_{S_k} F_z d z & = \lim_{k \to \infty} \frac{1}{\Area(S_k)} \int_{S_k} \sum_{\{H \in \HH \, : \, H \sim  H_z\}} \frac{\diam(H)^2}{\Area(H_z)} d z \\
& \leq \limsup_{k \to \infty} \frac{1}{\Area(S_k)} \sum_{H \in \HH(S_k)} \sum_{\{H' \in \HH \, : \, H' \sim  H\}} \diam(H')^2. 
\end{alignedat}
\end{equation}
Since the maximal size of the cells in $\HH(S_k)$ is almost surely of strictly smaller order than the side length of $S_k$ as $k \to \infty$ (see \cite[Lemma 2.9]{GMS22}), we find that almost surely for large enough $k \in \N$, each $H' \in \HH$ with $H' \sim H$ for some $H \in \HH(S_k)$ is contained in $\HH(S_k(1))$, where $S_k(1)$ is the square with the same center as $S_k$ and three times the side length of $S_k$. Therefore, the last line in \eqref{eq_second_0_cone} is almost surely bounded above by
\begin{equation*}
	2 \limsup_{k \to \infty} \frac{1}{\Area(S_k)} \sum_{H \in \HH(S_k(1))} \diam(H)^2 \deg(H), 
\end{equation*}
which is finite by \cite[Lemmas~2.8~and~2.10]{GMS22}. 

\medskip
\noindent
Therefore, summing up, we proved that the embedded lattice $\MM$ satisfies assumptions~\ref{a_1},~\ref{a_2},~\ref{a_3}, ad so, we can apply Proposition~\ref{pr_invariance_general}. Furthermore, we notice that, thanks to the rotational invariance of the law of $(h^{\rm{c}}, \theta)$, the limiting covariance matrix $\Sigma$ is given by a positive scalar multiple of the identity matrix. Hence, in order to conclude the proof of Proposition~\ref{pr_invariance_dual_mated}, we need to transfer the result to the setting of mated-CRT maps with the sphere topology. This can be done by means of absolute continuity arguments as in \cite[Subsection~3.3]{GMS_Tutte}. For the sake of brevity, we will not repeat such arguments here and we refer to \cite{GMS_Tutte}.
\end{proof}

\subsection{Convergence to LQG}
\label{sub_conv_LQG}
In this subsection, we see how the proof of Theorem~\ref{th_main_2} is almost an immediate consequence of the results proved above. More specifically, for $n \in \N$ and $\gamma \in (0, 2)$, let $(\GG^n, v_0^n, v_1^n)$ be the doubly marked $n$-mated-CRT map with the sphere topology under the a priori SLE/LQG embedding as specified in Subsection~\ref{sub_SLE_LQG}.
We observe that there exists a conformal map from $\C$ to $\CC_{2 \pi}$ sending $\infty \mapsto +\infty$ and $0 \mapsto -\infty$. This mapping is unique up to horizontal translation and rotation. The horizontal translation can be fixed by specifying that the volume of $\R/2\pi\Z\times [0, \infty)$ under the $\gamma$-LQG measure $\mu_h$ induced by the embedding is precisely $1/2$. Furthermore, the rotation on $\R/2\pi\Z$ can be chosen uniformly at random. Therefore, using this conformal map, we can define an embedding of $(\GG^n, v^n_0, v^n_1)$ into $\CC_{2\pi}$ in such a way that $\smash{v_0^n}$ and $\smash{v_1^n}$ are mapped to $-\infty$ and $+\infty$, respectively. Using the same conformal map, we can also embedded the associated dual graph $\smash{\hat{\GG}^n}$ into $\smash{\CC_{2\pi}}$. This put us exactly in the setting of Theorem~\ref{th_main_1} from which we can deduce the following result. 
\begin{proposition}
\label{pr_close_mated}
Fix $\gamma \in (0, 2)$ and let $\{(\GG^n, v_0^n, v_1^n)\}_{n \in \N}$ be the sequence of doubly marked $n$-mated CRT map with the sphere topology embedded in the infinite cylinder $\CC_{2\pi}$ as specified above. For each $n \in \N$, let $\dotSS_n : \VV\GG^{n} \to \CC_{2\pi}$ be the Smith embedding associated to $(\GG^n, v_0^n, v_1^n)$ as specified in Definition \ref{def_dotted_Smith}. There exists a sequence of random affine transformations $\{T_n\}_{n \in \N}$ from $\CC_{\eta_n}$ to $\CC_{2\pi}$ of the form specified in the statement of Theorem~\ref{th_main_1} such that, for all compact sets $K \subset \CC_{2\pi}$, the following convergence holds in probability 
\begin{equation*}
\lim_{n \to \infty} \sup_{x \in \VV\GG^{n}(K)} \d_{2\pi}\l(T_n \dotSS_n(x), x\r) = 0,
\end{equation*}
where $\d_{2\pi}$ denotes the Euclidean distance on the cylinder $\CC_{2\pi}$.
\end{proposition}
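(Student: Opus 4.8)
The plan is to deduce Proposition~\ref{pr_close_mated} from Theorem~\ref{th_main_1} by checking that the sequence $\{(\GG^n, v_0^n, v_1^n)\}_{n\in\N}$, together with its dual sequence $\{\hat{\GG}^n\}_{n\in\N}$, embedded in $\CC_{2\pi}$ via the conformal map from $\C$ described above, almost surely satisfies hypotheses \ref{it_embedding}, \ref{it_invariance}, \ref{it_invariance_dual}. Once these are verified, Theorem~\ref{th_main_1} applies \emph{conditionally on the pair $(h,\theta)$}: for each fixed realization of $(h,\theta)$ in the almost sure event on which the hypotheses hold, there exist (deterministic given $(h,\theta)$, hence random) sequences $c_n^{\frkh}, b_n^{\frkh}, b_n^{\frkw}$ defining affine maps $T_n$ of the required form such that $\sup_{x\in\VV\GG^n(K)}\d_{2\pi}(T_n\dotSS_n(x),x)\to 0$. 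Since this convergence holds almost surely (on the full-measure event), it holds in probability, which is what is claimed. Thus the entire content of the proof is the verification of the three hypotheses.

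First I would handle \ref{it_embedding}. The quadruple $(\GG^n, v_0^n, v_1^n)$ is a triangulation with a proper planar embedding in $\C$ coming from the space-filling $\SLE_\kappa$/LQG construction of Subsection~\ref{sub_SLE_LQG} (with the $\kappa\in(4,8)$ edges slightly perturbed to remove intersections, exactly as discussed there and in the proof of Proposition~\ref{pr_invariance_dual_mated}); composing with the conformal map $\C\to\CC_{2\pi}$ sending $0\mapsto-\infty$, $\infty\mapsto+\infty$ gives a proper embedding in $\CC_{2\pi}$ in the sense of Definition~\ref{def_proper_embedd}, since edges remain non-crossing continuous curves, $\GG^n$ is connected (being a triangulation of the sphere), and the two marked cells are sent to the two ends. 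The dual graph $\hat{\GG}^n$ is embedded via the same conformal map using the cell-boundary description of Subsection~\ref{sub_SLE_LQG}; one checks it is properly embedded in the sense of Definition~\ref{def_proper_embedd_dual} (each dual vertex lies in a primal face, each primal edge is crossed once by the corresponding dual edge), again after the perturbation of intersecting edges. This is essentially bookkeeping and should be dispatched in a short paragraph.

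Next, \ref{it_invariance} is precisely Proposition~\ref{pr_invariance_mated} (i.e.\ \cite[Theorem~3.4]{GMS_Tutte}), transported through the conformal map using conformal invariance of Brownian motion: the invariance principle stated there is for the a priori SLE/LQG embedding in $\C$, and since the conformal map $\C\to\CC_{2\pi}$ is a biholomorphism away from the two punctures, a random walk converging to Brownian motion modulo time change in $\C$ on compact subsets transfers to the same statement on $\CC_{2\pi}$ on compact subsets (every compact $K\subset\CC_{2\pi}$ pulls back to a compact subset of $\C\setminus\{0\}$). Finally, \ref{it_invariance_dual} is exactly Proposition~\ref{pr_invariance_dual_mated}, again transported through the conformal map in the same way. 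With all three hypotheses in hand, Theorem~\ref{th_main_1} gives the conclusion. I expect the main (indeed only) real obstacle to be none of the above \emph{per se} — each ingredient is already available — but rather making sure the logical quantifiers line up: Theorem~\ref{th_main_1} is a deterministic statement about a \emph{fixed} sequence of maps, whereas here the maps are random, so one must note that the full-measure event on which \ref{it_embedding}--\ref{it_invariance_dual} hold is the same for all $n$ (it is a property of the single pair $(h,\theta)$), apply the theorem on that event, and then record that almost sure convergence implies convergence in probability. The edge-perturbation issue in the regime $\gamma\in(\sqrt2,2)$ (equivalently $\kappa\in(4,8)$) also deserves an explicit remark, since Theorem~\ref{th_main_1} requires a genuine embedding; but this is handled exactly as in the proof of Proposition~\ref{pr_invariance_dual_mated} and does not affect the random walk scaling limits.
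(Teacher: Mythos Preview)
Your proposal is correct and follows essentially the same route as the paper: verify \ref{it_embedding} from the construction, \ref{it_invariance} from Proposition~\ref{pr_invariance_mated}, \ref{it_invariance_dual} from Proposition~\ref{pr_invariance_dual_mated}, and then invoke Theorem~\ref{th_main_1}. The paper's own proof is a three-line version of exactly this; your additional remarks on transporting through the conformal map, on the edge perturbation for $\kappa\in(4,8)$, and on the deterministic-versus-random quantifiers are all sensible elaborations. One small caveat: Propositions~\ref{pr_invariance_mated} and~\ref{pr_invariance_dual_mated} give convergence \emph{in probability}, not almost surely, so strictly speaking there is no single full-measure event on which \ref{it_invariance}--\ref{it_invariance_dual} hold deterministically; the clean way to close this is the standard subsequence trick (every subsequence has a further subsequence along which the hypotheses hold a.s., apply Theorem~\ref{th_main_1} there, conclude convergence in probability for the full sequence). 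The paper glosses over this point as well.
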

\begin{proof}
By construction the sequence $\{(\GG^n, v_0^n, v_1^n)\}_{n \in \N}$ and the associated sequence of dual graphs $\{\hat{\GG}^n\}_{n \in \N}$ satisfy almost surely assumption~\ref{it_embedding}. Moreover, Proposition~\ref{pr_invariance_mated} guarantees the convergence in probability of the random walk on the sequence of primal graphs to Brownian motion and so assumption~\ref{it_invariance} is satisfied. Furthermore, Proposition~\ref{pr_invariance_dual_mated} guarantees the convergence in probability of the random walk on the sequence of dual graphs to Brownian motion and so also assumption~\ref{it_invariance_dual} is satisfied. Therefore, the desired result follows from Theorem~\ref{th_main_1}.
\end{proof}

\noindent
Using the same procedure specified at the beginning of this subsection, we can also consider the parameterization of the unit-area quantum sphere by the infinite cylinder $\CC_{2\pi}$. Hence, with a slight abuse of notation, we let $(\CC_{2\pi}, h, -\infty, +\infty)$ be the unit-area quantum sphere parametrized by $\CC_{2\pi}$ and we denote by $\mu_{h}$ the associated $\gamma$-LQG measure. We are now ready to prove Theorem~\ref{th_main_2}.

\begin{proof}[Proof of Theorem~\ref{th_main_2}]
The result on the measure convergence \ref{mated_a} follows from Proposition~\ref{pr_close_mated} and the fact that $\theta$ is parameterized by $\mu_h$-mass. The uniform convergence statement for curves \ref{mated_b} is also an immediate consequence of Proposition~\ref{pr_close_mated}. The claimed random walk convergence \ref{mated_c} follows from Propositions~\ref{pr_invariance_mated}~and~\ref{pr_close_mated}.
\end{proof}

\appendix
\section{Some standard estimates for planar Brownian motion}
Throughout the whole appendix, $B$ denotes a standard planar Brownian motion. For $\x \in \R^2$, we use the notation $B^\x$ to denote a planar Brownian motion started from $\x$. We recall that $\sigma_{2\pi} : \R^2 \to \CC_{2 \pi}$ is defined in \eqref{eq_covering_map} and denotes the covering map of the infinite cylinder $\CC_{2 \pi}$. In particular, if $B^\x$ is a as above, then $\sigma_{2\pi}(B^\x)$ is a Brownian motion on $\CC_{2 \pi}$ started from $\sigma_{2 \pi}(\x)$.

\begin{lemma}
\label{lm_std_BM_1}
Fix $R' > 1$. For $\x \in \R^2$, consider the following stopping times
\begin{equation*}
\tau := \inf \l\{t \geq 0 \, : \, \Im(B_t^\x) = - 2R' \text{ or } \Im(B_t^\x) =  R'\r\}.
\end{equation*}
Then it holds that
\begin{equation*}
\P_\x\l(\sigma_{2 \pi}(B|_{[0, \tau]}) \text{ does not wind around the cylinder below height } - R'\r) \lesssim \frac{1}{R'}, \quad  \forall \x \in \R \times \{-R'\},
\end{equation*}
where the implicit constant is independent of everything else. 
\end{lemma}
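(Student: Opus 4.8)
Here "$\sigma_{2\pi}(B|_{[0,\tau]})$ winds around the cylinder below height $-R'$" means that there are times $t_1<t_2$ in $[0,\tau]$ with $\Im(B_u^\x)\le -R'$ for all $u\in[t_1,t_2]$ and $|\wind_{2\pi}(B|_{[t_1,t_2]})|\ge 1$, i.e.\ the horizontal coordinate of the lift of $B|_{[t_1,t_2]}$ has range at least $2\pi$. Since the target bound is trivial (probability $\le 1$) once $R'$ is below an absolute threshold, we may assume $R'$ is large; in particular $h_0\ll R'$ for the absolute constant $h_0$ introduced below. The plan is to run the walk in a sequence of independent "trials'' between successive visits to height $-R'$, show that there are many trials before $\tau$, and show that each trial has a uniformly positive chance of producing the desired winding.

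\textbf{The iterated trials.} First I would fix an absolute constant $h_0>0$ with the following property: a planar Brownian motion started on the horizontal line $\R\times\{-R'-h_0\}$ and run until it hits $\R\times\{-R'\}$ or $\R\times\{-2R'\}$ has horizontal displacement of magnitude at least $2\pi$ with probability at least $1/2$, uniformly in $R'$ large. This is where the main work lies: for $R'$ large the chance of hitting height $-2R'$ first is at most $h_0/R'$ by gambler's ruin, and on the complementary event the vertical coordinate must climb back up by $h_0$, which (by choosing $h_0$ large) takes so long that, by the reflection principle applied to the independent horizontal coordinate, the horizontal range exceeds $2\pi$ with probability close to $1$; a routine optimization of the two Gaussian tail estimates yields the $1/2$. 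Next, starting from the initial point (at height $-R'$) define trials recursively: run until hitting height $-R'-h_0$ or $+R'$; if $+R'$ is hit, stop ($\tau$ is reached); if $-R'-h_0$ is hit (a \emph{deep descent}), continue until hitting height $-R'$ or $-2R'$; if $-2R'$, stop ($\tau$); if $-R'$, begin the next trial. By the strong Markov property each deep descent is an excursion lying entirely below height $-R'$ and reaching depth $h_0$, and, conditionally on the past, has probability $\ge 1/2$ of horizontal displacement $\ge 2\pi$ by the choice of $h_0$; whenever this happens, $\sigma_{2\pi}(B|_{[0,\tau]})$ winds around the cylinder below height $-R'$.

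\textbf{Counting trials and concluding.} By two applications of the gambler's ruin formula, in each trial the probability (given the past) that $\tau$ is reached during that trial is at most $\tfrac{h_0}{2R'+h_0}+\tfrac{h_0}{R'}\le \tfrac{2h_0}{R'}$, so the number $G$ of trials stochastically dominates a geometric random variable with parameter $2h_0/R'$, whence $\E[2^{-G}]\le 2h_0/R'$. The first $G-1$ trials all consist of deep descents, and, conditionally on their being deep descents, their horizontal displacements exceed $2\pi$ independently each with probability $\ge 1/2$; therefore
\[
\P_\x\l(\sigma_{2\pi}(B|_{[0,\tau]})\text{ does not wind around the cylinder below height }-R'\r)\le \E\l[2^{-(G-1)}\r]\le \frac{4h_0}{R'}\lesssim \frac1{R'},
\]
with an absolute implicit constant, as desired. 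The delicate points to get right are the existence of the absolute constant $h_0$ (the only genuinely quantitative step) and the careful use of the strong Markov property to make the across-trials independence, and the conditional independence of the displacement events given the trial structure, rigorous.
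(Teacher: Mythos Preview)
Your proof is correct and follows essentially the same strategy as the paper: decompose the path into successive excursions below height $-R'$ to a fixed depth (the paper uses depth $1$, you use a large absolute $h_0$), note that each excursion has a uniformly positive probability of horizontal displacement $\ge 2\pi$, and combine this with the fact that the number of excursions before $\tau$ is geometrically large with parameter $\asymp 1/R'$. One small point to tighten: when you bound $\P(\text{no wind})$ by $\E[2^{-(G-1)}]$, the first $G-1$ trials are conditioned not merely on being deep descents but on the deep descent \emph{returning to $-R'$} (rather than ending at $-2R'$); under this extra conditioning the success probability is $q_1/(q_1+q_3)\ge \tfrac12 - h_0/R'$ rather than $\ge \tfrac12$, but for $R'$ large this is still $\ge \tfrac14$, so the conclusion $\lesssim 1/R'$ is unaffected.
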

\begin{proof}
Fix $\x \in \R \times \{-R'\}$. Let $\sigma_0 : = 0$, and for $k \in \N_0$ we define inductively 
\begin{equation*}
\tau_{k} := \inf\l\{t \geq \sigma_{k} \, : \, \Im(B^\x_t) = -R'-1\}, \quad \sigma_{k+1} := \inf\l\{t \geq \tau_{k} \, : \, \Im(B^\x_t) = -R' \r\}.
\end{equation*}
Moreover, for $k \in \N_0$ consider the events
\begin{equation*}
A_k := \l\{|\Re(B^\x_{\sigma_k}) - \Re(B^\x_{\tau_k})| \geq 1\r\} , \quad F_{k} := \l\{\tau_{k} > \tau\r\}.
\end{equation*}
Let $K$ be the smallest $k \in \N_0$ such that $F_k$ occurs. Then the probability that the event in the lemma statement happens is less or equal to the probability that none of the events $\{A_k\}_{k \in [K]_0}$ happen. Thanks to the strong Markov property of the Brownian motion, the events $\{A_k\}_{k \in \N_0}$ are independent and identically distributed. 
Moreover, thanks to well-known properties of Brownian motion, the event $A_0$ happens with uniformly positive probability $p$ independent of $R'$ and $\x$.  Therefore, we obtain that 
\begin{equation*}
\P_{\x}\left(\bigcap_{i = 0}^K\bar{A_k}\right) = \sum_{k \in \N_0} \P_{\x}\left(\bigcap_{i = 0}^k \bar{A_k}\right)  \P_\x(K=k) = \sum_{k \in \N_0} (1-p)^{k} \P_\x(K = k) \lesssim \frac{1}{R'}, 
\end{equation*}
for all $\x \in \R \times \{R'\}$, where the implicit constant is independent of everything else, and the last inequality follows from standard Brownian motion estimates. Hence, this concludes the proof.
\end{proof}

\begin{lemma}
\label{lm_std_BM_2}
Fix $R > 1$, $R' > R$. For $\x \in \R \times [-R, R]$, let $\tau :=\inf \{t \geq 0 \, : \, |\Im(B^\x_t)| = R'\}$, and define the events 
\begin{equation*}
\begin{alignedat}{1}
& W^{+} := \l\{\sigma_{2 \pi} (B^\x|_{[0, \tau]}) \text{ does a loop around the cylinder between heights } R \text{ and } R'\r\}, \\
& W^{-} := \l\{\sigma_{2 \pi} (B^\x|_{[0, \tau]}) \text{ does a loop around the cylinder between heights } -R' \text{ and } -R \r\}.
\end{alignedat}
\end{equation*}
Then it holds that
\begin{equation*}
\P_\x\l(\bar{W^{+}} \cup \bar{W^{-}}\r) \lesssim \frac{R}{R'}, \quad \forall \x \in \R \times [-R, R],
\end{equation*}
where the implicit constant is independent of everything else.
\end{lemma}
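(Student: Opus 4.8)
The plan is to reduce the estimate to a one-dimensional gambler's-ruin count, in the same spirit as the proof of Lemma~\ref{lm_std_BM_1}. First I would dispose of the trivial regime: if $R' \leq R + C_0$ for a large enough universal constant $C_0$, then, since $R \geq 1$, we have $R/R' \geq R/(R+C_0) \geq 1/(1+C_0)$, so the claimed bound holds simply because probabilities are at most $1$. Hence from now on assume $R' \geq R + C_0$. Next I would condition on the side through which $\sigma_{2\pi}(B^\x)$ leaves the slab $\R \times [-R', R']$, i.e.\ on whether $\Im(B^\x_\tau) = R'$ or $\Im(B^\x_\tau) = -R'$. Since the reflection $\Im \mapsto -\Im$ fixes the set $\R \times [-R, R]$ of admissible starting points and the time $\tau$, and it swaps the events $W^+$ and $W^-$ as well as the two exit sides, it suffices to prove
\[
\P_\x\bigl(\bar{W^{+}} \mid \Im(B^\x_\tau) = R'\bigr) \lesssim \frac{R}{R'} \qquad \text{and} \qquad \P_\x\bigl(\bar{W^{-}} \mid \Im(B^\x_\tau) = R'\bigr) \lesssim \frac{R}{R'}
\]
uniformly over $\x \in \R \times [-R, R]$; a union bound over $W^{\pm}$, weighted by $\P_\x(\Im(B^\x_\tau) = R')$, together with the reflected statement for the bottom exit, then yields the lemma.

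Write $Y_t := \Im(B^\x_t)$ and $X_t := \Re(B^\x_t)$; these are independent linear Brownian motions, and the Doob $h$-transform realizing the conditioning $\{Y_\tau = R'\}$ reweights only the law of $Y$ (by a harmonic function of $y$ alone), leaving $X$ a standard linear Brownian motion independent of $Y$. The core of the argument is an excursion decomposition of the conditioned path. Call a \emph{substantial upper excursion} a piece of the trajectory that starts and ends on the line at height $R$ and meets the line at height $R+1$, and define substantial lower excursions symmetrically about height $-R$. Two inputs then combine. First, conditionally on everything else, a substantial upper excursion stays inside the strip $\{R \leq \Im \leq R'\}$ (for $R'-R \geq C_0$ it is overwhelmingly unlikely to climb above $R'$, and under the conditioning it cannot) and, projected to the cylinder, winds once around it with a fixed probability $p_0 > 0$: this is standard, proved by first requiring the vertical excursion to have duration in a fixed window (positive probability), on which event the independent horizontal Brownian motion $X$ has a positive chance of moving by more than $2\pi$ and closing up a noncontractible loop within the strip. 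Second, by the strong Markov property the substantial upper excursions are, conditionally on the exit side, i.i.d., and a one-dimensional gambler's-ruin computation applied to the $h$-conditioned vertical process shows that the number $N^{+}$ of substantial upper excursions completed before the walk first reaches height $R'$ stochastically dominates a geometric random variable whose per-excursion escape probability is of order $1/(R'-R)$, hence at most a constant times $R/R'$ (this is where $R' \geq R + C_0$ is used). Therefore
\[
\P_\x\bigl(\bar{W^{+}} \mid Y_\tau = R'\bigr) \leq \E_\x\bigl[(1 - p_0)^{N^{+}} \mid Y_\tau = R'\bigr] \lesssim \frac{R}{R'}.
\]
For $\bar{W^{-}}$ under the same conditioning I would first note, by a direct gambler's-ruin estimate, that the conditioned walk hits height $-R$ at all with probability $1 - O(R/R')$; on that event it performs substantial lower excursions which, by the identical reasoning (each confined to $\{-R' \leq \Im \leq -R\}$ and winding with probability $p_0$, with a geometric count $N^{-}$ whose escape probability is again $\lesssim R/R'$), wind around the cylinder with all but probability $\lesssim R/R'$. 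This completes the bound.

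The step I expect to be the real obstacle is controlling the counts $N^{\pm}$ \emph{after} conditioning on the exit side: a naive unconditioned argument loses a logarithmic factor, so one genuinely needs to work with the $h$-transformed vertical diffusion — which is again a tractable one-dimensional diffusion for which the crossing count is a plain gambler's-ruin quantity — using crucially that this conditioning leaves the horizontal coordinate, and hence the winding, untouched. A secondary, more routine point is to pin down the fixed probability $p_0$ that an excursion which advances horizontally by at least $2\pi$ actually closes up a genuine noncontractible loop inside the strip, which can be arranged by building an additional positive-probability self-intersection into the definition of a successful excursion.
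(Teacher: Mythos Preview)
Your argument is correct but takes a longer route than the paper's. The paper does not condition on the exit side at all: it bounds $\P_\x(\bar{W^+})$ and $\P_\x(\bar{W^-})$ separately and \emph{unconditionally}, then applies the union bound $\P_\x(\bar{W^+}\cup\bar{W^-})\leq \P_\x(\bar{W^+})+\P_\x(\bar{W^-})$. For $\bar{W^+}$, the paper uses the same excursion idea you describe---successive upcrossings from height $R$ to height $R+1$---but for the ordinary (unconditioned) Brownian motion: the number $K$ of such upcrossings completed before $\tau$ is, after absorbing an $O(R/R')$ probability that height $R$ is never reached at all, essentially geometric with failure parameter of order $1/R'$, and since each excursion winds with a fixed probability $p>0$ one gets $\E[(1-p)^K]\lesssim 1/R'\leq R/R'$. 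The bound for $\bar{W^-}$ follows by vertical reflection.

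Your worry that ``a naive unconditioned argument loses a logarithmic factor'' is misplaced here: the unconditioned excursion count already has the correct geometric tail, so no $h$-transform is needed. The conditioning you introduce is valid but buys nothing, and it manufactures precisely the delicate step you yourself flag as the main obstacle (controlling $N^\pm$ under the $h$-transformed vertical diffusion). The paper's direct approach sidesteps this entirely.
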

\begin{proof}
Fix $\x \in \R \times [-R, R]$. It is sufficient to prove that $\P_\x(\bar{W^{+}}) \lesssim R/R'$, and the same with $W^{-}$ in place of $W^{+}$. We will proceed similarly to the proof of the previous lemma. Let $\sigma_0 : = \inf\{t \geq 0 \, : \, \Im(B^\x_t) = R\}$ and, for $k \in \N_0$ we define inductively 
\begin{equation*}
\tau_{k} := \inf\l\{t \geq \sigma_{k} \, : \, \Im(B^\x_t) = R+1\r\}, \quad \sigma_{k+1} := \inf\l\{t \geq \tau_{k} \, : \, \Im(B^\x_t) = R\r\}.
\end{equation*}
Moreover, for $k \in \N_0$ consider the events 
\begin{equation*}
A_k := \l\{|\Re(B^\x_{\sigma_k}) - \Re(B^\x_{\tau_k})| = 1 , \, \Im(B^\x_{\sigma_k}) = \Im(B^\x_{\tau_k})\r\},  \quad F_{k} := \l\{\tau_{k} > \tau\r\} .
\end{equation*}
Let $K$ be the smallest $k \in \N_0$ such that $F_k$ occurs. Then the probability that the event $W^{+}$ does not happen is less or equal to the probability that none of the events $\{A_k\}_{k \in [K]_0}$ happen. Thanks to the strong Markov property of the Brownian motion, the events $\{A_k\}_{k \in \N_0}$ are independent and identically distributed. Moreover, since the event $A_0$ happens with uniformly positive probability $p$ independent of $R$ and $\x$, we have that 
\begin{equation*}
\P_\x\l(\bar{W^{+}}\r) \leq \P_{\x}\left(\bigcap_{i = 0}^{K}\bar{A_i}\right) = \sum_{k \in \N_0} \P_{\x}\left(\bigcap_{i = 0}^{k}\bar{A_i}\right) \P_\x(K = k)  = \sum_{k \in \N_0} (1-p)^{k}  \P_\x(K = k) \lesssim \frac{R}{R'}, 
\end{equation*}
for all $\x \in \R \times [-R, R]$, where the implicit constant is independent of everything else, and the last inequality follows from standard Brownian motion estimates. Proceeding in a similar way, one can also prove that $\P_\x(\bar{W^{-}}) \lesssim R/R'$. Therefore, the desired result follows from the fact that $\P_\x(\bar{W^{+}} \cup \bar{W^{-}}) \leq \P_\x(\bar{W^{+}}) + \P_\x(\bar{W^{-}})$. 
\end{proof}

\small
\bibliography{ref}{}
\bibliographystyle{Martin}

\end{document}